  \def\KK{{\mathbb K}} 
 \def\NN{{\mathbb N}}  
 \def\RR{{\mathbb R}} \def\SS{{\mathbb S}} \def\TT{{\mathbb T}}
 \def\ZZ{{\mathbb Z}}
\def\Si{\Sigma}
\def\La{\Lambda}
\def\De{\Delta}
\def\Om{\Omega}
\def\Ga{\Gamma}
  \def\cG{{\cal G}}  
  \def\cH{{\cal H}}  
\def\cC{{\cal C}}   \def\cO{{\cal O}} \def\cU{{\cal U}}
   \def\cP{{\cal P}} \def\cV{{\cal V}}
  \def\cK{{\cal K}}  \def\cW{{\cal W}}
\def\cF{{\cal F}}  \def\cL{{\cal L}}
\def\pitchfork{\frown\hspace{ -.625em}\mid}
\newtheorem{theorem}{{Theorem}}[section]
\newtheorem{lemma}[theorem]{{Lemma}}
\newtheorem{fact}[theorem]{{Fact}}
\newtheorem{claim}[theorem]{{Claim}}
\newtheorem{theo}[theorem]{Theorem}
\newtheorem{clai}[theorem]{Claim}
\newtheorem{lemm}[theorem]{Lemma}
\newtheorem{coro}[theorem]{Corollary}
\newtheorem{prop}[theorem]{Proposition}
\newtheorem{ques}[theorem]{Question}
\newtheorem{definition}[theorem]{{Definition}}
\newtheorem{defi}[theorem]{{Definition}}
\newtheorem{defis}[theorem]{{Definitions}}
\theoremstyle{definition}
\newtheorem{rema}[theorem]{Remark}
\newtheorem{exam}[theorem]{Example}
\newenvironment{demo}[1][Proof]{\noindent {\bf #1~: }}{\hfill$\Box$\medskip}
\title{Building  Anosov flows on $3$-manifolds}
\author{F. Beguin, C. Bonatti and B. Yu}
\date{\today}
\begin{document}

\maketitle

\hfill \emph{Dedicated to the memory of Marco Brunella\footnote{Varese Italy 1964,
Rio de Janeiro Brazil 2012}}
\begin{abstract}
We prove a result allowing to build (transitive or non-transitive) Anosov flows on $3$-manifolds by gluing together filtrating neighborhoods of hyperbolic sets. We give several applications; for example:
\begin{enumerate}
  \item we build a $3$-manifold supporting both of a transitive Anosov vector field and a non-transitive Anosov vector field;
  \item for any $n$,  we build a $3$-manifold $M$ supporting at least $n$ pairwise different Anosov vector fields;
  \item we build transitive attractors with prescribed entrance foliation; in particular,  we construct some incoherent transitive attractors;
  \item we build a transitive Anosov vector field admitting infinitely many pairwise non-isotopic transverse tori.
\end{enumerate}
{\bf MSC 2010:} 37D20, 57M. {\bf Keywords:} Anosov flow, hyperbolic basic set, $3$-manifold.
\end{abstract}

\section{Introduction}
\subsection{General setting and aim of this paper}

Anosov vector fields are the paradigm of hyperbolic chaotic dynamics. They are
non-singular vector fields on compact manifolds for which the whole manifold
is a hyperbolic set: if $X$ is an Anosov vector field, the tangent bundle
to the manifold $M$  splits in a direct sum $E^s\oplus \RR X\oplus E^u$,
where  $E^s$ and $E^u$ are continuous subbundles invariant under the flow of $X$, 
and the vectors in $E^s$ and $ E^u$ are respectively uniformly contracted and uniformly
expanded by the derivative of this flow.  As all hyperbolic dynamics, Anosov vector fields 
are structurally stable: any vector field $C^1$-close to $X$ is \emph{topologically equivalent}
(or \emph{orbit equivalent}) to $X$.  Therefore, there is a hope for a topological classification 
of Anosov vector fields up to topological equivalence,
 and many works started this kind of classification in dimension $3$
(see for instance \cite{Gh1,Gh2,Ba1,Ba2,Fe1,Fe2,BaFe}).

However, we are still very far from proposing a classification, even in dimension $3$. For instance we still do not know which
manifolds support Anosov vector fields, or how many Anosov vector fields may be carried by the same manifold.

The simplest examples of Anosov vector fields on three-dimensional manifolds are the suspension of an Anosov-Thom linear 
automorphism of the torus $T^2$ and the geodesic flow of a hyperbolic Riemann surface. These two (class of) examples share some strong rigidity properties: 
\begin{itemize}
\item Plante has proved that every Anosov vector field on a surface bundle over the circle is topological equivalent to the suspension of an Anosov-Thom automorphism (\cite{Pl}), 
\item Ghys has proved that every  Anosov vector field on a circle bundle is topological equivalent to the geodesic flow of a hyperbolic surface (\cite{Gh1}),
\item Ghys again has discovered that every Anosov vector field on a three-manifold whose stable and unstable foliations are $C^\infty$  is topologically equivalent to one 
 of these canonical examples. 
 \end{itemize}
However, several pathological examples of Anosov vector fields (on closed three-manifolds) have been built.  Let us mention Franks and Williams who have built a non-transitive 
Anosov vector field (\cite{FrWi}), and Bonatti and Langevin who have built an Anosov vector field admitting a closed transverse cross section (diffeomorphic to a torus) which does not cut every orbit. 

Both Franks-William's and Bonatti Langevin's examples were obtained by gluing filtrating neighbourhoods of hyperbolic sets along their boundaries. The  aim of the present paper is to make a general theory of this type of examples. We first describe some elementary bricks (called \emph{hyperbolic plugs}). Then we prove a quite general result allowing to build Anosov vector fields (on closed three-manifolds) by gluing together such elementary bricks. We also provide a simple criterion allowing to decide whether an Anosov flow built in this manner is topologically transitive or not. Finally we illustrate our ``construction game" by several examples.

\subsection{Hyperbolic plugs}

In order to present our main results, we need to state some definitions.
We call \emph{plug} any pair $(V,X)$ where $V$ is a compact $3$-manifold
with boundary, and $X$ is a vector field on $V$,
transverse to the boundary of $V$ (in particular, $X$ is assumed to be
non-singular on $\partial V$). Given such a plug $(V,X)$, we can
decompose $\partial V$ as the disjoint union of an
\emph{entrance boundary} $\partial^{in} V$ (the part of $\partial V$
where $X$ is pointing inwards) and an \emph{exit boundary}
$\partial^{out} V$ (the part of $\partial V$ where $X$ is pointing outwards).
 The plug $(V,X)$ will be called an \emph{attracting plug} if
$\partial^{out} V=\emptyset$, and a \emph{repelling plug} if
$\partial^{in} V=\emptyset$. The plug $(V,X)$ will be called
a \emph{hyperbolic plug} if
its maximal invariant set $\La=\bigcap_{t\in \RR}X^t(V)$ is nonsingular
and hyperbolic with $1$-dimensional stable and unstable bundles.

If $(V,X)$ is a hyperbolic plug, the stable lamination $W^s(\Lambda)$
(resp. the unstable lamination $W^u(\Lambda)$) of
the maximal invariant set $\La=\bigcap_{t\in \RR}X^t(V)$
intersects transversally the entrance boundary $\partial^{in} V$
(resp. the exit boundary $\partial^{out} V$) and is disjoint
from $\partial^{out} V$ (resp. $\partial^{in} V$).
By transversality, $\cL^s_X:=W^s(\La)\cap\partial^{in}V$
and $\cL^u_X:=W^u(\La)\cap\partial^{out}V$ are one-dimensional laminations;
we call them the \emph{entrance lamination} and the \emph{exit lamination}
of $V$. We will see that the laminations $\cL^s_X$ and $\cL^u_X$
satisfy the following properties (see Proposition~\ref{p.hyperbolicplug}):
\begin{itemize}
\item[(i)] They contains finitely many compact leaf.
\item[(ii)] Every half non-compact leaf is asymptotic to a compact leaf.
\item[(iii)] Each compact leaf  may be oriented  so that its holonomy
is a contraction.  This orientation will be called the \emph{contracting orientation}.
\end{itemize}
A lamination satisfying this three properties will be called a \emph{Morse-Smale lamination},
or shortly a \emph{MS-lamination}. If the lamination is indeed a foliation, we will speak on \emph{MS-foliation}.

If $(V,X)$ is a hyperbolic attracting plug, then $\cL^s_X$ is a foliation on $\partial^{in} V$. In particular,
every connected component of $\partial V=\partial^{in} V$ is a two-torus (or a Klein bottle if we allow $V$ to be non-orientable).

Let us first state an elementary result which is nevertheless a
fundamental tool for our ``construction game":

\begin{prop}\label{p.plug}
Let $(U,X)$  and $(V,Y)$ be two hyperbolic plugs. Let
$T^{out}$ be a union  of connected components of $\partial^{out}U$ and $T^{in}$ be a union of connected components of 
$\partial^{in}V$. Assume that there exists a diffeomorphism
$\varphi\colon T^{out}\to T^{in}$ so that $\varphi_*(\cL^u_X)$ is transverse
to the foliation $\cL^s_Y$.
Let  $Z$ be the vector field induced by $X$ and $Y$ on the
manifold $W:=U\sqcup V/\varphi$.
Then $(W, Z)$ is a hyperbolic plug\footnote{This entails that there is
a differentiable structure on $W$
(compatible with the differentiable structures of $U$ and $V$
by restriction), so that $Z$ is a differentiable vector field.}.
\end{prop}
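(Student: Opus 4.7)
The plan is to first endow $W = (U \sqcup V)/\varphi$ with a differentiable structure compatible with $X$ and $Y$ by a standard flow-box argument. Choose collar neighborhoods of $T^{out}\subset\partial^{out}U$ and of $T^{in}\subset\partial^{in}V$ in which $X$ and $Y$ are each trivialized as product flows, then extend $\varphi$ to a diffeomorphism between these collars that conjugates the trivialized $X$ to the trivialized $Y$. This produces a smooth $W$ on which the induced $Z$ is a smooth vector field; transversality of $Z$ to $\partial W=(\partial U\setminus T^{out})\sqcup(\partial V\setminus T^{in})$ is inherited directly from the hypotheses on $(U,X)$ and $(V,Y)$, so $(W,Z)$ is a plug.

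\textbf{Stage 2: structure of the maximal invariant set.} Next, I would describe $\Lambda_Z=\bigcap_{t\in\RR}Z^t(W)$. The gluing is one-way: an orbit of $Z$ crosses the interface $T^{out}\simeq T^{in}$ only from $U$ to $V$, and at most once. Consequently every orbit in $\Lambda_Z$ either lies entirely in $U$ (hence in $\Lambda_X$), entirely in $V$ (hence in $\Lambda_Y$), or crosses the interface exactly once at some point $q\in T^{out}$. In the last case, trappedness of the backward orbit in $U$ forces $q\in\cL^u_X$ and trappedness of the forward orbit in $V$ forces $\varphi(q)\in\cL^s_Y$. By the transversality hypothesis, $\varphi(\cL^u_X)\cap\cL^s_Y$ is a discrete compact, hence finite, set, giving finitely many connecting orbits $\gamma_1,\ldots,\gamma_n$, each $\alpha$-asymptotic to $\Lambda_X$ and $\omega$-asymptotic to $\Lambda_Y$. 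Thus $\Lambda_Z=\Lambda_X\cup\Lambda_Y\cup\bigcup_i\gamma_i$ is compact and nonsingular.

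\textbf{Stage 3: hyperbolicity.} Along $\gamma_i\cap U$, the tangent line to the $2$-dimensional unstable manifold $W^u(\Lambda_X)$ gives a continuous line field $E^u$ converging to $E^u|_{\Lambda_X}$ as $t\to-\infty$; symmetrically, on $\gamma_i\cap V$ the tangent line to $W^s(\Lambda_Y)$ gives $E^s$ converging to $E^s|_{\Lambda_Y}$ as $t\to+\infty$. To complete the splitting, push $E^u$ across the interface via $\varphi_*$: at the crossing point the transversality of $\varphi_*(\cL^u_X)$ with $\cL^s_Y$ is equivalent (modulo the flow direction) to transversality of $\varphi_*W^u(\Lambda_X)$ with $W^s(\Lambda_Y)$ in $V$, so $\varphi_*E^u$ is transverse to $E^s$ at $\varphi(q)$. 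The inclination ($\lambda$-) lemma then yields a continuous, exponentially expanded $E^u$ line field on $\gamma_i\cap V$ converging to $E^u|_{\Lambda_Y}$ as $t\to+\infty$. A symmetric construction extends $E^s$ backward along $\gamma_i\cap U$. The resulting continuous $Z$-invariant splitting $T_{\Lambda_Z}W=E^s\oplus\RR Z\oplus E^u$ is uniformly hyperbolic because $\Lambda_Z$ is compact and every orbit in $\Lambda_Z$ spends all but a bounded time inside small neighborhoods of $\Lambda_X$ or $\Lambda_Y$, where the pre-existing hyperbolic rates apply.

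The principal obstacle is Stage 3: continuously matching the two hyperbolic splittings across each connecting orbit and deducing uniform contraction and expansion constants. The transversality of $\varphi_*(\cL^u_X)$ with $\cL^s_Y$ is precisely the geometric input that makes the $\lambda$-lemma applicable at the gluing interface, thereby propagating hyperbolicity from $\Lambda_X$ and $\Lambda_Y$ to the full set $\Lambda_Z$.
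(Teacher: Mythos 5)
Your approach is essentially the same as the paper's: write $\Lambda_Z$ as the union of $\Lambda_X$, $\Lambda_Y$ and the orbits through the crossing set $\varphi_*(\cL^u_X)\cap\cL^s_Y$, then argue that transversality at the interface matches the stable and unstable data of $\Lambda_X$ and $\Lambda_Y$ into a hyperbolic splitting over the full set. Stages 1 and 3 are sound. Stage 2, however, contains a genuine error that needs to be flagged.

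You claim that ``by the transversality hypothesis, $\varphi(\cL^u_X)\cap\cL^s_Y$ is a discrete compact, hence finite, set, giving finitely many connecting orbits $\gamma_1,\dots,\gamma_n$.'' This is false in general. The objects $\cL^u_X$ and $\cL^s_Y$ are $1$-dimensional \emph{laminations}, not single embedded curves: they typically have uncountably many leaves (for a nontrivial hyperbolic basic set, $W^u(\Lambda)\cap\partial^{out}U$ has a Cantor-like transverse structure). Two transverse laminations of this type intersect in a compact, totally disconnected set, but this set is generically uncountable, not discrete. The Franks--Williams example, which is precisely the situation $T^{out}=\partial^{out}U$, $T^{in}=\partial^{in}V$ with $U$ attracting and $V$ repelling, has a Cantor set's worth of connecting orbits. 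The correct statement is that the crossing set is compact and zero-dimensional, and $\Lambda_Z=\Lambda_X\cup\Lambda_Y\cup\mathrm{Orb}_Z\bigl(\varphi_*(\cL^u_X)\cap\cL^s_Y\bigr)$.

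The good news is that the error is local to that sentence: nothing in Stage 3 actually requires finiteness. Your closing observation — that uniform hyperbolicity follows because every orbit of $\Lambda_Z$ spends all but a bounded amount of time inside small neighborhoods of $\Lambda_X\cup\Lambda_Y$ where the pre-existing rates apply — is the right mechanism, and it is justified by \emph{compactness} of the crossing set (the transition time is a continuous function on a compact set, hence bounded), not by finiteness. So the fix is to drop the indexing $\gamma_1,\dots,\gamma_n$, speak of a compact family of connecting orbits parametrized by $\varphi_*(\cL^u_X)\cap\cL^s_Y$, and run the $\lambda$-lemma/cone-field argument uniformly over that family rather than orbit by orbit. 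With that correction the argument matches the paper's intended proof.
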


A classical example of the use of Proposition~\ref{p.plug} is the
famous Franks-William's example of a non-transitive Anosov vector field.
It corresponds to the case where $(U,X)$ is an attracting plug, $(V,Y)$
is a repelling plug,
$T^{out} = \partial^{out}U$ and $T^{in}=\partial^{in}V$. In that case, $W$
is boundaryless and $Z$ is Anosov and non-transitive.

If $(V,X)$ is a hyperbolic plug such that $V$ is embedded in a closed
three-dimensional manifold $M$ and
$X$ is the restriction of an Anosov vector field $\overline X$ on $M$,
then the stable (resp. unstable)
lamination of the maximal invariant set of $V$ is embedded in the
stable (resp. unstable) foliation of the
Anosov vector field $\overline X$. This leads to some restrictions
on the entrance and exit laminations of $V$,
and motivates the following definition.

A lamination  $\cL$ on a compact surface $S$ is a  \emph{filling MS-lamination}
if it satisfies properties (i), (ii), (iii) above and
if  every connected $C$ of $S\setminus \cL$ is
``a strip whose width tends to $0$ at both ends": more precisely,
$C$ is simply connected, and the accessible boundary of $C$ consists
of two distinct non-compact leaves $L_1,L_2$ which are asymptotic
to each other at both ends.

Any filling MS-lamination can be embedded in a $\cC^{0,1}$ foliation.
As a consequence, a closed surface carrying a filling MS-lamination
is either a torus or a Klein bottle. We will prove  that the
entrance lamination $\cL^s$ of a hyperbolic plug
is a filling MS-lamination if and only if this is also the case for the exit
lamination $\cL^u$ (see lemma~\ref{l.simple}).
Therefore we will speak of \emph{hyperbolic plugs with filling
MS-laminations}. Theorem~\ref{t.embedding} shows that
every hyperbolic plug with filling MS-laminations can be embedded
in an Anosov vector field.

\begin{exam}
Consider a transitive Anosov vector field $X$ on a closed
three-dimensional manifold $M$,
and a finite collection $\alpha_1,\dots,\alpha_m,\beta_1,\dots \beta_n$
of (pairwise different) orbits of $Y$.
Let $Y$ be the vector field obtained from $X$ by performing some
DA bifurcations  in the stable direction on the orbits
$\alpha_1,\dots,\alpha_m$, and some DA bifurcations  in the
unstable direction on the orbits $\beta_1,\dots \beta_n$. Let
$U$ be the compact manifold with boundary obtained by cutting out
from $M$ some pairwise disjoint tubular neighborhoods of
$\alpha_1,\dots,\alpha_m,\beta_1,\dots \beta_n$ whose boundaries
are transverse to $Y$. Then, $(U,Y)$ is a hyperbolic plug with
filling MS-laminations. More precisely, on each connected component
of $\partial^{in} V$ (resp. $\partial^{out} V$) the entrance
lamination $\cL^s_X$ (resp. the exit lamination $\cL^u_X$) consists
in one or two Reeb components.
\end{exam}

We finish this section by stating an addendum to
Proposition~\ref{p.plug} which allows us to build more and more
complicated hyperbolic
plugs with filling MS-laminations.  Two filling MS-laminations $\cL_1$ and
$\cL_2$ are called \emph{strongly transverse} if they are
transverse and if every connected component $C$
of $S\setminus(\cL_1\cup\cL_2)$  is a topological disc whose
boundary $\partial C$
consists in exactly four segments $a_1,a_2, b_1,b_2$ where $a_1,b_1$ lies
on  leaves of $\cL_1$ and $a_2,b_2$ lies on  leaves of $\cL_2$.

\begin{prop}\label{p.plug2}
In Proposition~\ref{p.plug}, assume furthermore that the plugs $(U,X)$
and $(V,Y)$ have filling MS-laminations and that  $\varphi_*(\cL^u_X)$ is
strongly transverse to $\cL^s_Y$.
Then the plug $(W,Z)$ has filling MS-laminations.
\end{prop}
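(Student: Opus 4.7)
\begin{demo}[Proof plan]
By Proposition~\ref{p.plug} the glued pair $(W,Z)$ is a hyperbolic plug, so by Proposition~\ref{p.hyperbolicplug} the entrance lamination $\cL^s_Z$ automatically satisfies the three MS-conditions. The plan is to verify the remaining filling property; thanks to Lemma~\ref{l.simple} it will suffice to do this on one side, say for $\cL^s_Z$ on $\partial^{in}W$.

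We decompose $\partial^{in}W = \partial^{in}U \sqcup (\partial^{in}V \setminus T^{in})$. On $\partial^{in}V \setminus T^{in}$, since $\partial^{out}V \subset \partial^{out}W$, any point of $\cL^s_Z$ there must have its $Y$-orbit staying in $V$ forever, so $\cL^s_Z$ coincides with the restriction of $\cL^s_Y$, which is filling on each component by hypothesis. The whole work thus reduces to $\partial^{in}U$. Writing $f\colon \partial^{in}U \setminus \cL^s_X \to \partial^{out}U \setminus \cL^u_X$ for the $X$-transit map (a homeomorphism carrying each complementary strip $B^{in}$ of $\cL^s_X$ onto the corresponding complementary strip $B^{out}$ of $\cL^u_X$), one checks directly that
\[
\cL^s_Z \cap \partial^{in}U \;=\; \cL^s_X \;\sqcup\; f^{-1}\bigl(\varphi^{-1}(\cL^s_Y\cap T^{in})\bigr),
\]
where the pullback contributes only inside those $B^{in}$ whose image $B^{out}$ lies in $T^{out}$.

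Fix such a strip $B^{in}$. The strong transversality hypothesis will be used to control the subdivision of $B^{in}$ induced by the pullback: the arcs $\varphi^{-1}(\cL^s_Y)\cap B^{out}$ form a finite family cutting $B^{out}$ into rectangles, each with two vertical sides on $\varphi^{-1}(\cL^s_Y)$ and two horizontal sides on the boundary leaves $L^{out}_1, L^{out}_2$ of $B^{out}$. We will then show that each such rectangle $R$ pulls back, under $f^{-1}$, to a filling strip of $\cL^s_Z$ inside $B^{in}$. Simple connectedness is automatic since $f^{-1}$ is a homeomorphism, and the two arcs in $B^{in}$ coming from the vertical sides of $R$ are genuine non-compact leaves of $\cL^s_Z$. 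The hard part will be to identify the accessible boundary of $f^{-1}(R)$ with precisely these two arcs: it relies on the fact that $f$ does not extend to the boundary leaves $L^{in}_1, L^{in}_2$, and more precisely on the saddle dynamics of $X$ near the periodic orbits of $\La_X$ labelling the compact leaves $\gamma^{in}_\pm$ at the two ends of $B^{in}$. Namely, the preimage under $f$ of any bounded arc on $L^{out}_1$ or $L^{out}_2$ gets pushed to infinity along $B^{in}$, spiralling into $\gamma^{in}_-$ or $\gamma^{in}_+$. Consequently no finite piece of $L^{in}_1$ or $L^{in}_2$ lies in the accessible boundary of $f^{-1}(R)$; the latter reduces to the two pullback arcs, each of which asymptotes, at each of its two ends, to the same compact leaf of $\cL^s_X$, as required by the filling definition.

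The main obstacle is precisely this last step: one must carefully argue, from the hyperbolic saddle behavior of $(U,X)$ near its boundary periodic orbits, that the non-extension of $f$ at the boundary of $B^{in}$ takes the form of a logarithmic shear in the strip-longitudinal coordinate, so that the two horizontal sides of $R$ translate into asymptotic behavior rather than into finite boundary arcs in $\partial^{in}U$. Once this geometric fact is established, letting $R$ range over the finitely many rectangles in each $B^{out}$ and $B^{in}$ over the finitely many complementary strips of $\cL^s_X$ (those with $B^{out}\not\subset T^{out}$ remain unchanged and are already filling strips) gives the filling property of $\cL^s_Z$ on $\partial^{in}U$, and Lemma~\ref{l.simple} then transfers it to $\cL^u_Z$ on $\partial^{out}W$, completing the proof.
\end{demo}
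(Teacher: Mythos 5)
Your proof plan is essentially the paper's argument, run backwards in time: you reduce via Lemma~\ref{l.simple} to the entrance side and pull squares back through the crossing map $\Gamma_X^{-1}$ of $(U,X)$, whereas the paper reduces to the exit side and pushes squares forward through $\Gamma_Y$ of $(V,Y)$. By the time-reversal symmetry of the situation these are the same argument, and your bookkeeping (identifying the components of $\partial^{in}W\setminus\cL^s_Z$ that meet $\partial^{in}U$ with $f^{-1}$ of the rectangles cut out of a strip $B^{out}\subset T^{out}$ by $\varphi^{-1}(\cL^s_Y)$) matches what the paper does in the reverse direction via Proposition~\ref{p.gluedlaminations} and the little lemma inside its proof.

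The step you flag as the main obstacle --- showing that no piece of $L^{in}_1,L^{in}_2$ survives in the accessible boundary of $f^{-1}(R)$, and that the two pulled-back arcs become asymptotic to each other at both ends --- is exactly the content of Lemma~\ref{l.strip} in the paper, and the paper closes it not by a direct ``logarithmic shear'' estimate but by the invariant-foliation machinery it has already set up: Lemma~\ref{l.invariantfoliations} provides a smooth $X$-invariant $2$-dimensional foliation $\cF^s$ on $U$ containing $W^s(\Lambda_X)$, whose trace foliates the square $R$ by arcs crossing from one $\varphi^{-1}(\cL^s_Y)$-side to the other (after a small adjustment near the two $\cL^u_X$-sides, where transversality can fail), and Lemma~\ref{l.stablefoliation} says the crossing map contracts vectors tangent to this foliation by a factor that goes to $0$ near $\cL^u_X$. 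Pushing this auxiliary foliation backward through $f$ therefore yields a foliation of $f^{-1}(R)\cup\{\text{two arcs}\}$ by segments whose lengths tend to $0$ at the ends, which is precisely the filling strip condition. Your intuition about where the hyperbolic saddle dynamics enter is right, but making it rigorous without the auxiliary foliation would require you to re-derive a distortion/contraction estimate that the paper packages once and for all. One more small point: the filling condition asks that the \emph{two boundary leaves of the strip be asymptotic to each other at both ends}; your phrasing (``each of which asymptotes, at each of its two ends, to the same compact leaf of $\cL^s_X$'') is a weaker and not quite correct formulation --- being asymptotic to the same compact leaf at the same end does not automatically make two non-compact leaves asymptotic to each other, and the foliation-by-shrinking-segments argument is exactly what supplies this stronger statement.
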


\subsection{Building transitive Anosov flows}

In this section, we consider a hyperbolic plug with filling
MS-laminations $(U,X)$, and a diffeomorphism
$\varphi\colon \partial^{out} U\to \partial^{in} U$ so that the
laminations $\varphi_*(\cL^s_X)$ and $\cL^u_X$
are strongly transverse: we say that $\varphi$ is
\emph{a strongly transverse gluing diffeomorphism}. We denote by $Z$ the
vector field induced by $X$ on the closed manifold $U/\varphi$.
Note that there is a differentiable structure on $U/\varphi$
such that $Z$ is a differentiable vector field.

In general the vector field $Z$ is not hyperbolic. The dynamics of
$Z$ depends on the gluing diffeomorphism $\varphi$.
Two strongly transverse gluing diffeomorphisms $\varphi_0$ and
$\varphi_1$ may lead to vector fields $Z_0$ and $Z_1$ which are not
topologically equivalent, even if they are isotopic through
strongly transverse gluing diffeomorphisms. It is therefore necessary
to choose carefully the gluing diffeomorphism $\varphi$.

\begin{ques}
Given a hyperbolic plug  $(U,X)$ with filling MS-laminations and
a strongly transverse gluing diffeomorphism
$\varphi_0:\partial^{out} U\to \partial^{in} U$.
Does there exist a strongly transverse gluing diffeomorphism
$\varphi_1$, isotopic to $\varphi_0$ through strongly
transverse gluing diffeomorphisms, so that the vector field $Z_1$
induced by $X$ on  $U/\varphi_1$ is Anosov?
\end{ques}

We are not able to answer this question in general. Nevertheless we can
give a positive answer in the particular case where the
maximal invariant set $\La$ of $X$ in $U$ admits an affine Markov
partition (\emph{i.e.} a Markov partition so that the first return
map on the rectangles is affine for some coordinates). If  $\La$  does
not contain any attractor nor repeller, then a slight perturbation
of $X$ leads to a vector field $Y$, topologically equivalent to $X$,
whose maximal invariant set $\La_Y$ admits such affine Markov partition.
We will therefore allow us such perturbations.

Let $(U,Y)$ be another hyperbolic plug with filling MS-laminations,
and $\psi:\partial^{out} U\to\partial^{in} U$ be a
strongly transverse gluing diffeomorphisms. We say that
\emph{$(U,X,\varphi)$ and $(U,Y,\psi)$ are strongly isotopic}
if there is a continuous path $(U, X_t,\varphi_t)$ of hyperbolic plugs
with filling MS-laminations and strongly transverse
gluing diffeomorphisms so that $(U, X_0,\varphi_0)=(U,X,\varphi)$ and
$(U, X_1,\varphi_1)=(U,Y,\psi)$. Notice that this
implies that $(U,X)$ and $(U,Y)$ are topologically equivalent.

We will prove the following result:

\begin{theo}\label{t.transitive}
Let $(U,X)$ be a hyperbolic plug with filling MS-laminations so that the
maximal invariant set of $X$ contains neither attractors
nor repellers, and let $\varphi:\partial^{out}U\to \partial^{in} U$
be a strongly transverse gluing diffeomorphism. Then there
exist a hyperbolic plug $(U,Y)$ with filling MS-laminations and a
strongly transverse gluing diffeomorphism
$\psi:\partial^{out}U\to \partial^{in}U$ so that $(U,X,\varphi)$
and $(U,Y,\psi)$ are strongly isotopic,
and so that the vectorfield $Z$ induced by $Y$ on $U/\psi$  is Anosov.
\end{theo}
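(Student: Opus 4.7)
\medskip
\noindent\textbf{Proof plan.}
The strategy has two stages: first reduce, up to strong isotopy, to the case where $\La=\bigcap_{t\in\RR}X^t(U)$ carries an \emph{affine} Markov partition; then modify the gluing diffeomorphism $\varphi$ inside its strongly transverse isotopy class so that the first return map to a suitable cross-section becomes uniformly hyperbolic on all of $U/\psi$. The first reduction is exactly what the paragraph preceding the statement of the theorem permits: since $\La$ contains neither attractors nor repellers, every basic piece of $\La$ is a saddle hyperbolic set, and a $C^1$-small perturbation $Y$ of $X$ — realised through a path of hyperbolic plugs with filling MS-laminations, accompanied by a correlative path of strongly transverse gluings obtained by carrying $\varphi$ along — admits such an affine Markov partition. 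We may therefore assume from the start that $X$ itself has one.

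Once in the affine situation, the Markov rectangles trace on $\partial^{in}U$ and $\partial^{out}U$ finitely many sub-rectangles whose sides lie either on the compact leaves of $\cL^s_X$ and $\cL^u_X$ (property (i) of MS-laminations) or on transverse arcs read off from the Markov coordinates. Because every non-compact leaf of $\cL^s_X$ or $\cL^u_X$ is asymptotic to a compact one (property (ii)) and because $\varphi_*(\cL^u_X)$ is strongly transverse to $\cL^s_X$, these two rectangle decompositions can be refined to a common tiling of $\partial^{in}U$ into four-sided pieces on which $\varphi$ has a prescribed combinatorial type. In these Markov coordinates the interior flow map from $\partial^{in}U$ to $\partial^{out}U$ is itself piecewise affine, and a canonical stable/unstable cone field is available along the compact leaves of $\cL^s_X$ and $\cL^u_X$.

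The final step is to deform $\varphi$, through strongly transverse diffeomorphisms, into a diffeomorphism $\psi$ which is affine on each tile, with linear parts chosen so that the composition of $\psi$ with the interior flow map preserves the canonical cone field and expands/contracts uniformly on it. Combined with the hyperbolicity already present on a neighbourhood of $\La$, this yields a continuous invariant hyperbolic splitting on the closed manifold $U/\psi$, hence shows that the induced vector field $Z$ is Anosov by the usual cone-field criterion. The main obstacle is the global compatibility of the chosen linear parts: along each compact leaf of $\cL^s_X$ or $\cL^u_X$ several tiles meet, and the slopes must match across these leaves, be consistent with the combinatorial type of $\varphi$, and allow the deformation to stay inside the open set of strongly transverse gluings. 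Solving this finite linear combinatorial problem, and then smoothing $\psi$ near the compact leaves without destroying either strong transversality or cone invariance, is where the bulk of the technical work lies.
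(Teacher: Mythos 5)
Your first step — reducing, through a strong isotopy, to the case where $\Lambda$ carries an affine Markov partition and then extracting from it smooth invariant foliations $\cG^s,\cG^u$ with homothety holonomy on the compact leaves — is exactly the paper's Lemma~\ref{l.affine} and Proposition~\ref{p.preparation}, so there is nothing to object to there. The gap lies in what you do afterwards. You assert that, in Markov coordinates, \emph{the interior flow map from $\partial^{in}U$ to $\partial^{out}U$ is itself piecewise affine} on a finite four-sided tiling of $\partial^{in}U$. This is false, and the failure is not a technicality: the crossing time of a point $x\in\partial^{in}U\setminus\cL^s$ tends to $+\infty$ as $x\to\cL^s$, so the crossing map $\Ga$ concatenates an \emph{unbounded} number of returns of the Markov first-return map near the lamination, and its derivative blows up there (this is precisely the content of Lemma~\ref{l.stronghyperbolicity}). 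No finite tiling of the boundary torus can make $\Ga$, hence the return map $\Theta_\psi=\psi\circ\Ga$, piecewise affine — and with that the object your linear cone-invariance argument is supposed to act on disappears. Choosing $\psi$ affine on tiles would control $D\psi$ but tells you nothing uniform about $D(\psi\circ\Ga)$.

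Relatedly, you do name the real difficulty — making $\psi$ degenerate to the identity near the compact leaves while still providing extra expansion in the middle of each strip of $\partial^{in}U\setminus\cL^s$, without breaking cone invariance — but you classify it as ``a finite linear combinatorial problem'', which it is not; it is where all the analysis lives. Because the strips accumulate on the compact leaves with unbounded modulus, you cannot taper an expanding affine map to the identity inside a strip without paying with a stretch of contraction, and that contraction is fatal unless it occurs in a region where $\Ga$ already over-compensates. The paper resolves this by a completely different mechanism: it does not replace $\varphi$ by a piecewise-affine map, but writes the new gluing as $\psi=\psi^{in}\circ\varphi\circ\psi^{out}$, where $\psi^{in}$ (and symmetrically $\psi^{out}$) preserves every $\cG^u_{in}$-leaf, is the identity near $\cL^s$, and is built strip by strip as the $\cG^s_{in}$-saturation of a one-dimensional ``speed-up'' $h$ on a single $\cG^u$-segment, tapered towards the ends. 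Lemma~\ref{l.stronghyperbolicity} guarantees the taper region is already $\lambda$-expanding under $\Ga$, and the bounded-distorsion statement Lemma~\ref{l.distorsion} (which hinges precisely on the homothety holonomy you arranged in the normal form) propagates the estimate from the distinguished segment to the whole strip. The Anosov property is then read off from a cone-field criterion on a mixed section $\Sigma\cup\partial^{in}U$ (Lemma~\ref{l.hyperboilicity-vector-field-vs-return-map}), not from a global piecewise-linear structure. Your proposal would need to be recast along these lines to go through.
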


Theorem~\ref{t.transitive} allows to build an Anosov vector field $Z$
by gluing the entrance and the exit boundary of a hyperbolic plug.
We will now state a result providing a simple  criterion to decide
whether this Anosov vector field $Z$ is transitive or not.

Given a hyperbolic plug $(U,X)$ with filling MS-laminations and a
strongly transverse gluing map $\varphi:\partial^{out}U\to \partial^{in} U$,
consider the oriented graph $\cP$ defined as follows:
\begin{itemize}
\item the vertices of $\cP$ are the  basic pieces $\La_1,\dots, \La_k$
of $X$,
\item there is an edge going from $\La_i$ to $\La_j$ if $W^u_X(\La_i)$
 intersects  $W^s_X(\La_j)$, or $\varphi_*(W^u_X(\La_i)\cap \partial^{out} U)$
intersects $ W^u_X(\La_j)\cap \partial^{in} U$.
\end{itemize}
We say that $(U,X,\varphi)$ is \emph{combinatorially transitive}, if $\cP$
is \emph{strongly connected}, \emph{i.e.} if any two edges of $\cP$
can be joined by an oriented path.

\begin{prop}\label{p.transitive}
Under the hypotheses of Theorem~\ref{t.transitive}, if $(U,X,\varphi)$
is combinatorially transitive, then the Anosov vector field $Z$ is
transitive.
\end{prop}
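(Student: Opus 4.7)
The plan is to reduce to the Anosov case via Theorem~\ref{t.transitive}, then combine Smale's spectral decomposition of the resulting Anosov flow $Z$ with strong connectivity of $\cP$ to force $Z$ to have a single basic set. First, I would invoke Theorem~\ref{t.transitive} to replace $(U,X,\varphi)$ by a strongly isotopic triple $(U,Y,\psi)$ for which the induced flow $Z$ on $M:=U/\psi$ is Anosov. Strong isotopy preserves topological equivalence of the plug (and hence its basic pieces together with their internal heteroclinic intersections) and the isotopy class of the intersection pattern of $\varphi_*(\cL^u_X)$ with $\cL^s_X$ on the boundary tori; accordingly, the graph $\cP$ attached to $(U,Y,\psi)$ is still strongly connected. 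Apply Smale's spectral decomposition: $\Omega(Z)=\Omega_1\sqcup\cdots\sqcup\Omega_m$, with partial order $\Omega_\alpha\geq\Omega_\beta$ iff $W^u(\Omega_\alpha)\cap W^s(\Omega_\beta)\neq\emptyset$ (acyclic by the standard filtration theorem for Axiom A flows). Each basic piece $\La_i$ is a compact, transitive, hyperbolic, $Z$-invariant set (since $Z|_U=Y$), hence lies in a unique basic set $\Omega(i)$ of $Z$.

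Each edge of $\cP$ produces a $Z$-orbit heteroclinic from $\La_i$ to $\La_j$: an internal edge uses a point of $W^u_Y(\La_i)\cap W^s_Y(\La_j)\subset U$, while a boundary edge uses a point $q\in\Sigma:=\partial^{in}U$ with $q\in W^s_Y(\La_j)$ and $\psi^{-1}(q)\in W^u_Y(\La_i)$, which under the identification $U\to U/\psi$ lies on a single $Z$-orbit heteroclinic from $\La_i$ to $\La_j$. Hence $\Omega(i)\geq\Omega(j)$, and combining strong connectivity of $\cP$ with acyclicity of the Smale order forces all the $\Omega(i)$ to coincide in a common basic set $\Omega\supset\La:=\bigcup_i\La_i$.

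The remaining and main task is to show $\Omega=M$, whence $Z$ has the single basic set $\Omega=M$ and is transitive. I would proceed by contradiction: assume another basic set $\Omega'\neq\Omega$ exists; then $\Omega'\cap\La=\emptyset$ and $\Omega'$ contains a $Z$-periodic orbit $\gamma$. The plan is to produce heteroclinic $Z$-orbits from $\La$ to $\gamma$ and from $\gamma$ to $\La$ --- forcing $\Omega\geq\Omega'$ and $\Omega'\geq\Omega$ simultaneously and contradicting acyclicity. Such orbits arise from intersections of the two-dimensional laminations $W^u_Z(\La)$ and $W^s_Z(\La)$ with the two-dimensional stable and unstable manifolds of $\gamma$, and would follow from density of $W^u_Z(\La)$ and $W^s_Z(\La)$ in $M$.

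The main obstacle is establishing this density. The natural route is via the Poincar\'e return map $R:\Sigma\setminus\cL^s_Y\to\Sigma$ induced by $Z$: the trace $W^u_Z(\La)\cap\Sigma$ contains $\bigcup_{n\geq 0}R^n(\psi(\cL^u_Y))$, and one must show this union is dense. Strong transversality of $\psi(\cL^u_Y)$ with $\cL^s_Y$, combined with the filling property of both MS-laminations, should force each successive $R$-iterate to cross further into the rectangular components of the common complement $\Sigma\setminus(\cL^s_Y\cup\psi(\cL^u_Y))$, propagating density throughout $\Sigma$; flow-saturation then yields density in $M$, and the argument for $W^s_Z(\La)$ is symmetric.
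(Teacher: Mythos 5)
Your first half is correct and matches the first part of the paper's argument: the Smale spectral decomposition, the observation that each edge of $\cP$ yields a heteroclinic connection between the $Z$-basic sets $\Omega(i)$ containing the plug's basic pieces $\La_i$, and the deduction that strong connectivity of $\cP$ together with the no-cycle property forces all $\Omega(i)$ into a single basic set $\Omega$. The genuine gap is in your second half. To rule out a further basic set of $Z$ you reduce to density of $W^u_Z(\La)$ and $W^s_Z(\La)$ in $M$, and you acknowledge you cannot close that step. This is not a small detail: such density is essentially equivalent to transitivity itself, and the iterated-return-map argument you sketch (propagating density through the rectangles of $\Sigma\setminus(\cL^s_Y\cup\psi_*(\cL^u_Y))$ under $R$) does not obviously terminate without already knowing the recurrence structure of $Z$. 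You are attempting to prove a stronger statement than the one you need.

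The paper's Lemma~\ref{l.transitivity} avoids density entirely, and this is the idea you are missing. It exploits the standing hypothesis that $\cL^u$ is a \emph{filling} MS-lamination: the components of $\partial^{out}V\setminus\cL^u$ are strips pinching at both ends, so a leaf of a one-dimensional foliation of $\partial^{out}V$ transverse to $\cL^u$ cannot stay inside a strip and must cut $\cL^u$. Now take any $Z$-orbit $\gamma$ not contained in $\La$. It crosses $\partial^{out}V$, and the trace of its weak stable manifold $W^s_Z(\gamma)$ on $\partial^{out}V$ is a curve of precisely such a transverse foliation (the trace of the Anosov stable foliation of $Z$, which is transverse to $W^u_Z(\La)\supset\cL^u$). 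Hence $W^s_Z(\gamma)$ cuts $\cL^u\subset W^u_Z(\Omega)$, and symmetrically $W^u_Z(\gamma)$ cuts $\cL^s\subset W^s_Z(\Omega)$. This delivers, for \emph{every} orbit $\gamma$, both heteroclinic connections you were trying to extract from density; the no-cycle property then forces $\gamma\in\Omega$, so $\Omega=M$ and $Z$ is transitive. In short, replace ``density of $W^u_Z(\La)$'' by ``any curve in $\partial^{out}V$ transverse to a filling MS-lamination must cut it'' --- this is exactly what the word \emph{filling} in the hypothesis is there to give you.
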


In Theorem~\ref{t.transitive}, starting with a hyperbolic plug
without attractors nor repellers and a strongly transverse
gluing map $\varphi$, we build another vector field $Y$ orbit equivalent
to $X$ and a gluing map $\psi$ isotopic to $\varphi$
through strongly transverse gluing, so that the vector field induced by
the gluing is Anosov. It is natural to ask if
the resulting Anosov flow is independent of the choices
(of $Y$ and $\psi$), up to orbit equivalence.

\begin{ques} 
Let $(V,Y_1,\psi_1)$ and $(V,Y_2,\psi_2)$ be two hyperbolic plugs with filling MS-laminations endowed with strongly transverse gluing diffeomorphisms. 
Let  $Z_1$ and $Z_2$ be the vector fields induced by $Y_1$ and $Y_2$ on $V/\psi_1$ and $V/\psi_2$. Assume that $Z_1$ and $Z_2$ both are Anosov. 
Are $Z_1$ and $Z_2$ topologically equivalent ?
\end{ques}

\subsection{Examples}

To illustrate the power of Theorem~\ref{t.transitive}, we will use it
to build various types of examples of Anosov vector fields. We like to
think  hyperbolic plugs as elementary bricks of a
``construction game". Proposition~\ref{p.plug} and
Theorem~\ref{t.transitive} allow us to glue these elementary bricks
together in order to build more complicated hyperbolic plugs,
hyperbolic attractors, transitive or non-transitive Anosov vector fields.
We hope that the statements below will convince the reader of the
interest and of the versatility of this ``construction game".

\subsubsection{The ``blow-up, excise and glue surgery"}

As a first application of Theorem~\ref{t.transitive}, we shall prove
the following result:

\begin{theo}
\label{t.richer}
Given any transitive Anosov vector field $X$ on a closed (orientable) three-manifold
$M$, there exists a transitive Anosov vector field $Z$ on a closed
(orientable) three-manifold $N$ such that ``the dynamics of $Z$ is richer than the dynamics
of $X$". More precisely, there exists a compact set
$\Lambda\subset N$ invariant under the flow of $Z$, and a continuous onto
map $\pi:\Lambda\to M$ such that $\pi\circ X^t=Z^t\circ \pi$ for
every $t\in\RR$.
\end{theo}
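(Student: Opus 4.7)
The strategy is the ``blow-up, excise and glue surgery'' suggested by the subsection title. First I pick two distinct periodic orbits $\alpha, \beta$ of $X$ and perform a DA bifurcation in the stable direction at $\alpha$ (turning it into an attracting closed orbit) and a DA bifurcation in the unstable direction at $\beta$ (turning it into a repelling closed orbit), obtaining a new vector field $Y$ on $M$. Excising small tubular neighborhoods of $\alpha$ and $\beta$ transverse to $Y$, one obtains a compact manifold $U$ such that $(U,Y)$ is a hyperbolic plug with filling MS-laminations: by the example given after Proposition~\ref{p.plug2}, $\partial^{in}U$ is a torus around $\alpha$ whose entrance lamination is a union of Reeb components and similarly for $\partial^{out}U$ around $\beta$. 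Classical DA theory further provides a continuous onto semi-conjugacy $\pi_0\colon \La_0\to M$ from the maximal invariant set $\La_0=\bigcap_t Y^t(U)$ onto $(M,X)$, with $\pi_0\circ Y^t = X^t\circ \pi_0$, obtained by collapsing the stable/unstable arcs introduced by the surgery; moreover $\La_0$ is a single \emph{transitive} hyperbolic basic piece of saddle type, since $X$ is transitive.

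Next I choose a strongly transverse gluing diffeomorphism $\varphi\colon\partial^{out}U\to\partial^{in}U$. Such $\varphi$ exists because both laminations $\cL^u_Y$ and $\cL^s_Y$ are filling MS-laminations on two-tori, so post-composing any diffeomorphism of these tori with a sufficiently high power of an Anosov–Thom automorphism of $T^2$ puts the images in strongly transverse position (a standard ``asymptotic slope'' argument). Applying Theorem~\ref{t.transitive} to $(U,Y,\varphi)$ then yields a triple $(U,Y',\psi)$ strongly isotopic to $(U,Y,\varphi)$ such that the vector field $Z$ induced by $Y'$ on the closed manifold $N:=U/\psi$ is Anosov. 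Orientability of $N$ is automatic, since $M$ is orientable, DA bifurcations and excisions preserve orientability, and $\psi$ can be chosen orientation-preserving.

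Finally I verify both required properties of $Z$. Transitivity follows from Proposition~\ref{p.transitive}: the graph $\cP$ attached to $(U,Y',\psi)$ has a single vertex (the strongly isotopic family of plugs is a continuous family of hyperbolic plugs, so $\La_0$ persists as one transitive basic piece) and at least one edge, namely the self-loop produced by the strong transversality of $\psi_*(\cL^u_{Y'})$ with $\cL^s_{Y'}$; hence $\cP$ is strongly connected. For the semi-conjugacy, structural stability along the strong isotopy provides an orbit equivalence $h\colon \La_0':=\bigcap_t (Y')^t(U)\to\La_0$ between the two plug maximal invariant sets. The image $\La\subset N$ of $\La_0'$ under the projection $U\to N$ is compact and $Z$-invariant, and $\pi:=\pi_0\circ h\colon\La\to M$ is the required continuous surjection satisfying $\pi\circ Z^t=X^t\circ\pi$. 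The main technical obstacle is the careful description of the DA semi-conjugacy $\pi_0$ with all the required properties (continuity, surjectivity, commutation with the flows) and checking that it descends compatibly after the strong isotopy; the existence of the strongly transverse gluing on tori and the orbit equivalence from the strong isotopy are comparatively routine.
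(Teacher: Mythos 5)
Your overall strategy coincides with the paper's: blow up two periodic orbits (attracting DA on one, repelling DA on the other), excise two solid tori to obtain a saddle hyperbolic plug $(U,Y)$ with filling MS-laminations, then glue $\partial^{out}U$ to $\partial^{in}U$ by a strongly transverse diffeomorphism and invoke Theorems~\ref{t.transitive} and Proposition~\ref{p.transitive}. The transitivity argument (single transitive basic piece, hence a one-vertex graph with a self-loop) and the use of structural stability to transport the DA semi-conjugacy along the strong isotopy also match the paper's Lemmas~\ref{l.transitive} and~\ref{l.richer}.

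The one place where your argument has a real gap, and where the paper does substantive work that you replace by a slogan, is the existence of the \emph{strongly transverse} gluing diffeomorphism $\varphi\colon\partial^{out}U\to\partial^{in}U$. You invoke an ``asymptotic slope / high power of an Anosov--Thom automorphism'' argument, which would plausibly put $\varphi_*(\cL^u_Y)$ in \emph{transverse} position with $\cL^s_Y$, but strong transversality is strictly stronger: it requires every complementary component of $\varphi_*(\cL^u_Y)\cup\cL^s_Y$ to be a square bounded by two arcs of each lamination, and it is not at all clear that pushing by $A^n$ produces this configuration (one has to control the spiraling of the non-compact leaves onto the compact ones, not just the slopes of the compact leaves). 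The paper instead builds the gluing map explicitly (Lemma~\ref{l.gluing-diffeo}), and to do so it first chooses the periodic orbits $O,O'$ to have \emph{positive multipliers} so that, by item~8 of Proposition~\ref{p.properties-DA}, each of $\cL^u_{X''}$ and $\cL^s_{X''}$ is a filling MS-lamination with exactly two coherently oriented compact leaves. These hypotheses (same number of compact leaves, all coherently oriented) are precisely what feed into Lemma~\ref{l.gluing-diffeo}, whose proof uses the normal form of Proposition~\ref{p.simplification} and an explicit half-period shift. Your proposal neither restricts to positive-multiplier orbits nor verifies the hypotheses of any lemma that would deliver strong transversality, so as written this step is unjustified.

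Two further minor points. First, the DA semi-conjugacy the paper records (Proposition~\ref{p.properties-DA} item~4 and Lemma~\ref{l.richer}) intertwines $X$ with a \emph{reparametrization} of the new flow, not the flow itself, and an orbit equivalence from structural stability only preserves orbits up to time change as well; so the exact equation $\pi\circ Z^t = X^t\circ\pi$ you assert (and the paper's theorem statement literally claims) is in fact obtained only up to reparametrization — this caveat should be stated. Second, when you say the self-loop in the graph $\cP$ comes from the strong transversality of the glued laminations, note that the self-loop already exists for free because $W^u(\Lambda)\cap W^s(\Lambda)\supset\Lambda\neq\emptyset$; you do not need to argue that the laminations intersect, though they do.
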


The proof of Theorem~\ref{t.richer} relies on what we call the
\emph{blow-up, excise and glue surgery}. Let us briefly describe this
surgery (details will be given in section~\ref{s.surgery}). We start with a
transitive Anosov vector field $X$ on a closed three-manifold $M$.
\begin{itemize}
\item[]\textit{Step 1: blow-up.} We blow-up two periodic orbits of $X$.
More precisely, we pick two periodic orbits (with positive eigenvalues) $O,O'$ of $X$, we perform
an attracting DA (derived from Anosov) bifurcation on $O$, and a repelling
DA bifurcation on $O'$. This gives rise to a new vector field
$\overline{X}$ on $M$ which has three basic pieces: a saddle hyperbolic
set $\Lambda$, a attracting periodic orbit $O$,  and a
repelling periodic orbit~$O'$.
\item[]\textit{Step 2: excise.} Then we excise some two solid tori:
we consider the manifold with boundary $U=M\setminus (T\sqcup T')$,
where $T,T'$ are small tubular neighborhoods of $O,O'$. Under some
mild assumptions, $(U,\overline{X}_{|U})$ is a hyperbolic plug.
\item[]\textit{Step 3: Glue.} Finally, we glue the exit boundary on
the entrance boundary of $U$: we consider the manifold $N:=U/\varphi$
where $\varphi:\partial^{out} U\to\partial^{in} U$ is an appropriate gluing
 map, and the vector field $Z$ induced by $\overline X$ on
 $M$. Theorem~\ref{t.transitive} ensures that (up to perturbing $\overline{X}$ within its topological equivalence class) 
 $\varphi$ can be chosen so that $Z$ is Anosov, and Proposition~\ref{p.transitive} implies that $Z$ is  transitive. 
 The hyperbolic set
$\Lambda$ can be seen as a compact subset of $N$ which is invariant under
the flow of $Z$. Well-known facts on DA bifurcations show that there
exists a continuous map $\pi:\Lambda\to M$ inducing a semi-conjugacy
between the flow of $X$ and $Z$, as stated by Theorem~\ref{t.richer}.
\end{itemize}
Starting with a given transitive Anosov vector field on a closed
three-manifold, Theorem~\ref{t.richer} can be applied inductively,
giving birth to an infinite sequence of transitive Anosov vector fields
which are ``more and more complicated". Moreover, the ``blow-up, excise
and glue surgery" admits many variants, allowing to construct myriads
of examples of Anosov vector fields. For example, instead of starting
with a single Anosov vector field $X$, we could have started with
$n$ transitive Anosov vector fields $X_1,\dots,X_n$ in order to get a
single transitive Anosov vector field $Z$ which ``contains" the dynamics
of $X_1,\dots,X_n$. We could also have started with a non-transitive
Anosov vector field~$X$; in this case, we get an Anosov vector field
$Z$ which might or might not be transitive depending on the choice
of the periodic orbits we use for the DA bifurcations. As an application,
we will obtain the following result, which answers a question that
was asked to us by A. Katok:

\begin{theo}
\label{t.both-transitive-non-transitive}
There exists a closed orientable three-manifold supporting both a
transitive Anosov vector field and a non-transitive Ansov vector field.
\end{theo}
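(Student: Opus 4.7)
The plan is to produce a single closed orientable $3$-manifold $N$ supporting both types of Anosov vector fields by running the blow-up, excise, and glue surgery of Theorem~\ref{t.richer} on the Franks--Williams non-transitive Anosov, using two strongly transverse gluings $\varphi_t$ and $\varphi_{nt}$ that lie in the same mapping class of the boundary torus (so $U/\varphi_t \cong U/\varphi_{nt}$) but induce different combinatorial graphs $\cP$.

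Take the Franks--Williams non-transitive Anosov vector field $X_0$ on a closed orientable $3$-manifold $M_0$, whose non-wandering set decomposes into a non-trivial saddle attractor $\cA$ and a non-trivial saddle repeller $\cR$. Pick periodic orbits $O \subset \cA$ and $O' \subset \cR$ with positive eigenvalues, perform an attracting DA bifurcation at $O$ and a repelling DA bifurcation at $O'$, and excise tubular neighborhoods to form a hyperbolic plug $(U, Y)$ with filling MS-laminations; its basic pieces are the saddle remnants $\Lambda_\cA, \Lambda_\cR$, and the laminations split as $\cL^s_Y = \cL^s_\cA \sqcup \cL^s_\cR$ and $\cL^u_Y = \cL^u_\cA \sqcup \cL^u_\cR$. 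Choose $\varphi_t$ so that $(\varphi_t)_*(\cL^u_\cA)$ crosses $\cL^s_\cR$, adding an edge $\Lambda_\cA \to \Lambda_\cR$ to the graph; together with the edge $\Lambda_\cR \to \Lambda_\cA$ inherited from the wandering orbits of $X_0$ inside $U$, this makes $\cP_t$ strongly connected. Choose $\varphi_{nt}$, in the same $\diff(T^2)$-isotopy class as $\varphi_t$, so that $(\varphi_{nt})_*(\cL^u_\cA)$ is disjoint from $\cL^s_\cR$, leaving $\Lambda_\cA$ as a terminal strongly connected component of $\cP_{nt}$. Theorem~\ref{t.transitive} then yields, after strong isotopies, Anosov vector fields $Z_t$ on $N_t := U/\tilde\varphi_t$ and $Z_{nt}$ on $N_{nt} := U/\tilde\varphi_{nt}$. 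By Proposition~\ref{p.transitive}, $Z_t$ is transitive. For $Z_{nt}$, the terminal strongly connected component of $\cP_{nt}$ containing $\Lambda_\cA$ yields a proper closed invariant attractor (the union of $\Lambda_\cA$ with its forward unstable set), so $Z_{nt}$ cannot be transitive.

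The main obstacle is to ensure $N_t \cong N_{nt}$. Both quotients are mapping tori of $U$ along a diffeomorphism between the two boundary tori, so their homeomorphism type is determined by the mapping class in $SL(2,\ZZ)$. The crux of the argument is thus to find strongly transverse representatives within a single $\diff(T^2)$-mapping class that realize each of the two combinatorial behaviors above. This uses the openness of the strongly transverse condition together with the freedom to perform a large but mapping-class-preserving ambient isotopy of $\varphi_t$: one slides $(\varphi)_*(\cL^u_\cA)$ off $\cL^s_\cR$ while keeping $(\varphi)_*(\cL^u_Y)$ transverse to $\cL^s_Y$ as a whole, since the global topological position of one filling MS-lamination with respect to another is uncoupled from the mapping class (different filling MS-configurations of a common pair of laminations on $T^2$ can be realized in the same isotopy class). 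Once this geometric matching is verified, the common $3$-manifold $N := N_t = N_{nt}$ supports both $Z_t$ (transitive Anosov) and $Z_{nt}$ (non-transitive Anosov), proving the theorem.
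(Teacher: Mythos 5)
Your approach diverges substantially from the paper's. You glue a \emph{single} plug (cut from the Franks--Williams flow by one attracting DA at an orbit of the attractor and one repelling DA at an orbit of the repeller) to itself, keeping the vector field fixed and varying the gluing map within one mapping class. The paper instead glues \emph{two copies} of a plug $U$ cut out of a \emph{transitive} Anosov flow, varying the DA types: an attracting$+$attracting copy glued to a repelling$+$repelling copy gives the non-transitive flow (as a Franks--Williams type construction, via Proposition~\ref{p.plug}), while two mixed (attracting$+$repelling) copies glued in a cycle give the transitive flow (via Theorem~\ref{t.transitive} and Proposition~\ref{p.transitive}); the manifolds agree because all gluings can be taken isotopic to $-\mathrm{Id}$, using Remarks~\ref{r.homotopie-class-compact-leaves-1} and~\ref{r.homotopie-class-compact-leaves-2}.

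The crux of your argument --- producing a strongly transverse $\varphi_{nt}$ in the same isotopy class as $\varphi_t$ with $(\varphi_{nt})_*(\cL^u_\cA)\cap\cL^s_\cR=\emptyset$ --- is not justified (``uncoupled from the mapping class'' is an assertion, not an argument), and it meets a concrete obstruction. By Proposition~\ref{p.properties-DA}, item~\ref{new-lamination}, the two compact leaves of $\cL^u_Y$ on $\partial T$ are $W^u_Y(O_\pm)\cap\partial T$ with $O_\pm\subset\Lambda_\cA$, hence lie in $\cL^u_\cA$; symmetrically the two compact leaves of $\cL^s_Y$ on $\partial T'$ lie in $\cL^s_\cR$. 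Let $\varphi$ be \emph{any} strongly transverse gluing. If the compact leaves of $(\varphi)_*(\cL^u_Y)$ are not freely homotopic to those of $\cL^s_Y$, then the compact leaves of $(\varphi)_*(\cL^u_\cA)$ must intersect those of $\cL^s_\cR$ (non-isotopic essential curves on a torus always intersect). If they are parallel and disjoint, the compact leaves of $\cL^s_\cR$ are closed transversals to the filling MS-lamination $(\varphi)_*(\cL^u_Y)$ lying in the interior of the annuli cut out by $(\varphi)_*(\gamma^u_\pm)$, and by the normal form (Proposition~\ref{p.simplification}) every non-compact leaf of $(\varphi)_*(\cL^u_Y)$ in such an annulus is a graph crossing it entirely, hence meets the transversal; since $\Lambda_\cA$ is a non-trivial transitive basic piece, $(\varphi)_*(\cL^u_\cA)$ contains non-compact leaves accumulating on $(\varphi)_*(\gamma^u_\pm)$ and at least one of them crosses a compact leaf of $\cL^s_\cR$. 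In both cases $(\varphi)_*(\cL^u_\cA)\cap\cL^s_\cR\neq\emptyset$, so the oriented graph $\cP$ always contains the edge $\Lambda_\cA\to\Lambda_\cR$; together with the internal edge $\Lambda_\cR\to\Lambda_\cA$ coming from the Franks--Williams connecting orbits, $\cP$ is always strongly connected, and by Proposition~\ref{p.transitive} the glued Anosov flow is always transitive --- so no choice of $\varphi_{nt}$ can yield the non-transitive flow. There is also a secondary gap: even if $\cP_{nt}$ failed to be strongly connected, Proposition~\ref{p.transitive} only gives the implication in one direction, so you would still owe a filtration argument for why $Z_{nt}$ is non-transitive. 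The fix is to adopt the paper's device of two copies of $U$ with DA types varied, where the non-transitive flow arises automatically from gluing an attracting plug to a repelling plug.
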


\subsubsection{Hyperbolic attractors}

As already mentioned above, the entrance foliation $\cL^s(U,X)$ of
an attracting hyperbolic plug $(U,X)$ is always a \emph{MS foliation}
(it has finitely many compact leaves, every half leaf is asymptotic to a
compact leaf, and every compact leaf can be oriented so that its holonomy
is a contraction). Using Theorem~\ref{t.transitive}, we shall prove
a converse statement: every MS-foliation can be realized as the
entrance foliation of a transitive attracting hyperbolic plug.
 More precisely:

\begin{theo}
\label{t.attractors}
For every MS-foliation $\cF$ on a closed orientable surface $S$, there exists
an orientable transitive attracting hyperbolic plug $(U,X)$ and a homeomorphism
$h:\partial^{in} U\to S$ such that $h_*(\cL^s(U,X))=\cF$.
 \end{theo}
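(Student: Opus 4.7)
I would first reduce to the case that $S$ is a single torus. Since each component of a closed orientable surface carrying a filling MS-foliation must be a torus (by the remark after the definition of filling MS-lamination, where it is noted that a filling MS-lamination embeds in a $\cC^{0,1}$ foliation and the ambient closed surface is then a torus or Klein bottle), and since a disjoint union of transitive attracting hyperbolic plugs is itself a transitive attracting hyperbolic plug whose entrance foliation is the disjoint union of the individual ones, it suffices to build, for each torus component carrying a foliation $\cF$ with $n \geq 1$ compact leaves, a transitive attracting plug with boundary one torus and entrance foliation conjugate to $\cF$.

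I would then attempt a surgery on a transitive Anosov flow in the spirit of the Example preceding Theorem~\ref{t.transitive}. Start from a transitive Anosov vector field $X_0$ on a closed orientable three-manifold (say the suspension of a hyperbolic toral automorphism). Pick pairwise distinct periodic orbits with positive eigenvalues and perform attracting DA bifurcations at some of them and repelling DA bifurcations at others; excise disjoint tubular neighborhoods of the resulting attracting or repelling periodic orbits. This produces a hyperbolic plug $(V,\overline X)$ with filling MS-laminations, whose entrance and exit boundary tori each carry an MS-foliation consisting of one or two Reeb components. Now apply Theorem~\ref{t.transitive} to a partial gluing: identify $\partial^{out} V$ with all but one distinguished entrance torus $T^\ast \subset \partial^{in} V$ via a strongly transverse diffeomorphism, chosen (using the strong isotopy freedom allowed by Theorem~\ref{t.transitive}) so that the flow on the glued region becomes Anosov. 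The resulting object is an attracting hyperbolic plug $(U,X)$ with $\partial^{in} U = T^\ast$. Transitivity of its hyperbolic attractor follows from Proposition~\ref{p.transitive}: the combinatorial graph of basic pieces of $\overline X$ restricted to $U$ reduces to the single transitive saddle set obtained from the DA construction, and the gluing $\varphi$ makes this graph strongly connected to itself.

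The hard part is arranging that the induced entrance foliation on $T^\ast$ is conjugate to the prescribed $\cF$. This is a combinatorial surface-topology problem: the Reeb components of the entrance foliation on $T^\ast$ come from the original compact leaves attached to the blown-up orbit that produced $T^\ast$, together with compact leaves inherited from the partial gluing of neighboring exit tori. To match $n$ Reeb components with a prescribed cyclic arrangement and a prescribed side of contracting holonomy for each compact leaf, one must tune three pieces of data: the number and choice of DA-bifurcated orbits, the cyclic scheme of the partial gluing, and the monodromy diffeomorphism on each torus involved (an element of the mapping class group $SL(2,\ZZ)$). The flexibility of composing with Dehn twists that keep the gluing strongly transverse should suffice to realize any MS-foliation $\cF$ in this way, and the required pattern can in principle be achieved by a direct enumeration by $n$, or by an induction on $n$ in which each step adds one Reeb component by increasing the number of DA orbits and suitably twisting the gluing.

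The main obstacle is precisely this last combinatorial realization step: proving that every MS-foliation type on $T^2$ can be matched by some such assembly, while preserving the strongly transverse condition demanded by Theorem~\ref{t.transitive}. Once this is established, transitivity, the hyperbolic plug property, and the Anosov character of the interior flow are all delivered by the results of the preceding sections.
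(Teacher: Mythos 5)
Your overall strategy (DA bifurcations on an Anosov flow, excision of solid tori, partial gluing via Theorem~\ref{t.transitive}, and transitivity from Proposition~\ref{p.transitive}) lives in the same toolbox the paper uses, and you correctly flag the combinatorial realization step as the crux. But the gap is not merely an unproved claim to be filled in by a longer case analysis: the specific construction you propose is structurally incapable of hitting all combinatorial types. The compact leaves of the entrance lamination on your distinguished torus $T^\ast$ are in bijection with the free stable separatrices of periodic orbits in the maximal invariant set that meet $T^\ast$ (Proposition~\ref{p.hyperbolicplug}, Remark~\ref{r.free-2}). Those free separatrices are created solely by the DA bifurcation at the orbit whose tubular neighborhood was excised to produce $T^\ast$; a single attracting DA gives exactly two coherently-oriented compact leaves (Proposition~\ref{p.properties-DA}, item~8), one for a negative-multiplier orbit (Proposition~\ref{p.properties-DA-bis}), or $2k$ coherently-oriented leaves via a derived-from-pseudo-Anosov bifurcation at a $2k$-prong singularity (as in Lemma~\ref{M0X0}). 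Gluing the other exit tori to the other entrance tori does not create new free separatrices facing $T^\ast$ — it can only fill in non-compact leaves — so the number and orientations of the compact leaves on $T^\ast$ are frozen before the gluing even starts. In particular Dehn twists and the cyclic scheme of the gluing are irrelevant to the combinatorial type at $T^\ast$, and orientation-reversing mapping classes reverse all the compact leaves simultaneously so they cannot realize an incoherent pattern $\sigma$ from a coherent one.

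The paper's proof of Theorem~\ref{t.attractors} evades this with a different gluing primitive: the plug $(V,Y)$ of Lemma~\ref{a.hyperbolic-plug-twisted-orbit} (later replaced by $(W,Z)$ of Lemma~\ref{l.suspension-pair-pants} for the transitive version), whose maximal invariant set is a \emph{single} saddle periodic orbit with a free stable separatrix hitting the new entrance torus. Gluing this onto a plug realizing $\cF$ adds exactly one compact leaf $\gamma^s$ to the entrance lamination (Lemma~\ref{l.adding-leaf-1}), and composing the gluing map with a diffeomorphism $\tau$ that reverses the orientation of the single compact exit leaf $\gamma^u$ flips the coherence bit of the new leaf \emph{without} touching the orientations inherited from $\cL^s_X$ (because the contracting orientation of $\gamma^s$ is tied to the dynamical orientation of the saddle orbit via Proposition~\ref{p.flow-vs-contracting}, not to the gluing). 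Induction on the number of compact leaves, starting from the zipped Reeb base case of Lemma~\ref{l.realizing-zipped} and using the classification of MS-foliations by combinatorial type (Proposition~\ref{p.combinatorial-type}), then gives every $\cF$. Transitivity is \emph{lost} at each gluing step and must be restored separately: Lemma~\ref{l.adding-leaf-2} does this by an attracting DA on a negative-multiplier orbit of the transitive attractor followed by a Theorem~\ref{t.transitive}-style gluing of the new exit torus onto the spare entrance torus of the pair-of-pants plug, which is why $(W,Z)$ has two entrance tori. Both the one-leaf-at-a-time mechanism and the $\tau$-trick for controlling orientation are essential ideas missing from your proposal, and without them the combinatorial realization step cannot be carried out.
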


Let $(U,X)$ be an attracting hyperbolic plug, and $\Lambda$ be the
maximal invariant set of $(U,X)$. Each compact leaf $\gamma$ of the
foliation $\cL^s(U,X)$ can be oriented in such a way that its holonomy
is a contraction; we call this the \emph{contracting orientation} of
$\gamma$. The attractor $\Lambda$ is said to be \emph{incoherent} if one
can find two compact leaves $\gamma_1,\gamma_2$ of $\cL^s(U,X)$ in the
same connected component of $\partial^{in} U$, such that
$\gamma_1,\gamma_2$ equipped with their contracting orientations are not
freely homotopic. The notion of incoherent hyperbolic attractors
was introduced by J. Christy in his PhD thesis (\cite{Christy-thesis}).
Christy studied the existence of Birkhoff sections for hyperbolic
attractors of vector fields on three-manifolds. He proved that
a transitive hyperbolic attractor admits a Birkhoff section if and only if
it is coherent.
He announced that he could build incoherent transitive
hyperbolic attractors. Actually, he did publish an example of
incoherent hyperbolic attractor (\cite{Ch}), but it is not clear whether
his example is transitive or not. The existence of  incoherent
transitive hyperbolic attractors is an immediate consequence
of Theorem~\ref{t.attractors}:

\begin{coro}
\label{c.incoherent-attractors}
There exists incoherent transitive hyperbolic attractors (on orientable manifolds).
\end{coro}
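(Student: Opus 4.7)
The plan is to invoke Theorem~\ref{t.attractors} applied to an explicitly constructed incoherent Morse–Smale foliation on the torus. By definition of incoherence, it suffices to exhibit a single MS-foliation $\cF$ on some closed orientable surface $S$ possessing two compact leaves, lying in the same connected component of $S$, whose contracting orientations are not freely homotopic as oriented curves; Theorem~\ref{t.attractors} then realises $\cF$ as the entrance foliation of a transitive attracting hyperbolic plug $(U,X)$, whose maximal invariant set is the desired attractor.

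For the construction, take $S = T^2 = (\RR/\ZZ)^2$ with coordinates $(\theta,y)$, and let $\gamma_1 = S^1\times\{0\}$ and $\gamma_2 = S^1\times\{1/2\}$ be two disjoint parallel essential circles, cutting $T^2$ into annuli $A_1 = S^1\times[0,1/2]$ and $A_2 = S^1\times[1/2,1]$. Foliate $A_1$ by the leaves of $\omega_1 = dy + f(y)\,d\theta$ and $A_2$ by the leaves of $\omega_2 = dy - g(y)\,d\theta$, where $f,g\geq 0$ vanish to first order precisely at the endpoints of the corresponding interval. At $y=0,1/2,1$ both 1-forms reduce to $dy$, so the two prescriptions glue to a foliation $\cF$ on $T^2$ for which $\gamma_1$ and $\gamma_2$ are the only compact leaves.

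The heart of the proof is the verification that $\cF$ is a genuine MS-foliation with non-freely-homotopic contracting orientations. On $A_1$ a non-compact leaf satisfies $dy/d\theta = -f(y)\leq 0$, so as one traverses it in the $+\partial_\theta$ direction, $y$ decreases monotonically toward $0$ and (on the other end) must have been decreasing from $1/2$; hence such a leaf winds in the $+\partial_\theta$ direction as it approaches $\gamma_1$ and in the $-\partial_\theta$ direction as it approaches $\gamma_2$. On $A_2$ the relation $dy/d\theta = g(y)\geq 0$ gives leaves winding in $+\partial_\theta$ as they approach $\gamma_1$ (at $y=1\equiv 0$) and in $-\partial_\theta$ as they approach $\gamma_2$. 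In both annuli the winding direction at each $\gamma_i$ is the same, so the holonomies on the two sides of each compact leaf are simultaneously contractions when $\gamma_1$ is oriented by $+\partial_\theta$ and $\gamma_2$ by $-\partial_\theta$; the conditions (i)–(ii) of the MS definition are immediate from the construction. Since $\gamma_1$ and $\gamma_2$ are parallel essential loops carrying opposite orientations, they are not freely homotopic as oriented curves.

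Feeding $\cF$ into Theorem~\ref{t.attractors} produces a transitive attracting hyperbolic plug $(U,X)$ on an orientable $3$-manifold together with a homeomorphism $h\colon \partial^{in}U \to T^2$ satisfying $h_*(\cL^s(U,X)) = \cF$. Since $T^2$ is connected, so is $\partial^{in}U$, and the two compact leaves $h^{-1}(\gamma_1), h^{-1}(\gamma_2)$ lie in the same component with contracting orientations inherited from $\cF$; these orientations are therefore not freely homotopic. The maximal invariant set of $X$ is then the sought incoherent transitive hyperbolic attractor. The only subtlety is the orientation bookkeeping in the second paragraph: one has to choose opposite sign conventions in $\omega_1$ and $\omega_2$ so that the two sides of each compact leaf contribute compatible contracting orientations, the point being that a naive ``same sign'' gluing produces a genuine foliation on $T^2$ but one that fails property (iii) at each compact leaf.
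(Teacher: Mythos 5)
Your proof is correct and follows the same route the paper intends: Corollary~\ref{c.incoherent-attractors} is stated as an immediate consequence of Theorem~\ref{t.attractors}, the only ingredient left implicit being the existence of an incoherent MS-foliation on $\TT^2$, which you supply with a clean explicit model (and correctly note the sign-convention trap that would otherwise break property (iii) of Definition~\ref{d.MS}). Your orientation bookkeeping for the contracting orientations of $\gamma_1$ and $\gamma_2$ checks out, and the passage from the incoherent foliation to the incoherent attractor via $h$ is exactly as in the paper.
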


\subsubsection{Embedding hyperbolic plugs in Anosov flows}

Using Theorem~\ref{t.attractors}, we will be able to prove that
every hyperbolic plug with filling MS-laminations can be embedded in an
Anosov flow. More precisely:

\begin{theo}
\label{t.embedding}
Consider a hyperbolic plug with filling MS-laminations $(U_0,X_0)$.
\begin{itemize}
\item Up to changing $(U_0,X_0)$ by a topological equivalence, we can
find an Anosov vector field $X$ on a closed orientable three-manifold $M$ such
that there exists an embedding $\theta:U_0\hookrightarrow M$ with
$\theta_* X_0=X$.
\item Moreover, if the maximal invariant set of $(U_0,X_0)$
contains neither attractors nor repellers, the construction can
be done in such a way that the Anosov vector field  $X$ is transitive.
\end{itemize}
\end{theo}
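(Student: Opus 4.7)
My plan is to obtain the embedding by capping off the boundary components of $(U_0,X_0)$ with attracting and repelling hyperbolic plugs furnished by Theorem~\ref{t.attractors}, producing a closed manifold that carries an Anosov flow in which $U_0$ sits. For the transitive strengthening I then destroy the attracting and repelling basic pieces introduced by the capping via a ``blow-up, excise, glue'' surgery in the spirit of Theorem~\ref{t.richer}, performed entirely inside the added plugs so that the embedded copy of $U_0$ is preserved throughout.

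For the first item, each connected component $T$ of $\partial^{out}U_0$ is a torus carrying the filling MS-lamination $\cL^u_{X_0}|_T$. I first construct on $T$ an auxiliary MS-foliation $\cG_T$ strongly transverse to $\cL^u_{X_0}|_T$: pick compact leaves in a homology class transverse to that of the compact leaves of $\cL^u_{X_0}|_T$ and extend them across the strip-shaped complementary regions, arranging the contracting-holonomy condition so that $\cG_T$ and $\cL^u_{X_0}|_T$ are strongly transverse. Theorem~\ref{t.attractors} applied to $(T,\cG_T)$ then furnishes an orientable transitive attracting hyperbolic plug whose entrance foliation realizes $\cG_T$; I glue this plug to $U_0$ along $T$ via the homeomorphism supplied by that theorem, smoothed into a diffeomorphism by an arbitrarily small isotopy. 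By Proposition~\ref{p.plug2} the enlarged object is again a hyperbolic plug with filling MS-laminations, now with $T$ removed from its exit boundary. Iterating this over every component of $\partial^{out}U_0$, and then symmetrically over every component of $\partial^{in}U_0$ (via the time-reversed Theorem~\ref{t.attractors} to attach repelling plugs), produces a closed orientable three-manifold $M$ and a vector field $X$ extending $X_0$. The resulting ``plug'' has empty boundary, so its maximal invariant set is all of $M$ and is hyperbolic by construction, whence $X$ is Anosov and $(U_0,X_0)$ embeds in $(M,X)$. The ``topological equivalence'' flexibility in the statement absorbs the small perturbations needed to smooth the successive gluings.

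For the second item the Anosov flow $X$ on $M$ is non-transitive: its basic pieces are the saddle pieces of $X_0$ together with transitive attracting pieces $\Lambda_i^{\mathrm{att}}$ and repelling pieces $\Lambda_j^{\mathrm{rep}}$ inherited from the capping. Inside each $\Lambda_i^{\mathrm{att}}$ I choose a periodic orbit $O_i$ (with positive eigenvalues, after a $C^1$-small perturbation if needed) and perform an attracting DA bifurcation on it; symmetrically a repelling DA bifurcation is applied to a chosen orbit $O_j'\subset\Lambda_j^{\mathrm{rep}}$. Excising small tubular neighborhoods of all $O_i$'s and $O_j'$'s yields a hyperbolic plug $(\widetilde U,\widetilde X)$ with filling MS-laminations (one or two Reeb components per newly created boundary torus) whose maximal invariant set is a union of saddle basic pieces, containing neither attractor nor repeller. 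Theorem~\ref{t.transitive} then provides, possibly after modifying $\widetilde X$ within its topological equivalence class, a strongly transverse gluing $\psi\colon\partial^{out}\widetilde U\to\partial^{in}\widetilde U$ turning the induced vector field on $N:=\widetilde U/\psi$ into an Anosov flow $Z$; since every surgery was performed inside the capping plugs, $U_0$ is still embedded in $N$ with $Z|_{U_0}=X_0$. Transitivity of $Z$ is obtained from Proposition~\ref{p.transitive} by choosing $\psi$ inside its strongly transverse isotopy class so that the associated oriented graph $\cP$ of $(\widetilde U,\widetilde X,\psi)$ is strongly connected. The main obstacle is the combined verification of (a) the existence of strongly transverse auxiliary MS-foliations $\cG_T$ on every boundary torus, and (b) the arrangement of $\psi$ so that $\cP$ is strongly connected; both reduce to combinatorial questions about filling MS-laminations on tori and constitute the delicate points of the construction.
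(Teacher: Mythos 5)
Your overall strategy — cap the boundary of $U_0$ with attracting and repelling plugs from Theorem~\ref{t.attractors}, then kill the resulting attractor(s) and repeller(s) by a blow-up/excise/glue surgery and apply Theorem~\ref{t.transitive} — is exactly the paper's. The first item goes through essentially as you sketch (noting that a foliation transverse to a lamination is automatically strongly transverse to it, so strong transversality is free once $\cG_T$ is transverse). But there are two genuine gaps in the second item.

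First, you cap off the boundary one torus at a time: one transitive attracting plug per component of $\partial^{out}U_0$ and one repelling plug per component of $\partial^{in}U_0$. Since a \emph{transitive} attracting plug has a single attractor, your $(M,X)$ then has $k=|\pi_0(\partial^{out}U_0)|$ attracting basic pieces and $l=|\pi_0(\partial^{in}U_0)|$ repelling ones. After one DA per piece and excision, the surgered plug $(\widetilde U,\widetilde X)$ has $\partial^{out}\widetilde U$ consisting of $k$ tori and $\partial^{in}\widetilde U$ consisting of $l$ tori. When $k\neq l$ (which certainly happens for plugs whose entrance and exit boundaries have different numbers of components) there is no diffeomorphism $\psi\colon\partial^{out}\widetilde U\to\partial^{in}\widetilde U$ at all, and the gluing step simply cannot be performed. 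The paper sidesteps this by applying Theorem~\ref{t.attractors} \emph{once} to a MS-foliation on the whole disconnected surface $\partial^{out}U_0$ (and time-reversed once on all of $\partial^{in}U_0$), producing a single connected transitive attracting plug and a single connected transitive repelling plug. Then there is exactly one attractor and one repeller, one DA on each, and after excision $\partial^{out}\widetilde U$ and $\partial^{in}\widetilde U$ are single tori. Your argument would be repaired by switching to this ``one plug per side'' version instead of the per-component iteration.

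Second, you flag the strong connectedness of the graph $\cP$ as a ``delicate point'' to be arranged by an appropriate choice of $\psi$, but you do not establish it; this is the non-trivial part of the transitivity claim. The paper resolves it not by a special choice of $\psi$, but by the almost-one-to-one semi-conjugacy property of the DA bifurcation (item~4 of Proposition~\ref{p.properties-DA}): after the DA, $W^u_{\overline X}(\overline\Lambda_A)\cap\partial^{out}\widetilde U$ is dense in $\cL^u_{\overline X}$ and $W^s_{\overline X}(\overline\Lambda_R)\cap\partial^{in}\widetilde U$ is dense in $\cL^s_{\overline X}$, so \emph{every} strongly transverse gluing forces $\psi_*(W^u(\overline\Lambda_A))$ to meet $W^s(\overline\Lambda_R)$. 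Combined with the fact that in the non-transitive Anosov flow $(M,X)$ every saddle piece connects forward to the unique attractor and backward to the unique repeller, the graph $\cP$ is automatically strongly connected. This is a concrete verification, not a free parameter to tune.

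A smaller point: you propose to obtain a periodic orbit with positive multipliers inside a transitive basic piece ``after a $C^1$-small perturbation if needed.'' A $C^1$-small perturbation does not alter the sign of the multipliers of a (persistent) periodic orbit, so this does not help; what is actually needed and used is that any non-trivial transitive saddle basic piece already contains periodic orbits with positive multipliers.
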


In other words, the first item of Theorem~\ref{t.embedding} states that,
for every hyperbolic plug with filling MS-laminations $(U_0,X_0)$, we can
find a closed three-manifold $M$, an Anosov vector field $X$ on $M$,
and a closed submanifold with boundary $U$ of $M$, such that $X$
is transverse to $\partial U$ and such that $(U_0,X_0)$ is
topologically equivalent to $(U,X_{|U})$. The manifold $M$ and
the Anosov vector field $X$ will be constructed by gluing
appropriate attracting and repelling hyperbolic plugs on $(U_0,X_0)$.
These attracting and repelling hyperbolic plugs will be provided
by Theorem~\ref{t.attractors}.

In order to get the second item of Theorem~\ref{t.embedding}, we will
modify the Anosov vector field provided by the first
item, using the ``blow-up, excise and glue procedure" discussed above.

\subsubsection{Manifolds with several Anosov vector fields}

Our techniques also allow to construct examples of three-manifolds
supporting several Anosov flows. In~\cite{Ba4}, Barbot constructed
an infinite family of three-manifolds, each of which supports at least
two (non topologically equivalent) Anosov flows. We shall prove the
following result:

\begin{theo} \label{t.manyanosov}
 For any $n\geq 1$, there is a closed orientable three-manifolds
 $M$ supporting at least $n$ transitive Anosov vector fields
 $Z_1, \dots ,Z_n$ which are pairwise non topologically equivalent.
\end{theo}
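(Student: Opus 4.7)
My plan is to iterate Theorem~\ref{t.richer} -- the ``blow-up, excise and glue'' surgery -- so as to produce a tower of transitive Anosov vector fields of strictly increasing dynamical complexity on a common closed orientable three-manifold. I start with a transitive Anosov vector field $X_0$ on a closed orientable three-manifold $M_0$ (for instance the suspension of a hyperbolic linear toral automorphism). For each $k=1,\dots,n$ I inductively apply Theorem~\ref{t.richer} to $X_{k-1}$ to produce a transitive Anosov vector field $X_k$ on a closed orientable three-manifold $M_k$, together with a compact $X_k$-invariant set $\Lambda_k\subset M_k$ and a continuous surjection $\pi_k\colon\Lambda_k\to M_{k-1}$ that semi-conjugates $X_k|_{\Lambda_k}$ to $X_{k-1}$.

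By composing the semi-conjugacies on invariant subsets, $X_k$ semi-conjugates onto each of the preceding flows $X_0,\dots,X_{k-1}$ via an invariant compact subset. In particular the topological entropies satisfy $h(X_k)\geq h(X_{k-1})$ for every $k$. Since each surgery step introduces a DA bifurcation that creates genuinely new periodic orbits in $\Lambda_k$ (beyond those already accounted for by $\pi_k$ as images of $M_{k-1}$-orbits), the inequality is in fact strict: $h(X_k)>h(X_{k-1})$. As topological entropy is preserved (up to time reparametrization) by topological equivalence of flows, this already shows that the flows $X_0,X_1,\dots,X_n$ are pairwise non topologically equivalent.

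The remaining task is to arrange the induction so that all the manifolds $M_k$ are diffeomorphic to a single manifold $M$ (which may depend on $n$, as the statement permits). At each step, Theorem~\ref{t.richer} leaves two layers of freedom: the choice of the two periodic orbits on which to perform the DA bifurcations (which fixes the two excised solid tori and the plug $U$), and, within its strongly transverse isotopy class, the choice of gluing diffeomorphism $\varphi\colon\partial^{out}U\to\partial^{in}U$. The diffeomorphism type of $M_k=U/\varphi$ depends only on $U$ and on the isotopy class of $\varphi$. The plan is to select, at each step, a strongly transverse isotopy class of gluings for which the mapping-torus-type identification of $\partial^{out}U$ with $\partial^{in}U$ recovers (up to diffeomorphism) the ambient manifold $M_{k-1}$, so that $M_k\cong M_{k-1}$ for every $k$.

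The hard part will be this final topological step: exhibiting, among the strongly transverse isotopy classes permitted by Theorem~\ref{t.transitive}, a representative realizing the desired diffeomorphism type. This requires a careful analysis of the slope data of the excised tubular neighbourhoods in terms of the eigen-directions at the DA periodic orbits, together with a compatibility check between the allowed slopes and the combinatorial structure of the filling MS-laminations on $\partial U$. Once this compatibility is in place, the induction produces $n$ pairwise non topologically equivalent transitive Anosov vector fields $Z_1,\dots,Z_n$ (namely $X_1,\dots,X_n$) on the common closed orientable three-manifold $M:=M_0$, which is the required conclusion.
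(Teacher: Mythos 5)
Your plan founders on the crucial topological step, which you yourself flag as ``the hard part'' and then do not carry out: arranging $M_k\cong M_{k-1}$ at each stage. In fact this cannot hold in general. The manifold $M_k=U/\varphi$ obtained by excising two solid tori from $M_{k-1}$ and identifying the two resulting boundary tori always contains a \emph{non-separating} incompressible torus (the image of the glued boundary, which is transverse to the Anosov flow and hence incompressible by Brunella's theorem, and non-separating since $U$ is connected). So, for instance, if one starts with a closed hyperbolic (atoroidal) $M_0$ supporting a transitive Anosov flow, then $M_1$ cannot be diffeomorphic to $M_0$, and the induction breaks immediately at the first step. More generally the surgery changes the JSJ decomposition and typically increases $b_1$, and there is no mechanism in Theorem~\ref{t.richer} or Theorem~\ref{t.transitive} allowing one to choose the gluing within its strongly transverse isotopy class so as to recover the original manifold---the strongly transverse condition constrains the action of $\varphi$ on the boundary laminations, not the global topology of the quotient. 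Your entropy argument for distinguishing the flows is essentially fine once made rigorous (a proper compact invariant set of a transitive Anosov flow has strictly smaller topological entropy, by uniqueness and full support of the Bowen--Margulis measure), but it only distinguishes flows on \emph{different} manifolds as you have set things up, which does not give the theorem.

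The paper takes a completely different route that sidesteps this difficulty from the start: it first builds a single manifold $M$ by gluing two copies $W_\pm$ of a fixed plug (itself assembled from a hyperbolic piece coming from the suspension of a pseudo-Anosov map and a Seifert piece), and then produces $n$ Anosov flows $Z_1,\dots,Z_n$ on this \emph{same} $M$ by choosing $n$ strongly transverse gluing diffeomorphisms $\phi_1,\dots,\phi_n$ that are all \emph{isotopic to the identity}, so the diffeomorphism type of the quotient never changes. The flows are then distinguished not by entropy but by a rigidity argument: the JSJ decomposition of $M$ has two hyperbolic pieces and one Seifert piece, any topological equivalence must permute these pieces, and Barbot's theorem (tori transverse to an Anosov flow in the same homotopy class are isotopic along orbits) forces it to preserve the glued tori in a set-theoretic sense; one then reads off a combinatorial invariant from the intersection pattern of the filling MS-laminations on a canonical transverse torus (Lemma~\ref{phik} and Remark~\ref{r.non-homeomorphic}). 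If you want to salvage your approach, you would have to replace the ``return to the same manifold'' step with something of this nature: fix the manifold in advance and vary only the gluing within a fixed isotopy class.
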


\begin{rema}
The manifold $M$ that we will construct to prove
Theorem~\ref{t.manyanosov} has a JSJ decomposition with three pieces:
two hyperbolic pieces and one Seifert piece. These examples also positively answered
two questions asked by Barbot and Fenley in the final section of their recent paper  \cite{BaFe}\footnote{They asked the following questions. Does there exists 
examples of manifolds, with a JSJ decomposition containing more than one hyperbolic piece, supporting transitive Anosov or pseudo-Anosov flows~?  Does there exists 
examples of manifolds, with both hyperbolic and Seifert pieces in their JSJ decomposition, supporting transitive Anosov or pseudo-Anosov flows~?}.
The vector fields $Z_1,\dots,Z_n$ that we will construct are
pairwise homotopic in the space of non-singular vector fields on $M$.
\end{rema}

Theorem~\ref{t.manyanosov} admits many variants. For example, we claim
that the manifold $M$ that we will construct  also supports at least
$n$ non-transitive Anosov flows. We also claim that, for every
$n\geq 1$, there exists a graph manifold supporting at least
$n$ transitive Anosov vector fields. We will not prove those claims
to avoid increasing too much the length of the paper; we leave them
as exercises for the reader.

\subsubsection{Transitive Anosov vector fields with infinitely
many transverse tori}

Theorem~\ref{t.transitive} allows to build transitive Anosov vector fields
by gluing hyperbolic plug along their boundary. Conversely, one may
try to decompose a transitive Anosov vector field $X$ on a closed
three-manifold $M$ into hyperbolic plugs by cutting $M$ along a finite
family of pairwise disjoint tori that are transverse to $X$. Ideally,
one would like to find a canonical (maximal) finite family of
pairwise disjoint tori embedded in $M$ and transverse to $X$, so that,
by cutting $M$ along these tori, one gets a canonical (maximal)
decomposition of $(M,X)$ into hyperbolic plugs. This raises the
following question: given an Anosov vector field $X$ on a closed
three-manifold $M$, are there only finitely many  tori
(up to isotopy) embedded in $M$ and transverse to $X$~? Unfortunately,
we shall
prove that this is not the case:

\begin{theo}
\label{t.infinitely-many-tori}
There exists a transitive Anosov vector field $Z$ on a closed
orientable three-manifold $M$, such that there exist infinitely many pairwise
non-isotopic tori embedded in $M$ and transverse to $Z$.
\end{theo}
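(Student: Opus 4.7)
My plan is to apply Theorem~\ref{t.transitive} to a carefully chosen hyperbolic plug $(U,X)$ whose underlying 3-manifold contains a Seifert-fibered slab over a surface of positive genus, arranged so that the resulting Anosov flow $Z$ on $M:=U/\varphi$ has this slab as a JSJ piece with infinitely many transverse vertical tori. Since a surface of positive genus carries infinitely many pairwise non-isotopic essential simple closed curves, and their preimages in the Seifert piece are pairwise non-isotopic essential tori (transverse tori to Anosov flows being incompressible, hence with isotopy class rigid inside the JSJ decomposition), this would complete the proof.

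Concretely, I would proceed in three steps. First, using Theorem~\ref{t.attractors} to obtain attracting and repelling plugs with prescribed entrance foliations, and Propositions~\ref{p.plug} and~\ref{p.plug2} to glue elementary bricks along strongly transverse laminations, I would assemble a hyperbolic plug $(U,X)$ with filling MS-laminations whose underlying manifold contains a submanifold $N_0\cong\Sigma_{g,b}\times S^1$ (with $g\geq 1$) on which the flow takes the product form $X=V+\partial_\theta$, where $V$ is a non-vanishing vector field on $\Sigma_{g,b}$ and $\partial_\theta$ is the Seifert fiber direction. Second, I would choose a strongly transverse gluing diffeomorphism $\varphi\colon\partial^{out}U\to\partial^{in}U$ satisfying the combinatorial transitivity hypothesis of Proposition~\ref{p.transitive}, and apply Theorem~\ref{t.transitive} to obtain a transitive Anosov vector field $Z$ on $M=U/\varphi$. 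The gluing is designed to be compatible with the Seifert fibration on $N_0$, so that $N_0$ descends to a Seifert-fibered JSJ piece $N\subset M$.

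Third, for every simple closed curve $\gamma\subset\Sigma_{g,b}$ transverse to $V$, the vertical torus $T_\gamma:=\pi^{-1}(\gamma)\subset N$ has tangent plane spanned by $\gamma'$ and $\partial_\theta$ at each point; this tangent plane does not contain $Z=V+\partial_\theta$ precisely when $V$ is not tangent to $\gamma$, hence $T_\gamma$ is transverse to $Z$. Because $\Sigma_{g,b}$ has positive genus, there exist infinitely many pairwise non-isotopic essential simple closed curves that can be put transverse to a suitably chosen $V$; the corresponding family $\{T_\gamma\}$ gives the desired infinite set of pairwise non-isotopic transverse tori in $M$ (non-isotopy being preserved by the JSJ structure of $M$).

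The main obstacle is the first step: actually producing a hyperbolic plug with filling MS-laminations whose internal dynamics is compatible with a Seifert slab $N_0$ on which the flow has the product form $V+\partial_\theta$. The tension is between the filling hyperbolic dynamics required near the boundary of $U$ and the product-like flow required inside $N_0$; this demands a delicate interfacing of the hyperbolic bricks with the Seifert slab through appropriate gluings furnished by Proposition~\ref{p.plug2}, while keeping careful track of the entrance and exit laminations so as to preserve the filling and strong-transversality properties throughout the assembly.
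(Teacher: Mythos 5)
Your overall strategy is sound in outline and shares the central geometric idea with the paper: produce a piece of $M$ of the form $\Sigma\times\SS^1$ on which the flow has no vertical component along a large family of vertical tori, so that those tori are transverse to the flow and give the infinitude. But there is a genuine gap in your proposal, and it is precisely the one you flag at the end.

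The gap is Step 1. You insist that the base vector field $V$ on $\Sigma_{g,b}$ be non-vanishing and that the flow on the slab $N_0\cong\Sigma_{g,b}\times\SS^1$ be $V+\partial_\theta$. With $V$ non-vanishing and generically no periodic orbits, the slab carries no recurrent dynamics, so all the hyperbolicity of the plug must live outside $N_0$ and be interfaced with the product flow across $\partial N_0$. You correctly identify this interfacing as a serious obstacle, and you give no construction. The paper resolves exactly this difficulty by dropping the requirement that $V$ be non-vanishing: it takes $V=X_0$ a gradient-like field on $\TT^2$ with a source $\alpha$, a sink $\omega$ and two saddles $\sigma_1,\sigma_2$, sets the plug to be $U:=(\TT^2\setminus(D_\alpha\cup D_\omega))\times\SS^1$ with flow $X:=X_0+\varphi\,\partial_\theta$ where $\varphi$ is a bump supported near $D_{\sigma_1}\cup D_{\sigma_2}$ with opposite signs on the two discs. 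The saddle circles $\sigma_i\times\SS^1$ then become genuine hyperbolic periodic orbits (the twist $\varphi\,\partial_\theta$ is what makes them hyperbolic rather than normally hyperbolic tori, and its sign change is what makes the entrance/exit laminations filling MS-laminations with alternating contracting orientations). So the recurrent hyperbolic dynamics sits \emph{inside} the Seifert slab, the plug is the slab itself, and nothing needs to be interfaced. The transverse tori are $T_q:=c_q\times\SS^1$ for curves $c_q\subset\TT^2$ transverse to $X_0$, disjoint from all four discs (hence where $\varphi=0$, so the flow there is purely horizontal), and freely homotopic to $c_x+q\,c_y$.

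Two further points where your argument is weaker than what is needed. First, you claim that "infinitely many pairwise non-isotopic essential simple closed curves can be put transverse to a suitably chosen $V$"; this is not automatic and needs to be produced explicitly (the paper does so by a concrete desingularization of $c_x+q\,c_y$ against the gradient-like $X_0$). Second, your non-isotopy argument, "non-isotopy being preserved by the JSJ structure", is softer than necessary and would still require verifying that $N_0$ actually survives as a JSJ piece after the gluings of Theorem~\ref{t.transitive} (these gluings are chosen for strong transversality of laminations, not for fiber-preservation, and the resulting manifold is a graph manifold whose JSJ pieces must be identified). The paper sidesteps this entirely with a robust homological computation: the algebraic intersection number of the curve $c_q\times\{\theta_0\}$ with the torus $T_{q'}$ is $\pm|q-q'|$, which is nonzero for $q\neq q'$, so the $T_q$ are pairwise non-isotopic in $M$ regardless of the details of the JSJ decomposition.
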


Roughly speaking, Theorem~\ref{t.infinitely-many-tori} implies that there
is no possibility of finding a ``fully canonical" maximal decomposition
of any transitive Anosov vector field into hyperbolic plugs. In a
forthcoming paper (\cite{BeBoYu}), we shall nevertheless prove that
one can find a maximal decomposition of any transitive Anosov vector
field $X$ into hyperbolic plugs, so that the maximal invariant sets of
the  plugs are canonically associated to $X$. This is what we call
the \emph{spectral-like decomposition} for transitive Anosov vector fields.

\subsubsection*{Homage} This work started in January 2012, as we were reading in a working group the beautiful paper \cite{Br} 
of Marco Brunella. It was during the same month that we learned the sad news of Marco's death. We dedicate this paper to his memory. 
\part{Proof of the gluing theorem~\ref{t.transitive}}
\section{Definitions and elementary properties}

In this paper we  consider non-singular vector fields on compact $3$-manifolds. 


\subsection{Hyperbolic plugs}

\begin{defis}
All along this paper, a \emph{plug} is a pair $(V,X)$ where $V$ is a compact three-dimensional manifold with boundary,
and $X$ is a non-singular $C^1$ vector field on $V$ transverse to the boundary $\partial V$.
Given such a plug $(V,X)$, we decompose $\partial V$ as a disjoint union
$$\partial V=\partial^{in} V\sqcup\partial^{out} V,$$
where $X$  points inward (resp. outward) $V$ along $\partial^{in} V$ (resp. $\partial^{out} V$).
We call $\partial^{in} V$ the \emph{entrance boundary} of $V$, and $\partial^{out} V$ the \emph{exit boundary} of $V$.
If $\partial^{out}V=\emptyset$, we say that $(V,X)$ is an \emph{attracting plug}. If $\partial^{in}V=\emptyset$, we say that $(V,X)$ is a \emph{repelling plug}.
\end{defis}

If $\partial V$ is non-empty, the flow of $X$ is not complete.  Every orbit of $X$ is defined for a closed time interval of $\RR$. The \emph{maximal invariant set} $\La$ of $X$ in $V$ is the set of points $x\in V$ whose forward and backward orbits are defined forever; in an equivalent way, $\La$ is the set of points whose orbit is disjoint from $\partial V$. The \emph{stable set} $W^s(\La)$ is the set of points whose forward orbit is defined forever. Equivalently, $W^s(\La)$ is the set of points whose forwards orbits accumulates on $\La$,
and is the set of points whose positive orbit is disjoint from $\partial^{out} V$.  Analogously the \emph{unstable set} $W^u(\Lambda)$ is the set of points
whose backward orbit is defined forever; this negative orbit is disjoint from $\partial^{in}V$. 

\begin{defi}
All along the paper, we call \emph{hyperbolic plug} a plug $(V,X)$ whose maximal invariant set $\La$ is hyperbolic with one-dimensional
strong stable and strong unstable bundles:  for $x\in \La$, there is a splitting
$$T_xV=E^s(x)\oplus \RR X(x)\oplus E^u(x)$$
which depends continuously on $x$, which is invariant under the derivative of the flow of $X$, and there is a Riemannian metric on $V$
so that the differential of the time one map of the flow of $X$ contracts uniformly the vectors in $E^s$ and expands uniformly the vectors in $E^u$.
\end{defi}

We cannot recall here the whole hyperbolic theory.  Let us just recall some elementary properties from the
classical theory of hyperbolic dynamical systems  that we will use in the other section:
\begin{itemize}
 \item for every $x$ the \emph{strong stable manifold} $W^{ss}(x)=\{y\in V, d(X^t(y),X^t(x)){\underset{t\to +\infty}\longrightarrow}  0\}$ is a
 $C^1$ curve through $x$ tangent at $x$ to $E^s(x)$.  The strong unstable manifold $W^{uu}(x)$ of $x$ is the strong stable manifold of $x$ for $-X$.
 \item the weak stable manifold $W^s(x)$ (resp. weak unstable manifold $W^u(x)$) of $x$ is the union of the stable manifolds (resp. unstable manifolds)
 of the points in the orbit of $x$. The weak stable and weak unstable manifold of $x\in \La$ are $C^1$ injectively immersed surface which depends continuously 
 on $x$.  The weak stable (resp. unstable) manifold of $x$ is invariant under the positive (resp.negative) flow, and by the negative
 (resp. positive)  flow for the times it is defined.
 \item $\La^+$ and $\La^-$ are a $2$-dimensional laminations $W^s(\La)$ and $W^u(\La)$, whose leaves are the weak stable and unstable (respectively)  manifolds of
 the points of $\La$. These laminations are of class $C^{0,1}$, \emph{i.e.} the leaves are $C^1$-immersed manifold tangent to a continuous plane field. Moreover, these 
 laminations are of class $C^1$, in the case where the vector field $X$ is of class $C^2$ (see e. g. \cite[Corollary 2.3.4]{Has}).
 \item The $2$-dimensional laminations  $W^s(\La)$ and $W^u(\La)$ are everywhere transverse and the intersection $W^s(\La)\cap W^u(\La)$ is
 precisely $\La$.
 \item The non-wandering set $\Om(X)$ is contained in $\La$.  It is the union of finitely many transitive hyperbolic sets called
 \emph{basic pieces of $X$}. Every point in $\La$ belongs to the intersection of the stable manifold of one basic piece with the unstable manifold of
 a basic piece.

\item there is a $C^1$-neighborhood $\cU$ of $X$ so that, for every $Y\in\cU$, the pair $(V,Y)$ is a hyperbolic plug, and
it is topologically (orbitally) equivalent to $X$: there is a homeomorphism $\varphi_Y\colon V\to V$
mapping the oriented orbits of $Y$ on the orbits of $X$.  Furthermore, for $Y$ $C^1$-close to  $X$, the homeomorphism $\varphi_Y$ can be
chosen $C^0$-close to the identity.
\end{itemize}

\subsection{Separatrices, free separatrices, boundary leaves}

We will now introduce some notions which are useful to describe more precisely the geometry of hyperbolic sets of vectors fields on three-manifolds. These notions were introduced  by the first two authors in~\cite{BeBo}. 

Let $(V,X)$ be a hyperbolic plug, and $\La$ be its maximal invariant set.  One can embed $V$ in a closed manifold $M$ and extend $X$ on $M$, so that
$\partial^{in} V$ is an exit boundary of a repelling region of $M$ and $\partial^{out} V$ is the entrance
boundary of in attracting region of $M$: now the flow of $X$ is complete and $\La$  a locally maximal hyperbolic set of $X$,
with $1$-dimensional strong stable and strong unstable bundles. Furthermore $V$ is now a filtrating set, that is
the intersection of a repelling and an attracting region of $X$. This allows us to use some notions/results that were defined/proved for complete flows on closed three-manifolds.

\begin{defis}
A \emph{stable separatrix} of a periodic orbit $O\subset\La$ is a connected component of $W^s(O)\setminus O$. A stable separatrix of a periodic orbit $O\subset\La$ is called a \emph{free separatrix} is it is disjoined from $\La$. 
\end{defis}

\begin{rema}
\label{r.stable-manifolds}
For $x\in\Lambda$, the weak stable manifold $W^s(x)$ is an injectively immersed  manifold.  
\begin{itemize}
 \item If $W^s(x)$ does not contain periodic orbit, then $W^s(x)$ is diffeomorphic to the plane $\RR^2$, and the foliation of $W^s(x)$ by the orbits of flow of $X$ is topologically equivalent to the trivial foliation of $\RR^2$ by horizontal lines.
\item If $W^s(x)$ does not contain periodic orbit $O$, then $O$ is unique, and every orbit of $X$ on $W^s(x)$ is asymptotic to $O$ in the future. If the multipliers\footnote{i.e. the eigenvalues of the derivative at $p$ of the first return map of the orbits of $X$ on a local section intersecting $O$ at a single point $p$} of $O$ are positive, then $W^s(x)$ is diffeomorphic to a cylinder, and $O$ has two stable separatrices. If the multipliers of $O$ are negative, then $W^s(x)$ is diffeomorphisc to a M\"obius band, and $O$ has only one stable separatrix. In any case, each stable separatrix of $O$ is diffeomorphic to a cylinder $S^1\times \RR$, and the foliation of this separatrix by the orbits of $X$ is topologically equivalent to the trivial foliation of  $S^1\times \RR$ by vertical lines.
\end{itemize}
\end{rema}

\begin{rema}
\label{r.free}
Let $x$ be a point in $\Lambda$.
\begin{enumerate}
\item Each orbit of $X$ on $W^s(x)$ cuts $\partial^{in} V$ in at most one point, and this point depends continuously on the orbit. Thus, the connected components of $W^s(x)\cap\partial^{in} V$ are in $1$-to-$1$ correspondence with the connected components of $W^s(x)\setminus \La$. 
\item Due to the dynamics inside each leaf $W^s(x)$; one easily show that, for each connected component $C$ of $W^s(x)\setminus \La$, one has one of the two possible situation below:
\begin{itemize}
 \item either there is a orbit $O\in\La\cap W^s(x)$ so that $C$ is a connected component of $W^s(x)\setminus O$. 
 In that case, \cite[Lemma 1.6]{BeBo} proves that $O$ is a periodic orbit, and $C$ is a free stable separatrix of $O$. In particular, $C$ is diffeomorphic to a cylinder 
 and the foliation of $C$ by the orbits of $X$ is topologically equivalent to the trivial foliation of  $S^1\times \RR$ by vertical lines. Since each orbit $X$ of $C$ cuts $\partial^{in} V$ at exactly one point which depends continuously on the orbit, one deduces than  $C\cap\partial^{in} V$ is diffeomorphic to a circle; 
  \item or there are two orbits $O_1,O_2\in \Lambda\cap W^s(x)$ so that $C$ is a connected component of
 $W^s(x)\setminus (O_1\cup O_2)$; in other words, $C$ is a strip bounded by $O_1$ and $O_1$ and the foliation of $C$ by the orbits of $X$ is topologically equivalent to the trivial foliation of  $\RR^2$ by horizontal lines. Since each orbit $X$ of $C$ cuts $\partial^{in} V$ at exactly one point which depends continuously on the orbit, one deduces than $C\cap\partial^{in} V$ is diffeomorphic to a line;
\end{itemize}
\end{enumerate}
\end{rema}

\begin{defi}
An unstable manifold $W^u(x)$ is called \emph{a unstable boundary leaf} if there is an open path $I$ cutting $W^u(x)$ transversely at a point $y$
and so that one connected component of $I\setminus \{y\}$ is disjoint from  $W^s(\La)$. 
\end{defi}

\begin{rema}
\label{r.free-2}
Lemma 1.6 of \cite{BeBo} shows that the unstable boundary leaves are precisely the unstable manifolds of the periodic orbits having a free stable separatrix. Moreover Lemma 1.6 of \cite{BeBo} shows that
there are only finitely many periodic orbits in $\La$ having a free stable separatrices. As an immediate consequence, there are only finitely many boundary leaves in $W^u(\La)$.
\end{rema}

One defines similarly free unstable separatrices and the stable boundary leaves.

\subsection{Entrance and exit laminations}

Let $(V,X)$ be a hyperbolic plug, $\La$ be its maximal invariant set, $W^s(\La)$ and $W^u(\La)$ be the $2$-dimensional
stable and unstable laminations of $\La$ respectively.

The vector field $X$ is tangent to laminations $W^s(\La)$ and $W^u(\La)$, and is transverse to $\partial V$.  This implies that
the laminations $W^s(\La)$ and $W^u(\La)$ are transverse to $\partial V$.  As a consequence, each leaf of these
$2$-dimensional laminations cuts $\partial V$ along $C^1$-curves, and the laminations
$W^s(\La)$ and $W^u(\La)$ cut $\partial V$ along $1$-dimensional laminations. Thus
$$\cL^s=W^s(\La)\cap \partial V=W^s(\La)\cap \partial^{in} V\mbox{ and }
\cL^u=W^u(\La)\cap \partial V=W^u(\La)\cap \partial^{out} V$$
are $1$ dimensional laminations.
The aim of this section is to describe elementary properties of the laminations $\cL^s$ of $\partial^{in} V$
and $\cL^u$ of $\partial^{out} V$.  More precisely:

\begin{prop}\label{p.hyperbolicplug}
The laminations $\cL^s$ and $\cL^u$ satisfy the following properties:
\begin{enumerate}
\item The laminations contains finitely many compact leaves. 
\item Every half leaf is asymptotic to a compact leaf.
\item Each compact leaf  may be oriented  so that its holonomy is a contraction.
\end{enumerate}
\end{prop}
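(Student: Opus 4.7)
The plan is to handle $\cL^s$; the case of $\cL^u$ follows by replacing $X$ with $-X$ (which swaps $\partial^{in}V$ with $\partial^{out}V$). I start from the classification provided by Remark~\ref{r.free}: for every $x\in\La$ each connected component of $W^s(x)\cap\partial^{in}V$ is either (a) a circle, arising from a free stable separatrix of a periodic orbit $O\subset\La$, or (b) a line, arising from a strip $C\subset W^s(x)\setminus\La$ bounded by two orbits $O_1,O_2\in\La\cap W^s(x)$. In particular the compact leaves of $\cL^s$ are exactly the circles of type (a). To prove (1), I use that a periodic orbit has at most two free stable separatrices, together with Remark~\ref{r.free-2}, which ensures that only finitely many periodic orbits of $\La$ carry a free stable separatrix; this bounds the number of compact leaves.

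For (3), let $\gamma$ be a compact leaf coming from a free stable separatrix $\Sigma\subset W^s(O)$ of a periodic orbit $O$, whose first-return map has eigenvalues $\lambda,\mu$ with $|\lambda|<1<|\mu|$. Since $\Sigma$ is free, the two-dimensional lamination $W^s(\La)$ is one-sided along $\Sigma$, and after intersecting with $\partial^{in}V$ the same holds for $\cL^s$ along $\gamma$. On a transversal arc $\tau\subset\partial^{in}V$ to $\gamma$, the holonomy of $\cL^s$ on the non-free side is computed by tracking nearby leaves $W^s(y)\cap\partial^{in}V$ (for $y\in\La$ near $O$) through a thin ribbon around $\gamma$. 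Via the flow this ribbon corresponds to a tubular neighborhood of $\Sigma$, and one circuit of $\gamma$ is one application of the first-return map of $X$ at $O$; in the direction transverse to $\Sigma$, which is the unstable direction of $O$, this map acts as multiplication by $\mu$ in a linearizing coordinate. Orienting $\gamma$ against the flow-induced direction on $\Sigma$ makes the holonomy the inverse, hence a contraction of rate $1/|\mu|$.

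For (2), let $\ell$ be a non-compact leaf coming from a strip $C\subset W^s(x)\setminus\La$ bounded by $O_1,O_2\in\La$, and let $\ell^+$ be the half-leaf corresponding to orbits in $C$ approaching $O_1$. The entry point $P(t)\in\ell$ of the orbit at transverse parameter $t$ in $C$ (with $t\to 0^+$ representing $O_1$) cannot converge in $\partial^{in}V$ to a point on $O_1$, since $O_1\subset\La$ does not meet $\partial^{in}V$; hence the backward exit time from $V$ diverges as $t\to 0^+$, and $\{P(t)\}$ accumulates on a nonempty compact, $\cL^s$-saturated set $A\subset\partial^{in}V$. I claim $A$ is a single compact leaf. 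Let $M\subset A$ be a minimal closed $\cL^s$-saturated subset (Zorn's lemma). If $M$ contains a compact leaf $\gamma^*$ then, by the contracting holonomy from (3), $\gamma^*$ absorbs a neighborhood of itself inside the lamination, so minimality forces $M=\gamma^*$. Otherwise every leaf of $M$ would be a non-compact line dense in $M$; through the correspondence between line leaves of $\cL^s$ and strips in $W^s(\La)\setminus\La$, such recurrence produces a recurrent family of stable strips whose bounding orbits, via backward shadowing, must include a periodic orbit carrying a free stable separatrix (otherwise the finiteness of stable boundary leaves of $W^s(\La)$, the stable analog of Remark~\ref{r.free-2}, would be violated). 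This produces a compact leaf inside $M$, contradicting the hypothesis. Hence $M$ is a compact leaf; combined with its contracting holonomy, this forces $A=M$, and $\ell^+$ is asymptotic to the compact leaf $M$.

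The delicate step is (2), specifically the exclusion of exceptional (Cantor-like) minimal sets of $\cL^s$. The idea is to translate the finiteness of boundary leaves of the two-dimensional lamination $W^s(\La)$ (the stable analog of Remark~\ref{r.free-2}) into a finiteness property of the one-dimensional lamination $\cL^s$ on the compact surface $\partial^{in}V$, so that any $\cL^s$-minimal set consisting entirely of recurrent non-compact leaves would reproduce, at the level of stable strips, a recurrent family of boundary-like leaves exceeding this finite budget. Once that exceptional case is ruled out, minimality together with the contracting holonomy from (3) finishes the asymptotic claim.
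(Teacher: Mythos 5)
Your proofs of (1) and (3) run essentially parallel to the paper's: the compact leaves come from free stable separatrices (finite in number by Remark~\ref{r.free-2}), and the holonomy along such a leaf is read through the flow as the transverse (unstable) dynamics at the corresponding periodic orbit. One bookkeeping slip in (3): when $O$ has negative multipliers, $W^s(O)$ is a M\"obius band and $\gamma$ is freely homotopic to $2\cdot O$ in $W^s(O)$, so one circuit of $\gamma$ is \emph{two} applications of the first-return map and the contraction rate is $1/\mu^2$, not $1/|\mu|$; the conclusion (contraction) is unaffected, but the statement as written is incorrect in that case.

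The real issue is item (2). Your route through minimal sets requires ruling out an exceptional (Cantor-type) minimal set of $\cL^s$, and this is never actually done: ``a recurrent family of boundary-like leaves exceeding this finite budget'' is not an argument — you identify neither which finite quantity is exceeded, nor why a periodic orbit with free separatrix would produce a compact leaf \emph{inside $M$} rather than elsewhere in $\cL^s$. In fact the bounding orbits $O_1,O_2$ of the strips attached to leaves of $M$ need not be periodic at all, so the finiteness of boundary leaves does not immediately bound anything about $M$. The paper sidesteps this entirely with a direct argument: the strip $B\subset W^s(x)\setminus\La$ is disjoint from $W^u(\La)$ (since $B\subset W^s(\La)$ and $W^s(\La)\cap W^u(\La)=\La$), so the bounding orbits $O_i$ lie on \emph{unstable boundary leaves} of $W^u(\La)$, hence by Remark~\ref{r.free-2} $O_i\in W^u(P_i)$ with $P_i$ periodic and carrying a free stable separatrix; the $\lambda$-lemma applied to the backward flow of $B$ near $O_i$ then shows directly that the half-leaf accumulates on the compact leaf arising from $P_i$'s free separatrix, and the contracting holonomy from (3) upgrades accumulation to asymptotics. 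Your minimal-set machinery is an unnecessary detour here, and the one step you would need to close it — excluding exceptional minimal sets — is precisely what the $\lambda$-lemma argument proves directly, so you would be better off just running that argument.
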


\begin{proof}[Sketch of proof]
Item 1 is a direct consequence of Remarks~\ref{r.free} and~\ref{r.free-2}: the compact leaves of $\cL^s$ are in one-to-one correspondance with
the free stable separatrices of periodic orbits, and there are only finitely many such separatrices.

Consider a leaf $\gamma$ of $\cL^s$. According to Remark~\ref{r.free}, it corresponds to a connected component of some
$W^s(x)\setminus \La$ which is a strip $B$ bounded by
two orbits $O_1$ and $O_2$ in $\La\cap W^s(x)$.
Consider the unstable manifold $W^u(O_i)$.  These unstable leaves are
boundary leaves of $W^u(\La)$.  According to Remark~\ref{r.free-2}, it follows that $O_1$ and $O_2$ belong to the
unstable manifolds of periodic orbits having a free separatrix.  Therefore  the $\lambda$-lemma implies that $B$ accumulates these
free separatrices, and $\gamma$ accumulates on the corresponding compact leaves of $\cL^s$.
Lemma 1.8 of \cite{BeBo} makes this argument rigorously for proving item 2.

Let us now explain item 3.  Let $\gamma_0$ be a compact leaf of $\cL^s$. Then $\gamma_0$ is the intersection of $\partial^{in} V$ with a free stable separatrix of 
a periodic orbit $O_0\subset \La$.
The leaves of $\cL^s$, in the neighborhood of $\gamma_0$, are the transverse intersection of the weak stable manifolds of orbits which belong to
$W^u(O_0)$. Notice that $W^u(O_0)$ and $\partial^{in}V$ are both transverse to the lamination $W^s(\La)$.
Furthermore, $\gamma_0$ and $O_0$ are
contained in the same leaf  $W^s(O_0)$ which is either a cylinder or a Mobius band.
\begin{itemize}
\item If $W^s(O_0)$ is a cylinder then,
$\gamma_0$ and $O_0$ are homotopic in the leaf $W^s(O_0)$.  Therefore the holonomy of $\cL^s$ along $\gamma_0$ is
conjugated the holonomy of the lamination induced by
$W^s(\La)$ on $W^u(O_0)$. This holonomy is a contraction if one endows the orbit $O_0$ with the orientation induced by the vector field $-X$.

\item If $W^s(O_0)$ is a Mobius band then,
$\gamma_0$ is homotopic (in $W^s(O_0)$ to  $2.O_0$ .  Therefore the holonomy of $\cL^s$ along $\gamma_0$ is conjugated to
the square of the holonomy of the lamination induced by
$W^s(\La)$ on $W^u(O_0)$: once again, this holonomy is a contraction if one endows the orbit $O_0$ with the orientation induced by the vector field $-X$.
\end{itemize}
\end{proof}

\begin{defi}
\label{d.MS}
A lamination (resp. a foliation) on a closed surface is called $\cL$ a  \emph{MS-lamination}\footnote{``MS" stands for ``Morse-Smale"} (resp. a \emph{MS-foliation}) if it satisfies the following properties:
\begin{enumerate}
\item it has only finitely many compact leaves,
\item every half leaf is asymptotic to a compact leaf,
\item each compact leaf  may be oriented  so that its holonomy is a contraction.
\end{enumerate}
\end{defi}

Proposition~\ref{p.hyperbolicplug} states that the entrance/exit laminations of a hyperbolic plug are MS-laminations.

\begin{rema}
If $(V,X)$ is a hyperbolic attracting plug, then $\cL^s$ is a foliation on $\partial^{in} V$ (and $\partial^{out} V$ is empty).
In particular, $\partial V$ consists of some tori  (and possibly some Klein bottles
if $V$ is not orientable).  
\end{rema}

\subsection{Connected component of the complement of the laminations}

Let us start with a very general observation.
 
 \begin{definition}
 \label{d.strip-exceptionnal}
  Let $S$ be a closed surface and $\cL$ be a $1$-dimensional lamination with finitely many compact leaves, 
 and $C$ be a connected components of $S\setminus \cL$. We call $C$ a \emph{strip} if it satisfies the two following properties:
 \begin{itemize}
  \item $C$ is homeomorphic to $\RR^2$,
  \item the accessible boundary\footnote{that is, the points in the boundary which are  an extremal point of a segment whose interior is contained in $C$.} of $C$ consists in exactly two non-compact leaves of $\cL$ which are asymptotic two each other at both ends,
 \end{itemize}
 Otheriwse we say that $C$ is an \emph{exceptionnal component} of $S\setminus\cL$. 
\end{definition}

 \begin{lemm}\label{l.exceptional}
 Let $S$ be a closed surface and $\cL$ be a $1$-dimensional lamination with finitely many compact leaves. Then there are only finitely many exceptionnal components in $S\setminus\cL$.
 \end{lemm}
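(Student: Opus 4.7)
I plan to prove the lemma by reducing the problem to a count based on the structure of $\cL$ near its minimal sublaminations, using the classical fact that a one-dimensional lamination on a compact surface admits only finitely many minimal sublaminations.

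First, I handle the components whose accessible boundary contains a compact leaf. Since there are only finitely many compact leaves of $\cL$ by hypothesis, say $k$ of them, and each compact leaf lies in the accessible boundary of at most two complementary components, at most $2k$ components of $S\setminus\cL$ fall in this category; these can only contribute finitely many exceptional components.

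Second, I consider a remaining component $C$ whose accessible boundary consists entirely of non-compact leaves. Every such leaf accumulates at each of its two ends onto a minimal sublamination of $\cL$; invoking the structural fact cited above, there are only finitely many non-compact minimal sublaminations $\mu_1,\dots,\mu_q$. I then plan to bound, for each $\mu_i$, the number of exceptional components of $S\setminus\cL$ whose accessible boundary accumulates on $\mu_i$. Near each $\mu_i$, the lamination has a local ``Cantor times interval'' transverse structure, so generically the complementary ``gaps'' of $\mu_i$ near it are strips (topological disks bounded by exactly two leaves asymptotic at both ends). An exceptional component associated with $\mu_i$ must fall into one of two cases: (a) the component is not homeomorphic to $\RR^2$, which by an Euler characteristic count against $\chi(S)$ occurs finitely often; or (b) the component is a topological plane but with an ``anomalous'' accessible boundary (with a number of leaves other than two, or two leaves failing to be asymptotic at one end), which I plan to model combinatorially via the complementary regions of a finite train track carrying $\mu_i$.

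The main obstacle will be the precise analysis in case (b): controlling the number of anomalous gaps of an exceptional minimal sublamination $\mu_i$ via the topology of the ambient surface. Concretely, the plan is to show that $\mu_i$ admits a finite ``combinatorial model''---a train track with finitely many branches whose complementary regions correspond to the non-strip gaps of $\mu_i$---and that the number of complementary regions is bounded by a function of $\chi(S)$. Summing the resulting finite bounds over the finitely many $\mu_i$ and adding the contribution from Step 1 gives the required finiteness.
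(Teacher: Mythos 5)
Your plan takes a substantially different and heavier route than the paper, and it has a genuine gap at its foundation. The paper's proof is elementary: pick a smooth Morse--Smale vector field $Z$ on $S$ transverse to $\cL$. Only finitely many components of $S\setminus\cL$ can contain a singularity or a whole closed orbit of $Z$. For any other component $C$, the restricted flow $Z|_C$ has no nonwandering point, and transversality of $Z$ to the accessible boundary forces every orbit to exit $C$ in uniformly bounded time; one then deduces directly that such a $C$ is either an annulus bounded by two compact leaves (finitely many, by hypothesis) or a strip. The whole finiteness thus reduces to finiteness of the singularities, closed orbits, and compact leaves of $\cL$.

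The gap in your plan is the appeal to ``the classical fact that a one-dimensional lamination on a compact surface admits only finitely many minimal sublaminations.'' That statement is standard for geodesic laminations and for laminations already carried by a train track, but Lemma~\ref{l.exceptional} is stated for completely general laminations in the paper's sense: closed sets with local product structure, with no geometric normalization, no carrier, and no transverse invariant measure assumed. In this generality the finiteness of minimal components is not a quotable result --- it is a statement of exactly the same ``complexity bounded by the topology of $S$'' type as the lemma you are trying to prove, so citing it does not genuinely reduce the problem. Case (b) of your Step~2 compounds this: you would need to produce a finite-complexity train track carrying not merely one minimal piece $\mu_i$ but the part of $\cL$ accumulating on it, and then argue that the non-strip gaps correspond to complementary regions of that carrier with definite negative index; for an arbitrary lamination this is not a routine Euler-characteristic count. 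If you want to go through minimal sets, you would essentially have to re-prove the needed finiteness from scratch, whereas the transverse Morse--Smale flow handles it in one stroke.
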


 \begin{proof}
 There is  a smooth Morse Smale vector field  $Z$ transverse to the lamination (one easily build a
 continuous vector field transverse to the lamination; transversality is an open property; hence one may perturb this vector
 field by to turn it into a smooth Morse-Smale vector field). Now, at most finitely many connected component or $S\setminus \cL$ may contain
 a singular point of $Z$ or a whole periodic orbit of $Z$.

 If $C$ is a component which does not contain any singular point  and any periodic orbit, then the dynamics of $Z$ restricted to $C$ does not contain
 any non-wandering point. Furthermore, by transversality of $Z$ with $\cL$ (hence of the accessible boundary or $C$)
 any orbit reaching a small neighborhood of $\partial C$ goes out of $C$ in a finite time.
 As a direct consequence, there is $T>0$ so that no orbit of  $Z_{|C}$ is defined for a time interval of lenght larger than $T$. This implies 
 that either $C$ is an annulus bounded by two compact leaves of $\cL$, or $C$ is a strip (as definition~\ref{d.strip-exceptionnal}).
 \end{proof}

 \subsection{The crossing map}
 \label{ss.crossing-map}

We consider a hyperbolic plug $(V,X)$. As usual, we denote by $\La$ the maximal invariant set of $(V,X)$, by $\cL^s\subset\partial^{in} V$ the entrance lamination of $(V,X)$, and 
 by $\cL^u\subset\partial^{out} V$ the exit lamination of $(V,X)$. 
 
 \begin{defi}
 The positive orbit of any point $x\in \partial^{in}V\setminus \cL^s$ reaches $\partial^{out}V$ in a finite time at a point $\Ga(x)\in\partial^{out} V\setminus\cL^u$. 
 The map $x\mapsto\Ga(x)$  will be called \emph{the crossing map} of the plug $(V,X)$. Using the fact that the orbits of $X$ are transverse to $\partial V$, one easily sees that $\Ga$ defines a diffeomorphism from 
 $\partial^{in}V\setminus \cL^s$ to $\partial^{in}V\setminus \cL^s$. 
 \end{defi}

 \begin{lemm} \label{l.exceptional-inout} 
 A connected component $C$ of $\partial^{in}V\setminus \cL^s$ is a strip if and only if $\Ga(C)$ is a strip in $\partial^{out}V\setminus \cL^u$.
 \end{lemm}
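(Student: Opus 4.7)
Since $\Gamma^{-1}$ is the crossing map of the plug $(V,-X)$, whose entrance lamination is $\cL^u$ and exit lamination is $\cL^s$, it suffices to prove one direction: if $C$ is a strip, then so is $\Gamma(C)$.

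As $\Gamma$ is a diffeomorphism from $\partial^{in}V\setminus\cL^s$ onto $\partial^{out}V\setminus\cL^u$, it sends connected components bijectively onto connected components, and $\Gamma(C)$ is homeomorphic to $C$; in particular $\Gamma(C)\cong\RR^2$. By Lemma~\ref{l.exceptional} applied to $\cL^u$, the component $\Gamma(C)$ is either a strip or exceptional, so it remains to check that its accessible boundary consists of exactly two non-compact leaves of $\cL^u$ that are asymptotic to each other at both ends.

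Let $\ell_1,\ell_2$ be the accessible boundary leaves of $C$. By Remark~\ref{r.free}, each $\ell_i$ is the trace on $\partial^{in}V$ of a strip in some weak stable leaf $W^s(x_i)$ bounded by two periodic orbits of $\Lambda$. Using item~(2) of Proposition~\ref{p.hyperbolicplug} together with the strip hypothesis on $C$, the leaves $\ell_1$ and $\ell_2$ share the same two asymptotic compact leaves $\gamma^+,\gamma^-$ of $\cL^s$, each $\gamma^\pm$ being the intersection with $\partial^{in}V$ of a free stable separatrix of some periodic orbit $O^\pm\subset\Lambda$. I would then trace orbits of $X$ starting in $C$ very close to $\gamma^+$: such orbits first shadow the free stable separatrix of $O^+$ carrying $\gamma^+$, approach $O^+$, and by the $\lambda$-lemma are subsequently pushed along $W^u(O^+)$ until they exit $V$ through $\partial^{out}V$ close to a compact leaf $\delta^+$ of $\cL^u$ (a free unstable separatrix of $O^+$). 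An analogous analysis at the end of $C$ corresponding to $\gamma^-$ produces a compact leaf $\delta^-$ of $\cL^u$. Combining these two asymptotic descriptions with the injectivity of $\Gamma$, the accessible boundary of $\Gamma(C)$ is made of two non-compact leaves of $\cL^u$, each asymptotic to $\delta^+$ at one end and to $\delta^-$ at the other; hence $\Gamma(C)$ is a strip.

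\textbf{The main obstacle} is the dynamical tracing step in the previous paragraph. Making it rigorous requires combining the local product structure near the periodic orbits $O^\pm$, the $\lambda$-lemma, and a continuous-dependence argument, in order to identify precisely the free unstable separatrices on which the accessible boundary of $\Gamma(C)$ accumulates, and to verify that the two ends of the strip $C$ actually produce two \emph{distinct} accessible boundary leaves of $\Gamma(C)$ (rather than collapsing to a single leaf or splitting into more than two). The rest of the argument is essentially a bookkeeping consequence of $\Gamma$ being a diffeomorphism on the complement of the laminations.
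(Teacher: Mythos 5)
Your reduction is correct and matches the paper: it suffices to show one direction, because $\Gamma^{-1}$ is the crossing map of the reversed plug $(V,-X)$, and you correctly observe that $\Gamma(C)\cong\RR^2$ so, by Lemma~\ref{l.exceptional}, all that remains is to control the accessible boundary. You also invoke the $\lambda$-lemma, which is indeed the right dynamical tool. However, you yourself flag the critical gap: you cannot verify that the two ends of $C$ produce exactly two distinct accessible-boundary leaves of $\Gamma(C)$ that are asymptotic at both ends, rather than collapsing or proliferating. This is not a minor bookkeeping point --- it is precisely the content of the lemma, so the proof is incomplete as written.

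The device that closes this gap in the paper is to stop reasoning leaf-by-leaf and instead choose a smooth vector field $Z$ on $\partial^{in}V$ transverse to $\cL^s$ with no singularity in $C$. Because $C$ is a strip, the $Z$-orbits foliate $C\cup(\text{accessible boundary of }C)$ by segments, each crossing from one boundary leaf to the other; this is a trivial foliation of a closed strip. Pushing this foliation forward by $\Gamma$ gives a foliation of $\Gamma(C)$, and the $\lambda$-lemma (applied exactly as you suggest, since the accessible boundary of $C$ lies in $W^s(\Lambda)$) shows that $\Gamma_*(Z)$ converges to the tangent direction of $\cL^u$ as one approaches the boundary of $\Gamma(C)$. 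Hence the pushed-forward foliation extends continuously to the closure of $\Gamma(C)$, still as a trivial foliation by compact segments. The two boundary curves of this trivially foliated closed strip are then automatically the two accessible-boundary leaves of $\Gamma(C)$, and they are asymptotic at both ends because the foliating segments shrink at the ends. So the transverse-foliation trick is exactly what replaces your ad hoc orbit-tracing and resolves the distinctness/asymptoticity bookkeeping you identified as the obstruction.
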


 \begin{proof}
 Using the fact that $\Ga$ is a diffeomorphism, up to reversing the flow of $X$, one is reduced to prove the following assertion:
 if $C$ is a strip then $\Ga(C)$ is a strip.

Assume that $C$ is a strip. First notice that $\Ga(C)$ is homeomorphic to $C$, hence to $\RR^2$.
 It remains to check that the accessible boundary consists in $2$ leaves.
 For that purpose, one considers a smooth vector field $Z$ on $\partial^{in}V$ transverse to $\cL^s$ and without singular point in $C$.
 Then the orbits of $Z$ restricted to the union of $C$ with its accessible boundary induces a trivial foliation
 by segments. Consider now the image of this foliation by $\Ga$.  This is a foliation of $\Ga(C)$.
 However, as $Z$ is transverse to the  boundary of $C$ which is contained in the stable manifold of the hyperbolic set $\La$,
 the $\lambda$-lemma (or a cone field argument) implies that $\Ga_*(Z)$ tends uniformly to the tangent direction to the
 unstable lamination $\cL^u$ when $\Ga(x)$ tends to the boundary of $\Ga(C)$.

 Thus one gets a foliation on the closure of $\Ga(C)$; this implies that $\Ga(C)$ is a strip, concluding.
 \end{proof}

 A very similar proof allows us to prove:
 
 \begin{lemm}
 A connected component $C$ of $\partial^{in} V\setminus \cL^s$ is an annulus bounded by two compact leaves (resp. a Mobius band bounded by one compact leaf) of $\cL^s$  if and only if  $\Ga(C)$ is an annulus bounded by two compact leaves  (resp. a Mobius band bounded by one compact leaf) of $\cL^u$.
 \end{lemm}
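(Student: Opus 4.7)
My plan is to mirror the proof of Lemma~\ref{l.exceptional-inout} almost verbatim, exploiting that $\Gamma$ is a diffeomorphism from $\partial^{in} V \setminus \cL^s$ onto $\partial^{out} V \setminus \cL^u$ that maps connected components to connected components. By playing $\Gamma$ against $\Gamma^{-1}$, it suffices to prove one direction: if $C$ is an annulus bounded by two compact leaves $\gamma_1, \gamma_2$ of $\cL^s$ (respectively a M\"obius band bounded by one compact leaf $\gamma$), then $\Gamma(C)$ is of the same type. That $\Gamma(C)$ is topologically an open annulus (resp.\ M\"obius band) is automatic from $\Gamma|_C$ being a diffeomorphism; the nontrivial point is the structure of the accessible boundary of $\Gamma(C)$.

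To control this boundary, I would build a $1$-dimensional foliation $\cF$ on a neighborhood of $\overline C$ in $\partial^{in} V$, transverse to $\cL^s$, which restricts on $\overline C$ to a trivial ``product'' foliation by arcs: in the annulus case, arcs running from $\gamma_1$ to $\gamma_2$ (for instance the gradient lines of a height function with $\gamma_1,\gamma_2$ as level sets); in the M\"obius case, arcs given by the fibers of the $I$-bundle structure of the closed M\"obius band $\overline C$ over its core circle, whose two endpoints both lie on $\gamma$. In either case the leaves of $\cF|_C$ are arcs parametrized by a circle.

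The diffeomorphism $\Gamma$ transports $\cF$ to a foliation $\Gamma_*\cF$ of $\Gamma(C)$ by arcs. Exactly as in the proof of Lemma~\ref{l.exceptional-inout}, the $\lambda$-lemma (or a cone-field argument) forces the tangent direction of $\Gamma_*\cF$ to converge uniformly to the tangent direction of $\cL^u$ as one approaches the accessible boundary of $\Gamma(C)$ from inside. Hence $\Gamma_*\cF$ extends continuously to a foliation of $\overline{\Gamma(C)}$ whose extra (boundary) leaves are exactly the leaves of $\cL^u$ forming the accessible boundary of $\Gamma(C)$.

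To conclude, I read off the boundary from the topology of this extended foliation: since its interior leaves are arcs parametrized by $S^1$, each with two endpoints converging to the accessible boundary, this accessible boundary is a disjoint union of circles sitting inside $\cL^u$ -- two circles in the annulus case (one per end of the arcs) and one in the M\"obius case. Each such bounding circle is connected and contained in $\cL^u$, hence contained in a single leaf of $\cL^u$; since any non-compact leaf of $\cL^u$ is homeomorphic to $\RR$ and so cannot contain a circle, that leaf must be compact. This yields exactly the statement of the lemma. The one genuinely delicate point beyond the bookkeeping above is the $\lambda$-lemma extension of $\Gamma_*\cF$ to $\overline{\Gamma(C)}$, but this is precisely the technical heart of the already-proved Lemma~\ref{l.exceptional-inout} and requires no new idea.
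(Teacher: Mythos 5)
The paper gives no proof of this lemma beyond the remark ``A very similar proof allows us to prove,'' pointing back to Lemma~\ref{l.exceptional-inout}. Your proposal correctly fills in that outline and takes exactly the approach the authors intend: reduce to one implication using $\Gamma^{-1}$, note that $\Gamma|_C$ is a diffeomorphism so $\Gamma(C)$ has the right topology, transport a product foliation of $\overline C$ transverse to $\cL^s$ by arcs, and invoke the $\lambda$-lemma to extend $\Gamma_*\cF$ continuously to $\overline{\Gamma(C)}$ so that its accessible boundary is swept out by the endpoints. Your final step (the bounding circles are tangent to $\cL^u$, hence lie in single leaves, hence those leaves are compact since a circle cannot embed in $\RR$) is sound, though it tacitly uses that the extended foliation has a local product structure at the boundary to get that the endpoint map $S^1\to\cL^u$ is injective and hence that the boundary is an embedded circle; worth a sentence in a full write-up, but not a gap. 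Correct, same strategy as the paper.
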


\subsection{Filling MS-lamination, and pre-foliation}

If a hyperbolic plug can be embedded in an Anosov flow, the stable and unstable manifolds of its maximal invariant set is a sub-lamination of
the stable and unstable foliations, respectively. This leads to restrictions on its stable and unstable laminations.

\begin{defi}
We say that a lamination $\cL$ of a closed surface $S$ is a \emph{pre-foliation} if it can be completed as
a foliation of $S$. Notice that this implies that every connected component of $S$ is either
the torus $\TT^2$ or the Klein bottle $\KK$.
\end{defi}

As a direct consequence of Proposition~\ref{p.hyperbolicplug}, Lemmas~\ref{l.exceptional} and~\ref{l.exceptional-inout}, one obtains:

\begin{lemm}
\label{l.pre-foliation}
Let $(V,X)$ be an hyperbolic plug.  The lamination $\cL^s$ is a pre-foliation if and only if every exceptional
component of $\partial^{in} V\setminus \cL^s$ is either an annulus or a Mobius band bounded by compact leaves of $\cL^s$.
Furthermore,
$$\cL^s \mbox{  is a pre-foliation if and only if } \cL^u \mbox{  is a pre-foliation }.$$
\end{lemm}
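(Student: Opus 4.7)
The plan is to analyze each connected component of $\partial^{in}V\setminus\cL^s$ individually and then transfer the result from $\cL^s$ to $\cL^u$ via the crossing map. Recall from Proposition~\ref{p.hyperbolicplug} that $\cL^s$ is an MS-lamination, and from Lemma~\ref{l.exceptional} that its complementary components consist of strips plus finitely many exceptional components. The central observation is that $\cL^s$ extends to a foliation of $\partial^{in}V$ if and only if each complementary component $C$ can be filled with a smooth nonsingular foliation whose leaves along the accessible boundary of $C$ coincide with the corresponding leaves of $\cL^s$; these local foliations patch together because they can all be chosen transverse to a common smooth auxiliary vector field.

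For the direction $(\Leftarrow)$, I would verify that strips and annuli/Möbius bands bounded by compact leaves always admit such a filling. In a strip, pick parameterizations of the two asymptotic boundary leaves compatible with their pairwise asymptotic behavior and interpolate by smooth parallel arcs. In an annulus bounded by two compact leaves (resp. a Möbius band bounded by one compact leaf), foliate by parallel circles homotopic to the boundary. For $(\Rightarrow)$, suppose $\cL^s$ extends to a foliation $\cF$ and let $C$ be an exceptional component. I would first argue that the accessible boundary of $C$ consists only of compact leaves: a non-compact leaf in the accessible boundary is, by MS-property~(ii), asymptotic to compact leaves at its two ends, and combining this with the hypothesis that $C$ is not a strip forces all accessible boundary leaves to be compact. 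Then $\overline{C}$ is a compact surface whose boundary is a disjoint union of circles, and $\cF|_{\overline{C}}$ is a nonsingular foliation tangent to $\partial\overline{C}$. By Poincaré--Hopf, $\chi(\overline{C})=0$, and since $\partial\overline{C}$ is nonempty, $\overline{C}$ must be either an annulus or a Möbius band, as desired.

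For the second equivalence, I would invoke Lemma~\ref{l.exceptional-inout} together with the subsequent lemma: the crossing diffeomorphism $\Gamma$ sends strips in $\partial^{in}V\setminus\cL^s$ bijectively to strips in $\partial^{out}V\setminus\cL^u$, and annuli (resp. Möbius bands) bounded by compact leaves of $\cL^s$ to annuli (resp. Möbius bands) bounded by compact leaves of $\cL^u$. Hence the criterion established in the first equivalence is satisfied by $\cL^s$ if and only if it is satisfied by $\cL^u$, which yields the second equivalence.

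The main obstacle is the careful verification, in the direction $(\Rightarrow)$, that the accessible boundary of an exceptional component consists only of compact leaves. A non-compact leaf in this boundary accumulates onto compact leaves at both ends, and one has to show that such accumulation is incompatible with $C$ being an exceptional (i.e. non-strip) component; this requires a careful argument on how strips of $\cL^s$ attach to compact leaves, using the holonomy contraction from property~(iii) of MS-laminations. Once this is established, the rest of the proof is a routine combination of Poincaré--Hopf and the elementary constructions of foliations on strips, annuli, and Möbius bands.
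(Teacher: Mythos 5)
Your $(\Leftarrow)$ direction and the transfer between $\cL^s$ and $\cL^u$ via Lemma~\ref{l.exceptional-inout} and the lemma that follows it are sound, but the $(\Rightarrow)$ direction has a genuine gap that you flag without closing, and which, as you have set things up, cannot be closed. The claim you need is: if $\cL^s$ extends to a foliation $\cF$ and $C$ is an exceptional component, then the accessible boundary of $C$ contains no non-compact leaf. As a statement about arbitrary MS-laminations this is simply false. Take a zipped Reeb foliation $\cF$ of $\TT^2$ with its unique compact leaf $\gamma$ (Example~\ref{e.zipped-Reeb}) and set $\cL := \gamma \cup \ell$ for a single non-compact leaf $\ell$ of $\cF$. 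Then $\cL$ satisfies properties~(i),~(ii),~(iii) of Definition~\ref{d.MS}; it is a pre-foliation by construction; and $\TT^2\setminus\cL$ has a single component $C$, a disc whose accessible boundary is the one non-compact leaf $\ell$ (approached from both sides) --- an exceptional component that is neither an annulus nor a M\"obius band bounded by compact leaves. So ``$\ell$ asymptotic to compact leaves at both ends, plus contracting holonomy, plus $C$ not a strip'' cannot force a contradiction; the conclusion does not follow from the MS axioms alone, and your sketch invokes nothing beyond them. What actually excludes this pathology for an entrance lamination $\cL^s$ of a hyperbolic plug is the rigidity of $W^s(\La)$ coming from the maximal invariant set (boundary leaves of $W^s(\La)$ are finitely many and realised by stable manifolds of periodic orbits with a free unstable separatrix, cf.\ Remark~\ref{r.free-2}; there is no non-compact leaf of $\cL^s$ isolated on both sides). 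None of this enters your argument.

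Your route is also different from the one the paper implicitly intends. The lemma is stated as a direct consequence of Proposition~\ref{p.hyperbolicplug} and Lemmas~\ref{l.exceptional} and~\ref{l.exceptional-inout}: in the proof of Lemma~\ref{l.exceptional} one introduces a Morse--Smale vector field $Z$ transverse to the lamination, and when $\cL^s$ is a pre-foliation one may in fact choose $Z$ transverse to the completing foliation $\cF$, hence non-singular. Every component that is neither a strip nor an annulus bounded by two compact leaves must then contain a $Z$-periodic orbit; that orbit is essential because it is transverse to a foliation of the torus or Klein bottle, and from there one reads off that the component is an annulus or M\"obius band bounded by compact leaves. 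Your Poincar\'e--Hopf computation on $\overline C$ is a clean way to finish once the accessible boundary of $C$ is known to be a finite union of compact leaves, but establishing that fact is exactly where the content of the lemma lies.
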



The components of $\partial^{in} V\setminus \cL^s$ (resp. $\partial^{out} V\setminus \cL^u$) which are annuli or Mobius band will sometimes lead to specific difficulties.
For this reason we introduce a more restrictive notion:

\begin{defi}\label{s.lamination} 
A lamination $\cL$ on a closed surface $S$ is called a \emph{filling MS-lamination} if:
\begin{itemize}
 \item this is a MS-lamination (see definition~\ref{d.MS}), 
 \item $S\setminus \cL$ has no exceptional component (in other words, every connected component of $S\setminus \cL$ is a strip, see definition~\ref{d.strip-exceptionnal} and figure~\ref{f.filling-MS}).
\end{itemize}
\end{defi}

\begin{figure}[htp]
\begin{center}
  \includegraphics[totalheight=5cm]{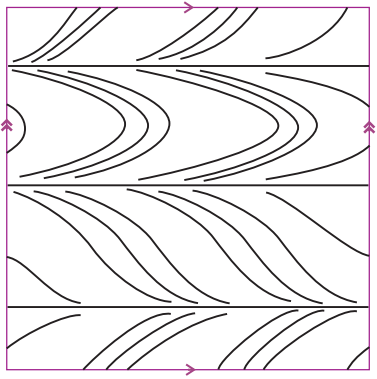}\\
  \caption{A filling MS Lamination}\label{f.filling-MS}
  \end{center}
\end{figure}

\begin{lemm}
\label{l.embed-in-foliation}
Let $\cL$ be a filling MS-lamination of a closed surface $S$.  Then $\cL$ is  a prefoliation. In particular, every component of $S$ is either a torus $\TT^2$ or a Klein bottle $\KK$.
Furthermore, if $\cF$ is a foliation containing $\cL$ as a sub-lamination, then $\cF$ is a MS-foliation.
Finally, two foliations containing $\cL$ as a sublamination are topologically conjugated.
\end{lemm}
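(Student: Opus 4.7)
The plan is to treat the four assertions in sequence. Items~1 and~2 reduce at once to Lemma~\ref{l.pre-foliation}: by Definition~\ref{s.lamination}, every connected component of $S\setminus\cL$ is a strip in the sense of Definition~\ref{d.strip-exceptionnal}, so $S\setminus\cL$ has no exceptional component; the lemma then yields that $\cL$ is a pre-foliation, and the remark built into the definition of a pre-foliation forces every connected component of $S$ to be either a torus $\TT^2$ or a Klein bottle $\KK$.

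For item~3, let $\cF$ be a foliation containing $\cL$; the extra leaves of $\cF$ lie inside the strips of $S\setminus\cL$. I would first rule out new compact leaves: a compact leaf of $\cF$ contained in a strip $C\cong\RR^2$ would bound a disc foliated without singularities by $\cF|_C$, contradicting Poincar\'e--Hopf. Hence the compact leaves of $\cF$ are exactly those of $\cL$, that is, finitely many leaves each carrying its contracting orientation. It remains to check that every half-leaf of $\cF$ is asymptotic to a compact leaf. Half-leaves contained in $\cL$ are handled by the MS hypothesis on $\cL$. For a half-leaf inside a strip $C$ with boundary leaves $L_1,L_2$ asymptotic at both ends, I would use a smooth vector field on $C$ transverse to $\cF|_C$ (in the spirit of the proof of Lemma~\ref{l.exceptional}) to show that the leaf of $\cF$ through it is a properly embedded line running from one end of $C$ to the other; the half-leaf then accumulates on $L_1\cup L_2$ near that end, and since $L_1,L_2$ are themselves asymptotic to compact leaves of $\cL$, so is the half-leaf.

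For item~4, given two foliations $\cF_1,\cF_2$ extending $\cL$, they agree on $\cL$ and induce, on each strip $C$ together with its boundary leaves $L_1,L_2$, two foliations sharing $L_1,L_2$ as boundary leaves and having interior leaves running from one end of $C$ to the other. I would argue that any two such foliations of $C\cup L_1\cup L_2$ are conjugate by a homeomorphism of $C\cup L_1\cup L_2$ that is the identity on $L_1\cup L_2$: choosing a properly embedded arc transverse to each foliation and joining $L_1$ to $L_2$, one parametrizes both leaf spaces by the same closed interval and transports leaf onto leaf. Gluing these strip-by-strip conjugations with the identity on $\cL$ yields a global homeomorphism of $S$ sending $\cF_1$ onto $\cF_2$.

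The principal technical obstacle is the construction of the boundary-preserving conjugation on each strip, where one must carefully control the homeomorphism near the two cusp-like ends at which $L_1$ and $L_2$ become arbitrarily close to each other. Once this local uniqueness is established, the global assembly for items~3 and~4 is routine bookkeeping.
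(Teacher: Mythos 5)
Your proposal follows the same route as the paper's (very brief) sketch: foliate the strips of $S\setminus\cL$, observe that each strip admits an essentially unique foliation up to conjugacy supported in the strip, and derive the torus/Klein-bottle restriction from Poincar\'e--Hopf. You fill in noticeably more detail than the paper, and you correctly isolate the cusp-end control as the technical crux of the uniqueness step.

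Two small remarks. First, Lemma~\ref{l.pre-foliation} is stated for the entrance lamination of a hyperbolic plug, whereas here $\cL$ is an abstract filling MS-lamination; the content transfers, but it is cleaner to argue directly (as the paper does) by foliating each strip. Second, in showing that $\cF$ is an MS-foliation, you assert that the compact leaves of $\cF$ ``each carry their contracting orientation,'' but one still needs to check that the \emph{$\cF$-holonomy} (defined on a full transversal) along such a leaf is a contraction, not merely the $\cL$-holonomy. This does follow from what you have set up: on a small transversal through a compact leaf $\gamma$, the $\cF$-holonomy restricts to the $\cL$-holonomy on $\cL$-points (a contraction by hypothesis), and on the complementary open gaps it is a monotone interpolation with no fixed points, since a fixed point would produce a closed leaf of $\cF$ inside a strip, which your Poincar\'e--Hopf argument excludes; hence $h(x)<x$ on the whole transversal near $\gamma$. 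Worth a sentence to close the loop.
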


\begin{proof}[Idea for the proof] 
One just need to foliate each connected component of the complement of $\cL$. Such a component is  strip
bounded by two asymptotic leaves.
There is a unique way to foliate such a strip up to topological conjugacy.
\end{proof}

Once again, as a consequence  of  Proposition~\ref{p.hyperbolicplug}  and
Lemmas~\ref{l.exceptional} and~\ref{l.exceptional-inout}, one easily shows:

\begin{lemm}\label{l.simple}
Let $(V,X)$ be a hyperbolic plug. The stable lamination $\cL^s$ is a filling MS-lamination
if and only if $\partial^{in}V\setminus \cL^s$ has no exceptional component.
The entrance lamination $\cL^s$ is a filling MS-lamination if and only if $\cL^u$ is filling MS-lamination.
\end{lemm}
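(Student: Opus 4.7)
The plan is to deduce both equivalences in the lemma directly from results already at hand, since all the substantive content has been placed in Proposition~\ref{p.hyperbolicplug} and Lemma~\ref{l.exceptional-inout}; the work here amounts to combining them correctly.

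First I would observe that, by Proposition~\ref{p.hyperbolicplug}, the entrance lamination $\cL^s$ of any hyperbolic plug automatically satisfies the three defining properties of an MS-lamination listed in Definition~\ref{d.MS}. Comparing this with Definition~\ref{s.lamination}, being a \emph{filling} MS-lamination adds exactly one further requirement, namely that $\partial^{in}V\setminus\cL^s$ contain no exceptional component. This immediately yields the first equivalence claimed in the lemma: $\cL^s$ is a filling MS-lamination if and only if $\partial^{in}V\setminus\cL^s$ has no exceptional component. The same argument applied to the plug $(V,-X)$, whose entrance lamination is precisely $\cL^u$ by definition, will give the symmetric statement that $\cL^u$ is a filling MS-lamination if and only if $\partial^{out}V\setminus\cL^u$ has no exceptional component.

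To finish, I then need only to prove the purely combinatorial equivalence
$$
\partial^{in}V\setminus\cL^s \text{ has no exceptional component} \iff \partial^{out}V\setminus\cL^u \text{ has no exceptional component.}
$$
This is where the crossing map $\Gamma\colon \partial^{in}V\setminus\cL^s\to\partial^{out}V\setminus\cL^u$ introduced in Section~\ref{ss.crossing-map} comes into play. Since $\Gamma$ is a diffeomorphism, it induces a bijection between the sets of connected components of $\partial^{in}V\setminus\cL^s$ and $\partial^{out}V\setminus\cL^u$. By Lemma~\ref{l.exceptional-inout}, under this bijection a component $C$ is a strip if and only if its image $\Gamma(C)$ is a strip. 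Hence exceptional components correspond bijectively to exceptional components, and the two no-exceptional-component conditions are equivalent.

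Combining these two reductions, both equivalences of the lemma follow at once. There is no real obstacle in this argument; the only point that requires a touch of care is to verify that the notion of \emph{entrance} lamination for $(V,-X)$ coincides with $\cL^u$ for $(V,X)$ (so that Proposition~\ref{p.hyperbolicplug} may be applied symmetrically to $\cL^u$), but this is immediate from the definitions since reversing the flow interchanges $\partial^{in}V$ with $\partial^{out}V$ and $W^s(\Lambda)$ with $W^u(\Lambda)$.
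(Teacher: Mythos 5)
Your proof is correct and follows exactly the route the paper intends: the paper simply cites Proposition~\ref{p.hyperbolicplug} and Lemmas~\ref{l.exceptional} and~\ref{l.exceptional-inout} as the proof, and your write-up just spells out how those pieces fit together (the reduction to the no-exceptional-component condition via Proposition~\ref{p.hyperbolicplug} and Definition~\ref{s.lamination}, then the correspondence of strips under the crossing map from Lemma~\ref{l.exceptional-inout}). The only cited ingredient you did not explicitly invoke is Lemma~\ref{l.exceptional}, which is not actually needed for this particular equivalence, so no harm there.
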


If $\cL^s$ and $\cL^u$ are filling MS-laminations, we will speak say that $(V,X)$ is a \emph{hyperbolic plugs with filling MS-laminations}.

\subsection{Hyperbolic plugs with pre-foliations and invariant foliation}

Let $(V,X)$ be a hyperbolic plug so that the entrance lamination $\cL^s_X$ and the exit lamination $\cL^u_X$ are pre-foliations.
The following lemma shows that every foliation on $\partial^{out}V$ transverse to $\cL^u$ extends to an $X$-invariant foliation
of $V$ containing $W^s(\La)$ as a sublamination.

\begin{lemm}\label{l.invariantfoliations} 
Let $F^s$ be a foliation on $\partial^{out}V$ which is transverse to $\cL^u$. Then $F^s$ extends on $V$ in an invariant  foliation $\cF^s$ with two-dimensional leaves containing $W^s(\La)$ as a sublamination.  In particular,
$\cF^s\cap \partial^{in}V$ is a foliation which extends $\cL^s$.
\end{lemm}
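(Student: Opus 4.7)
The plan is to construct $\cF^s$ explicitly by flowing the foliation $F^s$ backward and extending across $W^s(\La)$ by the weak stable manifolds. For $p \in V \setminus W^s(\La)$ the forward $X$-orbit of $p$ exits $V$ at a unique point $q(p) \in \partial^{out}V \setminus \cL^u$, and I declare the leaf $\cF^s(p)$ to be the surface swept out by flowing the leaf $F^s(q(p))$ of $F^s$ backward inside $V$ (equivalently, the set of points of $V$ whose forward orbit hits $F^s(q(p))$). For $p \in W^s(\La)$, I set $\cF^s(p) := W^s(p)$. By construction the resulting decomposition of $V$ is $X$-invariant, contains $W^s(\La)$ as a sublamination, and traces $F^s$ on $\partial^{out}V$.

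The content is then to verify that $\cF^s$ is a continuous $2$-dimensional foliation. On $V \setminus W^s(\La)$ this is routine: the crossing map restricts to a diffeomorphism $\Gamma : \partial^{in}V \setminus \cL^s \to \partial^{out}V \setminus \cL^u$, so combining small arcs transverse to $F^s$ near $q(p_0)$ with the backward flow inside $V$ furnishes a local product structure adapted to $\cF^s$ near any $p_0 \in V \setminus W^s(\La)$.

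The main obstacle is continuity of the tangent plane field of $\cF^s$ at a point $p_0 \in W^s(\La)$. Say $p_0$ lies in $W^s(x_0)$ with $x_0 \in \La$. For a nearby $p \in V \setminus W^s(\La)$, the forward orbit of $p$ shadows that of $p_0$ near $\La$ for an arbitrarily long time before exiting $V$; accordingly, $q(p)$ approaches a point $q_0$ on the leaf $W^u(x_0) \cap \partial^{out}V$ of $\cL^u$, and the exit time tends to $+\infty$. Since $F^s(q(p))$ is transverse to $\cL^u$ (hence to $W^u(\La)$) at $q(p)$, the $\lambda$-lemma for hyperbolic flows (applied in the direction $E^u$, contracted by backward time) gives that the backward iterates $X^{-t}(F^s(q(p)))$ converge in the $C^1$ topology, on any compact neighborhood of $p_0$, to pieces of $W^s(x_0) = \cF^s(p_0)$ as $p \to p_0$. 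In particular the tangent plane to $\cF^s$ at any $y$ converges to that of $W^s(x_0)$ at $p_0$ when $y \to p_0$; combined with the previous paragraph, this shows that $\cF^s$ is a continuous $2$-dimensional foliation of $V$.

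Finally, because $\cF^s$ is $X$-invariant and $X$ is transverse to $\partial V$, the intersection $\cF^s \cap \partial^{in}V$ is a $1$-dimensional foliation of $\partial^{in}V$. It contains $\cL^s = W^s(\La) \cap \partial^{in}V$ as a sublamination, and hence extends $\cL^s$ as required.
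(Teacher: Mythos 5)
Your proof is correct and follows essentially the same route as the paper's: define the leaves on $V\setminus W^s(\La)$ by flowing the leaves of $F^s$ backward, declare the stable manifolds $W^s(x)$ as the leaves across $W^s(\La)$, and invoke the $\lambda$-lemma to get $C^1$-convergence of the flowed leaves to the stable leaves (hence continuity of the plane field at points of $W^s(\La)$). The paper phrases the $\lambda$-lemma step slightly differently — it considers a compact, $C^1$-continuous family of leaf segments of $F^s$ centered at points of $\cL^u$ and their negative orbits — but this is the same argument packaged another way.
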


\begin{proof} 
First notice that $V\setminus W^s(\La)$ is the (backwards) $X$-orbit of the set $\partial^{out}V$. The $X$-orbits of the leaves of $F^s$ are the leaves of a foliation $\cF^s_0$ of $V\setminus W^s(\La)$. The foliation $\cF^s_0$ is tangent to $X$ and therefore transverse to $\partial^{in}V$. It induces a $1$-foliation on $\partial^{in}V\setminus \cL^s$.

Thus, it is enough to check that the leaves of $\cF^s_0$ tend to the leaves of $W^s(\La)$. Notice that a point in $V\setminus W^s(\La)$
in a very small neighborhood of $W^s(\La)$ has its positive orbit which meets $\partial^{out} V$ at a point very close to $\cL^u$.  Thus, for
proving that $\cF^s_0$ extends by continuity by $W^s(\La)$, it is enough to consider the negative orbits through small segments of leaves of
$F^s$ centered at points of $\cL^u$.

Thus we fix $\varepsilon>0$ and we consider the family of segments of leaves of $F^s$ centered at the points   of $\cL^u$.  It is a
$C^1$-continuous family of segments parameterized by a compact set. We consider the negative orbits by the flow of  $X$ of these segments.
We need to prove that these orbits tends to the stable leaves of $\La$ as the time tends to $-\infty$. That is the classical $\lambda$-lemma.
\end{proof}

\begin{lemm} 
Every $X$-invariant $\cC^{0,1}$ foliation $\cF$ containing $W^s(\La)$ as a sublamination induces on
$\partial^{out}V$ a one-dimensional foliation $\cL$ transverse to $\cL^u$.
\end{lemm}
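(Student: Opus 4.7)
The plan is to prove separately that (a) $\cL:=\cF\cap \partial^{out}V$ is genuinely a one-dimensional foliation, and (b) it is everywhere transverse to $\cL^u$.

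For (a), the $X$-invariance of $\cF$ forces every leaf $F$ of $\cF$ to be tangent to $X$, so $T_p F \supset \RR X(p)$ at each $p$. Since $X$ is transverse to $\partial^{out} V$, the two-plane $T_p F$ meets $T_p \partial^{out} V$ in exactly a line. The continuity of the tangent plane field of $\cF$ (part of the definition of $\cC^{0,1}$) makes this line field continuous on $\partial^{out} V$, and since the leaves of $\cF$ are $C^1$, integrating yields the foliation $\cL$.

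For (b), I argue by contradiction. Fix $p\in \cL^u$ and pick $x\in \La$ with $p\in W^u(x)$. Write $F_p$ for the leaf of $\cF$ through $p$. Both $T_p F_p$ and $T_p W^u(x)$ are two-planes containing $X(p)$, and their intersections with $T_p \partial^{out}V$ are the lines $T_p\cL$ and $T_p \cL^u$ respectively. If $T_p\cL = T_p\cL^u$, then $T_p F_p$ and $T_p W^u(x)$ share $\RR X(p)$ together with a line transverse to $X(p)$, so $T_p F_p = T_p W^u(x)$. Now propagate by the flow: $X$-invariance of $\cF$ and of $W^u(\La)$ (with $W^u(x)$ sent to $W^u(X^t(x))$) gives
\[
T_{X^t(p)} F_p \;=\; DX^t\bigl(T_p F_p\bigr) \;=\; DX^t\bigl(T_p W^u(x)\bigr) \;=\; T_{X^t(p)} W^u(X^t(x)) \qquad\text{for all } t\le 0.
\]
Since $p\in W^u(x)$, the negative orbit of $p$ accumulates on the orbit of $x$ in $\La$, so along a suitable sequence $t_n\to-\infty$ one has $X^{t_n}(p)\to y$ for some $y\in\La$. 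Continuity of the tangent plane field of $\cF$ combined with $W^s(\La)$ being a sublamination of $\cF$ (hence $T_z\cF = T_z W^s$ for every $z\in W^s(\La)$, in particular for $z=y\in\La$) gives the limit $T_y W^s$; continuity of the tangent plane field of $W^u(\La)$ along the unstable lamination gives the limit $T_y W^u$. Hence $T_y W^s = T_y W^u$, contradicting the everywhere transversality of $W^s(\La)$ and $W^u(\La)$ stated in the hyperbolic-plug background.

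The main obstacle is the asymptotic tangent plane computation at the end: one must be careful that the $\cC^{0,1}$ regularity of $\cF$ guarantees genuine continuity of the ambient plane field $T\cF$ on $V$ (and not merely continuity along individual leaves), so that the limit of $T_{X^{t_n}(p)}\cF$ really is $T_y\cF=T_y W^s$. Everything else is a packaging of $X$-invariance, transversality of $X$ to $\partial^{out}V$, and the standard transversality of stable and unstable laminations of a hyperbolic set.
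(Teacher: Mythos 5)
Your proof is correct and is essentially the paper's argument written in contrapositive form. The paper proceeds directly: since $T\cF = TW^s(\La)$ at every point of $\La$ and the stable and unstable laminations are transverse there, the $\cC^{0,1}$ regularity of $\cF$ (openness of transversality) gives a neighborhood $\cO$ of $\La$ on which $\cF$ is transverse to $W^u(\La)$; every point of $\cL^u$ lies on an $X$-orbit passing through $\cO$, and flow-invariance of both $\cF$ and $W^u(\La)$ propagates transversality out to $\partial^{out}V$. Your version instead propagates a hypothetical tangency backward along a subsequence $t_n\to -\infty$ and contradicts transversality on $\La$; the content is identical, and the paper's direct phrasing merely avoids the subsequence extraction and the limit-of-planes bookkeeping that you flag as the main obstacle.
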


\begin{proof} 
As $\cF$ is $X$-invariant, it is transverse to $\partial^{out}V$. Thus it induces a one-dimensional foliation
$\cL$ on $\partial^{out}V$. Furthermore, as $\cF$ is $\cC^{0,1}$ and contains $W^s(\La)$ which is transverse to $W^u(\La)$ along $\La$,
one gets that $\cF$ is transverse to $W^u(\La)$ in a neighborhood $\cO$ of $\La$.
Now every point $x$ of $\cL^u$ is on the orbit of $X$ of a point in $\cO$. The $X$-invariance of both $W^s(\La)$ and
$\cF$ implies that $\cF$ and $W^u(\La)$
are transverse at $x$. One deduces that $\cL$ is transverse to $\cL^u$.
\end{proof}

Thus, the $2$-dimensional $X$-invariant foliations on $V$ containing $W^s(\La)$ as a sublamination are in
one to one correspondance with the $1$-dimensional
foliation on $\partial^{out}V$ transverse to $\cL^u$.

This shows in particular that $V$ admits many invariant foliations.
These foliations will help us to recover the hyperbolic structure when we will glue the exit with the entrance boundaries. For that we
need to control the expansion/contraction properties of the crossing  along the directions tangent to these foliations.
This is the aim of next lemma:

\begin{lemm}\label{l.stablefoliation} Let $\cF^s$ be an invariant $\cC^{0,1}$-foliation on $V$ containing $W^s(\La)$, and let $F^s_{in}$ denote the
intersection  $\cF^s\cap \partial^{in}V$.  Then the derivative $\Ga_*$ of the crossing map contracts arbitrarily
uniformly the vectors tangent to $F^s_{in}$ in small neighborhoods of $\cL^s$. More precisely, for every $\varepsilon>0$ there is $\delta>0$ so that,
given any $x\in\partial^{in}V\setminus \cL^s$ with $d(x,\cL^s)<\delta$, given any vector $u\in T_x\partial^{in}V$
tangent to the leaf of $F^s_{in}$ one has:
$$\|\Ga_*(u)\|<\varepsilon\|u\|.$$
\end{lemm}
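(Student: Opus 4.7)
The strategy is to exploit that as $x\to\cL^s$ the exit time $T(x)$ tends to $+\infty$ while the forward orbit of $x$ spends almost all of that time in a prescribed small neighborhood of $\La$; on such a neighborhood, hyperbolic contraction in the stable direction takes effect on vectors tangent to $\cF^s$. The derivative of the crossing map projects along $X$, so only the transverse-to-flow part of $DX^{T(x)}(u)$ matters, which is precisely what the hyperbolic dynamics contracts.

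The first ingredient is an invariant stable cone field containing the tangent planes to $\cF^s$. Because $\cF^s$ contains $W^s(\La)$ as a sublamination and is $\cC^{0,1}$, its tangent $2$-plane field equals $E^s\oplus\RR X$ at every point of $\La$ and depends continuously on the basepoint. Combined with the hyperbolicity of $\La$, this produces a neighborhood $\cO$ of $\La$ in $V$ and a continuous cone field $C^s$ around $E^s\oplus\RR X$ on $\cO$ with two properties: (a) $T\cF^s\subset C^s$ on $\cO$; (b) there exists $\lambda\in(0,1)$ such that whenever $X^s(q)\in\cO$ for every $s\in[0,t]$, the flow map $DX^t$ induces on the quotient bundle $TV/\RR X$ a linear contraction of factor at most $\lambda^t$ on the image of $C^s(q)$. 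The second ingredient is a dwell-time estimate: since $\cL^s$ is compact and each $y\in\cL^s$ has its forward orbit converging to $\La$, a standard compactness and continuity argument produces $T_0>0$ such that any $x\in\partial^{in}V$ sufficiently close to $\cL^s$ satisfies $X^t(x)\in\cO$ for every $t\in[T_0,T(x)-T_0]$, with $T(x)\to\infty$ as $d(x,\cL^s)\to 0$. The bounded entrance window $[0,T_0]$ reflects the rate at which orbits of $\cL^s$ enter $\cO$; the bounded exit window relies on the fact that $\La$ has only finitely many free unstable separatrices (Remark~\ref{r.free-2}), each of which reaches $\partial^{out}V$ in bounded time after leaving $\cO$.

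With these two ingredients, the proof is a direct computation. For $u\in T_x F^s_{in}$, one has $u\in T_x\cF^s$ and by $X$-invariance $DX^t(u)\in T_{X^t(x)}\cF^s$ for every $t\in[0,T(x)]$. The derivative of the crossing map is $\Gamma_*(u)=\pi_x\bigl(DX^{T(x)}(u)\bigr)$, where $\pi_x\colon T_{\Ga(x)}V\to T_{\Ga(x)}\partial^{out}V$ is the projection parallel to $X(\Ga(x))$; it factors through the quotient $TV/\RR X$ and has operator norm uniformly bounded in $\Ga(x)\in\partial^{out}V$ by transversality of $X$ to $\partial V$. Tracking the quotient-bundle norm of $DX^t(u)$: on each window $[0,T_0]$ and $[T(x)-T_0,T(x)]$ it can change by at most a uniform multiplicative constant $C$ (bounded time, compact base); on the middle window $[T_0,T(x)-T_0]$ the orbit lies in $\cO$ and $DX^{T_0}(u)$ is tangent to $\cF^s$, so its image in the quotient sits in $C^s$ and is contracted by $\lambda^{T(x)-2T_0}$. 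Combining with the bounded norm of $\pi_x$ and the uniform equivalence between $\|\cdot\|_{T\partial V}$ and the quotient norm yields $\|\Gamma_*(u)\|\leq C'\lambda^{T(x)-2T_0}\|u\|$. Given $\varepsilon>0$, choose $T_1$ with $C'\lambda^{T_1}<\varepsilon$ and then $\delta$ small enough so that $d(x,\cL^s)<\delta$ forces $T(x)-2T_0\geq T_1$.

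The main technical obstacle is the uniform bound $T_0$ on the exit window: one must show that orbits close to $\cL^s$ at time $0$ reach $\partial^{out}V$ in uniformly bounded time after leaving $\cO$ for the last time. This uses the $\lambda$-lemma together with the finiteness of free unstable separatrices of $\La$ (Remark~\ref{r.free-2}), plus a compactness argument on $V\setminus\cO$. A secondary care-point is the cone-versus-contraction bookkeeping in the quotient bundle $TV/\RR X$, which must be carried out so that the contraction factor $\lambda$, the norm equivalences, and the operator norm of $\pi_x$ combine into a single uniform constant independent of $x$.
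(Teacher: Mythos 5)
Your proof is correct, and it uses the same structural decomposition as the paper (two bounded boundary windows plus a long middle segment in a filtrating neighborhood of $\La$ where hyperbolicity accumulates), but the local mechanism you invoke is genuinely different. The paper avoids cone fields and the quotient bundle entirely: it tracks the Jacobian $J^s_t(x)$ of $DX_t$ restricted to the plane $E^{cs}=T\cF^s$, observes that this area is contracted by $\lambda^t$ on a filtrating neighborhood of $\La$, and then exploits a one-line metric trick --- choose the metric so that $X$ is orthogonal to $\partial V$ with $\|X\|=1$, so that for a unit vector $u$ tangent to $F^s_{in}$ one has literally $\|\Ga_*(u)\|=J^s_{\tau(x)}(x)$. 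This collapses the cone bookkeeping, the quotient-norm equivalences, and the bound on the projection $\pi_x$ into a single identity, because the flow component $X$ has norm $1$ at both ends and the transverse component at the exit is exactly what the area measures. Your cone-field/quotient-bundle version buys nothing here and costs several extra uniformity estimates, though it is closer to the standard template and perhaps easier to believe without the metric normalization.

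One remark on the point you flag as the "main technical obstacle", the uniform bound $T_0$ on the exit window. You do not need the $\lambda$-lemma or the finiteness of free separatrices (Remark~\ref{r.free-2}) for this. Since $\cU$ is a filtrating neighborhood of the maximal invariant set $\La$, every point of the exit boundary $\partial^{out}\cU$ has a finite forward exit time from $V$ (its forward orbit never re-enters $\cU$, hence cannot accumulate on $\La\subset\mathrm{int}\,\cU$), and this exit time is a continuous function on the compact set $\partial^{out}\cU$, hence bounded. That is the uniform $T_0$, and it is the argument the paper tacitly uses when it refers to "two bounded segments." The more elaborate route you sketch is not wrong, but it solves a problem that compactness already handles.
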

\begin{proof}

The maximal invariant set $\La$ admits arbitrarily small filtrating neighborhoods. Recall that $\La$ is hyperbolic so that the area on the
center stable space $E^{cs}(x) =E^{ss}(x)\oplus \RR X(x)$ is uniformly contracted along the orbit of $x\in\La$.  Let $E^{cs}(x)$, $x\in V$
denotes the tangent plane to $\cF^s$. One denotes by  $$J^s_t(x)=\left|Det \left((X_t)_*|_{_{E^{cs}(x)}}\right)\right|$$
the determinant of the restriction to
$E^{cs}(x)$ of the derivative of the flow of $X$ at the time $t$.

One deduces that
there is a filtrating neighborhood $\cU$ of $\La$ and $0<\lambda< 1$ so that
$$x\in\cU  \mbox{ and }t\geq 1\Rightarrow J^s_t(x)<\lambda^t<1.$$

The proof consists now in noting that, for $x\in\partial^{in}V$, close to $\cL^s$,  the orbit segment joining $x$ to $\Ga(x)$
consists in a segment contained in $\cU$,  whose length tends to infinity
as $x$ tends to $\cL^s$, and two bounded segments: one joining $x\in\partial^{in}V$ to $\cU$,
and one joining $\cU$ to $\Ga(x)\in\partial^{out}V$.

The first and the last segments have a bounded effect on $J^s_t$, as their lengths are uniformly bounded. One deduces that
$$J^s_{\tau(x)}(x)\to 0 \mbox{ as } x\to \La^s,$$
where $\tau(x)$ is the crossing time of $x$ (that is $\Ga(x)=X_{\tau(x)}(x))$.

Let $u$ be the unit vector tangent to $\cL^s$ at $x\in L^s$.
The difficulty is that the vector $\Ga_*(u)$ is not the image $(X_{\tau(x)})_*(u)$;
the vector $\Ga_*(u)$ is the projection
along $X(y)$ on $T_y \partial^{out}V$ of $(X_{\tau(x)})_*(u)$,   where $y=\Ga(x)=X_{\tau(x)}(x)$.
In order to simplify the calculation let us choose a metric on $V$ so that $X$ is orthogonal to $\partial V$ and $\|X\|=1$.
With these notations, one gets
$$\left\|\Ga_*(u)\right\|= J^s_{\tau(x)}(x).$$
Thus $\frac{\left\|\Ga_*(u)\right\|}{\|u\|}$ tends to $0$ as $x\to \cL^s$. This completes  the proof.
\end{proof}

\subsection{Strongly transverse lamination}

Consider an Anosov flow $X$ on a closed $3$-manifold $M$, and assume that two plugs $(V_1,X_1)$ and $(V_2,X_2)$ are embedded in $(M,X)$ so that
a component $S$ of $\partial^{out}V_1$ is also a component of $\partial^{in} V_2$.   Then the laminations $\cL_1^u$ and $\cL_2^s$ are  not only
transverse, they extend on $S$ as two transverse foliations (the unstable and stable foliation of $X$):
not every two transverse filling MS-laminations may extend as two transverse foliations.
This difficulty leads to the following definition:

\begin{defi} Let $S$ be a compact surface and let $\cL_1$ and $\cL_2$ be two laminations on $S$.  We say that $\cL_1$ and $\cL_2$ are
\emph{strongly transverse} if
\begin{itemize}
 \item $\cL_1$ and $\cL_2$ are transverse at every point of $\cL_1\cap \cL_2$
 \item every connected component of $S\setminus \cL_1\cap\cL_2$  is a disc whose closure is the image of a square $[0,1]^2$ by an immersion which is a diffeomorphism
 on $(0,1)^2$, and so that $[0,1]\times\{0,1\}$ is mapped in leaves of $\cL_1$ and $\{0,1\}\times [0,1]$ is mapped in leaves of $\cL_2$.
\end{itemize}
\end{defi}

One can  easily show the following lemma.
\begin{lemm}\label{l.stronglytransverse}
 If $\cL_1$ and $\cL_2$ are strongly transverse, they extend in transverse foliations.  In particular, $\cL_1$ and $\cL_2$ are pre-foliations.
\end{lemm}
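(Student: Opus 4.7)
The plan is to produce foliations $\tilde{\cL}_1 \supset \cL_1$ and $\tilde{\cL}_2 \supset \cL_2$ of $S$ that remain everywhere transverse, by putting a pair of transverse product foliations on each complementary square.

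First, for each connected component $C$ of $S \setminus (\cL_1 \cup \cL_2)$, use the square immersion $\iota_C \colon [0,1]^2 \to \overline{C}$ provided by strong transversality (which is a diffeomorphism on the open square, sending two opposite sides into $\cL_1$ and the other two into $\cL_2$). Define $\cF_1^C$ to be the foliation of the interior of $C$ whose leaves are the $\iota_C$-images of horizontal segments $[0,1] \times \{t\}$ for $t \in (0,1)$, and $\cF_2^C$ the foliation whose leaves are the images of vertical segments. Inside $C$ these two foliations are transverse by the product structure of $\iota_C$, and the leaves of $\cF_1^C$ (respectively $\cF_2^C$) converge uniformly to the two $\cL_1$-boundary sides (respectively $\cL_2$-boundary sides) of $C$ as the parameter tends to $0$ or $1$. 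Set
\[
\tilde{\cL}_1 = \cL_1 \cup \bigcup_C \cF_1^C, \qquad \tilde{\cL}_2 = \cL_2 \cup \bigcup_C \cF_2^C.
\]

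Second, I would verify that each $\tilde{\cL}_i$ is a foliation and that the pair is everywhere transverse by checking local triviality at every point $p \in S$. If $p$ lies in the interior of some $C$, this is immediate from $\iota_C$. If $p \in \cL_1 \cap \cL_2$, one chooses a small transverse box around $p$ in which $\cL_1$ and $\cL_2$ appear as two transverse sub-laminations; the complementary components of $\cL_1 \cup \cL_2$ in this box are filled in by the $\cF_i^C$, assembling $\tilde{\cL}_1$ and $\tilde{\cL}_2$ into a pair of transverse coordinate foliations of the box. The most delicate case is $p \in \cL_1 \setminus \cL_2$ (the symmetric case is identical): choose a small transverse arc $\tau$ to $\cL_1$ through $p$ avoiding $\cL_2$; each complementary gap of $\tau \setminus \cL_1$ lies in a single complementary square $C$ whose two $\cL_1$-boundary sides bound the gap, and the horizontal foliation $\cF_1^C$ fills this gap with leaves converging uniformly to its two $\cL_1$-bounding leaves, thereby extending $\cL_1$ continuously to a local product across the gap. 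Transversality of $\tilde{\cL}_1$ and $\tilde{\cL}_2$ holds inside each $C$ by the product structure of $\iota_C$, on $\cL_1 \cap \cL_2$ by hypothesis, and on $\cL_1 \setminus \cL_2$ because the locally relevant $\tilde{\cL}_2$-leaves come from the vertical foliations $\cF_2^C$ of the adjacent squares.

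Since $\cL_1 \subset \tilde{\cL}_1$ and $\cL_2 \subset \tilde{\cL}_2$ with $\tilde{\cL}_i$ genuine foliations of $S$, both $\cL_1$ and $\cL_2$ are pre-foliations by definition, establishing the ``in particular'' conclusion. The main obstacle I foresee is the local triviality check at points of $\cL_i \setminus \cL_j$: one must show that the horizontal extensions coming from the various complementary squares patch continuously with the ambient transverse structure of $\cL_i$, so that $\tilde{\cL}_i$ is not merely a set-theoretic union of curves but a topologically genuine foliation. This reduces to the uniform convergence of $\cF_i^C$ to its boundary $\cL_i$-sides, which is built into the square parametrization.
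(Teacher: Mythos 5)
The paper offers no proof of Lemma~\ref{l.stronglytransverse} (it merely declares that ``one can easily show'' it), and your construction -- foliating each complementary square by its horizontal and vertical product foliations and adjoining them to $\cL_1$ and $\cL_2$ -- is clearly the intended one, in the same spirit as the authors' sketch for Lemma~\ref{l.embed-in-foliation}.

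There is nevertheless a genuine gap in your verification that $\tilde\cL_1$ is a foliation. Your case analysis ($p$ interior to a square, $p\in\cL_1\cap\cL_2$, $p\in\cL_1\setminus\cL_2$) omits $p\in\cL_2\setminus\cL_1$, and for $\tilde\cL_1$ this is \emph{not} the ``symmetric case'' of the previous one: at such a $p$ the $\tilde\cL_1$-leaf is not a leaf of $\cL_1$ but a concatenation, across the $\cL_2$-arc through $p$, of a horizontal leaf of $\cF_1^C$ limiting on $p$ from one side with a horizontal leaf of $\cF_1^{C'}$ limiting on $p$ from the other, for the two squares $C,C'$ abutting that $\cL_2$-arc. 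Indeed, as written $\tilde\cL_1 = \cL_1 \cup \bigcup_C \cF_1^C$ does not even assign a leaf to such a point. One must define these concatenated leaves, check that the concatenation crosses $\cL_2$ transversally (here the $C^1$ immersion structure of $\iota_C$ up to the boundary and the strong transversality of the two laminations are used), and produce a local product chart straddling $\cL_2$. This is the real content of the lemma, and you invoke it without proof when you write that ``the locally relevant $\tilde\cL_2$-leaves come from the vertical foliations of the adjacent squares.'' A secondary point: your handling of $p\in\cL_1\setminus\cL_2$ asserts that a gap of $\tau\setminus\cL_1$ runs from one $\cL_1$-side of its square $C$ to the other, equivalently that $\cF_1^C$ is transverse to $\tau$ throughout the gap. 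For an arbitrary immersion $\iota_C$ this is not automatic (the preimage arc of the gap could a priori leave $[0,1]\times\{0\}$ and return to it, making $\cF_1^C$ tangent to $\tau$ inside the gap), so one needs either an argument ruling out such a ``bigon'' or a normalization of the parametrizations $\iota_C$ compatible with a fixed atlas of lamination flowboxes near $\cL_1\cup\cL_2$ in order to exhibit the local product charts.
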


\subsection{Gluing vector fields}

Let $V_1$ and $V_2$ be manifolds with boundary, $S_1$ and $S_2$ be  unions of boundary components of $\partial V_1$ and $\partial V_2$,
and $X_1$  and $X_2$ be vector fields on $V_1$ and $V_2$, transverse to $S_1$ and $S_2$ so that $X_1$
goes out $V_1$ through $S_1$ and in $V_2$ through $S_2$.
Let $\varphi\colon S_1\to S_2$ be a diffeomorphism. Let $V$ be the quotient $(V_1\amalg V_2)/x\simeq \varphi(x)$.

\begin{lemm}\label{l.glue} 
With the notation above there is a differential structure on $V$ so that $V$ is a smooth manifold and
there is a $C^1$-vector field on $V$ so that the restriction of $X$ to $V_i$ is $X_i$, $i=1,2$.
\end{lemm}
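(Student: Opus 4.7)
The plan is to define the smooth structure on $V$ and the vector field simultaneously, by using a single flow--box chart that straddles the gluing locus. Since $X_1$ is transverse to $S_1$ and exits $V_1$ through it, the flow--box theorem (applied to the $C^1$ vector field $X_1$) provides some $\varepsilon > 0$, an open neighborhood $N_1$ of $S_1$ in $V_1$, and a $C^1$ diffeomorphism
\[
\Phi_1 \colon N_1 \longrightarrow S_1 \times (-\varepsilon, 0]
\]
with $\Phi_1|_{S_1} = (\mathrm{id}_{S_1}, 0)$ and $(\Phi_1)_* X_1 = \partial_t$. Symmetrically, I obtain $\Phi_2 \colon N_2 \to S_2 \times [0, \varepsilon)$ with $\Phi_2|_{S_2} = (\mathrm{id}_{S_2}, 0)$ and $(\Phi_2)_* X_2 = \partial_t$.

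Next, I define the would--be chart
\[
\Psi \colon S_1 \times (-\varepsilon, \varepsilon) \longrightarrow V,
\qquad
\Psi(x, t) =
\begin{cases}
\,[\Phi_1^{-1}(x, t)] & \text{if } -\varepsilon < t \le 0, \\
\,[\Phi_2^{-1}(\varphi(x), t)] & \text{if } 0 \le t < \varepsilon,
\end{cases}
\]
where brackets denote the class in $V$. At $t = 0$ the two branches both give the class of $x \sim \varphi(x)$, so $\Psi$ is well--defined and injective, with open image $U \subset V$. I then declare a $C^1$ atlas on $V$ consisting of $\Psi$ together with all charts of $V_1$ supported in $V_1 \setminus S_1$ and all charts of $V_2$ supported in $V_2 \setminus S_2$. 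The transition between $\Psi$ and any chart of $V_i \setminus S_i$ is, after restricting to $\{t<0\}$ or $\{t>0\}$, the $C^1$ diffeomorphism $\Phi_i$ (or its inverse, possibly precomposed with $\varphi$); hence the atlas is compatible and turns $V$ into a $C^1$ manifold in which the canonical maps $V_i \hookrightarrow V$ are $C^1$ embeddings.

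Finally, set $X := X_i$ on the image of $V_i$ in $V$. On the overlap $\Psi(S_1 \times (-\varepsilon, 0)) \subset V_1 \setminus S_1$, the defining property of $\Phi_1$ gives $\Psi_* \partial_t = X_1$; on the overlap $\Psi(S_1 \times (0, \varepsilon)) \subset V_2 \setminus S_2$, the fact that $\varphi$ affects only the $S_1$--factor combined with $(\Phi_2)_* X_2 = \partial_t$ gives $\Psi_* \partial_t = X_2$. So the two prescriptions agree, and in the chart $\Psi$ the glued vector field reads simply $\partial_t$, while outside $U$ it coincides with the $C^1$ field $X_i$ on each $V_i$. Thus $X$ is a globally $C^1$ vector field on $V$ restricting to $X_i$ on each $V_i$, as required.

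The only real subtlety, and the reason for choosing flow--box coordinates rather than arbitrary collar neighborhoods, is to ensure that the two parametrizations on either side of the gluing locus align so as to produce a $C^1$ vector field and not merely a continuous one. Parametrising each collar by the flow--time resolves this at once: in those coordinates each $X_i$ becomes the constant vector field $\partial_t$, and the two pieces glue into the single smooth model $\partial_t$ on $S_1 \times (-\varepsilon, \varepsilon)$.
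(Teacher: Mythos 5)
Your proof is correct and follows the same route as the paper's one-line argument: use the flow-box theorem to get collar coordinates in which each $X_i$ reads $\partial_t$, then declare the straddling flow-box chart $\Psi$ to be part of the atlas and observe that the glued field is $\partial_t$ there. You have merely spelled out the verifications (well-definedness and injectivity of $\Psi$, $C^1$-compatibility of the transition maps, agreement of the two definitions of $X$ on the overlaps) that the paper leaves implicit.
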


\begin{demo} 
Just notice that the flow box theorem implies that there is a tubular neighbourhood $U_i$ of $S_i$ in $V_i$, and some coordinates $(x,t):U_1\to S_i\times (-\varepsilon, 0]$ (resp. $(x,t):U_2\to S_2\times [0,\varepsilon)$) so that, in these coordinates $X_i$ is the trivial vector field  $\frac{\partial}{\partial t}$.
\end{demo}

\begin{defi}
\label{d.strongly-isotopic}
Let $X_0$ and $X_1$ be two vector fields on the same compact $3$-manifold with boundary $V$, so that $(V,X_0)$ and $(V,X_1)$ both are hyperbolic plugs with filling MS-laminations. Let $\varphi_0:\partial^{out}_{X_0} V\to\partial^{in}_{X_0} V$ and  $\varphi_1:\partial^{out}_{X_1} V\to\partial^{in}_{X_1} V$ be strongly transverse gluing maps respectively for $(V,X_0)$ and $(V,X_1)$. We say that $(V,X_0,\varphi_0)$ and $(V,X_1,\varphi_1)$ are \emph{strongly isotopic} if there exists a continuous path $(U, X_t,\varphi_t)_{t\in [0,1]}$ of hyperbolic plugs with filling MS-laminations and strongly transverse gluing maps. 
\end{defi}

\begin{rema}
 If $(V,X_0,\varphi_0)$ and $(V,X_1,\varphi_1)$ are strongly isotopic, the structural stability of hyperbolic plugs imples that $(V,X_0)$ and $(V,X_1)$ are topologically equivalent.
\end{rema}

\section{Gluing hyperbolic plugs without cycles}
\label{s.no-cycle}

In this section, we consider two hyperbolic plugs $(U,X)$ and $(V,Y)$. We also consider a union $T^{out}$ of connected components of $\partial^{out} U$, a union $T^{in}$ of connected components of $\partial^{in} V$, and a gluing map $\varphi\colon T^{out}\to T^{in}$ such that the lamination $\varphi_*(\cL^u_X)$ is transverse to the lamination $\cL^s_Y$. Then we consider the manifold with boundary $W:=(X\sqcup V)/\varphi$, and the vector field $Z$ induced by $X$ and $Y$ on $W$.

The aim of the section is to prove that $(W,Z)$ is a hyperbolic plug (Proposition~\ref{p.plug}) and that $(W,Z)$ has filling MS-laminations provided that this is the case for $(U,X)$ and $(V,Y)$ and provided that $\varphi$ is a strongly transevers egluing map (Proposition~\ref{p.plug2}).

\subsection{Hyperbolicity of the new plug: proof of Proposition~\ref{p.plug}}

\begin{proof}[Proof of Proposition~\ref{p.plug}]
The vector field $Z$ is transverse to the boundary of $W$; hence $(W,Z)$ is a plug. So we only need to check that the maximal invariant set of $(W,Z)$ is a hyperbolic set.
Let $\La_X$, $\La_Y$, $\La_Z$ be the maximal invariant sets of $(U,X)$, $(V,Y)$, $(W,Z)$ respectively. Then $\La_Z$ is the union of $\La_X$, $\La_Y$ 
and the $Z$-orbit of the set $\varphi_*(\cL^u_X)\cap \cL^s_Y$. A classical consequence of the hyperbolic theory asserts that the orbit of $\varphi_*(\cL^u_X)\cap \cL^s_Y$ inherit of a hyperbolic structure, so that the maximal invariant set on the vector field $Z$ on $U\sqcup_{\varphi} V$ is hyperbolic: for $x\in \varphi_*(\cL^u_X)\cap \cL^s_Y$, the stable (resp. unstable) bundle at $x$ is the direct sum of the line $\RR.Y(x)$ (resp. $\RR.(\varphi_*X)(x)$) and the line tangent to $\cL^s_Y$ (resp. $\varphi_*(\cL^u_X)$) at $x$.
\end{proof}

The following simple observation (whose proof is left to the reader) will be used many times in remainder of the paper:

\begin{prop}\label{p.gluedlaminations}
The exit boundary of the plug $(W,Z)$ is
$$\partial^{out}W=(\partial^{out}U\setminus T^u)\cup \partial^{out}V.$$
Furthermore, the lamination $\cL^u_W$ coincides with:
 $$\cL^u_X\mbox{ on }\partial^{out}U\setminus T^u\mbox{, and with }
 \cL^u_Y\sqcup (\Ga_Y)_*(\varphi_*(\cL^u_X)\setminus \cL^s_Y) \mbox{ on }\partial^{out}V,$$
 where $\Ga_Y$ is the crossing map of the plug $(V,Y)$.
\end{prop}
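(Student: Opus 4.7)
The strategy is to unwind definitions carefully, exploiting the decomposition $\La_Z = \La_X \cup \La_Y \cup \cO$, where $\cO$ is the $Z$-orbit of $\varphi_*(\cL^u_X)\cap \cL^s_Y$, and then describe the backward $Z$-orbits issuing from each piece of $\partial^{out}W$.

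First I would establish the boundary identity. Since $W=(U\sqcup V)/\varphi$ is obtained by identifying $T^{out}\subset\partial^{out}U$ with $T^{in}\subset\partial^{in}V$, the set $\partial W$ is exactly $(\partial U\setminus T^{out})\sqcup(\partial V\setminus T^{in})$. Because $Z$ coincides with $X$ on $U$ and with $Y$ on $V$, the decomposition of $\partial W$ into inward and outward pieces is inherited verbatim from those of $(U,X)$ and $(V,Y)$; this gives $\partial^{out}W=(\partial^{out}U\setminus T^{out})\cup\partial^{out}V$, and symmetrically for $\partial^{in}W$.

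Next I would analyze $\cL^u_W=W^u(\La_Z)\cap \partial^{out}W$ piece by piece. Fix $z\in\partial^{out}W$. A point $z$ lies in $\cL^u_W$ iff its full backward $Z$-orbit is defined in $W$, equivalently, iff this backward orbit never exits through $\partial^{in}W$.

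Consider first $z\in\partial^{out}U\setminus T^{out}$. Because $Z=X$ near $z$ and the boundary component containing $z$ is an exit component not affected by the gluing, the backward $Z$-orbit of $z$ starts by entering $U$; if it ever leaves $U$, it must do so through $T^{out}$, but then it would have to pass through $T^{out}$ in the forward direction, contradicting the fact that $T^{out}\subset\partial^{out}U$ (the $X$-flow points out of $U$ there). Therefore the backward $Z$-orbit of $z$ coincides with the backward $X$-orbit and stays in $U$; hence $z\in\cL^u_W$ iff $z\in W^u(\La_X)\cap\partial^{out}U=\cL^u_X$.

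Now consider $z\in\partial^{out}V$. Run the backward $Z$-orbit of $z$. Two cases arise. If this orbit remains in $V$ forever, then it is the backward $Y$-orbit of $z$ and $z\in\cL^u_Y$. Otherwise it exits $V$ at a first (in backward time) point $y'\in\partial^{in}V$; by definition of the crossing map, $y'\in\partial^{in}V\setminus\cL^s_Y$ and $z=\Ga_Y(y')$. Through the gluing $y'$ is identified with $x:=\varphi^{-1}(y')\in T^{out}$, and the backward $Z$-orbit continues as the backward $X$-orbit of $x$ in $U$. Arguing as in the previous paragraph, this backward orbit can never leave $U$ again, so it stays in $U$ forever iff $x\in W^u(\La_X)=\cL^u_X$, i.e.\ iff $y'\in\varphi_*(\cL^u_X)\setminus\cL^s_Y$. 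Thus the contribution of this case to $\cL^u_W\cap\partial^{out}V$ is exactly $\Ga_Y\bigl(\varphi_*(\cL^u_X)\setminus\cL^s_Y\bigr)$, and the union with $\cL^u_Y$ is disjoint because the two sets are distinguished by whether the backward $Z$-orbit remains in $V$ or crosses into $U$. This proves the claimed formula.
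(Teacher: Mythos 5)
Your argument is correct in its essentials, and since the paper explicitly leaves the proof of this proposition to the reader, there is no written proof in the paper to compare against; the direct unwinding of definitions that you carry out, built on the observation that the gluing surface is a one-way gate for the backward flow (a backward $Z$-orbit can cross from the $V$-side to the $U$-side but never back), is clearly the intended argument.

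There is, however, one small omission. In the case $z\in\partial^{out}V$, after noting that the backward $Z$-orbit of $z$ first leaves $V$ at a point $y'\in\partial^{in}V$, you immediately set $x:=\varphi^{-1}(y')\in T^{out}$, which tacitly assumes $y'\in T^{in}$. But in the setting of Proposition~\ref{p.plug}, $T^{in}$ is only a union of components of $\partial^{in}V$, so the subcase $y'\in\partial^{in}V\setminus T^{in}$ can occur. In that subcase $y'$ lies on $\partial^{in}W$, the backward $Z$-orbit of $z$ terminates there, so $z\notin\cL^u_W$; and since $\varphi_*(\cL^u_X)\subset T^{in}$, one also has $z=\Ga_Y(y')\notin\Ga_Y\bigl(\varphi_*(\cL^u_X)\setminus\cL^s_Y\bigr)$, while $z\notin\cL^u_Y$ because the backward $Y$-orbit of $z$ leaves $V$. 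Thus both sides of the stated identity exclude $z$ and the equality persists; adding this one sentence closes the proof.
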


\subsection{Filling MS-laminations: proof of Proposition~\ref{p.plug2}}

In this subsection, we assume that the hyperbolic plugs $(U,X)$ and $(V,Y)$ have filling MS-laminations, and that $\varphi$ is a strongly transverse gluing map. 

\begin{proof}[Proof of Proposition~\ref{p.plug2}]
According to Lemma~\ref{l.simple} it is enough to prove that $\cL^u_Z$ is a filling MS-lamination. For that we consider a connected component $C$ of $\partial^{out}W\setminus \cL^u_Z$. 

If $C\subset \partial^{out}U\setminus T^{out}$, then $C$ is a connected component of $\partial^{out}U\setminus \cL^u_X$ and therefore is a strip bounded by two asymptotic leaves, ending the proof in this case.

According to Proposition~\ref{p.gluedlaminations}, we can now assume that $C\subset \partial^{out}V$.
Furthermore, according to Proposition~\ref{p.gluedlaminations},
$C$ is disjoint from $\cL^u_Y$, so that one can consider $\Ga_Y^{-1}(C)$:
\begin{itemize}
 \item either $\Ga_Y^{-1}(C)$ is contained in $\partial^{in} V\setminus T^{in}$.  In that case $\Ga_Y^{-1}(C)$ is
 a connected component of $\partial^{in}V\setminus \cL^s_Y$ so that $C$ itself is a connected component of
 $\partial^{out}V\setminus \cL^u_Y$.  Thus is a strip bounded by two asymptotic leaves, ending the proof in this case.
 \item or $\Ga_Y^{-1}(C)$ is contained in $T^{in}$.
\end{itemize}
Thus we are led to assume that $\Ga_Y^{-1}(C)$ is contained in $T^{in}$.

\begin{lemm}
Assume that $\Ga_Y^{-1}(C)$ is contained in $T^{in}\subset \partial^{in}V$.  Then $\Ga_Y^{-1}(C)$
is a connected component of $T^{in}\setminus (\cL^s_Y\cup \varphi(\cL^u_X))$.
\end{lemm}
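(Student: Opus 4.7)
The plan is to unpack Proposition~\ref{p.gluedlaminations} and to use that the crossing map $\Gamma_Y$ is a diffeomorphism between $\partial^{in} V\setminus \cL^s_Y$ and $\partial^{out} V\setminus \cL^u_Y$. Recall that, according to Proposition~\ref{p.gluedlaminations}, the part of $\cL^u_Z$ lying in $\partial^{out} V$ is
\[
\cL^u_Y\ \sqcup\ (\Gamma_Y)_*\bigl(\varphi_*(\cL^u_X)\setminus \cL^s_Y\bigr),
\]
and $C$ is a connected component of the complement of this set in $\partial^{out} V$.

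First I would check the inclusion $\Gamma_Y^{-1}(C)\subset T^{in}\setminus(\cL^s_Y\cup\varphi(\cL^u_X))$. The inclusion $\Gamma_Y^{-1}(C)\subset T^{in}$ is the standing hypothesis. Disjointness from $\cL^s_Y$ is automatic since $\Gamma_Y$ is only defined on $\partial^{in}V\setminus\cL^s_Y$. Disjointness from $\varphi(\cL^u_X)$ is an easy contradiction argument: if some $x\in\Gamma_Y^{-1}(C)$ lay in $\varphi(\cL^u_X)$, then $x\in \varphi_*(\cL^u_X)\setminus \cL^s_Y$, hence $\Gamma_Y(x)\in (\Gamma_Y)_*(\varphi_*(\cL^u_X)\setminus \cL^s_Y)\subset \cL^u_Z$, contradicting $\Gamma_Y(x)\in C$.

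Next, connectedness of $\Gamma_Y^{-1}(C)$ is immediate since $C$ is connected and $\Gamma_Y$ is a homeomorphism on its domain. The only nontrivial step, which I expect to be the main (but still mild) obstacle, is the maximality: I would argue by contradiction. Suppose $D$ is a connected subset of $T^{in}\setminus(\cL^s_Y\cup\varphi(\cL^u_X))$ strictly containing $\Gamma_Y^{-1}(C)$. Since $D\subset \partial^{in} V\setminus \cL^s_Y$, the map $\Gamma_Y$ is defined on $D$ and $\Gamma_Y(D)$ is a connected subset of $\partial^{out} V\setminus\cL^u_Y$ containing $C$. Moreover, because $D$ is disjoint from $\varphi(\cL^u_X)$ and from $\cL^s_Y$, the image $\Gamma_Y(D)$ is disjoint from $(\Gamma_Y)_*(\varphi_*(\cL^u_X)\setminus\cL^s_Y)$, so it lies entirely in the complement of $\cL^u_Z$ inside $\partial^{out} V$. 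Being connected and meeting the component $C$, it must be contained in $C$, giving $\Gamma_Y(D)=C$ and hence $D=\Gamma_Y^{-1}(C)$, a contradiction. This shows $\Gamma_Y^{-1}(C)$ is a connected component of $T^{in}\setminus(\cL^s_Y\cup\varphi(\cL^u_X))$, as claimed.
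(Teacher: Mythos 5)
Your proof is correct and follows essentially the same route as the paper: establish that $\Gamma_Y^{-1}(C)$ is a connected subset of $T^{in}\setminus(\cL^s_Y\cup\varphi(\cL^u_X))$ using the domain of $\Gamma_Y$ and Proposition~\ref{p.gluedlaminations}, then show maximality by pushing a putatively larger connected set forward through $\Gamma_Y$ and using that it must land in the component $C$. The paper phrases the maximality step directly (take $C_1$ to be the component containing $\Gamma_Y^{-1}(C)$ and show $\Gamma_Y(C_1)\subset C$) rather than by contradiction, but the content is identical.
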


\begin{proof}
The set $\Ga_Y^{-1}(C)$ is disjoint from $\cL^s(Y)$ because the range of $\Ga_Y^{-1}$ is $\partial^{in}V\setminus \cL^s_Y$.  
Proposition~\ref{p.gluedlaminations} implies that it is also disjoint from $\varphi(\cL^u_X)$, as $C$ is disjoint from
$\Ga(\varphi(\cL^u_X))\subset \cL^u_W$.  Therefore $\Ga_Y^{-1}(C)$ is contained in a connected component
$C_1$ of
$T^{in}\setminus (\cL^s_Y\cup \varphi(\cL^u_X))$. Now $\Ga_Y(C_1)$ is disjoint from $\cL^u_Z$ (Proposition~\ref{p.gluedlaminations})
one deduces the other inclusion: $\Ga_Y(C_1)\subset C$.
\end{proof}

Let $C_1:=\Ga_Y^{-1}(C)$.  By definition of strongly transverse lamination, $C_1$ is an immersed
square $[0,1]^2$ bounded by two segments $\gamma_1$, $\gamma_2$ (images of the horizontal segments $[0,1]\times \{0,1\}$) in
 $\cL^s_Y$ and two segments $\sigma_1$, $\sigma_2$ in $\varphi_*(\cL^u_X)$ (images of the vertical segments
 $\{0,1\}\times [0,1] $). Notice that $L_i=\Ga_Y(\sigma_i)$, $i=1,2$, is a leaf of $\cL^u_W$ contained in the accessible boundary of $C$.
 We will see that $L_1$ and $L_2$ are asymptotic on both sides.  More precisely:
 
 \begin{lemm}
 \label{l.strip} 
 With the notation above, that there is a foliation on $C\cup L_1\cup L_2$ whose leaves are segments with one end point in $L_1$ and the other end point on
 $L_2$.  Furthermore, the length of the leaves tends to $0$ when one of their endpoints tends to one end $L_i$.
 \end{lemm}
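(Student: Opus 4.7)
The plan is to foliate the closed immersed square $C_1$ by the natural horizontal segments joining $\sigma_1$ to $\sigma_2$ (i.e.\ parallel to $\gamma_1,\gamma_2\subset\cL^s_Y$), and then push this foliation forward by the crossing map $\Ga_Y$. The image will be a foliation of $C\cup L_1\cup L_2$ whose leaves run from $L_1$ to $L_2$; the required length estimate will come directly from Lemma~\ref{l.stablefoliation}.

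I parameterize $C_1$ as $[0,1]^2$ so that $\gamma_i=[0,1]\times\{i-1\}$ and $\sigma_i=\{i-1\}\times[0,1]$, and write $\cG$ for the horizontal foliation $\{[0,1]\times\{t\}\}_{t\in[0,1]}$ of this square. Since $\cL^s_Y$ is a filling MS-lamination, Lemma~\ref{l.embed-in-foliation} lets me complete it to a foliation $F^s_{in}$ of $\partial^{in}V$; by strong transversality, the complement of $\cL^s_Y\cup\varphi_*(\cL^u_X)$ in $\partial^{in}V$ consists of topological squares with a pair of $\cL^s_Y$-sides and a pair of $\varphi_*(\cL^u_X)$-sides, and I interpolate $\cL^s_Y$ across each such square by arcs transverse to the $\varphi_*(\cL^u_X)$-sides, arranging in particular that the arcs used for $C_1$ are exactly the leaves of $\cG$. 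The resulting $F^s_{in}$ extends $\cL^s_Y$, is everywhere transverse to $\varphi_*(\cL^u_X)$, and restricts to $\cG$ on $C_1$. The same $\lambda$-lemma argument as in the proof of Lemma~\ref{l.invariantfoliations}, applied this time to the forward $Y$-flow starting from $\partial^{in}V$, then extends $F^s_{in}$ to an invariant two-dimensional $\cC^{0,1}$ foliation $\cF^s$ on $V$ containing $W^s(\La_Y)$.

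Since $\Ga_Y$ is a diffeomorphism from $C_1\setminus(\gamma_1\cup\gamma_2)$ onto $C\cup L_1\cup L_2$ sending $\sigma_i$ onto $L_i$, pushing $\cG$ forward immediately yields a foliation of $C\cup L_1\cup L_2$ by $C^1$ segments running from $L_1$ to $L_2$. For the length control, I observe that ``an endpoint on $L_i$ tends to one end of $L_i$'' translates to $t\to 0$ or $t\to 1$ in the parameterization, and that the distance from the horizontal segment $[0,1]\times\{t\}$ to $\cL^s_Y$ then tends to $0$ uniformly along the segment. The tangent vectors of these segments are, by construction, tangent to $F^s_{in}$, so Lemma~\ref{l.stablefoliation} applied to $\cF^s$ provides, for any $\varepsilon>0$, a $\delta>0$ such that $(\Ga_Y)_*$ contracts them by a factor less than $\varepsilon$ whenever the base point lies $\delta$-close to $\cL^s_Y$. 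Integrating over the (uniformly bounded length) segment, the image $\Ga_Y([0,1]\times\{t\})$ has length at most $\varepsilon$ times that of the original for $t$ close enough to $0$ or $1$, which is precisely the estimate asked for.

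The main obstacle I anticipate is the compatible choice of $F^s_{in}$: it must simultaneously contain $\cL^s_Y$, be globally transverse to $\varphi_*(\cL^u_X)$, restrict to $\cG$ on $C_1$, and extend to an invariant two-dimensional foliation of $V$. Strong transversality makes every complementary square admit an essentially unique foliation up to isotopy, so there is enough freedom to fulfill all four constraints; the book-keeping to formalize this compatible choice is the only non-routine ingredient, the rest being a direct application of Lemmas~\ref{l.invariantfoliations} and~\ref{l.stablefoliation}.
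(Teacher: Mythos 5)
Your approach departs from the paper's at a crucial step and has a genuine gap. You construct a foliation $F^s_{in}$ on $\partial^{in}V$ (extending $\cL^s_Y$, transverse to $\varphi_*(\cL^u_X)$, and equal to $\cG$ on all of $C_1$), and then assert that $F^s_{in}$ extends to a $Y$-invariant $\cC^{0,1}$ foliation $\cF^s$ of $V$ containing $W^s(\La_Y)$ ``by the same $\lambda$-lemma argument'' as Lemma~\ref{l.invariantfoliations} run forward. But Lemma~\ref{l.invariantfoliations} goes in the \emph{other} direction: its input is a foliation on $\partial^{out}V$ transverse to $\cL^u_Y$, and its $\lambda$-lemma step controls the \emph{backward} flow of small transversals to $\cL^u_Y$. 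The induced map $\cF^s\mapsto\cF^s\cap\partial^{in}V$ from such invariant foliations to foliations of $\partial^{in}V$ extending $\cL^s_Y$ is not surjective, and it is false in general that a prescribed $F^s_{in}$ extending $\cL^s_Y$ is the trace of an admissible $\cF^s$. In the linear model $Y=\lambda x\,\frac{\partial}{\partial x}+\frac{\partial}{\partial\theta}+\mu z\,\frac{\partial}{\partial z}$ ($\lambda<0<\mu$), with $\partial^{in}V=\{x=1\}$ and $\partial^{out}V=\{z=1\}$, the crossing map is $\Ga(\theta,z)=(z^{-\lambda/\mu},\theta-\frac{1}{\mu}\ln z)$; the trace of a genuine $\cF^s$ is the spiralling foliation $\{\theta=\theta_0+\frac{1}{\mu}\ln z\}$, whose tangent vectors $\Ga_*$ contracts by a factor $\sim z^{-\lambda/\mu}$, whereas the ``horizontal'' foliation $\{z=\mathrm{const}\}$ has tangent vector $(1,0)$ mapped by $D\Ga$ to the unit vector $(0,1)$ — no contraction at all — and its pushforward $\{x=\mathrm{const}\}$ is \emph{tangent} to $\cL^u$ rather than transverse. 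So without the extension to $\cF^s$ you cannot invoke Lemma~\ref{l.stablefoliation}, and the length estimate you need can genuinely fail for a carelessly prescribed $F^s_{in}$.

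The paper's proof sidesteps this entirely: it takes $\cF^s$ (hence $\cF^s_{in}$) from Lemma~\ref{l.invariantfoliations} as given, and modifies $\cF^s_{in}\cap C_1$ \emph{only in the interior of} $C_1$, keeping it \emph{unchanged in a neighbourhood of} $\gamma_1\cup\gamma_2$, to produce the foliation $\cG_1$ of $C_1$ by arcs from $\sigma_1$ to $\sigma_2$. The foliation $\cG_1$ is not required to extend to anything invariant — it only lives on the compact square $C_1$ — but near the ends of $L_1,L_2$ (i.e.\ near $\gamma_1\cup\gamma_2$) its leaves are genuine leaves of $\cF^s_{in}$, so Lemma~\ref{l.stablefoliation} applies \emph{there} and gives the length estimate. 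To repair your argument, drop the requirement that $F^s_{in}$ equal $\cG$ on all of $C_1$ and drop the extension step: let $\cF^s$ and $\cF^s_{in}$ come from Lemma~\ref{l.invariantfoliations}, and build the foliation of $C_1$ as a modification of $\cF^s_{in}\cap C_1$ away from $\gamma_1\cup\gamma_2$ only.
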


\begin{proof}
According to Lemma~\ref{l.invariantfoliations} applied to $(V,Y)$, there is an $Y$-invariant foliation $\cF^s$ on $V$  containing $W^s(\La_Y)$ as a sub-lamination
and so that the foliation $\cF^{s}_{out}$ induced by $\cF^s$ on $\partial^{out} V$ is transverse to $\cL^u_Y$. We denote by $\cF^{s}_{in}$ the foliation induced by 
$\cF^s$ on $\partial^{in} V$. The lamination $\cL^s_Y$ is a sublamination of $\cF^{s}_{in}$.  In particular, the sides $\gamma_1$ and $\gamma_2$ are leaf
segments of $\cF^{s}_{in}$. Unfortunately, the foliation $\cF^{s}_{in}$ may fail to be transverse to the segments $\sigma_1,\sigma_2$. However,
since the transversality is an open property, there is a neighborhood of $\gamma_1\cup\gamma_2$ in $C_1$ which is foliated by segments of leaves of $\cF^{s}_{in}$ 
with one endpoint on $\sigma_1$ and the other endpoint on $\sigma_2$. As a consequence, there is a foliation $\cG_1$ of $C_1$, which coincides with $\cF^{s,in}\cap C_1$ 
on a neighborhood of $\gamma_1\cup\gamma_2$, so that the leaves of $\cG_1$ are segments with one endpoint on $\sigma_1$  and the other endpoint on $\sigma_2$.
The announced foliation is $\Ga_Y(\cG_1)$.  Lemma~\ref{l.stablefoliation} implies that the length of the leaves of this foliation tends to $0$ when
one of their endpoints tends to one of the ends of $\Ga_Y(\sigma_i)=L_i$.
\end{proof}

Lemma~\ref{l.strip} implies that $C$ is a strip whose accessible boundary consists in two leaves of $\cL^u_Z$ with are asymptotic to each other at both ends. 
This concludes the proof of Proposition~\ref{p.plug2}.
\end{proof}

 \begin{figure}[ht]
\begin{center}
\includegraphics[totalheight=5cm]{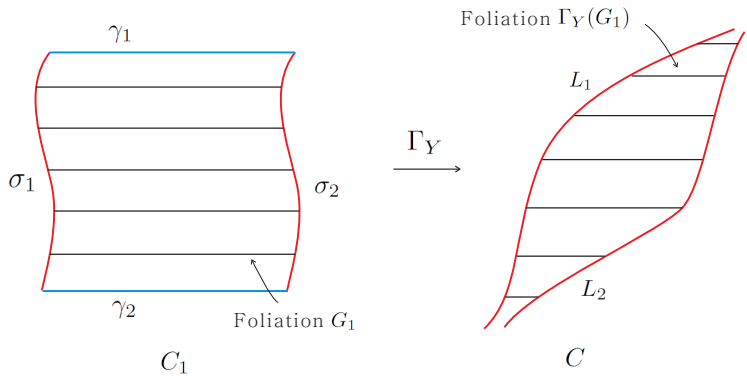}
\caption{\label{f.foliation-crossingmap}The crossing map $\Ga_Y: (C_1, G_1)\rightarrow (C, \Ga_Y (G_1))$.}
\end{center}
\end{figure}

\section{Normal form}

Given a hyperbolic plug $(V,X)$ with filling MS-laminations, and   a strongly transverse gluing map $\varphi_0\colon \partial^{out}V\to \partial^{in}V$, the main purpose of the present section is to perturb $(V,X)$ (within its topological equivalence class) and the map $\varphi_0$ (with its isotopy class of  strongly transverse gluing map) in order to get some foliations on $\partial^{in} V$ and $\partial^{out} V$ satisfying some nice properties. Our perturbation uses Markov partition by disjoint rectangles. Such Markov partition exists if and only if the hyperbolic set does not contain any attractor nor repeller. For this reason, we will need to assume that the maximal invariant set of $(V,X)$ does not contain attractors and repellors. 

\begin{defi}
A hyperbolic plug $(V,X)$ is called a \emph{saddle hyperbolic plug}, if the maximal invariant set of $(V,X)$ does not contain attractors, nor reppellors.
\end{defi}

More precisely, we will prove the following proposition: 

\begin{prop}\label{p.preparation} 
Let $(V,X)$ be a saddle hyperbolic plug with filling MS-laminations, endowed with a strongly transverse gluing map $\varphi_0\colon \partial^{out}V\to \partial^{in}V$.
Then there exists a vector field $Y$ on $V$ arbitrarily $C^1$-close to $X$ and a map $\varphi_1:\partial^{out}_X V\to\partial^{in}_X V$ with the following properties.
 \begin{itemize}
  \item $(V,Y)$ is a hyperbolic plug, $\varphi_1$ is a strongly transverse gluing map for $(V,Y)$, and $(V,Y,\varphi_1)$ is strongly isotopic to $(V,X,\varphi_0)$ (definition~\ref{d.strongly-isotopic}). We denote by $\Lambda_Y$ the maximal invariant set of $(V,Y)$.
  \item There exist  smooth $Y$-invariant foliations $\cG^s$ and $\cG^u$ on $V$, so that $W^s(\La_Y)$ is a sublamination of $\cG^s$ and $W^u(\La_Y)$ is a sublamination of $\cG^u$. We denote by $\cG^s_{in}$ $\cG^s_{out}$, $\cG^u_{in}$, and $\cG^u_{out}$ the intersection of $\cG^s$ and $\cG^u$ with $\partial^{in}V$ and $\partial^{out}V$.
  \item For each compact leaf of $\cG^u_{out}$ and of $\cG^s_{in}$ the holonomy is conjugated to a homothety.
  \item The image of $\cG^u_{out}$ by $\varphi_1$ is transverse to $\cG^s_{in}$. 
 \end{itemize}
\end{prop}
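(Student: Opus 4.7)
The plan is to reduce everything to an \emph{affine model} near the periodic orbits whose invariant manifolds provide the compact leaves of $\cL^s_X$ and $\cL^u_X$, and then extend the invariant laminations to invariant foliations by a careful choice of transverse foliations on the boundary.

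\textbf{Step 1: Affine Markov partition.} Since $(V,X)$ is a saddle hyperbolic plug, the maximal invariant set $\Lambda_X$ admits a Markov partition by \emph{pairwise disjoint} rectangles (the absence of attractors and repellers is crucial here). Using the classical construction of Bowen/Parry--Sullivan and perturbing $X$ within its topological equivalence class, one produces a $C^1$-small perturbation $Y_0$ of $X$ such that $(V,Y_0)$ has a Markov partition for which the first return map on each rectangle is affine in suitable smooth coordinates. In particular, near each periodic orbit $O$ of $Y_0$ lying in $\Lambda_{Y_0}$, a neighbourhood of $O$ is smoothly conjugated to its linearization; equivalently, the strong stable and strong unstable foliations of $\Lambda_{Y_0}$ are smooth near the finitely many periodic orbits whose separatrices are free (these are exactly the periodic orbits producing compact leaves of $\cL^s$ and $\cL^u$, by Remarks~\ref{r.free} and~\ref{r.free-2}).

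\textbf{Step 2: Building the invariant foliations.} I invoke Lemma~\ref{l.invariantfoliations} twice. First I choose a smooth foliation $F^s_{out}$ on $\partial^{out}V$ transverse to $\cL^u_{Y_0}$ and extending the compact leaves of $\cL^u_{Y_0}$ as compact leaves, and dually a smooth foliation $F^u_{in}$ on $\partial^{in}V$ transverse to $\cL^s_{Y_0}$ extending the compact leaves of $\cL^s_{Y_0}$. Here I use that $\cL^s_{Y_0}$ and $\cL^u_{Y_0}$ are filling MS-laminations, so the complementary strips admit a canonical completion to a foliation (Lemma~\ref{l.embed-in-foliation}). The saturations under the flow of $Y_0$ give $Y_0$-invariant $2$-dimensional foliations $\cG^s$ and $\cG^u$ containing $W^s(\Lambda_{Y_0})$ and $W^u(\Lambda_{Y_0})$ respectively, and $\cG^s_{in} := \cG^s \cap \partial^{in}V$ (resp.\ $\cG^u_{out}$) extends $\cL^s_{Y_0}$ (resp.\ $\cL^u_{Y_0}$).

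\textbf{Step 3: Homothety holonomy on compact leaves.} A compact leaf $\gamma$ of $\cG^s_{in}$ corresponds to a free stable separatrix of a periodic orbit $O_0$ of $Y_0$. As explained in the proof of Proposition~\ref{p.hyperbolicplug}(iii), the holonomy of $\cG^s_{in}$ along $\gamma$ is (smoothly) conjugated to the holonomy of the lamination $\cG^s \cap W^u(O_0)$ along $O_0$, possibly squared if $W^s(O_0)$ is a M\"obius band. Using Step~1, a neighbourhood of $O_0$ is smoothly conjugate to an affine model, so that the local unstable foliation on $W^u(O_0)$ is smooth and its holonomy is exactly the linear map given by the unstable eigenvalue. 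Transporting this smooth structure to a collar of $\gamma$ inside $\partial^{in}V$ via the flow (and adjusting the transverse foliation chosen in Step~2 to coincide, near $\gamma$, with the image by the crossing map of this affine model) yields a foliation whose holonomy along $\gamma$ is, in appropriate smooth coordinates on a transversal, a linear contraction $t\mapsto \lambda t$. The same procedure works for every compact leaf of $\cG^u_{out}$.

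\textbf{Step 4: Isotoping the gluing map.} The map $\varphi_0$ is a strongly transverse gluing map for $(V,X)$. Structural stability of hyperbolic plugs (and of the entrance/exit laminations under $C^1$-perturbation of the vector field) gives, for $Y_0$ close enough to $X$, a homeomorphism of $\partial^{out}V$ close to the identity that carries $\cL^u_X$ to $\cL^u_{Y_0}$ (and similarly on $\partial^{in}V$). Composing $\varphi_0$ with these homeomorphisms produces a strongly transverse gluing map $\psi$ for $(V,Y_0)$, and the path of conjugations provided by structural stability realises a strong isotopy from $(V,X,\varphi_0)$ to $(V,Y_0,\psi)$. It remains to modify $\psi$ into $\varphi_1$ so that $\varphi_{1,*}(\cG^u_{out})$ is transverse (not merely strongly transverse on the sublaminations $\cL^u_{Y_0}$ and $\cL^s_{Y_0}$) to $\cG^s_{in}$. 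The laminations $\psi_*(\cL^u_{Y_0})$ and $\cL^s_{Y_0}$ are already strongly transverse; the complementary regions on $\partial^{in}V$ are quadrilateral discs, and inside each such disc the two foliations $\psi_*(\cG^u_{out})$ and $\cG^s_{in}$ are two foliations of a disc each transverse to a pair of opposite sides. A standard isotopy, supported in the interiors of these discs and fixing the compact leaves and a neighbourhood of the laminations $\psi_*(\cL^u_{Y_0})$ and $\cL^s_{Y_0}$, straightens one foliation relative to the other; this produces $\varphi_1$ with the desired transversality. Since the isotopy is supported away from the compact leaves and preserves the strong transversality of the sublaminations at every time, it provides a strong isotopy from $(V,Y_0,\psi)$ to $(V,Y_0,\varphi_1)$.

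\textbf{Main obstacle.} The delicate point is Step~1 combined with Step~3: producing an affine Markov partition for some vector field topologically equivalent to $X$ already requires care, and one must then guarantee that the smooth invariant foliations produced in Step~2, after the local modification near compact leaves in Step~3, still fit together globally as a smooth $Y$-invariant foliation on $V$. Matching the affine local models around each periodic orbit with the globally chosen transverse boundary foliation without destroying transversality to $\cL^u_{Y_0}$ on $\partial^{out}V$ (and symmetrically on $\partial^{in}V$) is where the bulk of the technical work lies.
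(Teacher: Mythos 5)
Your proposal follows the same broad skeleton as the paper — affine Markov partition, saturation to get invariant foliations, homothety holonomy, isotoping the gluing map to achieve transversality — but there are two genuine gaps, one of which you acknowledge and one which you do not.

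\textbf{Gap 1 (acknowledged, but not merely technical).} In Step~2 you first choose a transverse boundary foliation $F^s_{out}$ and saturate, and in Step~3 you try to \emph{adjust} this foliation near the compact leaves to make the holonomy affine. The global consistency of this adjustment is not a technical detail but a real issue, and the paper avoids it by proceeding in the opposite order: it first builds $\cF^s,\cF^u$ on an invariant neighborhood $\cU_0$ of $\Lambda$ by saturating the vertical/horizontal foliations of the Markov rectangles themselves (Lemma~\ref{l.linearfoliation}); homothety holonomy then falls out automatically (Remark~\ref{r.holonomy-affine}). The extension to all of $V$ is done via the notion of \emph{adapted neighborhood} (subsection~4.3 and Lemma~\ref{l.filling}), which you never introduce and which is precisely the device that reconciles the local affine model with the global boundary data. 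Also, your phrase ``extending the compact leaves of $\cL^u_{Y_0}$ as compact leaves'' for a foliation transverse to $\cL^u_{Y_0}$ is incoherent as written.

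\textbf{Gap 2 (unacknowledged).} In Step~4 you appeal to a ``standard isotopy supported in the interiors of these discs'' to straighten $\psi_*(\cG^u_{out})$ against $\cG^s_{in}$, citing strong transversality of the laminations $\psi_*(\cL^u)$ and $\cL^s$. But strong transversality only gives transversality of the two \emph{foliations} near the finitely many intersection points $\psi_*(\cL^u)\cap\cL^s$ (the corners of the quadrilaterals), not along the rest of their boundary. On a leaf segment of $\cL^s$ lying between two corners, a leaf of $\psi_*(\cG^u_{out})$ can perfectly well be tangent to $\cL^s$, so transversality near $\partial C$ --- the hypothesis needed to invoke a straightening as in Lemma~\ref{l.square} --- is not automatic. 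The paper secures it in two stages: first Proposition~\ref{p.lineargluing} composes $\varphi_0$ with leaf-preserving diffeomorphisms so that $\varphi_*(\cW^s_{out}),\varphi_*(\cW^u_{out})$ literally coincide with $\cW^s_{in},\cW^u_{in}$ on a neighborhood of the intersection set (Lemma~\ref{l.transversefoliation} is the tool); then the two Claims inside the proof of Lemma~\ref{l.transverse} make $\varphi_*(\cL^u)$ transverse to the full foliation $\cG^s_{in}$ (and dually) on the complementary $in$-rectangles, using the adapted-neighborhood structure. Only after both stages does the interior-of-square argument (Lemma~\ref{l.square}) apply. Your Step~4 skips this preparation, so the final isotopy is not justified as stated.

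Your Step~1 matches Lemma~\ref{l.affine}, and the strong-isotopy bookkeeping in Step~4 via structural stability is correct; the issues are concentrated in how the boundary foliations are built and in the order of the transversality reductions.
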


Appart from proving this proposition, we will also establish some bound of the rate of contraction/expansion of the crossing map (see Lemma~\ref{l.stronghyperbolicity}), and build specific invariant neighborhoods of the maximal invariant set of $(V,X)$, called adapted neighborhoods.

\subsection{Linear model for the vector field $X$}

If $\Lambda$ is a locally maximal hyperbolic set without attractors and repellers for a vector field in dimension $3$, then $\La$ admits a transverse 
cross section $\Si$. Moreover, the first return map on $\Sigma$ admits a Markov partition by disjoint rectangles contained in $\Si$, so that $\Si\cap \La$ is the maximal invariant set of the union of the rectangles (see for instance \cite{BeBo}).
One can refine such a Markov partition by considering intersection of the rectangles with their (positive or negative) images under
the first return map. Iterating the process the diameters of the rectangles can be
made arbitrarily small.  When the diameters  are small enough the restrictions of the first
return map to the rectangles are almost affine; by a classical argument one can perform
a $C^1$-small perturbation of the vector field so that the first return map becomes affine on each rectangles, preserving the vertical and horizontal foliations. 
This proves to the following lemma:

\begin{lemm}
\label{l.affine} 
Let $(V,X)$ be a saddle hyperbolic plug and $\La_X$ its maximal invariant set.
There is an arbitrarily small $C^1$ perturbation  $Y$ of $X$, topologically equivalent to $X$,  admitting an \emph{affine Markov partition},
that is:
\begin{itemize}
 \item a Markov partition consisting in smooth disjoint rectangles,
 \item the boundary of each rectangle is disjoint from $\La_Y$,
 \item every orbit of $\La_Y$ meets the union of the interior of the rectangles,
 \item there are coordinates on the rectangles so that the first return map preserves the horizontal and vertical foliations defined by these coordinates and is affine on each rectangle.
\end{itemize}.
\end{lemm}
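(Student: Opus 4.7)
\begin{proo}[Proof sketch for Lemma~\ref{l.affine}]
The plan is to produce an affine Markov partition by first building an abstract (not necessarily smooth, not necessarily affine) Markov partition for the hyperbolic set, then refining it until it is so fine that a $C^{1}$-small perturbation of $X$ can be made to linearize the first-return map on each rectangle.

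First I would produce a global cross-section $\Sigma$ for the action of $X$ on $\La_X$. Since $(V,X)$ is a saddle hyperbolic plug, the basic pieces of $\La_X$ are all of saddle type, so by the standard construction (recalled in \cite{BeBo}) one can find a finite disjoint union $\Sigma$ of smooth discs transverse to $X$ whose orbit meets every point of $\La_X$. The first-return map $P$ of $X$ on $\Sigma$ is defined on an open subset containing $\La_X\cap \Sigma$, and its restriction to $\La_X\cap \Sigma$ is a topologically hyperbolic diffeomorphism. Applying Bowen's theorem, one obtains a Markov partition of $\La_X\cap\Sigma$ by finitely many closed rectangles $R_1,\dots,R_N$; moreover, since we work inside a $2$-dimensional cross-section and the stable/unstable directions of the restricted map are transverse, we may choose each $R_i$ so that $\partial R_i\cap \La_X=\emptyset$ and so that the rectangles are pairwise \emph{disjoint} (not merely with disjoint interiors), by slightly shrinking the unstable/stable boundaries outside $\La_X$.

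Next I would refine the rectangles so as to make them arbitrarily small. For each $n\geq 1$, consider the new partition obtained by intersecting the rectangles with the images $P^{\pm k}(R_j)$ for $k=1,\dots,n$. By hyperbolicity of $P|_{\La_X\cap\Sigma}$, the unstable diameters of the pieces shrink geometrically under forward refinement and the stable diameters under backward refinement, so for $n$ large enough the refined rectangles have diameter smaller than any prescribed $\delta>0$. This refined family is again a Markov partition of $\La_X\cap\Sigma$ with pairwise disjoint rectangles whose boundaries avoid $\La_X$.

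Now I would linearize $P$ on each (small) rectangle. On each $R_i$, choose a periodic point of $P$ (or, failing that, an arbitrary point of $\La_X\cap R_i$) and local coordinates centered at it, adapted to the stable and unstable directions. In such coordinates, the restriction of $P$ to $R_i$ is $C^{1}$-close, with error controlled by $\delta$, to its affine part. Using standard smoothing techniques (multiplication by bump functions supported in a small neighborhood of $R_i\cup P(R_i)$, extended trivially outside), one obtains a $C^{1}$-small smooth perturbation $\widetilde P$ of $P$ which equals the affine linearization on each $R_i$ and still preserves the horizontal and vertical foliations defined by the chosen coordinates. By applying the flow-box theorem, this surface perturbation lifts to a $C^{1}$-small perturbation $Y$ of $X$ whose first-return map on $\Sigma$ is exactly $\widetilde P$; by structural stability of hyperbolic plugs, $Y$ is topologically equivalent to $X$, the rectangles $R_i$ still realize a Markov partition for the new maximal invariant set $\La_Y$, and the return map is affine on each $R_i$.

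The main obstacle is the linearization step: one must smooth the perturbation carefully so that it remains $C^{1}$-small (to preserve hyperbolicity and topological equivalence via structural stability), while simultaneously forcing affineness on each rectangle and preserving the horizontal/vertical foliations. This is where the preliminary refinement of the Markov partition is essential: only once the rectangles are small enough is the original return map already close enough to its linearization for the smoothing argument to produce a $C^{1}$-small global perturbation.
\end{proo}
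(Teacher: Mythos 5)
Your proposal is correct and follows essentially the same route as the paper: take a global cross-section, build a Markov partition by disjoint rectangles (citing the standard construction), refine it until the rectangles are small enough that the first-return map is almost affine on each, then perform a $C^1$-small perturbation of the vector field to make the return map exactly affine while preserving the horizontal/vertical foliations, and invoke structural stability. The paper treats this argument as classical and sketches it in a single paragraph; your extra attention to the disjointness of the rectangles and the smoothing step is in the right spirit and fills in details the paper leaves implicit.
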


Lemma~\ref{l.affine}  motivates the following definition:

\begin{defi}
An \emph{affine plug} is a saddle hyperbolic plug admitting an affine Markov partition by disjoint rectangles.
\end{defi}

\begin{rema}
\label{r.holonomy-affine}
For an affine plug, the holonomy of each compact leaf of the entrance (resp. exit) lamination $\cL^s$ (resp. $\cL^u$) is an affine contraction, \emph{i.e.} an homothety.
\end{rema}

As $Y$ can be chosen arbitrarily $C^1$-close to $X$, the laminations $(\varphi_{0})_*(\cL^u_Y)$ and $\cL^s_Y$ are still strongly transverse. Thus $(V,Y,\varphi_0)$ is
still a saddle hyperbolic plug endowed with a strongly transverse gluing diffeomorphism. 

By pushing along the flow the vertical and horizontal foliations of the rectangle of an affine Markov partition, one gets a pair of invariant foliations which extend the stable and unstable laminations of the maximal invariant set. More precisely:

\begin{lemm}\label{l.linearfoliation} 
Let $(V,Y)$ be an affine plug with maximal invariant set $\La$.
There is an invariant neighborhood $\cU_0$ of $\Lambda$ endowed with two smooth invariant 2-dimensional foliations $\cF^s$ and $\cF^u$ so that
\begin{itemize}
 \item $\cF^s$ and $\cF^u$  are transverse to each other,
 \item $\cF^s$ and $\cF^u$  are both transverse to $\partial V=\partial^{in} V\cup \partial^{out} V$,
 \item the leaves of the laminations $W^s(\La)$ and $W^u(\La)$ are leaves of $\cF^s$ and $\cF^u$, respectively.
\end{itemize}
Let $\cU^{in}_0$ and $\cU^{out}_0$ denote the intersections of $\cU$ with $\partial^{in} V$ and $\partial^{out} V$ respectively. By transversality, $\cF^s$ and $\cF^u$ induce :
\begin{itemize}
 \item two smooth transverse $1$-dimensional foliations $\cW^s_{in},\cW^u_{in}$ on $\cU^{in}_0$, so that $\cL^s$ is a sublamination of $\cW^s_{in}$,
 \item two smooth transverse $1$-dimensional foliations $\cW^s_{out},\cW^u_{out}$ on $\cU^{out}_0$, so that $\cL^u$ is a sublamination of $\cW^u_{out}$.
 \end{itemize}
Furthermore, one can choose the foliations $\cF^s$ and $\cF^u$ so that the only compact leaves of $\cW^s_{in}$ and $\cW^u_{out}$ are those of the laminations $\cL^s$ and $\cL^u$, and their holonomies are homotheties.
\end{lemm}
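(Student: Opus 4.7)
The plan is to exploit the affine Markov partition provided by Lemma~\ref{l.affine}. Let $R_1,\dots,R_n$ be the rectangles of this partition, equipped with affine coordinates in which the first return map is affine and preserves both the stable and unstable coordinate foliations. In these coordinates, the traces $R_i\cap W^s(\La)$ and $R_i\cap W^u(\La)$ sit inside two smooth, transverse, $1$-dimensional coordinate foliations on $R_i$, which I will denote $\cV^s_i$ and $\cV^u_i$ (the first tangent to $W^s$, the second to $W^u$). I slightly enlarge each $R_i$ into an open cross-section $\tilde R_i$ carrying the same coordinates and the corresponding foliations; up to refinement, $\bigcup_i \tilde R_i$ still meets every orbit of $\La$.

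Next I define $\cU_0$ as the flow saturation $\bigcup_{t\in\RR}Y^t(\bigcup_i \tilde R_i)\cap V$. This is an open flow-invariant neighborhood of $\La$ in $V$. On it I define $\cF^s$ (resp. $\cF^u$) by declaring the leaf through $x\in\cU_0$ to be the $Y$-orbit of the leaf of $\cV^s_i$ (resp. $\cV^u_i$) through any point where the orbit of $x$ meets $\tilde R_i$. Well-definedness reduces to the claim that the affine first return map sends the restriction of $\cV^s_i$ to a sub-rectangle of $R_i$ onto the restriction of $\cV^s_j$ to the image sub-rectangle of $R_j$, and similarly for $\cV^u$ -- which is exactly the defining property of an affine Markov partition. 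Smoothness of $\cF^s$ and $\cF^u$ then follows from smoothness of the return map and of the $\cV^{s,u}_i$. Invariance under the flow of $Y$ is built into the construction; transversality of $\cF^s$ to $\cF^u$ follows from $\cV^s_i\pitchfork\cV^u_i$; transversality to $\partial V$ follows because the leaves are tangent to $Y$, which is transverse to $\partial V$; and $W^s(\La)\subset\cF^s$, $W^u(\La)\subset\cF^u$ by construction.

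Restricting to the boundary gives the smooth transverse $1$-foliations $\cW^s_{in},\cW^u_{in}$ on $\cU_0\cap \partial^{in}V$ and $\cW^s_{out},\cW^u_{out}$ on $\cU_0\cap \partial^{out}V$, and by construction $\cL^s\subset\cW^s_{in}$, $\cL^u\subset\cW^u_{out}$. The last assertion requires showing there are no extra compact leaves and that the holonomy is a homothety. Any compact leaf $\gamma$ of $\cW^s_{in}$ is $F\cap\partial^{in}V$ for some leaf $F$ of $\cF^s$ that closes into a cylinder or a M\"obius band; such $F$ must contain a periodic orbit $O\subset\La$ whose stable manifold exits $V$ through $\partial^{in}V$, which by Remarks~\ref{r.free}~and~\ref{r.free-2} is equivalent to $O$ having a free stable separatrix, so $\gamma\in\cL^s$. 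The holonomy around $\gamma$ is, up to conjugacy, the restriction to the $\cV^u$-direction of an iterate of the affine first return map along $O$ (or the square thereof, in the M\"obius case), which is a linear contraction, hence a homothety. The argument for $\cW^u_{out}$ is symmetric under time-reversal.

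The main difficulty -- and the only step where affinity is essential -- is the coherent global definition of $\cF^s$ and $\cF^u$. For a merely smooth Markov partition, the return map preserves the $1$-dimensional laminations $W^{s,u}(\La)\cap R_i$ but could distort the ambient smooth foliations $\cV^{s,u}_i$, making the candidate leaves through points in overlapping flow boxes disagree. Affinity of the return map is exactly what rules this out; once coherence and smoothness are established, the remaining verifications reduce to routine bookkeeping with the $\lambda$-lemma and the description of periodic orbits with free separatrices.
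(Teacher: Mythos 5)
Your construction is the same as the paper's: take $\cU_0$ to be the flow-saturation of the rectangles of an affine Markov partition, and define $\cF^s,\cF^u$ by flow-saturating the vertical and horizontal coordinate foliations. The paper's proof is a two-sentence sketch and you have filled in the verifications (well-definedness from the Markov/affine structure, smoothness, invariance, transversality, and $W^s(\La)\subset\cF^s$, $W^u(\La)\subset\cF^u$) correctly. The explanation that the holonomy of a compact leaf of $\cL^s$ is conjugate to (an iterate or the square of) the linear return map, hence a homothety, is also right.

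The verification of the ``furthermore'' clause has a genuine gap, however. You assert that for any compact leaf $\gamma$ of $\cW^s_{in}$, the $\cF^s$-leaf $F$ through $\gamma$ ``must contain a periodic orbit $O\subset\La$.'' This is not justified and, as stated, false without an additional choice: if $\gamma$ is a non-contractible circle lying in an exceptional annular component $A$ of $\partial^{in}V\setminus\cL^s$, then $\gamma$ is disjoint from $W^s(\La)$, the forward orbit of every point of $\gamma$ exits $V$ through $\partial^{out}V$, and $F$ contains the compact annulus swept out by $\gamma$ with no periodic orbit of $\La$ anywhere in it. Remarks~\ref{r.free} and~\ref{r.free-2} apply only after you already know $F\subset W^s(\La)$, which is exactly what needs to be shown. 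The statement's phrasing ``one can \emph{choose} the foliations $\cF^s$ and $\cF^u$ so that\dots'' is a signal that some extra freedom must be used. The standard fix is to refine the Markov partition (Lemma~\ref{l.affine} allows rectangles of arbitrarily small diameter) so that $\cU^{in}_0=\cU_0\cap\partial^{in}V$ becomes a thin neighborhood of $\cL^s$ that meets each exceptional annulus only in two collar neighborhoods of its boundary circles. On each collar the holonomy around the bounding compact leaf of $\cL^s$ is a nontrivial homothety and therefore has no other closed leaf nearby, and the strip components of $\partial^{in}V\setminus\cL^s$ are simply connected so carry no closed leaves at all. You should make this refinement step explicit rather than claiming the default $\cU_0$ already works.
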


\begin{proof} 
The neighborhood $\cU_0$ is just the union of the orbits of $Y$ intersecting the rectangles of an affine Markov partition of $\La$. The  foliation $\cF^s$ and $\cF^u$ are obtained by saturating by the flow of $Y$ the vertical and horizontal foliations of the rectangles.
\end{proof}

\subsection{Local linearization of the gluing map}
\label{ss.lineargluing}

The aim of this section is to prove the following result, which provides a kind of ``normal form" for the gluing map of an affine plug
in a neighborhood of the intersection of the laminations of the boundary.

\begin{prop}\label{p.lineargluing} 
Let $(V,X)$ be  an affine plug with maximal invariant set $\La$, so that the entrance lamination $\cL^s$ and the exit lamination $\cL^u$ of $(V,X)$ are prefoliations. 
Let $\varphi_0\colon \partial^{out}V\to \partial^{in} V$ be a diffeomorphism  so that $\varphi_{0,*}(\cL^u)$ and $\cL^s$ are strongly transverse.
Let $\cU_0$ be an invariant neighborhood of $\La$ endowed with two smooth foliations $\cF^s$ and $\cF^u$
as given by Lemma~\ref{l.linearfoliation}. Let $\cU^{in}_0$ and $\cU^{out}_0$ be the intersections of $\cU$ with $\partial^{in} V$ and $\partial^{out} V$. Observe that these are neighbourhoods of the laminations $\cL^s$ and $\cL^u$ in $\partial^{in} V$ and $\partial^{out} V$ respectively. Let $\cW^s_{in}$, $\cW^u_{in}$, $\cW^s_{out}$ and $\cW^u_{out}$  be the foliations induced by on $\cF^s$ and $\cF^u$ on  $\cU^{in}_0$ and $\cU^{out}_0$.

There exists a diffeomorphism $\varphi$ and an invariant neighborhood  $\cU\subset \cU_0$ of $\La$ so that:
\begin{itemize}
\item $\varphi_*(\cL^u)$ and $\cL^s$ are strongly transverse;
\item the foliations $\varphi_*(\cW^s_{out})$ and $\varphi_*(\cW^u_{out})$ coincide with $\cW^s_{in}$ and $\cW^u_{in}$ on $\varphi(\cU^{out})\cap \cU^{in}$.
\item $\varphi$ is isotopic to $\varphi_0$ among strongly transverse gluing maps.
\end{itemize}
\end{prop}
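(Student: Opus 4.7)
The plan is to construct $\varphi$ by post-composing $\varphi_0$ with a diffeomorphism $\psi\colon \partial^{in} V \to \partial^{in} V$, isotopic to the identity and supported in a small neighborhood of $\cL^s$, so that $\psi\circ\varphi_0$ carries the pair $(\cW^s_{out},\cW^u_{out})$ onto $(\cW^s_{in},\cW^u_{in})$ near the laminations. Since the foliations produced by Lemma~\ref{l.linearfoliation} are smooth and pairwise transverse, and since $\cL^u,\cL^s$ are filling MS-laminations (so all their non-compact leaves are asymptotic to compact ones), the task reduces to a local straightening problem at the finite set of intersection points between the compact leaves of $\varphi_0(\cL^u)$ and the compact leaves of $\cL^s$.

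First I would fix normal-form coordinates around each compact leaf on both sides. For a compact leaf $\gamma$ of $\cL^u$ in $\partial^{out} V$, the homothetic holonomy provided by Remark~\ref{r.holonomy-affine} allows me to trivialize an annular neighborhood $A_\gamma \subset \cU^{out}_0$ as $S^1\times(-\epsilon,\epsilon)$ in which the leaves of $\cW^u_{out}$ are the orbits of an explicit affine model invariant under the holonomy, and a further smooth change of coordinates on $A_\gamma$ straightens the transverse foliation $\cW^s_{out}$ to a standard form. The same construction gives analogous normal-form coordinates around each compact leaf $\delta$ of $\cL^s$ in $\partial^{in} V$, standardizing $\cW^s_{in}$ and $\cW^u_{in}$. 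Next, at each intersection point $p\in \varphi_0(\gamma)\cap \delta$ (with $\gamma,\delta$ compact leaves), I would use the classical flow-box argument for two transverse smooth foliations to produce a local diffeomorphism $\psi_p$, supported in a small neighborhood of $\varphi_0(p)$ and $C^1$-close to the identity, such that $\psi_p\circ\varphi_0$ matches the two pairs of foliations near $p$. Using a partition of unity on a small tubular neighborhood of $\cL^s$, I would patch the $\psi_p$'s into a single diffeomorphism $\psi$ of $\partial^{in} V$, equal to the identity far from $\cL^s$. The asymptotic property of filling MS-laminations then automatically propagates the local foliation identity from neighborhoods of the finitely many compact leaves to a full tubular neighborhood of $\cL^s$; shrinking $\cU_0$ to a suitable smaller invariant neighborhood $\cU$ of $\Lambda$ delivers the equality $\varphi_*(\cW^s_{out})=\cW^s_{in}$ and $\varphi_*(\cW^u_{out})=\cW^u_{in}$ on $\varphi(\cU^{out})\cap\cU^{in}$.

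The main obstacle is the last bullet: verifying that $\varphi:=\psi\circ\varphi_0$ is isotopic to $\varphi_0$ through \emph{strongly transverse} gluing maps, not merely through diffeomorphisms. The idea is to require each local $\psi_p$ to be so $C^1$-small that the straight-line isotopy $\psi_t:=\operatorname{id}+t(\psi-\operatorname{id})$ (taken in the local normal-form coordinates near each $p$) remains $C^1$-close to the identity, so that the family $\varphi_t:=\psi_t\circ\varphi_0$ stays $C^1$-close to $\varphi_0$. Strong transversality of two transverse smooth laminations is a $C^1$-open condition (cf.\ Lemma~\ref{l.stronglytransverse}), hence the whole isotopy $\{\varphi_t\}_{t\in[0,1]}$ lies in the set of strongly transverse gluing diffeomorphisms. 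The finiteness of the compact-leaf sets (item~1 of Proposition~\ref{p.hyperbolicplug}) together with the asymptotic property of filling MS-laminations are exactly what makes this uniform $C^1$-smallness estimate feasible, despite the patching being performed over a neighborhood of a non-compact lamination.
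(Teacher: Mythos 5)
Your reduction to a finite local problem at the intersections of \emph{compact} leaves of $\varphi_0(\cL^u)$ with \emph{compact} leaves of $\cL^s$ is incorrect, and this is a genuine gap, not a simplification. The set $\varphi_0(\cL^u)\cap\cL^s$ is the intersection of two transverse one-dimensional laminations with empty interior on a surface; it is a compact, totally disconnected set that is typically uncountable, since each of the uncountably many non-compact leaves of $\varphi_0(\cL^u)$ meets many non-compact leaves of $\cL^s$. Agreeing with $(\cW^s_{in},\cW^u_{in})$ near the finitely many compact-compact crossings does not propagate, by ``asymptotics of the lamination,'' to agreement on a full neighborhood of $\varphi(\cL^u)\cap\cL^s$: the foliations $\varphi_0(\cW^u_{out})$ and $\cW^u_{in}$ are transverse to $\cW^s_{in}$ but are otherwise unrelated between two consecutive compact leaves, and nothing forces them to coincide there after a straightening that is only done at the compact leaves. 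The paper instead covers the entire totally disconnected intersection set by finitely many arbitrarily small disjoint discs (Lemma~\ref{l.transversefoliation}) and straightens inside each disc.

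A second, independent gap is the $C^1$-smallness that your isotopy argument hinges on. There is no reason for the local diffeomorphism $\psi_p$ that carries $(\varphi_0(\cW^s_{out}),\varphi_0(\cW^u_{out}))$ onto $(\cW^s_{in},\cW^u_{in})$ to be $C^1$-close to the identity: $\varphi_0$ is an arbitrary strongly transverse gluing map, so $\varphi_0(\cW^u_{out})$ and $\cW^u_{in}$ may be very far apart in $C^1$. Hence the straight-line isotopy $\psi_t=\mathrm{id}+t(\psi-\mathrm{id})$ is not guaranteed to stay in the diffeomorphism group, let alone to stay in the open set of strongly transverse gluing maps. The paper sidesteps this by straightening in two steps, each time along a single family of leaves: first a $\psi^{in}$ that preserves every leaf of $\cW^s_{in}$ and carries $\varphi_0(\cW^u_{out})$ to $\cW^u_{in}$ near the intersection set, then a $\psi^{out}$ that preserves every leaf of $\cW^u_{out}$ (which after step one equals $\varphi_1^{-1}(\cW^u_{in})$ near the intersection set, so step two does not destroy step one) and carries $\varphi_1^{-1}(\cW^s_{in})$ to $\cW^s_{out}$. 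Each of these is isotopic to the identity through diffeomorphisms preserving every leaf of one foliation (a barycentric isotopy along leaves), and this, not $C^1$-smallness, is what keeps the interpolating gluing maps strongly transverse. You should rework your argument along these lines; the flow-box/normal-form ideas you bring up are morally the right tools, but the decomposition into two leaf-preserving steps and the covering of the \emph{whole} intersection set are essential.
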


The proof of the proposition uses the following technical lemma:

\begin{lemm}\label{l.transversefoliation} 
Let $D$ be a compact disc of dimension $2$ endowed with $3$ smooth foliations $\cF$,$\cG$, and $\cH$. Assume that $\cF$ is transverse to
both $\cG$ and $\cH$. Let $\cK$ and $\cL$ be sublaminations of $\cF$ and $\cH$ respectively.  One assume that $\cK$ and $\cL$ have empty interior, and
that $\cK\cap\cL$ is disjoint from the boundary $\partial D$.

Then there is a smooth isotopy $(\psi_t)_{t\in [0,1]}$ of diffeomorphisms of $D$ with the following properties:
\begin{itemize}
\item $\psi_0=id$
 \item $\psi_t$ coincides with the identity map in a neighborhood of $\partial D$ for every $t$,
 \item $\psi_t$ preserves each leaf of $\cF$ for every $t$,
 \item $\psi_1(\cH)$ coincides with $\cG$ in a neighborhood of $\cK\cap\psi_1(\cL)$.
\end{itemize}
\end{lemm}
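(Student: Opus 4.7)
The plan is to construct $(\psi_t)$ as a finite composition of $\cF$-preserving isotopies, each supported in a foliated chart for $\cF$ covering a point of $\cK\cap\cL$. The compact set $\cK\cap\cL$ lies in the interior of $D$, so all charts can be chosen disjoint from a neighborhood of $\partial D$; the condition $\psi_t=\id$ near $\partial D$ will then be automatic.

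\emph{Step 1 (local normal form).} Fix $p\in\cK\cap\cL$ and pick a small foliated chart $U_p$ for $\cF$ with coordinates $(x,y)\in(-\varepsilon,\varepsilon)^2$ in which $\cF$ is horizontal ($y=\mathrm{const}$). Since $\cG$ and $\cH$ are both transverse to $\cF$, their leaves in $U_p$ are smooth graphs, which we parametrize by their intersection with $\{y=0\}$: write the $\cG$-leaf through $(c,0)$ as $\{x=g(y,c)\}$ and the $\cH$-leaf through $(c,0)$ as $\{x=h(y,c)\}$, with $g(0,c)=h(0,c)=c$ and each $c\mapsto g(y,c)$, $c\mapsto h(y,c)$ a diffeomorphism. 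The map
\[
\phi_p(x,y)=\bigl(g(y,h^{-1}(y,x)),\,y\bigr)
\]
is then a smooth diffeomorphism of $U_p$ that preserves every leaf of $\cF$ and sends each $\cH$-leaf to a $\cG$-leaf. Moreover, if $\cH$ happens to agree with $\cG$ on an open subset of $U_p$, then $h=g$ there and $\phi_p$ reduces to the identity.

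\emph{Step 2 (local isotopy with cutoff).} Shrinking $U_p$, the map $\phi_p$ is $C^1$-close to the identity, so the straight-line isotopy
\[
\phi_p^t(x,y)=\bigl((1-t)x+t\,g(y,h^{-1}(y,x)),\,y\bigr),\qquad t\in[0,1],
\]
is a smooth family of $\cF$-preserving diffeomorphisms of $U_p$. Since $\phi_p^t-\id$ is everywhere horizontal, multiplying this displacement by a smooth cutoff $\chi_p(x,y)$ that equals $1$ on a smaller neighborhood $U'_p\Subset U_p$ of $p$ and vanishes outside $U_p$ gives a new isotopy $\widetilde\phi_p^t$ that is $\cF$-preserving, equals $\phi_p^t$ on $U'_p$, and extends by the identity to a smooth isotopy of all of $D$ (for $U_p$ small enough, each $\widetilde\phi_p^t$ is a diffeomorphism).

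\emph{Step 3 (finite cover and composition).} Cover the compact set $\cK\cap\cL$ by finitely many neighborhoods $U'_{p_1},\dots,U'_{p_n}$ as above. Define
\[
\psi_t^{(i)}=\widetilde\phi_{p_i}^{\,t}\qquad(i=1,\dots,n),\qquad \psi_t=\psi_t^{(n)}\circ\cdots\circ\psi_t^{(1)}.
\]
Each factor is $\cF$-preserving and identity near $\partial D$, so $\psi_t$ inherits these properties with $\psi_0=\id$. After the first factor, $\psi_1^{(1)}(\cH)=\cG$ on $U'_{p_1}$; at the $i$-th step the local construction is carried out using the \emph{current} foliation $(\psi_1^{(i-1)}\circ\cdots\circ\psi_1^{(1)})(\cH)$ in place of $\cH$. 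Thanks to the observation at the end of Step~1, the $i$-th factor is already the identity on the sub-region of $U_{p_i}$ where the current foliation already coincides with $\cG$; this is exactly the content needed to guarantee inductively that the equality with $\cG$ achieved on $U'_{p_1}\cup\cdots\cup U'_{p_{i-1}}$ is preserved, while being newly realized on $U'_{p_i}$. After $n$ steps, $\psi_1(\cH)$ coincides with $\cG$ on the open set $U'_{p_1}\cup\cdots\cup U'_{p_n}$, which is a neighborhood of $\cK\cap\cL=\cK\cap\psi_1(\cL)$ (the last equality because $\psi_1$ preserves every leaf of $\cF$, hence fixes $\cK$ setwise).

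\emph{Main obstacle.} The only real subtlety is the inductive step in Step~3: one must ensure that the later factors do not destroy the matching $\psi_1^{(j)}\circ\cdots\circ\psi_1^{(1)}(\cH)=\cG$ already obtained on $U'_{p_1}\cup\cdots\cup U'_{p_j}$. The local formula for $\phi_p$ is what makes this automatic—being the identity wherever the two foliations under consideration already agree—so the construction is consistent without any further gluing argument.
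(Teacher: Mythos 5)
Your Step 1--2 local construction is sound, but the inductive step in Step 3 rests on an incorrect observation, and the hypothesis that $\cK$ and $\cL$ have empty interior is never used—which should itself be a warning sign, since the lemma is false without it.

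The claim at the end of Step 1—``if $\cH$ agrees with $\cG$ on an open subset of $U_p$, then $h=g$ there and $\phi_p$ reduces to the identity''—does not hold. The parametrizations $g(y,\cdot)$ and $h(y,\cdot)$ are keyed to the leaves through the transversal $\{y=0\}$, so they can differ on a region $W$ where the \emph{foliations} $\cG$ and $\cH$ coincide, provided the leaves connecting $W$ back to $\{y=0\}$ leave $W$. A concrete counterexample in $U_p$: take $\cG$ vertical ($x=\mathrm{const}$), $\cH=\{x=c+\phi(y)\}$ with $\phi(0)=0$ and $\phi\equiv 1$ for $y>\delta$; then $\cG=\cH$ on $\{y>\delta\}$ but $\phi_p(x,y)=(x-1,y)$ there. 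Consequently, when you apply $\widetilde\phi_{p_i}^{\,1}$ at step $i$, it can move points on $U'_{p_1}\cup\cdots\cup U'_{p_{i-1}}$ in a way that is not the identity, and the matching $\cH_i=\cG$ achieved on that set can be destroyed. There is no ``gluing for free'' argument here; you need the supports of the successive factors to be \emph{pairwise disjoint}. That disjointness is exactly what the empty-interior hypothesis buys: $\cK\cap\cL$ is a compact totally disconnected set (the trace of $\cL$ on each $\cF$-leaf in $\cK$ is totally disconnected, and $\cK$ itself is transversally totally disconnected), so it can be covered by finitely many \emph{pairwise disjoint} rectangles, and inside each one the diffeomorphism can be supported in disjoint thin vertical subrectangles. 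With disjoint supports the composition is harmless and the induction is trivial. This is the paper's route and the step your argument is missing.

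A secondary, minor inaccuracy: the identity $\cK\cap\psi_1(\cL)=\cK\cap\cL$ at the end of Step 3 is wrong. Since $\psi_1$ preserves each $\cF$-leaf and $\cK$ is $\cF$-saturated, what one actually gets is $\cK\cap\psi_1(\cL)=\psi_1(\cK\cap\cL)$, and the matching must be checked on a neighborhood of $\psi_1(\cK\cap\cL)$—which again is easy once the supports are disjoint, but is not the set you covered by the $U'_{p_i}$.
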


\begin{proof}
First observe that any diffeomorphism coinciding with the identity close to $\partial D$ and preserving every leaf of $\cF$ is
 isotopic to the identity inside the set of diffeomorphisms preserving each leaf of $\cF$: to prove this fact, one just need just consider 
 a barycentric isotopy along the leaves. Therefore, we only need to build the diffeomorphism $\psi_1$.

 Using the fact that  $\cK$ and $\cL$  have empty interior, the intersection of every leaf  of $\cK$ with $\cL$ is totally discontinuous. As $\cK$ has empty interior, one  deduces that  one cover $\cK\cap \cal L$ by finitely many pairwise disjoint rectangles, so that the vertical segments of these rectangles are segments of leaves of $\cF$, the horizontal segments of these rectangles are segments of leaves of $\cG$, and the boundaries of these rectangles is disjoint from $\cK \cap \cL$. Fix such a rectangle $R$.

 Let $\sigma$ be a connected component of $\cK\cap R$ (note that $\sigma$ is a vertical segment of $R$). One can find an arbitrarily thin vertical subrectangle $R_\sigma$ of $R$ containing $\sigma$, so that the vertical sides of $R_\sigma$ are disjoint from $\cK$.  Any connected component of the intersection of a leaf of $\cL$ with $R_\sigma$ is disjoint from the horizontal boundary of $R_\sigma$, and ``crosses $R_\sigma$ horizontally" intersecting every vertical segment of $R_\sigma$. If $R_\sigma$ is thin enough, then the same is true for a connected component of the intersection of a leaf of $\cH$ with $R_\sigma$ in a neighbourhood of $\cL\cap R$ (because $\cH$ is tranverse to $\sigma$ and the horizontal boundary of $R_\sigma$ is disjoint from $\cL$). Therefore one can find a diffeomorphism $\psi_\sigma$ supported in the interior of $R_\sigma$, preserving the vertical segments of $R$ (i.e. the leaves of $\cF$), and so that any connected component of $R_\sigma$ with a leaf $\cH$ is mapped on an horizontal segment of $R_\sigma$ (i.e. in a leaf pof $\cG$) in a neighborhood of $L\cap R_\sigma$. Now one can cover $\cL\cap R$ by finitely many disjoint such rectangles $R_{\sigma_i}$ and the announced diffeomorphism $\psi_1$ is the product of the  $\psi_{\sigma_i}$'s.
\end{proof}

We are now ready for proving the proposition.

\begin{proof}[Proof of Proposition~\ref{p.lineargluing}]
We consider the foliations $\cW^s_{in}$, $\cW^u_{in}$, $\varphi_0(\cW^s_{out})$, and $\varphi_0(\cW^u_{out})$ defined by Lemma~\ref{l.linearfoliation} on some neighborhoods
$\cU_0^{in}\subset \partial^{in} V$ and $\cU_0^{out}\subset \partial^{out} V$ of $\cL^s$ and $\cL^u$ respectively.

Since $(V,X)$ is a saddle hyperbolic plug, the  laminations $\cL^s$ and $\cL^u$ have empty interior. By assumption, the laminations $\varphi_0(\cL^u)$ and $\cL^s$ are (strongly) transverse to each other. One deduces that the intersection  $\varphi_0(\cL^u)\cap \cL^s$ is totally discontinuous compact subset of the surface $\partial^{in}V$.
Therefore $\varphi_0(\cL^u)\cap \cL^s$  can be covered by the interior of a finite union of disjoint arbitrarily small compact discs $D_i$'s.

As the laminations $\varphi_0(\cL^u)$ and  $\cL^s$ are (strongly) transverse, there is a neighborhood $\cO$ of $\varphi_0(\cL^u)\cap \cL^s$ on which the foliations
$\varphi_0(\cW^u_{out})$ and $\cW^s_{in}$ are transverse. By shrinking $\cO$ if necessary, one may assume that $\cO\subset \varphi_0(\cU_0^{out})\cap \cU_0^{in}$.
We choose the discs $D_i$'s small enough so that they are contained in $\cO$.

According to Lemma~\ref{l.transversefoliation} for each of the disc $D_i$ there is a diffeomorphism $\varphi_i$ supported in the interior of $D_i$,
preserving each leaf of $\cW^s_{in}$ (and isotopic to the identity through diffeomorphisms preserving the leaves of $\cW^s_{in}$),
and so that $\psi_i(\varphi_0(\cW^u_{out}))$ coincides with $\cW^u_{in}$ in the neighborhood of $\psi_i(\varphi_0(\cL^u))\cap D_i)\cap \cL^s$.
Let $\psi^{in}$ be the diffeomorphism of $\partial^{in}V$ coinciding with $\psi_i$ on each $D_i$ and with the identity out of the $D_i$.
Let $\varphi_1:=\psi^{in}\circ \varphi_0$. Note that $\varphi_1(\cW^u_{out})$ coincides with $\cW^u_{in}$ in the neighborhood of $\varphi_1(\cL^u)\cap \cL^s$.
So we got half of the conclusion.  Now, we need now to push $\varphi_0(\cW^s_{out})$ on $\cW^s_{in}$ without destroying what has been done.

 For that purpose, we consider the foliations $\cW^s_{out}$, $\cW^u_{out}$, $\varphi_1^{-1}(\cW^s_{in})$ and $\varphi_1^{-1}(\cW^u_{in})$.
Notice that  $\cW^u_{out}$ and  $\varphi_1^{-1}(\cW^u_{in})$ coincide in a neighborhood $\cO_1$ of $\cL^u\cap\varphi_1^{-1}(\cL^s)$, where the foliations
$\cW^s_{out}$ and  $\varphi_1^{-1}(\cW^s_{in})$ are transverse. We cover $\cL^u\cap\varphi_1^{-1}(\cL^s)$ by a family of disjoint discs $\De_j$ contained in $\cO_1$,
and with boundary disjoint from $\cL^u\cap\varphi_1^{-1}(\cL^s)$.
We apply Lemma~\ref{l.transversefoliation} with $\cF=\cW^u_{out} =\varphi_1^{-1}(\cW^u_{in})$, $\cG=\cW^s_{out}$ and
$\cH=\varphi_1^{-1}(\cW^s_{in})$.  This provides a diffeomorphism denoted $(\psi^{out})^{-1}$,
supported in the union of the interiors of the discs $\De_j$ keeping invariant each leaf of $\varphi_1^{-1}(\cW^u_{in})$ and sending
$\varphi_1^{-1}(\cW^s_{in})$ on $\cW^s_{out}$, in a small neighborhood of $(\psi^{out})^{-1}(\varphi_1^{-1})(\cL^s)\cap \cL^u$.
Notice that $(\psi^{out})^{-1}(\varphi_1^{-1}(\cW^s_{in}))=\varphi_1^{-1}(\cW^s_{in})=\cW^u_{out}$.
The desired diffeomorphism is $\varphi:=\varphi_1\circ \psi^{out}=\psi^{in}\circ \varphi_0\circ\psi^{out}$. 
\end{proof}

\subsection{Adapted neighborhoods}

In the reminder of the section, we consider an affine saddle plug $(V,X)$, a strongly transverse gluing map $\varphi:\partial^{out} V\to\partial^{in} V$, an invariant neighborhood $\cU$ of the with maximal invariant set $\La$ of $(V,X)$, and some transverse foliations $\cF^s$ and $\cF^u$ on $\cU$ containing respectively $W^s(\La)$ and $W^u(\La)$ as sublaminations. We set $\cU^{in}:=\cU\cap \partial^{in}V$ and $\cU^{out}:=\cU\cap \partial^{out}V$. We denote by $\cW^s_{in}$, $\cW^u_{in}$, $\cW^s_{out}$ and $\cW^u_{out}$ the foliations induced by on $\cF^s$ and $\cF^u$ on  $\cU^{in}$ and $\cU^{out}$. We assume that these objects satisfy the conclusion of Proposition~\ref{p.lineargluing}, i.e. we assume that the foliations $\varphi_*(\cW^s_{out})$ and $\varphi_*(\cW^u_{out})$ coincide with $\cW^s_{in}$ and $\cW^u_{in}$ on $\varphi(\cU^{out})\cap \cU^{in}$.
Recall that $\cU^{in}:=\cU\cap \partial^{in}V$ is a neighborhood of $\cL^s$. Hence $\cU^{in}$ contains all but finitely many elements of the connected components of $\partial^{in} V\setminus\cL^s$. 

\begin{defis}
A \emph{$in$-square} is a disc  $C\subset \partial^{in} V$  with the following properties
\begin{itemize}
\item the  boundary $\partial C$ is contained in $\cU^{in}$,
\item $\partial C$ consists in exactly four segments: two segments of leaves of $\cW^s_{in}$ and two segments of leaves of $\cW^u_{in}$,
\item there is a diffeomorphism from $C$ to $[0,1]^2$ so that, on a neighbourhood of the boundary of $C$, this diffeomorphisms maps $\cW^s_{in}$ and $\cW^u_{in}$ on the horizontal and vertical foliations of $[0,1]^2$.
\end{itemize}
A compact neighborhood $\cU^{in}_1\subset \cU^{in}$ of $\cL^s$ will be called an \emph{adapted neighborhood} of $\cL^s$ if, for any connected component $C$ of the complement of $\cL^s$, the complement $C\setminus \cU^{in}_1$ is either empty or is the interior of a  $in$-square. 
\end{defis}

As $\cL^s$ is a filling MS-lamination one easily proves:

\begin{lemm}
\label{l.adapted} 
The lamination $\cL^s$ admits a basis of adapted neighborhoods: every neighborhood of $\cL^s$ in $\partial^{in} V$ contains an adapted neighborhood.
\end{lemm}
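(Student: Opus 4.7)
The plan is to carve out an open in-square from each strip of $\partial^{in} V\setminus \cL^s$ that extends outside the given neighborhood $U$, and take the complement as our adapted neighborhood. Given an open neighborhood $U$ of $\cL^s$ in $\partial^{in} V$, replace $U$ by $U\cap\cU^{in}$, so that $U\subset\cU^{in}$. Since $\cL^s$ is a filling MS-lamination, every component of $\partial^{in} V\setminus\cL^s$ is an open strip; the set $\partial^{in} V\setminus U$ is compact and disjoint from $\cL^s$, so by compactness it meets only finitely many strips, say $C_1,\dots,C_N$. Every other strip $C$ is entirely contained in $U$, and the condition ``$C\setminus\cU^{in}_1=\emptyset$'' will be automatic.

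For each $C_i$, denote its two boundary leaves by $L_1^i, L_2^i$ and the compact leaves of $\cL^s$ on which they jointly accumulate by $\gamma$ (at one end) and $\gamma'$ (at the other end). Near $\gamma$ the pair $(\cW^s_{in},\cW^u_{in})$ forms a smooth local product (Lemma~\ref{l.linearfoliation}), in which $L_1^i, L_2^i$ appear as horizontal leaves spiraling toward $\gamma$ while leaves of $\cW^u_{in}$ are vertical. We choose a short vertical leaf-segment $\tau_1^i$ of $\cW^u_{in}$ crossing both $L_1^i$ and $L_2^i$ at a spiral level deep enough that (i) $\tau_1^i$ lies in $U$ and (ii) the cusp of $C_i$ cut off between $\tau_1^i$ and $\gamma$ lies in $U$; this is possible because the width of $C_i$ between consecutive spirals tends to $0$, as the holonomy of $\cW^s_{in}$ along $\gamma$ is an affine contraction (Remark~\ref{r.holonomy-affine}). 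We construct $\tau_2^i$ similarly at $\gamma'$. Then we pick leaves $M_1^i, M_2^i$ of $\cW^s_{in}$, lying strictly inside $C_i$ and so close to $L_1^i, L_2^i$ that the thin tubes they bound in $C_i$ with $L_1^i, L_2^i$ respectively stay in $U$; such leaves exist since $\cL^s$ is a sublamination of the honest foliation $\cW^s_{in}$. Let $R_i$ be the closed disc bounded by the four sub-arcs of $M_1^i, M_2^i, \tau_1^i, \tau_2^i$ between their consecutive intersections; transversality of $\cW^s_{in}$ and $\cW^u_{in}$ on a neighborhood of $\partial R_i\subset\cU^{in}$ ensures that $R_i$ is an in-square.

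We then set $\cU^{in}_1 := \partial^{in} V\setminus \bigcup_{i=1}^N \mathrm{int}(R_i)$. This is compact, being closed in a compact surface. Each closed disc $R_i$ lies in $C_i$ at positive distance from $\cL^s$, so $\cU^{in}_1$ contains an open neighborhood of $\cL^s$. Every point of $\partial^{in} V\setminus U$ belongs to some $C_i$, and by the choices of $\tau_k^i, M_j^i$ it lies in $\mathrm{int}(R_i)$; hence $\cU^{in}_1\subset U$. Since the interiors of the $R_i$ are contained in pairwise disjoint strips, for every component $C$ of $\partial^{in} V\setminus\cL^s$ we have $C\setminus\cU^{in}_1=\mathrm{int}(R_i)$ if $C=C_i$ and $C\setminus\cU^{in}_1=\emptyset$ otherwise, which is precisely the adapted-neighborhood condition.

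The most delicate step is condition (ii) in the construction of $\tau_k^i$: one must verify that the cusp of $C_i$ beyond the crosscut shrinks into the chosen neighborhood $U$. This is where the affine (homothetic) holonomy of the compact leaves, guaranteed by the normal-form assumption on $(V,X)$, is used in an essential way; it provides quantitative control on how fast the successive spirals of $L_1^i, L_2^i$ tighten around $\gamma$.
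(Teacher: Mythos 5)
Your construction is correct and follows the natural strategy. The paper gives no proof of this lemma (the text merely says ``one easily proves''), so there is no official argument to compare against; but excising an in-square $R_i$ from each of the finitely many strips $C_i$ meeting the compact set $\partial^{in}V\setminus U$, and taking $\cU^{in}_1:=\partial^{in}V\setminus\bigcup_i\mathrm{int}(R_i)$, is surely what the authors had in mind, and your verification of the adapted-neighborhood conditions is sound.

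One remark on your last paragraph: you overstate the role of the affine normal form. To push the crosscut $\tau_k^i$ deep enough that the cusp of $C_i$ beyond it lies in $U$, you only need that this cusp Hausdorff-converges to the compact leaf $\gamma\subset\cL^s\subset\mathrm{int}(U)$ as $\tau_k^i$ recedes towards $\gamma$; that holds whenever the holonomy of $\gamma$ is a topological contraction, which is part of the definition of a MS-lamination and has nothing to do with this holonomy being a homothety. Likewise, choosing $M_j^i$ so that the thin tube it bounds with $L_j^i$ lies in $U$ uses only the compactness of $\overline{L_j^i}$ and the fact that $\cW^s_{in}$ is a foliation containing $\cL^s$ as a sublamination. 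The affine Markov partition is genuinely used earlier in the section to produce the smooth transverse foliations $\cW^s_{in},\cW^u_{in}$ on a neighborhood of $\cL^s$ (Lemma~\ref{l.linearfoliation}), and those foliations are the objects this lemma is stated in terms of; but once they are in place, the homothetic structure of the compact-leaf holonomies plays no further role here. (It does become important later, in the distortion estimates of Lemma~\ref{l.distorsion} --- but not in the present lemma.)
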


One defines analogously adapted neighborhoods of $\cL^u$, and proves that $\cL^u$ admits a basis of adapted neighborhoods.

\subsection{The crossing map}

Recall that, for every point $x$ in $\partial^{in}V\setminus \cL^s$, the positive orbit of $x$ exits $V$ at a point $\Ga(x)$ of $\partial^{out} V\setminus \cL^u$. The map
 $\Gamma\colon \partial^{in}V\setminus \cL^s\to \partial^{out}V\setminus \cL^u$ is a diffeomorphism called the \emph{crossing map} of $(V,X)$ (see subsection~\ref{ss.crossing-map}). The invariance (under the flow of $X$) of the neighbourhood $\cU$ and the foliations $\cF^s$ and $\cF^u$ imply that 
 $$\Gamma(\cU^{in}\setminus \cL^s)=\cU^{out}\setminus \cL^u\quad\quad \Gamma_*(W^s_{in})=W^s_{out}\quad\quad\Gamma_*(W^u_{in})=W^u_{out}.$$
One easily deduces the next two lemmas:

\begin{lemm} 
\label{l.adapted-neighbourhood-invariant}
If $\cV^{in}\subset\cU^{in}$ is an adapted neighborhood of $\cL^s$, then $\Gamma(\cV^{in})\cup \cL^u$ is an adapted neighborhood of $\cL^u$. 
\end{lemm}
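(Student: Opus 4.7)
The plan is to verify the two defining conditions of an adapted neighborhood directly for $\Gamma(\cV^{in})\cup\cL^u\subset\partial^{out}V$, using the three invariance properties recalled just above the statement, namely $\Gamma(\cU^{in}\setminus\cL^s)=\cU^{out}\setminus\cL^u$, $\Gamma_*(\cW^s_{in})=\cW^s_{out}$, and $\Gamma_*(\cW^u_{in})=\cW^u_{out}$, together with Lemma~\ref{l.exceptional-inout}, which guarantees that $\Gamma$ induces a bijection between the connected components of $\partial^{in}V\setminus\cL^s$ and those of $\partial^{out}V\setminus\cL^u$.

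First I would establish the neighborhood property. Since $\cV^{in}$ is a closed neighborhood of $\cL^s$, the compact set $K:=\partial^{in}V\setminus\mathrm{int}(\cV^{in})$ is disjoint from $\cL^s$. Hence its image $\Gamma(K)$ is compact and disjoint from $\cL^u$, so $W:=\partial^{out}V\setminus\Gamma(K)$ is an open neighborhood of $\cL^u$. For any $y\in W\setminus\cL^u$ written as $y=\Gamma(x)$, one must have $x\in\mathrm{int}(\cV^{in})$, showing $W\subseteq\Gamma(\cV^{in})\cup\cL^u$. For compactness, take $y_n=\Gamma(x_n)\to y$ with $x_n\in\cV^{in}\setminus\cL^s$; if $y\notin\cL^u$, continuity of $\Gamma^{-1}$ on $\partial^{out}V\setminus\cL^u$ combined with closedness of $\cV^{in}$ yields $x_n\to\Gamma^{-1}(y)\in\cV^{in}$, so $y\in\Gamma(\cV^{in})$.

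Next I would check the structural condition. Let $C'$ be a component of $\partial^{out}V\setminus\cL^u$ and $C:=\Gamma^{-1}(C')$ the corresponding component of $\partial^{in}V\setminus\cL^s$. The bijectivity of $\Gamma$ yields
\[
C'\setminus\bigl(\Gamma(\cV^{in})\cup\cL^u\bigr)=\Gamma(C\setminus\cV^{in}).
\]
If $C\setminus\cV^{in}$ is empty there is nothing to prove; otherwise by hypothesis it is the interior of an in-square $D$. Because $\cL^s$ lies in the interior of $\cV^{in}$, the closed disc $D$ is disjoint from $\cL^s$, so $\Gamma$ is defined on all of $D$ and $D':=\Gamma(D)$ is a closed disc in $\partial^{out}V$. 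Its boundary $\partial D'=\Gamma(\partial D)$ sits in $\cU^{out}$ by the invariance of neighborhoods, and by the foliation-preservation it decomposes into two arcs of $\cW^s_{out}$ and two arcs of $\cW^u_{out}$. If $\Phi\colon D\to[0,1]^2$ is the diffeomorphism provided by the in-square condition, then $\Phi':=\Phi\circ(\Gamma|_D)^{-1}$ is a diffeomorphism $D'\to[0,1]^2$; near $\partial D'$ the map $(\Gamma|_D)^{-1}$ sends $\cW^s_{out},\cW^u_{out}$ to $\cW^s_{in},\cW^u_{in}$, which $\Phi$ in turn sends to the horizontal and vertical foliations of $[0,1]^2$. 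Hence $D'$ is an out-square with interior $\Gamma(C\setminus\cV^{in})$.

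The argument contains no deep obstacle; the only point requiring care is that $\cV^{in}$ must be a genuine neighborhood of $\cL^s$, i.e. $\cL^s\subseteq\mathrm{int}(\cV^{in})$. This is precisely what forces both $K$ and $D$ to sit inside the domain of $\Gamma$, and in particular $\partial D$ inside $\cU^{in}\setminus\cL^s$ where the foliation-preservation applies, so that the out-square boundary structure and the adapted diffeomorphism transfer from $D$ to $\Gamma(D)$.
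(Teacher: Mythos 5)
Your proposal is correct and follows the same route the paper intends: the lemma is stated immediately after the three invariance equalities $\Gamma(\cU^{in}\setminus\cL^s)=\cU^{out}\setminus\cL^u$, $\Gamma_*(\cW^s_{in})=\cW^s_{out}$, $\Gamma_*(\cW^u_{in})=\cW^u_{out}$ with the remark that ``one easily deduces'' it, and your write-up supplies precisely the elided details (in particular the observation that $\cL^s\subset\mathrm{int}(\cV^{in})$ forces the closed disc $D$ to miss $\cL^s$, so $\Gamma$ is defined on all of $D$ and transports the in-square structure to an out-square structure).
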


\begin{defi}
An invariant neighborhood $\cV$ of $\La$ will be called \emph{an adapted neighborhood of $\La$} if both $\cV^{in}=\cV\cap\partial^{in} V$ and $\cV^{out}=\cV\cap \partial^{out}V$ are adapted neighborhoods of $\cL^s$ and $\cL^u$ respectively.
\end{defi}

\begin{lemm}
\label{l.filling} 
Let  $\cG^s_{in}$ and $\cG^u_{in}$ be smooth transverse foliations on $\partial^{in} V$, which coincide respectively with $W^s_{in}$ and $W^u_{in}$ on an adapted neighborhood of $\cL^s$. Then $\cG^s_{in}$ and $\cG^u_{in}$ extend to smooth transverse $X$-invariant foliations $\cG^s$ and $\cG^u$ in $V$, which coincide with  $\cF^s$ and $\cF^u$ respectively on a neighborhood of $\La$. As a consequence, $\Ga_*(\cG^s_{in})$ and $\Ga_*(\cG^u_{in})$ extend on $\partial^{out} V$ to smooth transverse foliations $\cG^s_{out}$ and $\cG^u_{out}$ which coincide with $W^s_{out}$ and $W^u_{out}$ respectively on an adapted neighborhood of $\cL^u$. 
\end{lemm}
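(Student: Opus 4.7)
The strategy is to extend $\cG^s_{in}$ and $\cG^u_{in}$ to $X$-invariant foliations of $V$ by flowing their leaves along $X$, and to glue the resulting foliations with $\cF^s$ and $\cF^u$ inside a well-chosen invariant neighborhood of $\La$. The main subtlety is smoothness of the gluing, which requires the two foliations being glued to agree not only on the boundary of the invariant neighborhood but on an open set around it.

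First, using Lemma~\ref{l.adapted}, pick an adapted neighborhood $\cV'^{in}$ of $\cL^s$ whose closure is contained in the interior of the adapted neighborhood on which $\cG^s_{in} = W^s_{in}$ and $\cG^u_{in} = W^u_{in}$. Let $\cV' \subset \cV_0$ be the associated invariant neighborhood of $\La$, obtained as the union of orbit segments of $X$ joining $\cV'^{in}$ to $\Ga(\cV'^{in})\cup\cL^u$ (which is an adapted neighborhood of $\cL^u$ by Lemma~\ref{l.adapted-neighbourhood-invariant}) together with $W^s(\La)\cup W^u(\La)$. Define $\cG^s$ to equal $\cF^s$ on $\cV'$; on $V\setminus \cV'$ (which, since $\cV'$ is $X$-invariant, consists only of orbit segments entering through $\partial^{in}V\setminus \cV'^{in}$), define the leaf of $\cG^s$ through a point $y$ to be the forward $X$-orbit of the leaf of $\cG^s_{in}$ through the entry point of $y$. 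Construct $\cG^u$ analogously, flowing $\cG^u_{in}$ forward along $X$ outside $\cV'$ and setting $\cG^u = \cF^u$ on $\cV'$.

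The two definitions of $\cG^s$ match on $\partial\cV'$ because a leaf of $\cF^s$ through $y\in\partial\cV'$ is traced out by the flow of the leaf of $W^s_{in} = \cF^s\cap\partial^{in}V$ through the entry point $x\in\partial\cV'^{in}$, and $\cG^s_{in}$ coincides with $W^s_{in}$ at $x$. Smoothness across $\partial \cV'$ follows because this agreement holds on an \emph{open} neighborhood of $\partial\cV'^{in}$ (by our choice of $\cV'^{in}$ strictly inside the original adapted neighborhood), so by $X$-invariance of $\cF^s$ the two constructions give the same foliation on an open neighborhood of $\partial \cV'$ in $V$. Transversality of $\cG^s$ and $\cG^u$ holds on $\cV'$ since $\cF^s$ and $\cF^u$ are transverse, and holds on $V\setminus \cV'$ because $\cG^s_{in}$ and $\cG^u_{in}$ are transverse on $\partial^{in}V$ and the crossing map is a diffeomorphism preserving the flow direction.

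For the consequence, $\cG^s$ and $\cG^u$ are transverse to $\partial^{out}V$ (being $X$-invariant), so they induce smooth transverse foliations $\cG^s_{out}$ and $\cG^u_{out}$ on $\partial^{out}V$. By construction $\cG^s_{out} = \Ga_*(\cG^s_{in})$ on $\partial^{out}V\setminus \cL^u$, and on the adapted neighborhood $\Ga(\cV'^{in})\cup\cL^u$ of $\cL^u$ we have $\cG^s_{out} = \Ga_*(W^s_{in}) = W^s_{out}$ by $X$-invariance of $\cF^s$; similarly $\cG^u_{out} = W^u_{out}$ there. The main obstacle, as anticipated, is verifying smoothness of the glued foliation across $\partial\cV'$, and it is handled by the open-neighborhood matching of $\cG^s_{in}$ with $W^s_{in}$ coming from the strict inclusion $\overline{\cV'^{in}}\subset\mathrm{int}(\cV^{in})$.
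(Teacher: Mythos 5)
The paper does not supply a proof of Lemma~\ref{l.filling}; it is grouped with Lemma~\ref{l.adapted-neighbourhood-invariant} under the remark ``One easily deduces the next two lemmas,'' so your argument is filling in a gap the authors left implicit. Your construction is the natural one and it is correct: take $\cV'$ to be the $X$-invariant hull (union of full orbits plus $W^s(\Lambda)\cup W^u(\Lambda)$) of a slightly smaller adapted neighborhood $\cV'^{in}$ compactly contained in $\mathrm{int}(\cV^{in})$, set $\cG^s=\cF^s$ on $\cV'$, and flow $\cG^s_{in}$ through the flow-box $V\setminus\cV'$; the overlap of the two definitions on an open neighborhood of $\partial\cV'$ (guaranteed precisely by the strict inclusion $\overline{\cV'^{in}}\subset\mathrm{int}(\cV^{in})$, the identity $\cG^s_{in}=\cW^s_{in}$ there, and the $X$-invariance of $\cF^s$) is exactly what makes the glued foliation smooth, which is the one point that requires care and which you correctly isolate. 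The rest — $X$-invariance, transversality via the flow-box, and the reading-off of $\cG^s_{out},\cG^u_{out}$ with the appeal to Lemma~\ref{l.adapted-neighbourhood-invariant} for the adapted neighborhood $\Gamma(\cV'^{in})\cup\cL^u$ — all check out. This mirrors the flow-saturation argument of Lemma~\ref{l.invariantfoliations}, except that gluing with the smooth $\cF^s,\cF^u$ near $\Lambda$ replaces the $\lambda$-lemma limit and upgrades the regularity from $\cC^{0,1}$ to smooth. Two cosmetic remarks: the symbol $\cV_0$ in ``$\cV'\subset\cV_0$'' should presumably be $\cU_0$, and your transversality sentence should invoke the flow-box diffeomorphism of $V\setminus\cV'$ rather than the crossing map $\Gamma$, though the content is the same.
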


The following lemma shows that the crossing map is `` as  strongly hyperbolic as we want''  in small neighborhoods of $\cL^s$:

\begin{lemm}\label{l.stronghyperbolicity} 
Given any $\lambda>1$, there is an adapted neighborhood $\cU^{in}_\lambda$ of $\cL^s$
so that, for $x\in\cU^{in}_\lambda\setminus\cL^s$ the crossing map $\Gamma$ expands vectors tangent to the leaves of $\cW^u_{in}$ by
larger than $\lambda$, and contracts vectors tangent to the leaves of $\cW^s_{in}$ by a factor smaller than $\lambda^{-1}$.
\end{lemm}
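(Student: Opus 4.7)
The plan is to treat the two assertions (contraction on $\cW^s_{in}$ and expansion on $\cW^u_{in}$) separately, by appealing to Lemma~\ref{l.stablefoliation} first for the flow $X$ and then for the reversed flow $-X$; and only at the very end to arrange that the resulting open neighbourhood of $\cL^s$ be chosen adapted (using Lemma~\ref{l.adapted}).

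First I would obtain the contraction on $\cW^s_{in}$. Recall that $\cW^s_{in}=\cF^s\cap \partial^{in}V$, and $\cF^s$ is an $X$-invariant $\cC^{0,1}$ (actually smooth) foliation containing $W^s(\La)$ as a sublamination. Lemma~\ref{l.stablefoliation} applied with this foliation gives: for every $\varepsilon>0$, there exists $\delta_s>0$ such that for any $x\in \partial^{in}V\setminus \cL^s$ with $d(x,\cL^s)<\delta_s$ and any vector $u$ tangent to $\cW^s_{in}$ at $x$, one has $\|\Ga_*(u)\|<\varepsilon\,\|u\|$. Choosing $\varepsilon=\lambda^{-1}$ yields the desired $\cW^s_{in}$-contraction on the $\delta_s$-neighbourhood of $\cL^s$.

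Next I would obtain the expansion on $\cW^u_{in}$ by applying the same Lemma~\ref{l.stablefoliation} to the reversed flow $-X$. For $-X$, the maximal invariant set is the same $\La$, the entrance boundary is $\partial^{out}V$, the entrance lamination is $\cL^u$, the invariant extension of $W^s(\La,-X)=W^u(\La,X)$ is $\cF^u$ (whose trace on the $-X$-entrance boundary is $\cW^u_{out}$), and the crossing map is exactly $\Ga^{-1}$. The lemma therefore yields: for every $\varepsilon'>0$, there exists $\delta_u>0$ such that for any $y\in\partial^{out}V\setminus\cL^u$ with $d(y,\cL^u)<\delta_u$ and any $v$ tangent to $\cW^u_{out}$ at $y$, one has $\|\Ga^{-1}_*(v)\|<\varepsilon'\|v\|$. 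Setting $\varepsilon'=\lambda^{-1}$ and writing $v=\Ga_*u$ with $u$ tangent to $\cW^u_{in}$ at $x=\Ga^{-1}(y)$ (which makes sense because $\Ga$ maps $\cW^u_{in}$ leaves to $\cW^u_{out}$ leaves), this is exactly the inequality $\|\Ga_*(u)\|>\lambda\|u\|$.

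To tie these together, I need a neighbourhood $\cN$ of $\cL^s$ in $\partial^{in}V$ on which both conditions hold, i.e. $\cN\subset\{d(\cdot,\cL^s)<\delta_s\}$ and $\Ga(\cN\setminus\cL^s)\subset\{d(\cdot,\cL^u)<\delta_u\}$. The first inclusion is automatic by taking $\cN$ small. For the second I use that the crossing time $\tau(x)\to+\infty$ as $x\to\cL^s$ and that the forward orbit of $x$ spends all its late time in the filtrating neighbourhood $\cU$ of $\La$: a standard $\lambda$-lemma argument (or equivalently the fact, recorded in Lemma~\ref{l.adapted-neighbourhood-invariant}, that $\Ga$ sends adapted neighbourhoods of $\cL^s$ to adapted neighbourhoods of $\cL^u$ that shrink to $\cL^u$) shows that for any prescribed $\delta_u$ there is a neighbourhood $\cN$ of $\cL^s$ for which $\Ga(\cN\setminus\cL^s)$ lies in the $\delta_u$-neighbourhood of $\cL^u$. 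Finally, by Lemma~\ref{l.adapted}, every neighbourhood of $\cL^s$ contains an adapted neighbourhood, so I take $\cU^{in}_\lambda$ to be an adapted neighbourhood contained in $\cN$. The only mildly delicate point is this last step controlling where $\Ga(x)$ lies; everything else follows from Lemma~\ref{l.stablefoliation} applied to $X$ and to $-X$.
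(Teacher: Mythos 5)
Your proof is correct and follows essentially the same approach as the paper --- both reduce to Lemma~\ref{l.stablefoliation} together with the observation that orbits starting close to $\cL^s$ spend arbitrarily long near $\La$ --- but you make explicit two steps the paper's one-paragraph sketch leaves implicit: the time-reversal (apply Lemma~\ref{l.stablefoliation} to $-X$) needed for the $\cW^u_{in}$-expansion, and the fact that $\Ga$ sends small neighbourhoods of $\cL^s$ into small neighbourhoods of $\cL^u$. The compactness/continuity argument you gesture at for the latter (crossing times tend to $+\infty$ as $x\to\cL^s$, so any accumulation point of $\Ga(x)$ has a complete backward orbit and hence lies in $W^u(\La)\cap\partial^{out}V=\cL^u$) does close that gap; Lemma~\ref{l.adapted-neighbourhood-invariant} alone would not, since it only says $\Ga$ carries adapted neighbourhoods to adapted neighbourhoods, not that the image shrinks.
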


\begin{proof}
It is a direct consequence of Lemma~\ref{l.stablefoliation}. Let us just recall the idea.
For  every point $x\in \partial^{in} U\setminus \cL^s$  close enough to  $\cL^s$,
the positive orbit goes in finite time in a small neighborhood of the hyperbolic set $\La_X$, then
spends an arbitrarily large interval of time close to $\La_X$,
and then reaches $\partial^{out} U$ in a finite time.  Therefore it is enough to choose the adapted
neighborhood $\cU^{in}_\lambda$ small enough for getting the desired strength of hyperbolicity for the
crossing map.
\end{proof}

\subsection{Modifying the gluing map to get some tranversality}

Let $\cG^s$ and $\cG^u$ be a choice of smooth transverse $X$-invariant foliations given by Lemma~\ref{l.filling}. We denote by $\cG^s_{in},\cG^u_{in},\cG^s_{out},\cG^u_{out}$  the one-dimensional foliations induced by $\cG^s$ and $\cG^u$ on $\partial^{in} V$ and $\partial^{out} V$ respectively. According to Proposition~\ref{p.lineargluing}, we can (and we do) assume that there exists an invariant neighbourhood $\cU$ of $\La$ so that $\varphi_*(\cG^s_{out})$ and $\varphi_*(\cG^u_{out})$ coincide with $\cG^s_{in}$ and $\cG^u_{in}$ on $\varphi(\cU^{out})\cap \cU^{in}$. Up to shrinking $\cU$, we may (and we do) assume that $\cU$ is an adapted neighbourhood.

\begin{lemm}\label{l.transverse}  
There is a map $\varphi_1:\partial^{out} V\to\partial^{in} V$, isotopic to $\varphi$ and coinciding with $\varphi$ on an adapted neighborhood of the exit lamination $\cL^u$, so that $(\varphi_1)_*(\cG^u_{out})$ is tranvsverse to $ \cG^s_{in}$.
\end{lemm}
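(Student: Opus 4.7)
The plan is to produce $\varphi_1 = \psi \circ \varphi$, where $\psi$ is a diffeomorphism of $\partial^{in} V$ isotopic to the identity and supported in the complement of a suitable adapted neighborhood of $\cL^s$. Since $\psi$ is supported off $\cL^s$, the set where $\varphi_1$ differs from $\varphi$ is disjoint from an adapted neighborhood of $\cL^u$; and $\psi \sim \mathrm{id}$ gives $\varphi_1 \sim \varphi$. The only substantive task is to choose $\psi$ so that $\psi_*(\varphi_*(\cG^u_{out}))$ is transverse to $\cG^s_{in}$.

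Using Lemma~\ref{l.adapted}, first pick adapted neighborhoods $\cV^{out}\subset \cU^{out}$ of $\cL^u$ with $\overline{\cV^{out}}$ contained in the interior of $\cU^{out}$, and set $\cV^{in}=\varphi(\cV^{out})$, which is an open subset of $\cU^{in}$ containing a neighborhood of $\cL^s\cap \varphi(\cL^u)$. On $\cV^{in}$ the normal-form hypothesis forces $\varphi_*(\cG^u_{out})=\cG^u_{in}$, already transverse to $\cG^s_{in}$. Hence the only region where transversality has to be produced is the compact subsurface $K := \partial^{in}V\setminus \mathrm{int}(\cV^{in})$, whose boundary lies inside the region of existing transversality.

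Now construct $\psi$ by local shears along leaves of $\cG^s_{in}$. Cover $K$ by finitely many foliated charts $(U_i,(x_i,y_i))$ in which $\cG^s_{in}$ reads as the horizontal foliation $\{y_i=\text{const}\}$. On $U_i$ the line field $L:=\varphi_*(\cG^u_{out})$ is described by a smooth slope function $s_i(x_i,y_i)$, and tangency with $\cG^s_{in}$ corresponds exactly to $\{s_i=0\}$. Composing locally with a shear $(x_i,y_i)\mapsto (x_i, y_i+\lambda_i h_i(x_i,y_i))$, where $h_i$ is a bump function supported in $U_i\cap \mathrm{int}(K)$ and $\lambda_i>0$ is small, modifies the slope by $\lambda_i\partial_{x_i} h_i$ to leading order; choosing $h_i$ so that $\partial_{x_i} h_i$ is bounded away from zero on the current tangency locus inside $U_i$ removes tangencies in that chart. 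Proceeding inductively through the charts $U_1,\dots, U_N$, and at each stage taking $\lambda_i$ sufficiently small that the new shear preserves (by openness of transversality) the transversality already achieved in previously treated charts and on $\cV^{in}$, one obtains a diffeomorphism $\psi$ of $\partial^{in}V$ isotopic to the identity, supported in $\mathrm{int}(K)$, such that $\psi_*L$ is transverse to $\cG^s_{in}$ throughout $\partial^{in}V$. Setting $\varphi_1 := \psi\circ\varphi$ completes the construction and gives all three required properties.

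The main technical difficulty is the inductive gluing in the final step: each new shear must be small enough in the $C^1$-topology not to reopen tangencies in previously treated charts, yet large enough to eliminate tangencies in its own chart. The compactness of $K$, combined with the openness of the pointwise transversality condition, makes this standard. Alternatively, one can bypass the chart-by-chart bookkeeping by a Thom-transversality argument: inside the space of diffeomorphisms of $\partial^{in}V$ agreeing with $\mathrm{id}$ on a neighborhood of $\cL^s$, those $\psi$ for which $\psi_*L$ is transverse to $\cG^s_{in}$ form an open dense set, so a generic small perturbation of the identity suffices.
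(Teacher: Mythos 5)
There are two substantial problems with the proposal.

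First, the identification of the region where transversality can fail is wrong. You set $\cV^{in}:=\varphi(\cV^{out})$ and assert it is contained in $\cU^{in}$; this is false in general. The set $\varphi(\cV^{out})$ is a neighborhood of $\varphi(\cL^u)$ in $\partial^{in}V$, whereas $\cU^{in}$ is a neighborhood of $\cL^s$; these are two \emph{different} filling MS-laminations on the torus whose intersection has empty interior, so neither neighborhood contains the other. The normal-form hypothesis supplied by Proposition~\ref{p.lineargluing} forces $\varphi_*(\cG^u_{out})=\cG^u_{in}$ only on the \emph{intersection} $\varphi(\cU^{out})\cap\cU^{in}$, so you cannot conclude transversality on all of $\cV^{in}$. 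Consequently $K$ misses the piece $\cV^{in}\setminus\cU^{in}$, where tangencies between $\varphi_*(\cG^u_{out})$ and $\cG^s_{in}$ may well occur; and precisely there your $\psi$ is constrained to be the identity (in order that $\varphi_1=\varphi$ on $\cV^{out}$), so the proposed perturbation cannot remove them. This is exactly why the paper's proof works with \emph{two} auxiliary maps: a $\psi^{in}$ on $\partial^{in}V$ (identity near $\cL^s$) and a $\psi^{out}$ on $\partial^{out}V$ (identity near $\cL^u$); neither of the two constraints alone leaves enough room.

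Second, and independently, the proposed transversalization mechanism does not work. The concluding ``Thom-transversality'' claim --- that diffeomorphisms $\psi$ with $\psi_*L$ transverse to $\cG^s_{in}$ form an open dense set --- is simply false for two one-dimensional foliations of a surface: generically the tangency set is a nonempty curve, and a small generic perturbation merely moves that curve without eliminating it. The same defect appears in the chart-by-chart shear argument. With $\psi_\lambda(x,y)=(x,y+\lambda h(x,y))$ and $L$ of slope $s$, the tangency locus of $\psi_{\lambda*}L$ with the horizontal foliation is $\{s=-\lambda h_x/(1+\lambda h_y)\}$, which for small $\lambda$ is a small deformation of $\{s=0\}$ and typically still nonempty (and it must coincide with $\{s=0\}$ outside $\operatorname{supp}h$). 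What makes the conclusion provable is not genericity but the combinatorial rigidity coming from \emph{strong transversality} of $\cL^s$ and $\varphi_*(\cL^u)$: after arranging (via $\psi^{in}$ and $\psi^{out}$) that the foliations are transverse on a neighborhood of $\cL^s\cup\varphi_*(\cL^u)$, the complement is a finite union of topological squares whose boundary edges lie on leaves of the two laminations, and Lemma~\ref{l.square} provides an explicit normal-form diffeomorphism of each square, supported off its boundary, that conjugates $\varphi_*(\cG^u_{out})$ to a foliation transverse to $\cG^s_{in}$. This normal-form construction, rather than any genericity argument, is the essential missing ingredient in your proof.
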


\begin{rema}
The map $\varphi_1$ is a strongly transverse gluing map since it coincides with $\varphi$ on an adapted neighborhood of  $\cL^u$. Moreover, $\varphi_1$ is isotopic to $\varphi$ inside the set of strongly transverse gluing maps.
\end{rema}

We start the proof of Lemma~\ref{l.transverse} by a very general lemma:

\begin{lemm}\label{l.square} 
Let $\cF$ and $\cG$ be smooth foliations of the square $C=[0,1]^2$ so that
$[0,1]\times \{0,1\}$ consists in leaves of $\cF$ and $\{0,1\}\times [0,1]$ consists in leaves of $\cG$, and
$\cF$ and $\cG$ are transverse in  a neighborhood of the boundary $\partial C$.  Then there is a smooth
diffeomorphisms $\psi$ of $C$ equal to the identity map in a neighborhood of $\partial C$ so that
$\psi(\cG)$ is transverse to $\cF$.
\end{lemm}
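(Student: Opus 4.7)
We split the proof into two steps: first construct a smooth foliation $\cG'$ which is transverse to $\cF$ on all of $C$ and coincides with $\cG$ on a neighborhood of $\partial C$; then produce $\psi$ as the time-one map of an isotopy between $\cG$ and $\cG'$ in the space of foliations.

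Since $C$ is simply connected, both $\cF$ and $\cG$ are orientable, so pick smooth nonvanishing vector fields $X$ tangent to $\cF$ and $Y$ tangent to $\cG$, and a smooth nonvanishing vector field $N$ transverse to $X$ everywhere on $C$ (for instance, the rotation of $X$ by $\pi/2$ in the Euclidean metric). Write $Y = aX + bN$; near $\partial C$, transversality of $\cF$ and $\cG$ forces $b$ to be nonzero, and after reversing the sign of $Y$ if necessary, $b > 0$ on some open neighborhood $U$ of $\partial C$. Choose a smooth cutoff $\eta\colon C \to [0,1]$ vanishing on a smaller neighborhood $U' \Subset U$ and equal to $1$ outside $U$, and a constant $\lambda > \sup_C |b|$. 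Then
\[
Y' \;:=\; aX + (b + \lambda\eta)\,N
\]
is smooth, nonvanishing, transverse to $X$ everywhere (its $N$-coefficient is positive on $U$ because $b > 0$ there, and positive outside $U$ because $\lambda > |b|$), and equals $Y$ on $U'$. The foliation $\cG'$ tangent to $Y'$ is therefore transverse to $\cF$ everywhere and agrees with $\cG$ on $U'$, which completes Step~1.

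For Step~2, view foliations on $C$ as sections of the projectivized tangent bundle $P(TC)$; since $C$ is simply connected and parallelizable, this is a trivial $\RR P^1$-bundle, and sections are smooth maps $C \to \RR/\pi\ZZ$. The angle functions $\alpha,\alpha'$ of $\cG,\cG'$ lift to smooth functions $\tilde\alpha,\tilde\alpha' \colon C \to \RR$, and since $\alpha \equiv \alpha'$ on $U'$ the lifts can be chosen equal on $U'$. The convex combination $\tilde\alpha_t := (1-t)\tilde\alpha + t\tilde\alpha'$ descends to a smooth family of foliations $\cG_t$ with $\cG_0 = \cG$, $\cG_1 = \cG'$, and $\cG_t|_{U'} = \cG|_{U'}$ for all $t \in [0,1]$. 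Apply Moser's trick to this family: in dimension two, the condition that a smooth time-dependent vector field $Z_t$ realize the infinitesimal deformation $\partial_t\cG_t$ via its Lie action on line fields is, at each point, a single scalar equation in the two components of $Z_t$, hence solvable with a smooth solution which may be taken identically zero on a smaller neighborhood $U'' \Subset U'$ (on $U'$ the deformation $\partial_t\cG_t$ vanishes, so $Z_t \equiv 0$ is an admissible local choice). Integrating $Z_t$ yields an isotopy $\psi_t\colon C \to C$ with $\psi_0 = \mathrm{id}$, $\psi_t|_{U''} = \mathrm{id}$, and $(\psi_t)_*\cG = \cG_t$ for every $t$; take $\psi := \psi_1$.

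\textbf{Main obstacle.} Step~1 is routine. The nontrivial point is the Moser-type argument in Step~2: producing an ambient smooth time-dependent vector field $Z_t$ whose flow realizes the interpolation $\cG_t$ and which can be arranged to be supported in the interior of $C$. The two-dimensionality of $C$ is crucial, as it makes the pointwise system for $Z_t$ underdetermined (one scalar constraint in two unknowns), so both smoothness of the solution and its vanishing on a neighborhood of $\partial C$ (where $\cG_t$ is constant in $t$) follow essentially automatically.
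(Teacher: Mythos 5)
Your Step~1 is sound, but there is a genuine gap in Step~2, at the assertion that the Moser-type equation for $Z_t$ is ``at each point, a single scalar equation in the two components of $Z_t$,'' so that vanishing near $\partial C$ ``follows essentially automatically.'' This is not a pointwise algebraic constraint: writing $Y_t$ for a nonvanishing vector field tangent to $\cG_t$, the condition that the flow of $Z_t$ carry $\cG_0$ to $\cG_t$ reads, modulo the line field $\cG_t$, $[Z_t,Y_t]+\partial_t Y_t\equiv 0$; the Lie bracket differentiates $Z_t$ in the $Y_t$-direction, so projecting onto a transversal to $\cG_t$ one obtains a first-order linear \emph{transport equation} for the transverse component of $Z_t$ along the leaves of $\cG_t$, not an underdetermined pointwise system. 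Imposing $Z_t=0$ near the bottom edge and integrating up a leaf, the source $\partial_t\cG_t$ is nonzero across the middle of the square, so $Z_t$ becomes nonzero there; near the top edge the source vanishes again, but then $Z_t$ merely obeys the homogeneous equation, which does not drive it back to zero.

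This is not a fixable technicality of the method but reflects a genuine obstruction, the holonomy. Every leaf of each $\cG_t$ is an arc from the bottom edge to the top edge (the sides are leaves; the top and bottom are crossed transversally since $\cG_t=\cG$ near $\partial C$; and a bottom-to-bottom arc would bound a disc, forcing a singularity or a closed leaf), so $\cG_t$ has a well-defined holonomy $h_t\colon [0,1]\times\{0\}\to [0,1]\times\{1\}$. If $\psi_t$ were the identity near $\partial C$ with $(\psi_t)_*\cG_0=\cG_t$, then $\psi_t$ would send the $\cG_0$-leaf joining $(x_0,0)$ to $(h_0(x_0),1)$ onto a $\cG_t$-leaf with the \emph{same} endpoints, forcing $h_t\equiv h_0$. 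Your cutoff construction does nothing to preserve the holonomy: pushing the tangent direction toward the vertical by a bump generically changes where leaves exit at the top, so $\cG'$ need not have the same holonomy as $\cG$, and the sought $\psi$ simply does not exist for that target. The paper's proof is short precisely because, after straightening $\cF$, it builds the transverse foliation $\cH$ so as to have the \emph{same holonomy} as $\cG$ --- this is exactly the necessary and sufficient condition for a boundary-fixing conjugacy between two such foliations of the square, and once it holds the conjugacy is constructed by hand, leaf by leaf, without any Moser argument. To repair your argument, Step~1 must be replaced by a construction of $\cG'$ that also matches the holonomy of $\cG$; once that is done, no isotopy theory is needed.
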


\begin{proof}
The hypothesis imply that each of the foliations $\cF$ and $\cG$ are smoothly conjugated to trivial foliations, so that
we may assume that
$\cF$ is the horizontal foliation $\{[0,1]\times\{t\}\}_{t\in[0,1]}$.   Now, there is a foliation $\cH$ on $C$, transverse to the leaves of
$\cF$ and coinciding with $\cG$ in a neighborhood of $\partial C$ and having the same holonomy from $\{0\}\times [0,1]\to \{1\}\times [0,1]$ as $\cG$.

The foliations $\cG$ and $\cH$ are smoothly conjugated by a diffeomorphism $\psi$
which coincide with the identity close to $\partial C$, concluding.
 \end{proof}

\begin{proof}[Proof of Lemma~\ref{l.transverse}] 
The proof consist in $3$ steps:

\begin{claim} 
There is a diffeomorphism $\psi^{in}\colon \partial^{in} V\to \partial^{in} V$ which coincides with the identity map on a
neighborhood of $\cL^s$, and so that $(\psi^{in}\circ\varphi)_*(\cL^u)$ is transverse to $\cG^s_{in}$.
\end{claim}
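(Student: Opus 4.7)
The plan is to build $\psi^{in}$ as a diffeomorphism equal to the identity on the adapted neighborhood $\cU^{in}$ of $\cL^s$, and equal to a suitable diffeomorphism $\psi_j$ on each closed in-square $\overline{D_j}$ making up the finite collection of components of $\partial^{in}V\setminus \cU^{in}$. Since $\cU^{in}$ is a neighborhood of $\cL^s$, this will automatically meet the requirement that $\psi^{in}$ coincide with the identity near $\cL^s$.

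First I would observe that on $\cU^{in}$ the required transversality already holds, without any modification. Indeed, since $\cL^u\subset \cU^{out}$, any point of $\varphi_*(\cL^u)\cap \cU^{in}$ lies in $\varphi(\cU^{out})\cap \cU^{in}$; on this open set the normal form of Proposition~\ref{p.lineargluing} yields $\varphi_*(\cG^u_{out})=\cG^u_{in}$. Hence $\varphi_*(\cL^u)\cap \cU^{in}$ is a sublamination of $\cG^u_{in}$, which is everywhere transverse to $\cG^s_{in}$.

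Next, for each in-square $\overline{D_j}\subset \partial^{in}V\setminus \cU^{in}$ I would apply Lemma~\ref{l.square} with $\cF:=\cG^s_{in}|_{\overline{D_j}}$ and $\cG$ a smooth foliation of $\overline{D_j}$ that contains $\varphi_*(\cL^u)\cap \overline{D_j}$ as a sublamination and has the vertical boundary segments of $\partial \overline{D_j}$ as leaves. The natural candidate is $\cG=\varphi_*(\cG^u_{out})|_{\overline{D_j}}$, which does the job provided the normal form equality $\varphi_*(\cG^u_{out})=\cG^u_{in}$ is valid on a neighborhood of the vertical sides of $\overline{D_j}$; if not, one can instead extend the MS-lamination $\varphi_*(\cL^u)\cap \overline{D_j}$ to a smooth foliation that agrees with $\cG^u_{in}$ near those sides. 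In either case $\cF$ and $\cG$ are smooth foliations on $\overline{D_j}$ satisfying the horizontal/vertical boundary conditions required by Lemma~\ref{l.square}, and they are transverse near $\partial \overline{D_j}$ because $\cG^s_{in}$ and $\cG^u_{in}$ are transverse. The lemma then produces $\psi_j\colon \overline{D_j}\to \overline{D_j}$, equal to the identity near $\partial \overline{D_j}$, such that $(\psi_j)_*(\cG)$ is transverse to $\cF$; in particular $(\psi_j)_*(\varphi_*(\cL^u)\cap \overline{D_j})$ is transverse to $\cG^s_{in}|_{\overline{D_j}}$.

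Finally I would define $\psi^{in}$ to coincide with $\psi_j$ on each $\overline{D_j}$ and with the identity on $\cU^{in}$. Because every $\psi_j$ is the identity near $\partial \overline{D_j}$, the resulting map is a smooth diffeomorphism of $\partial^{in}V$, equal to the identity on the neighborhood $\cU^{in}$ of $\cL^s$; the lamination $(\psi^{in}\circ\varphi)_*(\cL^u)$ is transverse to $\cG^s_{in}$ on $\cU^{in}$ (by the first observation) and on each $\overline{D_j}$ (by the previous step). I expect the main technical obstacle to be the verification of the vertical-side boundary hypothesis of Lemma~\ref{l.square}: one must either shrink $\cU$ using the fact that the normal form already holds in a neighborhood of $\varphi(\cL^u)\cap \cL^s$, or produce an \emph{ad hoc} smooth foliation of $\overline{D_j}$ extending $\varphi_*(\cL^u)\cap \overline{D_j}$ and matching $\cG^u_{in}$ near the vertical sides.
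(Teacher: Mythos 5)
Your proof follows the same plan as the paper's: transversality is automatic on the adapted neighborhood $\cU^{in}$ via the normal form of Proposition~\ref{p.lineargluing} (using $\varphi(\cL^u)\subset\varphi(\cU^{out})$), and on each complementary in-square one applies a Lemma~\ref{l.square}-type perturbation supported away from the boundary, then glues. Your extra care about the vertical-side boundary hypothesis---extending $\varphi_*(\cL^u)$ to a smooth foliation matching $\cG^u_{in}$ near the vertical sides, since $\varphi_*(\cG^u_{out})$ need not have those sides as leaves---correctly addresses the technical wrinkle that the paper disposes of by invoking ``a similar proof'' to Lemma~\ref{l.square}.
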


\begin{proof}
We have assumed that there exists an adapted neighbourhood $\cU$ of $\Lambda$ so that $\varphi_*(\cG^s_{out})$ and $\varphi_*(\cG^u_{out})$ coincide with $\cG^s_{in}$ and $\cG^u_{in}$ on $\varphi(\cU^{out})\cap \cU^{in}$. Hence, $\varphi(\cL^u)$ is already transverse to $\cG^s_{in}$ on $\cU^{in}$. So it is  enough to consider a connected component $R$ of $\partial^{in} V\setminus\cU^{in}$. Since $\cU^{in}$ is an adapted neighbourhood of $\cL^s$, $R$ is an $in$-rectangle~: the restrictions to $R$ of $\cG^s_{in}$ and $\cG^u_{in}$ are the trivial horizontal and vertical foliations of $R$. Moreover $\varphi_*(\cL^u)\cap R$ is a lamination coinciding with $\cG^u_{in}$ in a neighbourhood of $\partial R$,  and each leaf of $\varphi_*(\cL^u)\cap R$ is a segment joining the bottom horizontal segment to the top horizontal segment of $\partial R$. A similar proof to the one of Lemma~\ref{l.transverse} proves the existence of a diffeomorphism $\psi^{in}_R$  equal to the identity in a neighborhood of $\partial R$ and  so that $(\psi^{in}_R\circ\varphi)_*(\cL^u)\cap R)$
is transverse in $R$ to $\cG^s_{in}$.  The announced diffeomorphism $\psi^{in}$ is the product of the diffeomorphisms $\psi^{in}_{R_1},\dots,\psi^{in}_{R_n}$ associated to the connected components $R_1,\dots,R_n$ of $\partial^{in} V\setminus\cU^{in}$. 
\end{proof}

Let $\varphi_1:=\psi^{in}\circ \varphi$. Notice that $\varphi_1$ is isotopic to $\varphi$ through strongly transverse gluing diffeomorphisms. Moreover, there is an adapted neighbourhood $\cV\subset\cU$ of $\La$ so that $(\varphi_1)*(\cG^s_{out})$ and $(\varphi_1)_*(\cG^u_{out})$ coincide with $\cG^s_{in}$ and $\cG^u_{in}$ on  $\varphi_1(\cV^{out})\cap \cV^{in}$.

\begin{claim} 
There is $\psi^{out}\colon \partial^{out} V\to \partial^{out} V$ which is the identity map in a  neighborhood of $\cL^u$, and so that $(\varphi_1\circ \psi^{out})^{-1}_*(\cL^s)$ is transverse to $\cG^u_{out}$.
\end{claim}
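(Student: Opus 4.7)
The plan is to mirror the proof of Claim~1, now on the exit side. First observe that the paragraph preceding the claim guarantees an adapted neighbourhood $\cV\subset\cU$ of $\La$ on which $(\varphi_1)_*(\cG^s_{out})=\cG^s_{in}$ and $(\varphi_1)_*(\cG^u_{out})=\cG^u_{in}$: the surgery $\psi^{in}$ performed in Claim~1 is supported in $in$-rectangles disjoint from a small adapted neighbourhood of $\cL^s$ and preserves the leaves of $\cW^s_{in}$, so it cannot destroy the normal form of Proposition~\ref{p.lineargluing} near $\La$ (it may only force us to shrink $\cU$ to $\cV$). Consequently, the pulled-back lamination $\cM:=\varphi_1^{-1}(\cL^s)$ satisfies $\cM\cap \cV^{out}\subset \cG^s_{out}$, since $\cL^s$ is a sub-lamination of $\cG^s_{in}$. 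In particular $\cM$ is already transverse to $\cG^u_{out}$ on the adapted neighbourhood $\cV^{out}$ of $\cL^u$, so the only remaining task is to fix the transversality in $\partial^{out}V\setminus\cV^{out}$.

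Second, the complement $\partial^{out}V\setminus \cV^{out}$ is, by definition of an adapted neighbourhood, a disjoint union of finitely many $out$-rectangles $R'_1,\dots,R'_m$. On each $R'_j$, the foliations $\cG^s_{out}$ and $\cG^u_{out}$ are smoothly conjugate to the horizontal and vertical trivial foliations of a square; the lamination $\cM\cap R'_j$ coincides with $\cG^s_{out}$ in a neighbourhood of $\partial R'_j$, so each of its leaves enters $R'_j$ through one $\cG^u_{out}$-side and exits through the other. Extending $\cM\cap R'_j$ to a smooth foliation of $R'_j$ that agrees with $\cG^s_{out}$ near $\partial R'_j$ (possible because the extension problem is, up to diffeomorphism, that of foliating a square by arcs joining the two vertical sides, agreeing with a prescribed foliation near the boundary) and applying the proof of Lemma~\ref{l.square} with the roles of $\cF$ and $\cG$ exchanged, one obtains a diffeomorphism $\psi^{out}_j$ of $R'_j$ which is the identity in a neighbourhood of $\partial R'_j$ and such that $(\psi^{out}_j)^{-1}_*$ carries the extended foliation, hence $\cM\cap R'_j$ itself, to a foliation transverse to $\cG^u_{out}$.

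Finally, define $\psi^{out}$ to coincide with $\psi^{out}_j$ on each $R'_j$ and with the identity elsewhere. Since each $\psi^{out}_j$ is supported in the interior of $R'_j$, the map $\psi^{out}$ is a well-defined smooth diffeomorphism of $\partial^{out}V$, equal to the identity on an open neighbourhood of $\cV^{out}$ and in particular on a neighbourhood of $\cL^u$. By construction, $(\varphi_1\circ\psi^{out})^{-1}_*(\cL^s)=(\psi^{out})^{-1}_*(\cM)$ is transverse to $\cG^u_{out}$ both on $\cV^{out}$ (where $\psi^{out}$ is the identity and the transversality was already established in the first step) and on each $R'_j$ (by the second step). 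The main subtlety in the whole argument lies in the first step: one must verify that Claim~1 leaves the normal form of Proposition~\ref{p.lineargluing} intact (after possibly shrinking $\cU$ to $\cV$), for it is precisely this fact that makes $\cM$ automatically tangent to $\cG^s_{out}$ near $\cL^u$ and thereby allows us to modify only the exit side; the remaining rectangle-by-rectangle surgery is a routine variant of Lemma~\ref{l.square}.
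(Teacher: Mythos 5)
Your proof is correct and takes essentially the same route as the paper's, which simply says the argument is identical to that of Claim~1 ``reversing the flow of $X$'': pulling back by $\varphi_1^{-1}$ instead of pushing forward by $\varphi$, and working in the finitely many $out$-rectangles outside a small adapted neighbourhood where the normal form of Proposition~\ref{p.lineargluing} already gives the required transversality. Your extra remark that the surgery $\psi^{in}$ preserves the normal form near $\Lambda$ (after possibly shrinking $\cU$) is exactly the content of the sentence in the paper immediately preceding the claim.
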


\begin{proof}
The proof is identical to the one of the first claim, reversing the flow of $X$.
\end{proof}

Now we set $\varphi_2:= \varphi_1\circ\psi^{out}$. Then  $\varphi_2$ is isotopic to $\varphi$ through strongly transverse diffeomorphisms, $(\varphi_2)_*(\cL^u)=(\varphi_1)_*(\cL^u)$ is transverse to $\cG^s_{in}$,  and $(\varphi_2)_*(\cG^u_{out})$ is transverse to $\cL^s_{in}$.  
So $\cG^s_{in}$ and $\varphi_2(\cG^u_{out})$ may fail to be transverse only in the interior of connected component
of $\partial^{in} V\setminus \cL^s\cup (\varphi_2)_*(\cL^u)$.  As $\cL^s$ and $(\psi_2)_*(\cL^u)$ are strongly transverse,
the closure of each component of $\partial^{in}V\setminus( \cL^s\cup(\psi_2)_*(\cL^u))$ is a square having
two sides on leaves of $\cL^s$ and two sides on leaves of $(\psi_2)_*(\cL^u))$.
One concludes by applying  Lemma~\ref{l.square} in each of
these squares with $\cF= \cG^s_{in}$ and $\cG=(\varphi_2)_*(\cG^u_{out})$,
which completes the proof.
\end{proof}

\begin{proof}[Proof of Proposition~\ref{p.preparation}]
It suffices to put together Lemma~\ref{l.affine}, Lemma~\ref{l.linearfoliation}, Proposition~\ref{p.lineargluing} and Lemma~\ref{l.transverse}.
\end{proof}

\begin{defi}
The \emph{return map} $\Theta:\partial^{in} V\to\partial^{out} V$ associated to $(V,X,\varphi_1)$ is obtained by composing the crossing map $\Gamma$ and the gluing map $\varphi_1$~:
$$\Theta:=\varphi_1\circ\Gamma.$$
\end{defi}

Note that, if $\varphi_1$ satisfies the conclusion of Lemma~\ref{l.transverse}, then the foliation $\Theta_*(\cG^u_{in})$ and $\cG^s_{in}$ are tranvserse:
$$\Theta_*(\cG^u_{in})\pitchfork\cG^s_{in}.$$

\section{Perturbation of the return map and proof of Theorem~\ref{t.transitive}}

The aim of this section is to prove of Theorem~\ref{t.transitive} and Proposition~\ref{p.transitive}. 

All over the section, we consider a saddle hyperbolic plug $(V,X)$ with filling MS-laminations, and a strongly transverse gluing map $\varphi:\partial^{out} V\to\partial^{in} V$. We denote by $\Lambda$ the maximal invariant set of $(V,X)$. Since $(V,X)$ is a saddle hyperbolic plug, $\Lambda$ does not contain attractors nor repellors. We denote by $\Gamma:\partial^{in} V\setminus\cL^s\to \partial^{out}V\setminus \cL^u$ the crossing map of $(V,X)$, and by $\Theta:=\varphi\circ\Gamma$ the return map of $X$ on $\partial^{in} V$. 

According to Proposition~\ref{p.preparation}, we may (and we do) assume that $V$ is endowed with a pair of two-dimensional smooth $X$-invariant foliations $\cG^s$ and $\cG^u$, transverse to $\partial V$ and transverse to each other, containing respectively $W^s(\Lambda)$ and $W^u(\Lambda)$ as sublaminations. We denote by $\cG^s_{in},\cG^u_{in},\cG^s_{out},\cG^u_{out}$ the one-dimensional foliations induced by  $\cG^s$ and $\cG^u$ on $\partial^{in} V$ and $\partial^{out} V$ respectively. Recall that the entrance laminaition $\cL^s$ and the exit lamination $\cL^u$ are sublaminations of the $\cG^s_{in}$ and $\cG^u_{out}$ respectively. Again by Proposition~\ref{p.preparation}, we may (and we do) assume that the holonomy of each compact leaf  of  $\cG^s_{in}$ and $\cG^u_{out}$ is conjugated to a homothety, and that $\varphi_{*}(\cG^u_{out})$ is transverse to $\cG^s_{in}$.

\subsection{Reduction of Theorem~\ref{t.transitive} to a perturbation of the return map $\Theta$}

Theorem~\ref{t.transitive} states that there exists a strongly tranverse gluing diffeomorphism $\psi: \partial^{out} V\to\partial^{in} V$, which is isotopic to $\varphi$ through strongly tranvserse gluing diffeomorphisms, and such that the vector field induced by $X$ on the closed manifold $V/\psi$ is Anosov. As stated by the following lemma, proving that $X_\psi$ is Anosov amounts to proving that its first return map $\Theta_\psi:=\psi\circ\Gamma$ is hyperbolic: 

\begin{lemm}
\label{l.hyperboilicity-vector-field-vs-return-map}
Consider a strongly transverse gluing map $\psi:\partial^{out}V\to\partial^{in} V$. Denote by $Z_\psi$ the vector field induced by $X$ on the closed manifold $V/\psi$, and by  $\Theta_\psi:=\psi\circ\Gamma$ the return map of $Z_\psi$ on $\partial^{in} V$. Assume that there exists two continuous cone fields $\cC^s_{in}$ and $\cC^u_{in}$ on $\partial^{in} V$ so that:
\begin{itemize}
\item the cone fields $\cC^u_{in}$ and $\cC^s_{in}$ are invariant under $d\Theta_\psi$ and $d\Theta_\psi^{-1}$ respectively, and the vector in $\cC^u_{in}$ and $\cC^s_{in}$ are uniformly expanded by  $d\Theta_\psi$ and $d\Theta_\psi^{-1}$ respectively (for some riemannian metric);
\item $\cC^u_{in}$ contains the direction tangent to $\cG^u_{in}$ and the direction tangent to $\psi_*(\cG^u_{out})$, but it does contain neither the direction tangent to $\cG^s_{in}$ nor the  the direction tangent to $\psi_*(\cG^s_{out})$; 
\item $\cC^s_{in}$ contains the direction tangent to $\cG^s_{in}$ and the direction tangent to $\psi_*(\cG^s_{out})$, but it does contain neither the direction tangent to $\cG^u_{in}$ nor the  the direction tangent to $\psi_*(\cG^u_{out})$.
\end{itemize}
Then the vector field induced $Z_\psi$ is Anosov. 
\end{lemm}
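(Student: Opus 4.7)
My plan is to lift the $1$-dimensional cone fields $\cC^u_{in},\cC^s_{in}$ on $T\partial^{in}V$ to continuous $2$-dimensional cone fields $\widetilde{\cC}^u,\widetilde{\cC}^s$ on the whole tangent bundle of $V/\psi$, verify strict invariance and uniform expansion/contraction under $dZ_\psi^t$, and conclude via the standard cone-field criterion for nonsingular flows that $Z_\psi$ admits a continuous invariant splitting $E^s\oplus\RR Z_\psi\oplus E^u$, hence is Anosov.

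Let $\Sigma\subset V/\psi$ denote the compact surface obtained by identifying $\partial^{in}V$ with $\partial^{out}V$ via $\psi$; it is transverse to $Z_\psi$, and its first-return map is precisely $\Theta_\psi$. For $x\in\Sigma$, using the decomposition $T_x(V/\psi)=T_x\Sigma\oplus\RR Z_\psi(x)$, set $\widetilde{\cC}^u(x):=\cC^u_{in}(x)\oplus\RR Z_\psi(x)$ and $\widetilde{\cC}^s(x):=\cC^s_{in}(x)\oplus\RR Z_\psi(x)$. These are consistent with respect to the gluing: the inclusion/exclusion hypotheses on $\cC^u_{in}$ and $\cC^s_{in}$ say exactly that the analogous cones read from the $\partial^{out}V$ side (built from the directions of $\cG^{u,s}_{out}$) are sent by $d\psi$ into the correct cones and away from the opposite ones. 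For $x\in V/\psi\setminus\Lambda$ let $\tau^-(x)\le 0\le\tau^+(x)$ be the last past and first future times at which the orbit of $x$ meets $\Sigma$, and propagate the cones along the flow:
$$
\widetilde{\cC}^u(x):=dZ_\psi^{-\tau^-(x)}\widetilde{\cC}^u(Z_\psi^{\tau^-(x)}(x)),\qquad \widetilde{\cC}^s(x):=dZ_\psi^{-\tau^+(x)}\widetilde{\cC}^s(Z_\psi^{\tau^+(x)}(x)).
$$
Between two consecutive crossings of $\Sigma$ these are tautologically $dZ_\psi^t$-invariant, and crossing $\Sigma$ is governed by $d\Theta_\psi$. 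Hence the strict invariance of $\widetilde{\cC}^u$ under $dZ_\psi^t$ (and of $\widetilde{\cC}^s$ under $dZ_\psi^{-t}$) for $t>0$, together with uniform expansion (resp. contraction) of the $T_x\Sigma$-component of vectors in these cones, follows directly from the invariance and expansion hypothesis on $\cC^u_{in}$ and $\cC^s_{in}$.

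It remains to extend the cone fields continuously across $\Lambda$ and check that they match there the plug's hyperbolic splitting $E^s\oplus\RR X\oplus E^u$. Since $\cG^u$ and $\cG^s$ contain $W^u(\Lambda)$ and $W^s(\Lambda)$ as sublaminations, the directions tangent to $\cG^u_{in},\cG^s_{in}$ along $\Lambda\cap\Sigma$ are exactly the weak unstable and weak stable directions of $\Lambda$; the inclusion/exclusion hypotheses place these inside the appropriate cones and outside the opposite ones, and the uniform hyperbolicity of $d\Theta_\psi$ forces successive iterates of the cones to collapse onto the strong unstable/stable directions of the plug, producing continuous limiting cones on $\Lambda$ that coincide with the plug's weak hyperbolic cones. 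Once $\widetilde{\cC}^u,\widetilde{\cC}^s$ are continuous, strictly $dZ_\psi^{\pm t}$-invariant and uniformly expanding/contracting modulo the flow direction, the standard cone-field criterion yields the Anosov splitting $T(V/\psi)=E^s\oplus\RR Z_\psi\oplus E^u$. The hard part is precisely this continuity at $\Lambda$, together with the verification that the limiting cones on $\Lambda$ from Step 2 agree with the weak hyperbolic directions from the plug; the inclusion/exclusion hypotheses on $\cC^{u,s}_{in}$ are tailor-made to align the return-map construction with the plug's hyperbolicity in the limit.
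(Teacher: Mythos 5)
Your construction on the boundary section is sound as far as it goes, but it leaves a genuine gap precisely at the step you flag as ``the hard part,'' and that step is the substance of the lemma. The glued surface $\partial^{in}V\cong\partial^{out}V$ is \emph{not} a complete cross section for $Z_\psi$: orbits lying entirely in the maximal invariant set $\Lambda$ never meet $\partial^{in}V$, so the return map $\Theta_\psi$ and your propagated cone fields simply do not ``see'' $\Lambda$. In particular, your sentence about ``the directions tangent to $\cG^u_{in},\cG^s_{in}$ along $\Lambda\cap\Sigma$'' refers to an empty set, since $\Lambda$ is disjoint from $\partial V$. As $x$ approaches $W^s(\Lambda)$ (or $W^u(\Lambda)$) the crossing times $\tau^\pm(x)$ blow up, and the existence of a well-defined continuous limit of the propagated cones on $\Lambda$ is exactly the statement that the dynamics on $\Lambda$ is hyperbolic and compatible with the boundary data — it is not implied by the hypotheses on $d\Theta_\psi$ alone, and asserting that ``the uniform hyperbolicity of $d\Theta_\psi$ forces successive iterates of the cones to collapse onto the strong directions of the plug'' is circular here.

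The paper closes this gap by changing the cross section. Since $\Lambda$ has no attractors nor repellers, it admits a local section $\Sigma_0$ made of finitely many discs, transverse to $X$, with boundary disjoint from $\Lambda$, whose interior meets every orbit \emph{inside} $\Lambda$. One then works with the first return map $f_\psi$ of $Z_\psi$ on $\Sigma_0\cup\partial^{in}V$, which does intersect every orbit of $Z_\psi$. The cone field on $\Sigma_0$ is taken from the hyperbolic structure of $\Lambda$ (this is given), the cone field on $\partial^{in}V$ is the hypothesized one, and the argument splits $f_\psi$ into four pieces according to whether the source and target lie in $\Sigma_0$ or $\partial^{in}V$. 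The $\Sigma_0\to\Sigma_0$ piece is controlled by the hyperbolicity of $\Lambda$, the $\partial^{in}V\to\partial^{in}V$ piece by the hypothesis on $\Theta_\psi$, and the two mixed pieces are handled by choosing $\Sigma_0$ in a sufficiently small neighbourhood of $\Lambda$: an orbit segment between $\partial^{in}V$ and $\Sigma_0$ must spend a long time near $\Lambda$, so (as in Lemma~\ref{l.stronghyperbolicity}) it expands the $\cG^u$-direction and contracts the $\cG^s$-direction by arbitrarily large factors, hence maps the boundary cones into the $\Sigma_0$ cones and vice versa. Your inclusion/exclusion hypotheses on $\cC^{u,s}_{in}$ are used exactly here — to guarantee compatibility with the mixed pieces — not to manufacture a limit cone field on $\Lambda$. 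To make your proposal into a proof you would need to import this auxiliary section near $\Lambda$ (or an equivalent device) rather than attempt to pass to a limit from the boundary.
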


\begin{proof}
By assumption, the maximal invariant set $\Lambda$ of $(V,X)$ does not contain attractors, nor repellors. Therefore, $\Lambda$ is transversally totally discontinuous, and we may consider a local section $\Sigma$ of $\Lambda$. By such, we mean that $\Sigma$ is a collection of closed topological discs, $\Sigma$ is tranvserse to $X$ (or equivalently, $Z_\psi$), the boundary of $\Sigma$ is disjoint from $\Lambda$, and the interior of $\Sigma$ intersects every orbit of $X$ in $\Lambda$. We denote by $f$ the first return map of the orbit of $X$ on $\Sigma$. We denote by $\cG^s_\Sigma$ and $\cG^u_\Sigma$ the 1-dimensional foliations induced by $\cG^s$ and $\cG^u$ on $\Sigma$. Note that $\Sigma$ can be chosen so that it is contained in an arbitrarily small neighbourhood of $\Lambda$. 

Every orbit of $Z_\psi$ either is contained in $\Lambda$, or intersects $\partial^{in} V$. Therefore, the interior of $\Sigma\cup\partial^{in} V$ intersects every orbit of $Z_\psi$. We denote by $f_\psi$ the first return map of the vector field $Z_\psi$ on $\Sigma\cup\partial^{in} V$. By classical elementary arguments, proving that the vector field $Z_\psi$ is hyperbolic (\emph{i.e.} Anosov) amounts to proving that $f_\psi$ is hyperbolic. In order to prove that $f_\psi$ is indeed hyperbolic, we will construct some cone fields $\cC^s$ and $\cC^u$ on $\mathrm{int}(\Sigma)\cup\partial^{in} V$, prove that $\cC^u$ and $\cC^s$ are invariant under $df_\psi$ and $df_\psi^{-1}$ respectively, and that the vectors in $\cC^u$ and $\cC^s$ and uniformly expanded by $df_\psi$ and $df_\psi^{-1}$ respectively.

By assumption, we already have some cone fields $\cC^u_{in}$ and $\cC^u_{in}$ on $\partial^{in} V$. Moreover, $\Lambda$ is a hyperbolic set for $X$; hence there exists some cone fields  $\cC^u_{\Sigma}$ and $\cC^s_\Sigma$ on $\mathrm{int}(\Sigma)$ which are invariant under $df$ and $df^{-1}$ respectively, and so that the vectors in $\cC^u_{\Sigma}$ and $\cC^s_\Sigma$ and uniformly expanded by $df$ and $df^{-1}$ respectively. We may assume that these cone fields  $\cC^u_{\Sigma}$ and $\cC^s_\Sigma$ respectively contain the directions tangent to the foliations induced by $\cG^u$ and $\cG^s$ on $\Sigma$. We consider the cone fields $\cC^u$ and $\cC^s$ on $\mathrm{int}(\Sigma)\cup\partial^{in} V$, which coincide with  $\cC^u_{in}$ and $\cC^u_{in}$ on $\partial^{in} V$ and coincide with  $\cC^u_{\Sigma}$ and $\cC^s_\Sigma$ on $\mathrm{int}(\Sigma)$. In order to check that these cone fields satisfy the desired properties, we will decompose the first return map $f_\psi$ into four parts. Namely, we consider the restrictions of $f_\psi$ to $\Sigma\cap f_\psi^{-1}(\Sigma)$, $\partial^{in} V\cap f_\psi^{-1}(\partial^{in} V)$, $\partial^{in} V\cap f_\psi^{-1}(\Sigma)$ and $\partial^{in} V\cap f_\psi^{-1}(\Sigma)$. We denote these restrictions by  $f_{\psi,1}$, $f_{\psi,2}$, $f_{\psi,3}$ and $f_{\psi,4}$ respectively. 

\begin{itemize}
\item The map $f_{\psi,1}:\Sigma\to\Sigma$ is nothing but the first return map $f$ of the orbits of $X$ on $\Sigma$ (because a segment of orbit of $Z_\psi$ which does not cross $\partial^{in} V$ is a segment of orbit of $\Sigma$). Hence, $\cC^u_{\Sigma}$ and $\cC^s_{\Sigma}$ are invariant respectively under $df_{\psi,1}$ and $df_{\psi,1}^{-1}$, and so that the vectors in $\cC^u_{\Sigma}$ and $\cC^s_{\Sigma}$ and uniformly expanded respectively by $df_{\psi,1}$ and $df_{\psi,1}^{-1}$.
\item The map $f_{\psi,2}:\partial^{in} V\to\partial^{in} V$ is a restriction of the return map $\Theta_\psi$ (namely, the restriction to the set of points $x$ such that the forward $Z_\psi$-orbit of $x$ intersects $\partial^{in} V$ before intersecting $\Sigma$). Hence our assumption ensures that $\cC^u_{in}$ and $\cC^s_{in}$ are invariant under $df_{\psi,2}$ and $df_{\psi,2}^{-1}$ respectively, and so that the vectors in $\cC^u_{in}$ and $\cC^s_{in}$ and uniformly expanded  by $df_{\psi,2}$ and $df_{\psi,2}^{-1}$ respectively.
\item Consider a neighbourhood $U$ of $\Lambda$. Suppose that $\Sigma$ is contained in a neighbourhood $V$ of $\Lambda$ so that $V$ is much smaller than $U$. Then, a segment of orbit of $X$ starting at some point of $\partial^{in} V$ and ending at some point of $\Sigma$ will have to spend a very long time in $U$ before hitting $\Sigma$. Hence (using the same arguments as in the proof of Lemma~\ref{l.stronghyperbolicity}), the vectors tangent to $\cG^u_{in}$ (resp. $\cG^s_{in}$) will be expanded (resp. contracted) by a very large factor along this segment of orbit. According to our assumptions, the cone fields $\cC^u_{in}$ contains the direction tangent to $\cG^u_{in}$ but does not contain the direction tangent to $\cG^s_{in}$. Therefore, provided that the section $\Sigma$ is contained in a small enough neighbourhood of $\Lambda$, the derivative of $f_{\psi,3}:\partial^{in} V\to\Sigma$ will map the cone field $\cC^u_{in}$ to an arbitrarily thin cone field around $\cG^u_{\Sigma}$ (in particular, the image of $\cC^u_{in}$ will be contained in $\cC^u_{\Sigma}$) and will expand uniformly the vectors in $\cC^u_{in}$. Similarly, the derivative of $f_{\psi,3}^{-1}$ will map $\cC^s_{\Sigma}$ inside $\cC^s_{in}$, and will expand uniformly the vectors in $\cC^s_\Sigma$.
\item Similar arguments show that, provided that the section $\Sigma$ is contained in a small enough neighbourhood of $\Lambda$, the cone fields satisfy the desired properties with respect to the map $f_{\psi,4}:\Sigma\to\partial^{in} V$ (here we use the fact that $\cC^u_{in}$ and $\cC^s_{in}$ respectively contain the directions tangent to $\psi(\cG^u_{out})$ and $\psi(\cG^s_{out})$, but do not contain the directions tangent to $\psi(\cG^s_{out})$ and $\psi(\cG^u_{out})$.
\end{itemize}
The four points above show that, if $\Sigma$ is contained in a small enough neighbourhood of $\Lambda$, then the cone fields $\cC^u$ and $\cC^s$ are invariant  under $df_{\psi}$ and $df_{\psi}^{-1}$ respectively, and so that the vectors in $\cC^u$ and $\cC^s$ and uniformly expanded by $df_{\psi}$ and $df_{\psi}^{-1}$ respectively. In other words, the first return map $f$ is hyperbolic provided that $\Sigma$ is small enough. Hence the vector field $Z_\psi$ is Anosov.
\end{proof}

The gluing map $\psi:\partial^{out} V\to\partial^{in} V$ (whose existence is claimed by theorem~\ref{t.transitive}) will be obtained as a composition $\psi=\psi^{in}\circ\varphi\circ\psi^{out}$, where $\varphi$ is the original gluing map, $\psi^{in}$ is a self-diffeomorphism of the entrance boundary $\partial^{in} V$ and $\psi^{out}$ is a self-diffeomorphism of the exit boundary $\partial^{out} V$. The diffeomorphisms $\psi^{in}$ and $\psi^{out}$ will be provided by the following proposition:

\begin{prop}
\label{p.returnmap} 
Given any $\lambda>1$ and $\varepsilon>0$, there exists a diffeomorphism $\psi^{in}\colon \partial^{in} V\to \partial^{in} V$  with the following properties:
\begin{itemize}
\item $\psi^{in}$ coincides with the identity map on a neighborhood of the lamination $\cL^s$
\item $\psi^{in}$ preserves each leaf of the foliation $\cG^u_{in}$
\item the foliation $(\psi^{in})^{-1}_*(\cG^s_{in})$  is $\varepsilon$-$C^1$-close to the foliation $\cG^s_{in}$
\item the derivative of $\Ga\circ\psi^{in}$ expands vectors tangent to $\cG^u_{in}$ by a factor larger than $\lambda$: for any vector $u$ tangent to a leaf of $\cG^u_{in}$, one has $\|(\Ga\circ\psi^{in})_*(u)\|>\lambda\|u\|$.
\end{itemize}
Analogously, there exists a diffeomorphism $\psi^{out}\colon \partial^{out}V\to \partial^{out} V$ so that
\begin{itemize}
\item $\psi^{out}$ coincide with the identity map on a neighborhood of the lamination $\cL^u$
\item $\psi^{out}$ preserves each leaf of the foliation $\cG^s_{out}$
\item the foliation $\psi^{out}_*(\cG^u_{out})$  is $\varepsilon$-$C^1$-close to the foliation $\cG^u_{out}$
\item the derivative of $(\Ga\circ\psi^{in})^{-1}$ expands vectors tangent to $\cG^s_{out}$ by a factor larger than $\lambda$: for any unit vector $u$ tangent to a leaf of $\cG^s_{out}$, one has $\|(\Ga\circ\psi^{out})^{-1}_*(u)\|>\lambda.\|u\|$.
\end{itemize}
\end{prop}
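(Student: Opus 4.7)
The plan is to construct $\psi^{in}$ leaf by leaf along the foliation $\cG^u_{in}$. Fix $\lambda>1$ and $\varepsilon>0$, and use Lemma~\ref{l.stronghyperbolicity} to select an adapted neighborhood $\cN_\lambda$ of $\cL^s$ on which $\Gamma$ already expands $\cG^u_{in}$-vectors by a factor larger than $\lambda$. I will take $\psi^{in}$ equal to the identity on a smaller neighborhood $\cN\subset\cN_\lambda$, which automatically takes care of the expansion condition on $\cN$. The remaining work consists in defining, on the complement of $\cN$, a self-diffeomorphism of $\partial^{in}V$ that preserves each leaf of $\cG^u_{in}$ and stretches sufficiently along leaves so that $\Gamma\circ\psi^{in}$ achieves the required uniform expansion.

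On each connected component $I$ of a leaf of $\cG^u_{in}$ minus $\cL^s$, parameterized by arc length, I seek a diffeomorphism $f_I\colon I\to I$ equal to the identity near $\partial I$ and satisfying $f_I'(t)\cdot g(f_I(t))>\lambda$ pointwise, where $g$ denotes the $\cG^u_{in}$-expansion factor of $\Gamma$ along $I$. Setting $h=(f_I^{-1})'$, this translates into the pointwise ceiling $h(s)\leq g(s)/\lambda$ together with the normalization $\int_I h\,ds=|I|$. The key observation is that $\int_I g\,ds=+\infty$: in linearizing coordinates near the basic piece absorbing the forward orbit of an endpoint of $I$, a direct computation shows $g(s)\sim 1/s$ near that endpoint, the divergence arising from the fact that $\Gamma(s)$ spirals (logarithmically in $s$) around the corresponding leaf of $\cL^u$ as the crossing time grows. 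Consequently the ceiling $g(s)/\lambda$ dominates $h=1$ near $\partial I$ (where $g\gg\lambda$ by the choice of $\cN_\lambda$), while leaving enough slack to select $h\leq g/\lambda$ in the middle of $I$ and still enforce $\int h=|I|$. This determines $f_I$, while on $I\cap\cN$ we simply set $f_I=\mathrm{id}$.

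To assemble the $f_I$ into a global diffeomorphism $\psi^{in}$ of $\partial^{in}V$, I work in local product coordinates $(x,y)$ where $\cG^u_{in}$ is horizontal and $\cG^s_{in}$ vertical: then $\psi^{in}(x,y)=(f(x,y),y)$ and the slope of $(\psi^{in})^{-1}_*(\cG^s_{in})$ is $-\partial_y f/\partial_x f$. The $\varepsilon$-$C^1$-closeness of the pulled-back foliation to $\cG^s_{in}$ reduces to bounding $\partial_y f/\partial_x f$, and this is the main obstacle: since the leafwise construction forces $\partial_x f$ to be large on parts of the complement of $\cN$, one must ensure that $f$ varies slowly across leaves. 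I plan to cover $\partial^{in}V\setminus\cN$ by finitely many small cells on which $g$ varies slowly, perform the leafwise construction coherently within each cell using a smooth cutoff in the transverse direction (via a partition of unity subordinate to the cells), and smooth the resulting $\psi^{in}$ near cell boundaries so that all derivative ratios $\partial_y f/\partial_x f$ stay below $\varepsilon$. Finally, the diffeomorphism $\psi^{out}$ is constructed symmetrically by applying the same procedure to $\Gamma^{-1}$, using the foliations $\cG^s_{out}$ and $\cG^u_{out}$ in place of $\cG^u_{in}$ and $\cG^s_{in}$.
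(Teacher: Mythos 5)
Your proposal heads in a genuinely different direction from the paper's, and there is a real gap exactly at the step you flag as ``the main obstacle.'' The paper does not construct $\psi^{in}$ leaf by leaf and then glue; instead, for each strip $B$ of $\partial^{in}V\setminus\cL^s$, it builds a diffeomorphism $h$ of a \emph{single} $\cG^u_{in}$-segment $I\subset B$, and then defines $\psi_h$ on all of $B$ by transporting $h$ along the $\cG^s_{in}$-holonomy (this is the map $\psi_h$ of subsection~4.2.2). By construction $\psi_h$ preserves $\cG^s_{in}$ \emph{exactly}, so the pullback of $\cG^s_{in}$ is $\cG^s_{in}$ and the $C^1$-closeness condition is not an issue at all at this stage. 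The map $\psi_h$ does not extend to the ends of the strip, and the paper then ``slows it down'' near the ends using Lemma~\ref{l.ends}; the resulting perturbation of $\cG^s_{in}$ is small and controlled. The control uses, in an essential way, the bounded distortion of $\cG^s_{in}$-holonomies (Lemma~\ref{l.distorsion} and Corollary~\ref{c.distorsion}), which in turn rests on the normal form of Proposition~\ref{p.preparation} — the holonomies along compact leaves are conjugate to homotheties because of the affine Markov partition. Your proposal never invokes this structure, yet it is precisely what makes the transverse estimates go through.

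The concrete gap in your assembly step is twofold. First, a transverse partition of unity applied to independently constructed leafwise maps $f_I$ produces, on each leaf, a convex combination of diffeomorphisms; but the pointwise expansion condition $f_I'(t)\,g(f_I(t))>\lambda$ is not convex in $f_I$ (the expansion function $g$ enters nonlinearly through $g(f_I(t))$), so the interpolated map need not satisfy it in the transition regions. Second, the bound you need on $\partial_y f/\partial_x f$ is not obtained merely by making cells ``small'': near the ends of a strip the $\cG^u_{in}$-segments $I$ shrink to zero length, the expansion function $g$ along $I$ blows up, and the transition from the deformation region to the identity region occurs in a degenerating neighborhood; making that transition uniformly $\varepsilon$-$C^1$-close to vertical is exactly the content of the slowing-down lemma and its reliance on the homothety structure. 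Without an analogue of Lemmas~\ref{l.distorsion} and~\ref{l.ends}, your assembly step is unsubstantiated. (Your heuristic computation $g(s)\sim 1/s$, giving $\int_I g\,ds=\infty$, is correct and mirrors the unnamed general lemma inside the proof of Lemma~\ref{l.derivativeinterval}, but that was never the hard part.)
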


\begin{proof}[Proof of  Theorem~\ref{t.transitive} assuming Proposition~\ref{p.returnmap}]
Given $\lambda>1$ and $\varepsilon>0$, we consider the diffeomorphisms $\psi^{in}_{\lambda,\varepsilon}$ and $\psi^{out}_{\lambda,\varepsilon}$ associated to $\lambda,\varepsilon$ by Proposition~\ref{p.returnmap}. Then we consider the gluing map
$$\psi_{\lambda,\varepsilon}:=\psi^{in}_{\lambda,\varepsilon}\circ\varphi\circ\psi^{out}_{\lambda,\varepsilon}$$
the vector field $Z_{\lambda,\varepsilon}$ induced by $X$ on the closed manifold $V/\psi_{\lambda,\varepsilon}$. Obverse that, since $\psi^{in}$ and $\psi^{out}$ coincide with the identity on neighbourhoods of $\cL^s$ and $\cL^u$ respectively, $\psi^{in}_{\lambda,\varepsilon}$ is a strongly tranverse gluing map which is isotopic to $\varphi$ inside the set of strongly transverse gluing maps. We want to prove that the vector field $Z_{\lambda,\varepsilon}$ is Anosov, provided that $\varepsilon$ is small enough and $\lambda$ is large enough. So we are left to proving that the return
$$\Theta_{\lambda,\varepsilon}:=\psi_{\lambda,\varepsilon}\circ\Gamma$$
satisfies the hypotheses of Lemma~\ref{l.hyperboilicity-vector-field-vs-return-map}, provided that $\varepsilon$ is small enough and $\lambda$ is large enough.
For technical reasons, it is convenient to introduce  the map 
$$\widehat\Theta_{\lambda,\varepsilon}:=(\psi^{in})^{-1}\circ\Theta_{\lambda,\varepsilon}\circ\psi^{in}=\varphi\circ\psi^{out}\circ\Gamma\circ\psi^{in},$$
and the foliations 
$$\cG^s_{in,\lambda,\varepsilon}:=(\psi_{\lambda,\varepsilon}^{in})^{-1}_*(\cG^s_{in})\quad\mbox{ and }\quad\cG^u_{out,\lambda,\varepsilon}=(\psi_{\lambda,\varepsilon}^{out})_*(\cG^u_{out}).$$
Note that
$$(\psi_{\lambda,\varepsilon}^{in})^{-1}_*(\cG^u_{in})= \cG^u_{in} \quad\mbox{and}\quad(\psi_{\lambda,\varepsilon}^{out})_*(\cG^s_{out}) = \cG^s_{out}$$
since $\psi^{in}_{\lambda,\varepsilon}$ preserves the foliation $\cG^u_{in}$ and $\psi^{out}_{\lambda,\varepsilon}$ preserves the foliation $\cG^u_{out}$ by assumption.

The diffeomorphism $\psi_{\lambda,\varepsilon}^{out}$ preserves each leaf of $\cG^s_{out}$. We denote by $h_{out,\lambda,\varepsilon}^s:\partial^{out} V\to\partial^{out} V$ the holonomy of the foliation $\cG^s_{out}$ between $x$ and $\psi_{\lambda,\varepsilon}^{out}(x)$. 

\begin{clai}
\label{c.holonomy-bounded}
The length of the segment of leaf of $\cG^s_{out}$ joining a point $x$ to $\psi_{\lambda,\varepsilon}^{out}(x)$ is bounded by a constant is independent of
$\varepsilon$, $\lambda$ and $x$. As a consequence, the action of the holonomy $h^s_{out,\lambda,\varepsilon}$ on vectors tangent to $\cG^u_{out}$ is uniformly
 bounded independently of $\varepsilon$ and $\lambda$.
\end{clai}

\begin{proof} 
Recall that $\cL^u$ is a filling MS-lamination, $\cL^u$ is a sub-lamination of the foliation $\cG^u_{out}$, and $\psi^{out}_{\lambda,\varepsilon}$ is the identity 
map in a neighborhood $\cL^u$. Therefore, if $x$ is not a fixed point of $\psi^{out}_{\lambda,\varepsilon}$, then the points $x$ and $\psi^{out}_{\lambda,\varepsilon}(x)$ 
belong to the same segment of leaf of  $\cG^s_{out}\setminus \cL^u$.  As $\cL^u$ is a filling MS-lamination, these segments have a uniformly bounded length, proving the first assertion. The second assertion is a direct consequence from the first one and the fact that $\cG^s_{out}$ is a smooth foliation.
\end{proof}

\begin{clai} 
The  perturbed foliations $\cG^s_{in,\lambda,\varepsilon}$ and $\cG^u_{out,\lambda,\varepsilon}$ tends to the non-perturbed foliations $\cG^s_{in}$ and 
$\cG^u_{out}$ when $\varepsilon\to 0$ for the $C^1$-topology. As a consequence, for $\varepsilon$ small enough, the foliation $(\widehat\Theta_{\lambda,\varepsilon})_*\varepsilon( \cG^u_{in})=\varphi_*(\cG^u_{out,\lambda,\varepsilon})$ is uniformly (in $\lambda,\varepsilon$) transverse to  $\cG^s_{in,\lambda,\varepsilon}$.
\end{clai}

\begin{proof}
The first assertion is follows immediately from the definition of the foliations $\cG^s_{in,\lambda,\varepsilon}$ and $\cG^u_{out,\lambda,\varepsilon}$, and from the properties of the   maps $\psi^{in}$ and $\psi^{out}$. The second assertion is an direct consequence of the first one.
\end{proof}

\begin{claim}
\label{l.uniformexpansion}
There exist some constants $C>0$ and $\varepsilon_0>0$ such that, for  any $\lambda>1$ and $0<\varepsilon<\varepsilon_0$, the return map $\widehat\Theta_{\lambda,\varepsilon}$ expands uniformly the vectors tangent to $\cG^u_{in}$ by a factor larger than $C\lambda$, and its inverse $(\widehat\Theta_{\lambda,\varepsilon})^{-1}$ expands uniformly the vectors tangent to $\cG^s_{in,\lambda,\varepsilon}$ by a factor larger than $C\lambda$. 
\end{claim}

\begin{proof} 
Let $u$ be a vector tangent to $\cG^u_{in}=\cG^u_{in,\lambda,\varepsilon}$. One writes
$$\widehat\Theta_{\lambda,\varepsilon, *}(u)= \varphi_*\circ (\psi^{out}_{\lambda,\varepsilon})_*\circ \Ga_*\circ(\psi_{\lambda,\varepsilon}^{in})_*(u).$$
See figure~\ref{f.end-proof-main-theorem}. By definition of the map $\psi_{\lambda,\varepsilon,*}^{in}$, one has
$$\|\Ga_*\circ(\psi_{\lambda,\varepsilon}^{in})_*(u)\|>\lambda.\|u\|.$$
Furthermore, $\Ga_*\circ(\psi_{\lambda,\varepsilon}^{in})_*(u)$ is tangent to $\cG^{u}_{out}$. Thus we just need to see that the action of $\psi_{\lambda,\varepsilon}^{out}$ on the vectors tangent to $\cG^{u}_{out}$ is bounded, independently of $\lambda$ and $\varepsilon$ (for $\varepsilon$ smaller than some $\varepsilon_0$).

Recall that $h_{out,\lambda,\varepsilon}^s$  is the holonomy of the foliation $\cG^s_{out}$ between $x$ and $\psi_{\lambda,\varepsilon}^{out}(x)$. The foliation $\cG^u_{out}$ is transverse to $\cG^s_{out}$, and $\psi_{\lambda,\varepsilon}^{out}(\cG^u_{out})$ is $\varepsilon$-$C^1$-close to $\cG^u_{out}$. In particular, there exists $\varepsilon_0$ so that, for $0<\varepsilon<\varepsilon_0$, the foliation $(\psi_{\lambda,\varepsilon}^{out})_*(\cG^u_{out})$ is uniformly (in $\lambda$ and $\varepsilon$) transverse to the foliation $\cG^s_{out}$. Therefore, for $v$ tangent to $\cG^u_{out}$, the ratio between $\|(h^s_{out,\lambda,\varepsilon})_*(v)\|$ and $\|(\psi_{\lambda,\varepsilon}^{out})_*(v)\|$ is  bounded  independantly of $\lambda$, $\varepsilon$ and $v$. Using the Claim~\ref{c.holonomy-bounded}, we conclude that $\widehat\Theta_{\lambda,\varepsilon}$ expands uniformly the vectors tangent to $\cG^u_{in}$ by a factor larger than $C\lambda$. See figure~\ref{f.end-proof-main-theorem}. The arguments are similar for the action of the map $(\widehat\Theta_{\lambda,\varepsilon})^{-1}$ on the vectors tangent to $\cG^s_{in,\lambda,\varepsilon}$. 
\end{proof}

\begin{figure}[htp]
\begin{center}
  \includegraphics[totalheight=6cm]{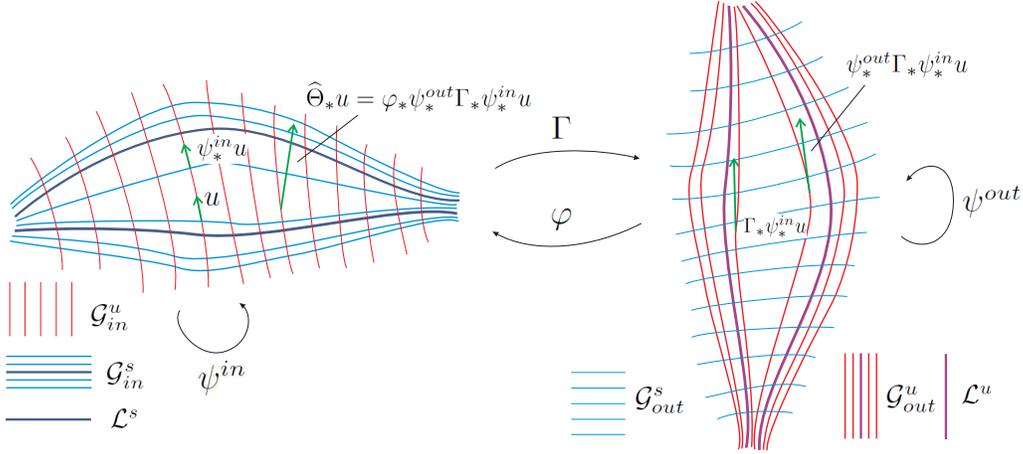}\\
\caption{\label{f.end-proof-main-theorem}The action of $\widehat\Theta_{\lambda,\varepsilon}$ on a vector $u$ tangent  to $\cG^u_{in}$.}
  \end{center}
\end{figure}

During the proof of claim~\ref{l.uniformexpansion}, we have chosen $\varepsilon_0$ so that the foliations $\varphi_*(\cG^u_{out,\lambda,\varepsilon})$ are uniformly transverse to $\cG^s_{in,\lambda,\varepsilon}$ for $\varepsilon\in [0,\varepsilon_0]$ and $\lambda>1$. Moreover, the foliation $\varphi_*(\cG^s_{out})$ is transverse to the foliation $\cG^u_{in}$. This allows us to choose a continuous conefield $\widehat\cC^u_{in}$ on $\partial^{in}V$ so that $\widehat\cC^u_{in}$ contains the direction tangent to the foliation $\cG^u_{in}$ and the direction tangent to of the foliation $\varphi_*(\cG^u_{out,\lambda,\varepsilon})$ for $\varepsilon\in [0,\varepsilon_0]$ and  $\lambda>1$, and so that $\widehat\cC^u_{in}$ does not contain neither the direction tangent to the foliation $\cG^s_{in,\lambda,\varepsilon}$ for any $\varepsilon\in [0,\varepsilon_0]$ and  $\lambda>1$ nor the direction tangent to the foliation $\varphi_*(\cG^s_{out})$.  As a direct consequence of Lemma~\ref{l.uniformexpansion} one gets:

\begin{fact} 
\label{f.cone-1}
For every $\varepsilon\in [0,\varepsilon_0]$, when $\lambda\to\infty$, the conefield  $\widehat\cC^u_{in}$ is mapped by $ \Ga_{\lambda,\varepsilon}$  in an arbitrarily small cone
field around $\cG^u_{out,\lambda,\varepsilon}$ and the vectors in $\widehat\cC^u_{in}$ is expanded by an arbitrarily large factor. As a consequence, there exists $\lambda_0$ such that for every $\varepsilon\in [0,\varepsilon_0]$ and $\lambda\geq \lambda_0$, the conefield $\widehat\cC^u_{in}$ is strictly invariant by $\widehat\Theta_{\lambda,\varepsilon}$ and the vectors in that conefield are uniformly expanded by $\widehat\Theta_{\lambda,\varepsilon}$.
\end{fact}

From now on, we fix $\varepsilon\in [0,\varepsilon_0]$ and $\lambda\geq\lambda_0$. We consider the cone field $\cC^u_{in}:=\psi^{in}_{\lambda,\varepsilon}(\widehat\cC^u_{in})$. This cone fields contains the direction tangent to the foliations 
$$(\psi^{in}_{\lambda,\varepsilon})^{-1}_*(\cG^u_{in})=\cG^u_{in}\quad \mbox{ and }\quad (\psi^{in}_{\lambda,\varepsilon})^{-1}_*\varphi_*(\cG^u_{out,\lambda,\varepsilon})=(\psi_{\lambda,\varepsilon})_*(\cG^{u}_{out}),$$ 
and it does not contain the direction tangent to the foliations 
$$(\psi^{in}_{\lambda,\varepsilon})^{-1}_*(\cG^s_{in,\lambda,\varepsilon})=\cG^s_{in} \quad\mbox{ nor }\quad(\psi^{in}_{\lambda,\varepsilon})^{-1}_*\circ\varphi_*(\cG^s_{out})=(\psi_{\lambda,\varepsilon})_*(\cG^{s}_{out}).$$ 
Moreover, fact~\ref{f.cone-1} implies that the conefield $\cC^u_{in}$ is strictly invariant by $\Theta_{\lambda,\varepsilon}$ and that the vectors in $\cC^u_{in}$ are uniformly expanded by $\Theta_{\lambda,\varepsilon}$ (for the norm associated to the pullback under $\psi^{in}_{\lambda,\varepsilon}$ of the initial riemannian metric). In other words, the cone field $\cC^u_{in}$ satisfies all the hypotheses of Lemma~\ref{l.hyperboilicity-vector-field-vs-return-map} (for the return map $\Theta_{\lambda,\varepsilon}$). The construction of a cone field  $\cC^s_{in}$ is completely similar. So we can apply Lemma~\ref{l.hyperboilicity-vector-field-vs-return-map}, which shows that the vector field $Z_{\lambda,\varepsilon}$ is Anosov.
\end{proof}

\begin{rema}
\label{r.partial-gluing}
There is a version of Theorem~\ref{t.transitive} where one does not glue the whole exit boundary of a hyperbolic plug on the whole entrance boundary. More precisely, let $(V,X)$ be a saddle hyperbolic plug with filling MS-laminations $(V,X)$. Let $T^{in}$ and $T^{out}$ be unions of connected components of $\partial^{in} V$ and $\partial^{out} V$ respectively. Let  $\varphi:T^{out}\to T^{in}$ be a map so that $\varphi_*(\cL^u\cap T^{out})$ is strongly transverse to $\cL^s\cap T^{in}$. Exactly the same arguments as above allow to prove that there is a vector field $Y$ on $V$ which is $C^1$-close to $X$ and map $\psi:T^{out}\to T^{in}$, so that:
\begin{itemize}
\item $(V,X,\varphi)$ and $(V,Y,\psi)$ are strongly isotopic, 
\item if $Z_\psi$ is the vector field induced by $X$ on $V/\psi$, then $(V/\psi,Z_\psi)$ is a hyperbolic plug. 
\end{itemize}
\end{rema}

\subsection{Perturbation of the return map $\Theta$: proof of Proposition~\ref{p.returnmap}}

This section is devoted to the proof of Proposition~\ref{p.returnmap}. We will only deal with the diffeomorphism $\psi^{in}:\partial^{in} V\to\partial^{in} V$. The construction of the diffeomorphism$\psi^{out}$ is analogous, up to reversing flow).  

Let us briefly present the construction of $\psi^{in}$.  Fix $\varepsilon>0$ and $\lambda>1$. According to  Lemma~\ref{l.stronghyperbolicity}, the crossing map $\Gamma$ expands the vectors tangent to $\cG^u_{in}$ by a factor at least $\lambda$ on some neighborhood of $\cL^s$. The image by $\Ga$ of such neighborhood is a neighborhood of $\cL^u$. Such a neighborhood contains an adapted neighborhood, whose complement consists in finitely many $in$-square (see Lemma~\ref{l.adapted}). Our proof will consist in building the diffeomorphism $\psi^{in}$ in one of these $in$-squares and to extend it on the whole $\partial^{in} V$ by gluing it with the identity map by a bump function, using the fact that the expansion of vectors tangent to $\cG^u_{in}$ is arbitrarily large out of these $in$-squares. 

As $\cL^s$ is a filling MS-lamination, every connected component $B$ of $\partial^{in} V\setminus \cL^s$ is a strip whose accessible boundary consists in two non-compact leaves  of $\cL^s$, which are asymptotic to each other at both ends. Each end of $B$ spirals around a compact leaf of $\cL^s$, with contracting linear holonomy (see figure~\ref{f.strip}). Our construction will be divided in two steps.
\begin{itemize}
 \item We will first build a diffeomorphism $\psi_h$ of $B$,  defined as the  product of a diffeomorphism $h$ of a segment $I^u$ of $\cG^u_{in}$ leaf, by the identity map in the direction of the leaves of $\cG^s_{in}$. The diffeomorphisms $\psi_h$ will have all the announced property, except that it will not coincide with the identity close to the the boundary of $S$, so that it cannot be extended on the whole $\partial^{in} V$.
\item Then, we will ``slow down" the diffeomorphism $\psi_h$ close to the ends of $B$, in order to be able to extend continuously $\psi_h$ on $\partial^{in}V$ (so that the extension of $\psi_h$ will coincide with the identity on the complement of $B$).
\end{itemize}
The main difficulty is to manage to ``slow down" $\psi_h$ without distroying the hyperbolicity. A key ingredient to do that will be the uniform control of distorsion of the holonomies of $\cG^s_{in}$ (this the reason why we need the holonomy of $\cG^s_{in}$ along a compact leaf to be conjugated to a homothety).

\begin{figure}[htp]
\begin{center}
  \includegraphics[totalheight=5cm]{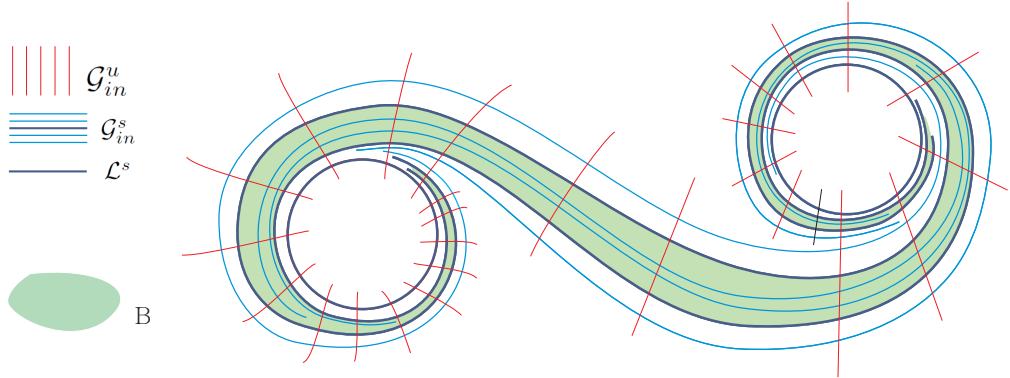}\\
\caption{\label{f.strip}A connected component of $\partial^{in} V\setminus\cL^s$.}
  \end{center}
\end{figure}

\subsubsection{Distorsion control of the holonomies}

\begin{lemm}
\label{l.distorsion} 
Let $\cF$ be a MS-foliation of a compact surface $S$, so that the holonomy of each compact leaf is conjugated to a homothety.
Let $\cL$ be a filling MS-sub-lamination of $\cF$. Let $\cG$ be a smooth foliation transverse to $\cF$.

Then there is $C>1$ with the following property. Let $I,J$ be two segments of $\cG$-leaves, whose interiors are contained in a
connected component $B$ of $S\setminus \cL$, and whose end points are on the boundary of $B$.
Let $H_{I,J}$ be the holonomy of the foliation $\cF$ from $J$ to $I$.  Then for every $x,y\in J$ one has:
$$C^{-1}<\frac{DH_{I,J}(x)}{DH_{I,J}(y)}<C$$
\end{lemm}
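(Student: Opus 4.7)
The plan is to split the strip $B$ into a compact middle region and one or two tails, each tail lying in a tubular annular neighborhood of a compact leaf of $\cL$ around which a boundary leaf of $B$ spirals. I will bound the distortion separately on each piece and compose, using that log-distortion is additive under composition of holonomies. On the compact middle region, any two $\cG$-arcs with endpoints on $\partial B$ live in a fixed compact subset of $S$ where $\cF$ and $\cG$ are smooth and uniformly transverse, so a uniform distortion bound follows immediately from compactness and smoothness.

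For each tail near a compact leaf $\gamma$ of $\cL$, I will use the homothety hypothesis to linearize $\cF$. Choose a smooth annular chart $A \simeq S^1 \times (-\epsilon, \epsilon)$ around $\gamma$ with coordinates $(\theta, r)$, $\gamma = \{r = 0\}$, such that the $\cF$-holonomy along $\gamma$ reads $r \mapsto \lambda r$; then $\cF|_A$ integrates to the level sets of $\psi(\theta, r) := r e^{-c\theta}$ with $c := \log\lambda/(2\pi) < 0$, a smooth first integral on the universal cover $\widetilde A$. For any $\cG$-arc $T \subset B \cap A$ with endpoints on $\partial B$, arclength-parameterize $T(s) = (\theta_T(s), r_T(s))$ and set $N_T(s) := r_T'(s) - c\, r_T(s)\,\theta_T'(s)$; then $p_T := \psi \circ T$ is a smooth monotone diffeomorphism from $T$ onto a fixed $\psi$-interval $(r_1^0, r_2^0)$, with derivative $p_T'(s) = e^{-c\theta_T(s)} N_T(s)$. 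Since the holonomy $H = H_{T_1, T_2}\colon T_1 \to T_2$ is characterized by $p_2 \circ H = p_1$, differentiating yields
\begin{align*}
\log \frac{H'(x)}{H'(y)} &= -c[\theta_1(x) - \theta_1(y)] + c[\theta_2(H(x)) - \theta_2(H(y))] \\
&\quad + \log \tfrac{|N_1(x)|}{|N_1(y)|} - \log \tfrac{|N_2(H(x))|}{|N_2(H(y))|}.
\end{align*}

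It remains to bound each of the four terms uniformly in $T_1, T_2, x, y$. The uniform transversality of $\cG$ to $\cF$ on the compact annulus $\overline{A}$ gives $0 < \delta \leq |N_i| \leq C$ (indeed $|N_i|$ is, up to a bounded normalization, the sine of the $\cF,\cG$-angle times the norm of the unit tangent to $\cG$), handling the two $N$-terms. For the $\theta$-variation, differentiating $\psi$ along $T$ gives $|d\theta/d\psi| = e^{c\theta(s)} |\theta_T'(s)|/|N_T(s)| \leq e^{c\theta(s)}/\delta$; choosing a lift of the tail to $\widetilde A$ so that $\theta \geq 0$ throughout, and using $c < 0$, this becomes $|d\theta/d\psi| \leq 1/\delta$, which integrated over the fixed $\psi$-interval produces the uniform bound $|\theta_i(x) - \theta_i(y)| \leq (r_2^0 - r_1^0)/\delta$. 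Finally, for arbitrary $T_1, T_2 \subset B$, factor $H_{T_1, T_2}$ through fixed auxiliary $\cG$-transversals separating the middle from each tail, and add the log-distortions. The main obstacle is the $\theta$-variation estimate, since a priori a $\cG$-arc in a tail could wind many times around $\gamma$; it is precisely the sign $c < 0$ (i.e.\ the contracting nature of the $\gamma$-holonomy) that makes the exponential weight $e^{c\theta}$ tame this potential winding and deliver the uniform bound.
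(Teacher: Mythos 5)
Your proof is correct, and it follows the same overall decomposition as the paper (a compact middle, where holonomy runs along $\cF$-arcs of uniformly bounded length in a fixed compact set, plus spiral tails in annular neighborhoods of the compact leaves), but the way you handle the tails is genuinely different. The paper's proof chooses a metric on $S$ for which the holonomy between $\cG$-fibers in a tubular neighborhood of each compact leaf is an \emph{exact} homothety; those factors then contribute zero log-distortion, and the whole bound reduces to the compact-middle estimate. You instead keep a fixed linearizing chart, introduce the explicit first integral $\psi(\theta,r)=re^{-c\theta}$ of $\cF$ on $\widetilde A$, and directly bound the four terms of $\log\bigl(H'(x)/H'(y)\bigr)$, with the inequality $c<0$ (i.e.\ the contracting nature of the $\gamma$-holonomy) being the decisive input that prevents unbounded $\theta$-winding. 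Your route is more computational and, to its credit, makes the mechanism transparent rather than burying it in a metric choice. Two steps that you state but should be made explicit in a final write-up: (i) passing from ``the return map along $\gamma$ reads $r\mapsto\lambda r$'' to ``$\cF|_A$ has first integral $re^{-c\theta}$'' is not automatic — it requires building the annular chart by $\cF$-holonomy transport of the linearizing coordinate on one cross-section (a standard but nontrivial normal-form step under the hypothesis that the holonomy is smoothly conjugate to a homothety); and (ii) the $\psi$-interval $(r_1^0,r_2^0)$ depends on the strip $B$, and since there are infinitely many such $B$, you must note that picking the lift with entry angle $\theta_{\min}\in[0,2\pi)$ forces $r_2^0-r_1^0\leq 2\epsilon\lambda^{-1}$ uniformly, so the $\theta$-variation bound is indeed independent of $B$.
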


An important point is that the constant $C$  depends neither on the connected component $B$ of $S\setminus \cL$ nor on the segments $I,J$.

\begin{proof} 
First notice that the existence of such an announced constant $C$ does not depend on the metric on the surface $S$ (only the value of $C$ will depend on the metric).  Therefore we may choose a metric on $S$ so that the holonomy of every compact leaf of $\cF$ is a homothety. More precisely, denote by $\gamma_1,\dots,\gamma_p$ the compact leaves of $\cF$. We choose a metric on $S$ such that, for every $i\in\{1,\dots,p\}$, there is a tubular neighborhood $T_i$ of the compact leaf $\gamma_i$ so that the fibers of the tubular neighborhood are segments of leaves of $\cG$, and so that the holonomy map from any fiber to any other fiber is a homothety.

Since $\cL$ is a filling MS-lamination, every half-leaf of $\cF$ spirals around some compact leaf. It follows that the length of a segment of leaf of $\cF$ which is disjoint from $T_1\cup\dots\cup T_p$ is uniformly bounded. As a consequence, there exists a constant $\ell$ with the following property. Given a connected component $B$ of $S\setminus \cL$, and two segments $I,J$ of $\cG$-leaves as in the statement of Lemma~\ref{l.distorsion}, the holonomy map $H_{I,J}$ can be decomposed  as
$$H_{I,J}= H_{I,I_1}\circ H_{I_1,J_1}\circ H_{J_1,J}.$$
 where $I_1,J_1$ are segments of $\cG$-leaves in  $\gamma$ of $S\setminus \cL$, and with endpoints on the boundary of $\gamma$ so that $H_{I,I_1}$ and $H_{J_1,J}$  are homotheties,  and the holonomy $H_{I_1,J_1}$ is along $\cF$-leaf segments of length bounded by $\ell$. 
 
 On the one hand, the distorsion of $H_{I,J}$ coincides with the distorsion of $H_{I_1,J_1}$ (since $H_{I,I_1}$ and $H_{J_1,J}$  are homotheties). On the other hand, the distorsion of $H_{I_1,J_1}$ is uniformly bounded (because the holonomies of the foliation $\cF$ along segments of leaves of length bounded by $\ell$ have uniformly bounded derivative). Hence, the distorsion of $H_{I,J}$ is uniformly bounded.
\end{proof}

\subsubsection{Building $\psi^{in}$ on a large square}
\label{ss.psi-h}

\begin{defi}
Let $I$ be a compact segment of $\cG^u_{in}$-leaf contained in a connected component $B$ of $\partial^{in} V\setminus \cL^s$. Given $h$ a diffeomorphism of $I$, so that the end points of $I$ are flat fixed points for $h$. We denote by  $\psi_h$  the unique diffeomorphism of $B$ so that
\begin{itemize}
 \item $\psi_h$ is the identity  out of the $\cG^s_{in}$-saturation of $I$
 \item $\psi_h$ preserves (globally) the foliation $\cG^s_{in}$
 \item $\psi_h$ preserves each leaf segment of $\cG^u_{in}$,
 \item the restriction of $\psi_h$ to $I$ is  $h$.
\end{itemize}
\end{defi}

The aim of this subsection is to prove the following result:

\begin{prop}
\label{p.largesquare} 
Given any $\lambda>1$ and  any component $B$ of $\partial^{in}V\setminus \cL^s$ there is a segment $I$ of $\cG^u_{in}$-leaf contained in $B$,  and a diffeomorphism $h:I\to I$ o that the end points of $I$ are flat fixed points for $h$, and so that, for any vector $u$ tangent to $\cG^u_{in}$ at some point $x\in B$, one has :
$$\|(\Ga\circ\psi_h)_*(u)\|>\lambda.\|u\|.$$
\end{prop}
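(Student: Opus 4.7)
The strategy is to choose an adapted neighborhood $\cU^{in}_\mu$ of $\cL^s$ (provided by Lemma~\ref{l.stronghyperbolicity}) with $\mu \gg \lambda$, so that $\Ga$ expands $\cG^u_{in}$-tangent vectors by at least $\mu$ on $\cU^{in}_\mu$, and to design $h$ so that the resulting diffeomorphism $\psi_h$ pushes $R := B \setminus \cU^{in}_\mu$ (a single $in$-square, in which $\Ga$ provides only a weak expansion) deep into $\cU^{in}_\mu$ on the side of one of the two boundary leaves of $B$; the strong expansion of $\Ga$ on the image of $R$ will then compensate for any contraction contributed by $\psi_h$ itself.

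Concretely, the plan is to pick a $\cG^u_{in}$-leaf segment $I$ in $B$ with both endpoints on $\partial B$, crossing $R$ from one $\cG^s_{in}$-side to the other, and to parametrize it by arclength as $[0,L]$, so that $I_R := I\cap R = [a,b]$, $I^- := [0,a]$, $I^+ := [b,L]$ with $I^- \cup I^+ \subset \cU^{in}_\mu$. Let $C \geq 1$ be the distortion constant provided by Lemma~\ref{l.distorsion} for the pair of transverse foliations $(\cG^u_{in}, \cG^s_{in})$. I would then take $h$ to be, after $C^\infty$ smoothing near $0, a, b, L$ so that $0$ and $L$ become flat fixed points, the piecewise-affine map that sends $[0,a]$ onto $[0,b]$, sends $[a,b]$ onto $[b,b+\alpha]$ with $\alpha := (b-a)\,C\lambda/\mu$, and sends $[b,L]$ onto $[b+\alpha,L]$.

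Using the identity $\psi_h'(x) = \bigl(H'_{I,J}(h(t))/H'_{I,J}(t)\bigr)\,h'(t)$ (with $J$ the $\cG^u_{in}$-leaf through $x$ and $t = H_{J,I}(x)$), the distortion bound of Lemma~\ref{l.distorsion} yields $\psi_h'(x) \geq h'(t)/C$ pointwise. The verification of $\|(\Ga\circ\psi_h)_*(u)\| > \lambda \|u\|$ then splits into regions. On the complement of the $\cG^s_{in}$-saturation of $I$, which lies in $\cU^{in}_\mu$, $\psi_h$ is the identity and the expansion is $\Ga' \geq \mu$. For $x \in R$, by construction $h(t) \in [b,b+\alpha] \subset I^+ \subset \cU^{in}_\mu$, so $\Ga'(\psi_h(x)) \geq \mu$ and $\psi_h'(x) \geq (C\lambda/\mu)/C = \lambda/\mu$, giving a product at least $\lambda$. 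For $x$ whose projection $t$ lies in $I^+$, $h'(t)$ is close to $1$ and $\psi_h(x)$ stays in $\cU^{in}_\mu$, so the product is at least $\mu/C > \lambda$.

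The delicate case, which is the main obstacle, is when $t \in I^-$ but $h(t)$ falls in $I_R$ (which happens for $t \in [a^2/b,\,a]$), because then $\psi_h(x) \in R$ and only the weaker bound $\Ga'(\psi_h(x)) \geq c^* := \inf_B \Ga'$ is available. The constant $c^*$ is positive since $\Ga' \to \infty$ at $\partial B \subset \cL^s$ while $\overline B$ is compact in $\partial^{in} V$. On this region $h'(t) = b/a$, so $\psi_h'(x) \geq b/(aC)$, and the needed inequality reduces to $b/a \geq C\lambda/c^*$. This forces the choice of $\mu$: as $\mu \to \infty$ the neighborhood $\cU^{in}_\mu$ shrinks to $\cL^s$, so $a$ tends to $0$ while $b$ stays bounded away from $0$, making $b/a$ arbitrarily large. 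The distortion control of Lemma~\ref{l.distorsion} is essential throughout since it is what converts a one-variable construction of $h$ on the single leaf $I$ into uniform estimates on $\psi_h$ across all of $B$, and this is precisely why the assumption that the holonomy of $\cG^s_{in}$ along each compact leaf be conjugated to a homothety is needed.
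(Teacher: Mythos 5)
Your proposal takes a genuinely different and more constructive route than the paper's. The paper simply combines two prepared lemmas: Lemma~\ref{l.derivativeinterval} constructs $h$ on a single crossing leaf $\sigma$ so that the one-variable composite $(\Ga|_\sigma\circ h)'$ exceeds some $A$ everywhere (via the elementary fact that a diffeomorphism of $(0,1)$ onto $\RR$ whose derivative blows up at both ends can be replaced, away from the ends, by one with derivative uniformly $>A$), and Lemma~\ref{l.holonomies} transfers the estimate from $\sigma$ to every other leaf at the cost of a uniform constant $\alpha_\sigma$, which is built by controlling \emph{both} the $\cG^s_{in}$-holonomy between $\cG^u_{in}$-leaves (Lemma~\ref{l.bound-holonomy-2}, via the distortion bound) \emph{and} the $\cG^s_{out}$-holonomy between $\cG^u_{out}$-leaves (Lemma~\ref{l.bound-holonomy-1}). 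You instead fix $\mu\gg\lambda$, build an explicit piecewise-affine $h$ pushing the $in$-square $R=B\setminus\cU^{in}_\mu$ into $\cU^{in}_\mu$, and estimate the product $\Ga'(\psi_h(x))\cdot\psi_h'(x)$ at an arbitrary $x$ by bounding the two factors separately: Lemma~\ref{l.stronghyperbolicity} for the first, Lemma~\ref{l.distorsion} applied to the $\cG^s_{in}$-holonomy conjugating $\psi_h|_J$ to $h$ for the second. This decoupling bypasses the $\cG^s_{out}$-holonomy lemma entirely, but pays for it with the extra global constant $c^*=\inf_B\Ga'>0$ (which you justify correctly) and a delicate pointwise case analysis that the paper's approach avoids because it prescribes the \emph{composite} derivative $(\Ga|_\sigma\circ h)'$ directly, so that the decay of $h'$ and the blow-up of $\Ga'$ cancel exactly rather than having to be balanced by hand.

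There is, however, a real technical gap at the corner $t=a$. Your unsmoothed $h$ has $h(a)=b$, which lies exactly on $\partial R$. After a $C^\infty$ smoothing of $h$ near $a$, there is a nontrivial range of parameters $t$ on which $h'(t)$ has already decayed toward $C\lambda/\mu$ while $h(t)$ is still slightly below $b$, hence in $R$, where the only available lower bound is $\Ga'\geq c^*$; on that range the product is only $\gtrsim c^*\lambda/\mu$, which does \emph{not} exceed $\lambda$ once $\mu>c^*$. The phrase ``$C^\infty$ smoothing near $0,a,b,L$'' does not by itself resolve this, because any smoothing that makes $h'$ continuous must take values between $b/a$ and $C\lambda/\mu$ on an interval overlapping $\{h<b\}$. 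The fix is routine — for instance, send $[0,a]$ onto $[0,c]$ for some $c$ strictly between $b$ and $L$, so that $h(t)>b$ throughout a fixed neighborhood of $a$ regardless of the smoothing; or follow the paper and smooth $\Ga|_\sigma\circ h$ rather than $h$ — but as written the required inequality is not established on that interval, and you should address it explicitly.
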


Porposition~\ref{p.largesquare} announces a control of the expansion on unit vectors tangent to $\cG^u_{in}$ at any point $x$ of a connected component $B$ of $\partial^{in}V\setminus \cL^s$.  We start by getting such a control along one lsegment of $\cG^u_{in}$-leaf crossing $B$:

\begin{lemm}
\label{l.derivativeinterval}
Let $B$ be a component of $\partial^{in}V\setminus \cL^s$, and $\sigma$ be  leaf of the restriction of  $\cG^u_{in}$ to $B$. Fix any constant $A>1$. Then, there is a diffeomorphism $h:\sigma\to\sigma$, equal to the identity map out of some compact part of $\sigma$,  so that for every vector $u$ tangent to $\cG^u_{in}$ at some point $x\in\sigma$ one has
$$\|(\Ga\circ\psi_h)_*(u)\|>A.\|u\|.$$
\end{lemm}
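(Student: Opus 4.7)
The plan is to construct $h$ as a reparametrization of $\sigma$ adapted to the infinitesimal expansion of the crossing map $\Ga$ in the $\cG^u_{in}$-direction. Parametrize $\sigma$ by arc length as an open interval $(0,L)$ and set $E(y):=\|\Ga_*(v_y)\|$, where $v_y$ is a unit $\cG^u_{in}$-tangent vector at $y\in\sigma$. Since $\psi_h$ preserves each $\cG^u_{in}$-leaf and restricts to $h$ on $\sigma$, for a unit vector $u$ tangent to $\cG^u_{in}$ at $x\in\sigma$ one has $(\Ga\circ\psi_h)_*(u)=h'(x)\,\Ga_*(v_{h(x)})$, so the conclusion of the lemma is equivalent to the pointwise inequality $h'(x)\cdot E(h(x))>A$ for all $x\in\sigma$. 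Introducing a primitive $\tilde F(y):=\int_{y_0}^{y}E(t)\,dt$ for a fixed basepoint $y_0\in\sigma$ and setting $H:=\tilde F\circ h$, this becomes simply $H'(x)>A$. The strategy is therefore to build $H$ first and recover $h=\tilde F^{-1}\circ H$.

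The key quantitative input is that $\int_\sigma E(y)\,dy=+\infty$, so that $\tilde F$ is a diffeomorphism from $(0,L)$ onto all of $\RR$. This comes from the hyperbolic structure at the endpoints: each endpoint of $\sigma$ lies on a free stable separatrix of a periodic orbit $O$ of $X$, and for $y\in\sigma$ close to the endpoint the forward orbit of $y$ shadows $O$ for a time $\tau(y)$ of order $-\log d(y,\cL^s)$ before escaping along $W^u(O)$; during this shadowing the vector $v_y$ (essentially in the unstable direction of the Markov rectangles, since $\cG^u_{in}$ is transverse to $\cL^s$) is expanded exponentially in $\tau(y)$. Since $d(y,\cL^s)$ is comparable to arc length on $\sigma$ from the endpoint, one obtains a lower bound $E(y)\geq c/d(y,\cL^s)$ which forces the logarithmic divergence of $\int_\sigma E$. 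Equivalently, $\Ga(\sigma)$ is a $\cG^u_{out}$-leaf segment whose ends spiral infinitely around compact leaves of $\cL^u$ and therefore has infinite arc length.

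Given $A>1$, using Lemma~\ref{l.stronghyperbolicity} I choose $\epsilon>0$ small enough that $E(y)\geq A$ on $(0,\epsilon]\cup[L-\epsilon,L)$ and, using the divergence above, also small enough that $\int_\epsilon^{L-\epsilon}E(y)\,dy\geq AL$. Then I construct a smooth strictly increasing $H:(0,L)\to\RR$ which coincides with $\tilde F$ on the two end intervals and satisfies $H'(x)\geq A$ throughout; this is possible by a standard smoothing argument, since $\tilde F'=E\geq A$ at the matching points $\epsilon$ and $L-\epsilon$, and the total increase available in the middle, $\tilde F(L-\epsilon)-\tilde F(\epsilon)\geq AL$, exceeds the minimum $A(L-2\epsilon)$ required. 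Setting $h:=\tilde F^{-1}\circ H$ yields a diffeomorphism of $\sigma$ equal to the identity on the two end intervals (hence outside a compact subset of $\sigma$), with $h'(x)\cdot E(h(x))=H'(x)\geq A$ everywhere; a strict inequality $>A$ is arranged by choosing $H'$ strictly above $A$ in the middle.

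The main technical obstacle is the divergence $\int_\sigma E=+\infty$. Lemma~\ref{l.stronghyperbolicity} only provides the qualitative statement $E\to\infty$ near $\cL^s$, and one must upgrade this to the quantitative lower bound $E(y)\geq c/d(y,\cL^s)$ by invoking the exponential-expansion / logarithmic-time estimates near the periodic orbit $O$ (available, in particular, thanks to the affine Markov structure coming from Proposition~\ref{p.preparation}).
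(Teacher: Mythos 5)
Your proof is correct and follows essentially the same route as the paper. The paper reduces the statement to two observations — that $\Ga(\sigma)$ is a full (non-compact) leaf of $\cG^u_{out}$, hence of infinite length, and that the expansion rate blows up near the ends of $\sigma$ (Lemma~\ref{l.stronghyperbolicity}) — and then invokes a one-variable calculus lemma (a diffeomorphism $\phi:(0,1)\to\RR$ with $\phi'\to+\infty$ at the ends can be replaced, outside a neighbourhood of the ends, by one with derivative everywhere $>A$), whose proof is left to the reader. Your $\tilde F$ is exactly that $\phi$, your $H$ is their $\tilde\phi$, and your explicit construction of $H$ on the middle interval, using $\int_\epsilon^{L-\epsilon}E\geq A(L-2\epsilon)$, is the content of the exercise they omit, so your write-up is the more complete one.

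One remark on the ``main technical obstacle'' you identify. You are right that $E\to\infty$ alone does not give $\int_\sigma E=\infty$, but the quantitative bound $E(y)\gtrsim 1/d(y,\cL^s)$ you propose is not actually needed, and setting it up carefully (relating arc-length on $\sigma$ to the dynamically natural distance near the periodic orbit) would take some work. The paper's observation — which you also make in the sentence beginning ``Equivalently'' — is cleaner and self-contained: $\Ga$ maps $\cG^u_{in}|_B$ leafwise onto $\cG^u_{out}|_{\Ga(B)}$, the strip $\Ga(B)$ is bounded by two $\cG^u_{out}$-leaves, so $\Ga(\sigma)$ is an entire non-compact leaf of the smooth foliation $\cG^u_{out}$ and therefore has infinite length; this is exactly the statement $\int_\sigma E=\infty$ without any rate estimate. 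I would keep that geometric argument and drop the $c/d$ bound.
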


\begin{proof}  
Observe that $\sigma$ is a interval of bounded length, and $\Ga(\sigma)$ is an entire leaf of $\cG^u_{out}$ hence isometric to $\RR$. Furthermore, according to Lemma~\ref{l.stronghyperbolicity}, the rate of expansion of the crossing map $\Gamma$ for vector tangent to $\sigma$ tends to infinity close to the ends of $\sigma$. Therefore Lemma~\ref{l.derivativeinterval} is a direct consequence of the following general lemma (whose proof is left ot the reader). 

\begin{lemm} 
Let $\phi\colon ]0,1[\to\RR$ be a diffeomorphism whose derivative tends to $+\infty$ when $t$ tends to $0$ or $1$. For any $A>1$, there is a diffeomorphism $\tilde\phi$ which
coincides  with $\phi$ in a neighborhood of $0$ and of $1$ and whose derivative is everywhere larger than $A$.
\end{lemm}

This completes the proof of Lemma~\ref{l.derivativeinterval}
\end{proof}

The following lemma allows to compare the rate of expansion of the map $\Ga\circ\psi_h$ for vector tangent to $\cG^u_{in}$ at different points of $\partial^{in}V\setminus \cL^s$.

\begin{lemm}\label{l.holonomies} 
Let $B$ be a connected component  of $\partial^{in}V\setminus \cL^s$ and $\sigma$ be leaf of the restriction of  $\cG^u_{in}$ to $B$. There exists a constant $\alpha_\sigma>0$ with the following property. For every diffeomorphism $h\colon\sigma\to \sigma$ supported in a compact segment $I\subset \sigma$, and every vectors $u,v$ tangent to $\cG^u_{in}$ at some points $x,y\in B$, such that $x,y$ belong to the same leaf of $\cG^s_{in}$ and $y\in\sigma$, one has
$$\frac{\|(\Ga\circ\psi_h)_*(u)\|}{\|u\|}>\alpha_\sigma \frac{\|(\Ga\circ\psi_h)_*(v)\|}{\|v\|}.$$
\end{lemm}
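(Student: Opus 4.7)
The plan is to exploit the commutation between $\Gamma$ and the $\cG^s$-holonomies, together with the bounded distorsion supplied by Lemma~\ref{l.distorsion}. Let $\tau\subset B$ be the leaf of $\cG^u_{in}$ through $x$, and set $\tau':=\Gamma(\tau)$, $\sigma':=\Gamma(\sigma)$. Denote by $H:\tau\to\sigma$ the holonomy along $\cG^s_{in}$ and by $H':\tau'\to\sigma'$ the holonomy along $\cG^s_{out}$. Since $\Gamma$ preserves the $X$-invariant $2$-dimensional foliation $\cG^s$, it sends each $\cG^s_{in}$-leaf to a $\cG^s_{out}$-leaf, which translates into the commutation $\Gamma|_\sigma\circ H=H'\circ\Gamma|_\tau$.

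The first step is to describe $\psi_h|_\tau$. Since $\psi_h$ preserves each $\cG^u_{in}$-leaf and maps $\cG^s_{in}$-leaves to $\cG^s_{in}$-leaves, one easily verifies that $\psi_h|_\tau = H^{-1}\circ h\circ H$; in particular $\psi_h(x)\in\tau$ and $H(\psi_h(x))=h(y)$. Without loss of generality take $u=e_x$ and $v=e_y$ to be unit vectors tangent to $\cG^u_{in}$. The chain rule gives $D\psi_h|_\tau(x) = Dh(y)\cdot DH(x)/DH(\psi_h(x))$, so
$$\frac{\|(\Gamma\circ\psi_h)_*(e_x)\|}{\|(\Gamma\circ\psi_h)_*(e_y)\|}=\frac{D\Gamma(\psi_h(x))\cdot DH(x)}{D\Gamma(h(y))\cdot DH(\psi_h(x))}.$$
Differentiating the commutation at $\psi_h(x)$ yields $D\Gamma(h(y))\cdot DH(\psi_h(x))=DH'(\tilde F(x))\cdot D\Gamma(\psi_h(x))$ where $\tilde F(x):=\Gamma(\psi_h(x))\in\tau'$, and substitution collapses the expression to the clean formula
$$\frac{\|(\Gamma\circ\psi_h)_*(e_x)\|}{\|(\Gamma\circ\psi_h)_*(e_y)\|}=\frac{DH(x)}{DH'(\tilde F(x))}.$$

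Next, I apply Lemma~\ref{l.distorsion} simultaneously to the configuration $(\cG^s_{in},\cL^s,\cG^u_{in})$ on $\partial^{in}V$ and to $(\cG^s_{out},\cL^u,\cG^u_{out})$ on $\partial^{out}V$: this yields a constant $C>1$ with respect to which $DH$ has $C$-bounded distorsion on $\tau$ and $DH'$ has $C$-bounded distorsion on $\tau'$. Picking a reference point $x_0\in\tau$ and setting $x_0':=\Gamma(x_0)$, one obtains
$$\frac{DH(x)}{DH'(\tilde F(x))}\geq C^{-2}\cdot\frac{DH(x_0)}{DH'(x_0')} = C^{-2}\cdot\frac{D\Gamma(x_0)}{D\Gamma(y_0)},$$
where the last equality again uses the differentiated commutation relation at $x_0$, with $y_0:=H(x_0)\in\sigma$.

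The main obstacle is then to produce a uniform positive lower bound on $D\Gamma(x_0)/D\Gamma(y_0)$ as $x_0$ ranges over $B$ (so that $y_0=\pi_\sigma(x_0)\in\sigma$, the projection along $\cG^s_{in}$). Here I use the fact that $\cG^s,\cG^u$ are smooth transverse $X$-invariant foliations both preserved by $\Gamma$: in local charts $(x^s,x^u)$ on $\partial^{in}V$ and $(y^s,y^u)$ on $\partial^{out}V$ adapted to these two foliations, $\Gamma$ takes the separated form $(x^s,x^u)\mapsto(f(x^s),g(x^u))$. The coordinate $\cG^u_{in}$-expansion $g'(x^u)$ therefore depends only on the $\cG^s_{in}$-coordinate $x^u$, which $x_0$ and $y_0$ share since they lie on a common $\cG^s_{in}$-leaf; this potentially unbounded factor thus cancels in the arc-length ratio $D\Gamma(x_0)/D\Gamma(y_0)$. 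What remains is a ratio of smooth positive metric-conversion factors at the four points $x_0,y_0,\Gamma(x_0),\Gamma(y_0)$, all of which lie in the compact sets $\overline B\subset\partial^{in}V$ and $\overline{\Gamma(B)}\subset\partial^{out}V$; covering these by finitely many adapted charts supplies the required uniform positive lower bound, and therefore $\alpha_\sigma>0$.
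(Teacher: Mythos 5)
Your reduction to the clean ratio
$$\frac{\|(\Gamma\circ\psi_h)_*(u)\|/\|u\|}{\|(\Gamma\circ\psi_h)_*(v)\|/\|v\|}=\frac{DH(x)}{DH'(\tilde F(x))}$$
is correct and is essentially a ratio form of the conjugacy identity $(\Gamma\circ\psi_h)|_{\sigma_x}=H_{\Sigma\to\Sigma_x}\circ(\Gamma\circ\psi_h)|_\sigma\circ H_{\sigma_x\to\sigma}$ that the paper writes down directly. Up to that point you are essentially on the paper's path, and the detour through a reference point $x_0$ is unnecessary: once you have $DH(x)/DH'(\tilde F(x))$, you can bound numerator and denominator separately exactly as the paper does, without introducing $D\Gamma(x_0)/D\Gamma(y_0)$ at all.

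The final step, however, has a genuine gap. You try to bound $D\Gamma(x_0)/D\Gamma(y_0)$ by writing $\Gamma$ locally in a product form $(x^s,x^u)\mapsto(f(x^s),g(x^u))$ and claiming that the coordinate expansion $g'$ cancels, leaving only smooth ``metric-conversion factors'' which are bounded by compactness of $\overline B$ and $\overline{\Gamma(B)}$. The cancellation of $g'$ is only valid when $x_0$ and $y_0$ lie in a single bifoliated chart; but $x_0$ and $y_0$ are joined by a $\cG^s_{in}$-leaf segment \emph{running along the strip} $B$, and that segment is unboundedly long as $\sigma_x$ recedes toward the ends of the strip. Globally, a product chart $(a,b)$ on the open disc $B$ does exist, but then the metric coefficient $\sqrt{G}$ along the $\cG^u$-direction is precisely the quantity that encodes the $\cG^s_{in}$-holonomy to $\sigma$: it degenerates (or blows up) at the ends of the strip, where the holonomy is a contraction. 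Thus ``bounding the ratio of metric-conversion factors'' is not a compactness argument but is \emph{exactly} the bounded-distorsion statement you are trying to prove. The chart argument is circular, and ``covering by finitely many charts'' does not resolve it because $D\Gamma$ itself blows up as $x_0\to\partial B$, so the ratio does not extend continuously to $\overline B$.

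What actually closes the proof is the structural asymmetry between the two sides of $\Gamma$, which your symmetric treatment misses. On the entrance side, the $\cG^u_{in}$-segments $\sigma_x$ crossing $B$ have uniformly bounded length, so Lemma~\ref{l.distorsion} for $(\cG^s_{in},\cL^s,\cG^u_{in})$ gives $DH_{\sigma_x\to\sigma}\geq \ell(\sigma)/(C\,\ell(\sigma_x))\geq\beta_\sigma>0$. On the exit side, it is the $\cG^s_{out}$-leaf segments crossing $\Gamma(B)$ that have uniformly bounded length (because $\cL^u$ is a filling MS-lamination), and this, together with smoothness of $\cG^s_{out}$, bounds $|DH'|\leq\beta$; no distorsion lemma is needed here — and indeed your appeal to Lemma~\ref{l.distorsion} for the triple $(\cG^s_{out},\cL^u,\cG^u_{out})$ is not a correct application of that lemma, since $\cL^u$ is a sublamination of $\cG^u_{out}$ rather than of $\cG^s_{out}$. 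With these two one-sided bounds one gets $DH(x)/DH'(\tilde F(x))\geq\beta_\sigma/\beta=:\alpha_\sigma$ directly.
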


\begin{proof}
Denote $\sigma_x$ the leaf of the restriction of  $\cG^u_{in}$ to $B$ containing $x$. Observe that $\Ga(B)$ is a connected component of $\partial^{out} V\setminus\cL^u$, and $\Si:=\Ga(\sigma)$ and $\Si_x:=\Ga(\sigma_x)$ are two leaves of $\cG^u_{out}$ contained in $\Ga(B)$. We denote by $H_{\sigma_x\to\sigma}\colon \sigma_x\to\sigma$  the holonomy of the foliation $\cG^s_{in}$ from $\sigma_x$ to $\sigma$. We denote by $H_{\Si\to\Si_x} \colon\Si\to \Si_x$ the holonomy of the foliation $\cG^s_{out}$ from $\Sigma$ to $\Sigma_x$. 

By construction, the restriction of $\psi_h$ to $\sigma_x$ is  conjugated to $h$ by $H_{\sigma_x\to\sigma}$.
One  deduces that the restriction of $\Ga\circ\psi_h$  to $\sigma_x$ can be written as:
\begin{equation}
\label{e.crossing-and-holonomies}
(\Ga\circ\psi_h)|_{\sigma_x}=H_{\Si\to \Si_x}\circ(\Ga\circ\psi_h)|_{\sigma}\circ H_{\sigma_x\to \sigma.}
\end{equation}
The following lemma gives a uniform bound for the derivative of $H_{\Si\to\Si_x}$:

\begin{lemm} 
\label{l.bound-holonomy-1}
There exists a constant $\beta>1$, such that the holonomy of the foliation $\cG^s_{out}$ between two leaves of $\cG^u_{out}$ in the same connected component $\partial^{out}V\setminus \cL^u$ has a derivative which is bounded by $\beta$.
\end{lemm}

\begin{proof}
Just notice that $\cG^s_{out}$ is a smooth foliation, and that the segment of leaves of $\cG^u_{out}$ contained in $\partial^{out}V\setminus \cL^u$ have uniformly bounded length.
\end{proof}

The following Lemma~\ref{l.holonomies} gives a uniform lower bound for the derivative of $H_{\Si\to\Si_x}$:

\begin{lemm} 
\label{l.bound-holonomy-2}
There is a constant $\beta_\sigma>0$ so that, for every $x\in B$ and any vector $u$ tangent to $\sigma_x$, one has
$$\|(H_{\sigma_x\to\sigma})_*(u)\|>\beta_\sigma.\|u\|.$$
\end{lemm}

\begin{proof} 
The component $B$ is a strip  whose ends converge to compact leaves whose holonomies are conjugated to homotheties.   Lemma~\ref{l.distorsion} asserts that the holonomy of the foliation $\cG^s_{in}$ between two leaves of the restriction to $\gamma$ of $\cG^u_{in}$ have uniformly bounded distorsion $C$. As a consequence, for every $x\in B$, the derivative on the holonomy $H_{\sigma_x,\sigma}$ is  larger than $ \frac{\ell(\sigma)}{C\cdot \ell(\sigma_x)}$ where $\ell$ is the length. One concludes by noticing that the length $\ell(\sigma_x)$ is uniformly bounded, so that
$$\inf_{x\in\gamma}\frac{\ell(\sigma)}{C\cdot \ell(\sigma_x)}>0.$$
\end{proof}

Putting together equality~\eqref{e.crossing-and-holonomies}, Lemma~\ref{l.bound-holonomy-1} and Lemma~\ref{l.bound-holonomy-2}, one easily sees that the constant $\alpha_\sigma:=\beta_\sigma\cdot \beta^{-1}$ satisfies the properties announced in Lemma~\ref{l.holonomies}. This completes the proof of Lemma~\ref{l.holonomies}.
\end{proof}

\begin{proof}[Proof of Proposition~\ref{p.largesquare}]
One just needs to combine Lemma~\ref{l.derivativeinterval} and Lemma~\ref{l.holonomies}, with a constant $A$ larger than $\alpha_\sigma.\lambda$.
\end{proof}

\subsubsection{Estimates for the derivative of $\psi_h$}

\begin{coro}
\label{c.distorsion} 
Let $B$ be a connected component of $\partial^{in}V\setminus \cL^s$ and $I$ be a segment of $\cG^u_{in}$-leaf contained in $B$, and $h$ be a diffeomorphism of $I$ so that the end points of $I$ are flat fixed points for $h$. We consider the diffeomorphism $\psi_h$ of $B$ associated to $h$ (see subsection~\ref{ss.psi-h}).  Let $u$ be a vector tangent to $\cG^u_{in}$ at some point $y\in B$. Then:
$$C^{-1}\inf_{x\in I} |Dh(x)|.\|u\|\leq\|D\psi_h(u)\|\leq C\sup_{x\in I} |Dh(x)|.\|u\|.$$
where $C$ is the bound on the distorsion of the holonomy of foliation $\cG^s_{in}$ given by Lemma~\ref{l.distorsion}. 
\end{coro}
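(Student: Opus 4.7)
The plan is to exploit the fact that, by construction, $\psi_h$ is conjugate to $h$ via the $\cG^s_{in}$-holonomy on each $\cG^u_{in}$-leaf in the strip $B$. Let $\sigma \subset B$ denote the $\cG^u_{in}$-leaf containing $I$, and, for a point $y$ in the $\cG^s_{in}$-saturation of $I$, let $\sigma_y$ be the $\cG^u_{in}$-leaf through $y$ and $H_{\sigma_y \to \sigma}\colon \sigma_y \to \sigma$ the $\cG^s_{in}$-holonomy. The three defining properties of $\psi_h$ (identity off the saturation of $I$, globally preserving $\cG^s_{in}$, preserving each leaf of $\cG^u_{in}$, restricting to $h$ on $I$) force
$$\psi_h(y) \;=\; H_{\sigma \to \sigma_y}\bigl(h(H_{\sigma_y \to \sigma}(y))\bigr).$$

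First I would dispose of the trivial case where $y$ lies outside the $\cG^s_{in}$-saturation of $I$: there $\psi_h = \mathrm{id}$, so $\|D\psi_h(u)\| = \|u\|$, and the desired inequality reduces to $C^{-1}\inf_I|Dh| \le 1 \le C\sup_I|Dh|$, which holds because the endpoints of $I$ are flat fixed points of $h$ (so $|Dh| = 1$ there) and $C \ge 1$.

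For $y$ in the saturation, I would differentiate the conjugacy formula along $\sigma_y$ at $y$ and apply the chain rule to obtain
$$D\psi_h(y) \;=\; DH_{\sigma \to \sigma_y}(h(p)) \cdot Dh(p) \cdot DH_{\sigma_y \to \sigma}(y),$$
where $p := H_{\sigma_y \to \sigma}(y)$. Using that $H_{\sigma \to \sigma_y}$ and $H_{\sigma_y \to \sigma}$ are mutually inverse, the product of the two holonomy derivatives can be rewritten as a pure distortion ratio
$$\frac{DH_{\sigma_y \to \sigma}(y)}{DH_{\sigma_y \to \sigma}(y')}, \qquad \text{with } y' := \psi_h(y) \in \sigma_y.$$
Applying Lemma~\ref{l.distorsion} to the two $\cG^u_{in}$-leaf segments $\sigma$ and $\sigma_y$ (both of which cross the strip $B$ with endpoints on $\partial B$) bounds this ratio between $C^{-1}$ and $C$, and the corollary follows at once by bounding $|Dh(p)|$ by $\sup_I|Dh|$ from above and by $\inf_I|Dh|$ from below.

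The main (and essentially only) subtlety is checking that Lemma~\ref{l.distorsion} genuinely applies with a constant $C$ independent of $y$: this is why Proposition~\ref{p.preparation} was set up so that the holonomies of the compact leaves of $\cG^s_{in}$ are conjugate to homotheties, which is exactly the hypothesis that powers Lemma~\ref{l.distorsion}. With that in hand, the uniformity of $C$ on all pairs $(\sigma, \sigma_y)$ inside $B$ (and indeed across all strip components of $\partial^{in}V \setminus \cL^s$) is automatic from that lemma.
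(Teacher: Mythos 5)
Your proof is correct and follows essentially the same route as the paper's (which is just a two-sentence sketch): differentiate the identity $\psi_h|_{\sigma_y} = H_{\sigma\to\sigma_y}\circ h\circ H_{\sigma_y\to\sigma}$, collapse the two holonomy factors into the distortion ratio $DH_{\sigma_y\to\sigma}(y)/DH_{\sigma_y\to\sigma}(\psi_h(y))$, and invoke Lemma~\ref{l.distorsion}. You've also supplied the small check for points outside the $\cG^s_{in}$-saturation of $I$ (where $\psi_h=\mathrm{id}$ and flatness of $h$ at the endpoints of $I$ gives $\inf_I|Dh|\le 1\le\sup_I|Dh|$), which the paper leaves implicit.
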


\begin{proof} 
Let $\sigma_y$ be the leaf through $y$ of the restriction of $\cG^u_{in}$ to $B$. Notice that the restriction of $\psi_h$ to $\sigma_y$ is the conjugated of $h$ by the holonomy of $\cG^s_{in}$. According to Lemma~\ref{l.distorsion}, the distorsion of this holonomy is bounded by $C$. This yields the desired estimates.
\end{proof}

\subsubsection{``Slowing down" the diffeomorphisms $\psi_h$ close to the ends of the strip}

Proposition~\ref{p.largesquare} built diffeomorphism $\psi_h$ of a connected component $B$ of $\partial^{in}V\setminus \cL^s$. Recall that $B$ is a strip bounded by two non-compact leaves  of $\cL^s$ which are asymptotic to each other at both ends.  Each end of the strip $B$ spirals around a compact leaf of $\cL^s$. The diffeomorphism $\psi_h$ coincides with the identity outside of the $\cG^s_{in}$-saturation of some compact interval $I\subset B$. Nevertheless, $\psi_h$ does not tend to the identity close to the ends of $B$. This is the reason why we need to ``slow down" $\psi_h$ close to the ends of $B$.

\medskip

We consider  a compact leaf $c$ of $\cL^s$ (or equivalently of $\cG^s_{in}$) contained in the closure of $B$ (\emph{i.e.} there is one end of $B$ spiraling around $c$).
We orient $c$ so that its holonomy is a linear contraction. Recall that $\cG^u_{in}$  is a smooth foliation transverse to $\cG^s_{in}$. So, one can choose a smooth  tubular neighbourhood $O$ of $c$ so that:
\begin{itemize}
 \item the boundary $\partial O$ is transverse to $\cG^s_{in}$,
 \item the fibers of $O$ are segments of leaves of $\cG^u$.
\end{itemize}

We choose a parametrization of $c$ by $S^1=\RR/\ZZ$, so that the universal cover of $O$ can be identify with $\RR\times [-1,1]$ where the lifts of the leaves of $\cG^u_{in}$ are the segments $\{t\}\times [-1,1]$. For every $\theta\in S^1$, and every $t\in\RR$, we will denote by $H_{\theta,t}$ the holonomy of the foliation $\cG^s_{in}$
from the fiber $\sigma^u_\theta= \{\theta\}\times [-1,1]$ to the fiber $\sigma^u_{\theta+t}=\{\theta+t\}\times [-1,1]$. More precisely, we choose a lift  $\bar \theta$ of  $\theta$, and we considers the holonomy of the lifted foliation $\bar \cG^s_{in}$ from the fiber $\{\bar \theta\}\times [-1,1]$ to the fiber $\{\bar\theta+t\}\times [-1,1]$;  the projection of this holonomy does not depend on the lift $\bar \theta$. Notice that, for every $t>0$ and every $\theta$, the holonomy $H_{\theta,t}$ is defined on the whole fiber, and is a contraction.

\begin{lemm}
\label{l.ends}  
Let $C$ be the constant given by Lemma~\ref{l.distorsion}. Let  $I$ be a segment of $\cG^u_{in}$-leaf ocontained in $\sigma^u_\theta\cap B$. We denote by $I_t$ the image of $I$ by the holonomy $H_{\theta,t}$.  Let $h$ be a diffeomorphism of $I$ so that the end points of $I$ are flat fixed points of $h$. For every $\varepsilon >0$, there is a diffeomorphism $\psi^+$ of $\gamma$, with the following properties:
 \begin{itemize}
 \item $\psi^+$ preserves each leaf $\sigma^u$ of $\cG^u_{in}$;
 \item $\psi^+$ is equal to $h$ on $I$;
 \item $\psi^+$ is the identity out of the $\cG^s_{in}$-saturation of  $I$;
 \item $\psi^+$ coincides with $\psi_h$ outside $O$, and also coincides with $\psi_h=H_{0,-t}hH_{0,-t}^{-1}$ on $I_{-t}$ for every $t>0$;
 \item for $t>0$ large enough, $\psi_+$ is the identity map on $I_t$;
 \item $\psi^+(\cG^s_{in})$ is $\varepsilon$-$C^1$-close to $\cG^s_{in}$;
 \item the action of $\psi^+$ on vectors tangent to $\cG^u_{in}$ is controlled by the derivative of $h$; more precisely, for every vector $u$ tangent to $\cG^u_{in}$, 
 \begin{equation}
 \label{e.control}
 C^{-1} \inf_{x\in I} |Dh(x)|.\|u\|  \leq \|(\psi^+)_*(u)\|\leq C\cdot \sup_{x\in I} |Dh(x)|.\|u\|;
 \end{equation}
\end{itemize}
\end{lemm}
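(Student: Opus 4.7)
The plan is to carry out the construction in linearizing coordinates on the universal cover of $O$, using the hypothesis from Proposition~\ref{p.preparation} that the holonomy of $c$ in $\cG^s_{in}$ is smoothly conjugate to a homothety. I would choose smooth coordinates identifying the universal cover of $O$ with $\RR\times[-1,1]$ so that $c$ lifts to $\RR\times\{0\}$, leaves of $\cG^u_{in}$ lift to the vertical fibers $\{\theta\}\times[-1,1]$, and every holonomy of $\cG^s_{in}$ takes the linear form $H_{\theta,t}\colon(\theta,x)\mapsto(\theta+t,\mu^t x)$ for some fixed $\mu\in(0,1)$. In these coordinates, writing $I=[a,b]\subset\{0\}\times[-1,1]$, the map $\psi_h$ reads $\psi_h(\theta,x)=(\theta,\mu^\theta h(\mu^{-\theta}x))$ on the $\cG^s_{in}$-saturation of $I$, and is the identity elsewhere in $B$.

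Next, I introduce the linear interpolation family $h_s:=(1-s)\,\mathrm{id}_I+s\,h$ for $s\in[0,1]$; because $h$ fixes the endpoints of $I$ to infinite order, so does each $h_s$, and $h_s$ is a smooth diffeomorphism of $I$ with flat fixed points at $a$ and $b$. The pointwise identity $Dh_s(x)=(1-s)+s\,Dh(x)$, combined with the obvious bound $\inf_I Dh\le 1\le\sup_I Dh$ (from $\int_I Dh=|I|$), yields the crucial sandwich $Dh_s(x)\in[\inf_I Dh,\sup_I Dh]$ for every $s,x$. I then fix $t_0>0$ (to be chosen large later) and take a smooth cutoff $\chi\colon\RR\to[0,1]$ equal to $1$ on $(-\infty,0]$, equal to $0$ on $[t_0,\infty)$, with $\|\chi'\|_\infty\le 2/t_0$, and define $\psi^+$ on the saturation of $I$ inside $O$ by
\[
\psi^+(\theta,x):=\bigl(\theta,\ \mu^\theta\,h_{\chi(\theta)}(\mu^{-\theta}x)\bigr),
\]
completing it by $\psi_h$ outside $O$ and by the identity on $B$ outside the saturation of $I$ (the flat fixed points of each $h_{\chi(\theta)}$ guarantee smooth gluing). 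Since $\chi(\theta)=1$ for $\theta\le 0$ one has $\psi^+=\psi_h$ on $I_{-t}$ for every $t>0$, and since $h_{\chi(\theta)}=\mathrm{id}$ for $\theta\ge t_0$ one has $\psi^+=\mathrm{id}$ on $I_t$ for $t\ge t_0$; the remaining elementary properties (leaf preservation, $\psi^+|_I=h$, support in the saturation of $I$, agreement with $\psi_h$ outside $O$) are immediate from the formula.

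The two substantive verifications are the derivative estimate on $\cG^u_{in}$-vectors and the $C^1$-closeness of $\psi^+(\cG^s_{in})$ to $\cG^s_{in}$. For the first, in the linearizing coordinates $\partial_x\psi^+=Dh_{\chi(\theta)}(\mu^{-\theta}x)$, and since $\mu^{-\theta}x\in I$ the sandwich above places this in $[\inf_I Dh,\sup_I Dh]$; converting back to the intrinsic metric on $\partial^{in}V$ introduces only the bounded distortion factor $C$ of Lemma~\ref{l.distorsion} on either side, which yields exactly inequality~(\ref{e.control}). For the second, a direct chain-rule computation shows that $D\psi^+$ sends the tangent to $\cG^s_{in}$ at a point of the saturation to a vector that differs from the tangent to $\cG^s_{in}$ at the image point only by $(0,\mu^\theta\chi'(\theta)(h(y)-y))$, whose norm is at most $2|I|/t_0$; choosing $t_0>2|I|/\varepsilon$ achieves the desired $\varepsilon$-$C^1$-closeness. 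The main obstacle is precisely the tension between these two requirements: a steep cutoff (small $t_0$) destroys the $C^1$-closeness of the stable foliation, while a shallow cutoff spreads the transition along the spiral of $B$ near $c$ and risks losing the pointwise bounds on the $\cG^u_{in}$-expansion. What resolves this tension is the combination of the linearizability of the holonomy of $c$ (which makes $H_{\theta,t}$ an exact homothety in our coordinates, so that conjugation by $H_{0,\theta}$ preserves pointwise derivatives) and the choice of \emph{linear} interpolation $h_s$ (which keeps $Dh_s$ sandwiched in $[\inf_I Dh,\sup_I Dh]$ uniformly in $s$), so that $t_0$ can be taken as large as needed to absorb $\varepsilon$ without any cost on the expansion estimate.
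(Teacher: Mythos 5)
Your proposal is correct and follows essentially the same route as the paper: the convex interpolation $h_s:=(1-s)\mathrm{id}+s\,h$, a slowly varying cutoff in the transverse ($\theta$) direction, the conjugation of $h_{\chi(\theta)}$ by the $\cG^s_{in}$-holonomy on each fibre, and the sandwich $Dh_s\in[\inf_I Dh,\sup_I Dh]$ combined with the distortion bound of Lemma~\ref{l.distorsion}. The only stylistic difference is that you first pass to a chart that makes the holonomy an exact homothety $H_{\theta,t}(\theta,x)=(\theta+t,\mu^t x)$, whereas the paper conjugates by the abstract $H_{\theta,t}$ and reads the distortion factor $\|DH_{\theta,t}(z_2)\|/\|DH_{\theta,t}(z_1)\|$ directly off Lemma~\ref{l.distorsion}; your explicit chart makes the $C^1$-closeness estimate (the error vector $(0,\mu^\theta\chi'(\theta)(h(y)-y))$) pleasantly concrete. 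One small caveat: in your chart the bound $\partial_x\psi^+\in[\inf_I Dh,\sup_I Dh]$ is for the \emph{coordinate} derivative, and converting to the intrinsic metric produces, besides the factor $C$, the ratio $\rho(0,h_{\chi(\theta)}(y))/\rho(0,y)$ of metric densities at two points of $I$, which need not equal $1$ because the linearizing parametrization of the fibre is determined only up to linear rescaling and so cannot in general be taken to be the arclength parametrization. This is harmless (replace $C$ by a larger fixed constant, which is all the application in Proposition~\ref{p.returnmap} uses), but to recover exactly the constant $C$ of Lemma~\ref{l.distorsion} as stated one should, as the paper implicitly does, form the convex sum in the intrinsic arclength parametrization and apply the distortion bound directly to the chain rule $\|(\psi^+)_*u\|/\|u\|=\bigl(\|DH_{\theta,t}(z_2)\|/\|DH_{\theta,t}(z_1)\|\bigr)\cdot\|Dh_{\chi(\theta)}(z_1)\|$.
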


\begin{proof}
We consider  the  isotopy $(h_t)_{t\in [0,1]}$ joining $h$ to the identity by convex sum, \emph{i.e.}  $h_t(x)-x=t (h(x)-x)$. Hence $Dh_t(x)-1= t\cdot (Dh(x)-1)$. We consider a   smooth decreasing  map $\tau\colon \RR\to [0,1[$ such that $\tau(t)=1$ for $t<0$ and $\tau(t)=0$ for $t$ large enough.  We defined $\psi^+$ as follows:
\begin{itemize}
\item $\psi^+=\psi_h$ outside $O$,;
\item $\psi^+=\mathrm{Id}$ outside the $\cG^s_{in}$-saturation of $I$;
\item $\psi^+=H_{\theta,t} h_{\tau(t)} H_{\theta,t}^{-1}$ on $I_t$.
\end{itemize} 
One easily checks that this definition is coherent. The diffeomorphism $\psi^+$ satisfies trivially all the desired properties, except for the two last ones (the control of distance between the foliations $\psi^+(\cG^s_{in})$ and $\cG^s_{in}$, and the control of the action of the derivative of $\psi^+$ on the vectors tangent to the $\cG^u_{in}$-leaves).

To obtain proximity between the foliations $\psi^+(\cG^s_{in})$ and $\cG^s_{in}$, one just needs to notice that:
\begin{itemize}
\item the $C^1$-distance between $\psi^+(\cG^s_{in})$ and $\cG^s_{in}$ tends to $0$ when $\sup_{\RR} | D\tau(t)|$ tends to $0$;
\item we can choose  the function $\tau$ so that $\sup_{\RR} | D\tau(t)|$ is arbitrarily small.
\end{itemize}

So we are left to check the last property. For this purpose, we consider a vector $u$ tangent to a $\cG^u_{in}$-leaf at some point $y\in B$. Assume that the point $y$ belongs to $O$. Hence, we have 
$$\|(\psi^+)_*(u)\|=\|(H_{theta,t}\circ h_{\tau(t)}\circ H_{\theta,t}^{-1})_*(u)\|= \frac{\|DH_{\theta,t}(z_2)\|}{\|DH_{\theta,t}(z_1)\|}.\|1+\tau(t)\cdot (Dh(z_1)-1)\|.\|u\|$$
where $z_1=H_{\theta,t}^{-1}(y)$ and $z_2=h(z_1)$. Using Lemma~\ref{l.distorsion} and the fact that $|\tau(t)|$ is less than $1$, this yields to the desired inequality~\eqref{e.control}. If the point $y$ is not in $O$, we get the inequality by similar (but easier) arguments: indeed $\psi^+=\psi_h$ outside $O$, and the restriction of $\psi_h$ to a $\cG^u_{in}$ is conjugated to $h$ by the holonomy of the foliation $\cG^s_{in}$.
\end{proof}

\subsubsection{End of the proof of Proposition~\ref{p.returnmap}}

\begin{proof}[Proof of Proposition~\ref{p.returnmap}] 
Fix $\lambda>1$ and $\varepsilon>0$. According to Lemma~\ref{l.stronghyperbolicity}, there is an adapted neighbourhood of $\cU^{in}_\lambda$ of $\cL^s$ in $\partial^{in} V$, so that:
\begin{equation}
\label{e.expansion-1}
\mbox{$\|(\Gamma)_*(u)\|\geq \lambda.\|u\|$ for every vector $u$ tangent to $\cG^u_{in}$ at some point $x\in \partial^{in} V\setminus \cU^{in}_\lambda$.}
\end{equation}
The set $\partial^{in} V\setminus\cU^{in}_\lambda$ is contained in finitely many connected components $B_1,\dots,B_m$ of $\partial^{in}V\setminus \cL^s$. Recall that each $B_i$ is a strip bounded by two non-compact leaves of $\cL^s$ which are spiraling (at both ends) around some compact leaves of $\cL^s$.

For $i=1,\dots, m$, we consider a homeomorphism $h_i$ associated to $\lambda$ and $B_i$ by Proposition~\ref{p.largesquare}. The map $\Gamma\circ\psi_{h_i}$ expands vectors tangent to $\cG^u_{in}$ by a factor larger than $\lambda$:
\begin{equation}
\label{e.expansion-2}
\|(\Gamma\circ\psi_{h_i})_*(u)\|\geq \lambda.\|u\|\mbox{ for every $u$ tangent to $\cG^u_{in}$ at some point $x\in B_i$.} 
\end{equation}
 The only trouble is that $\psi_{h_i}$ cannot be extended as a diffeomorphisms on the closure of $B_i$. To overcome this problem, we will modify $\psi_{h_i}$ on the ends of the strip $B_i$ using Lemma~\ref{l.ends}.

Let $m$ be a lower bound for the derivatives of all the $h_i$'s. According to Corollary~\ref{c.distorsion}, there exists a constant $C$ such that, for every $i$,
\begin{equation}
\label{e.expansion-3}
 \|(\psi_{h_i})_*(u)\|\geq C^{-1}m.\|u\|\mbox{ for every $u$ tangent to $\cG^u_{in}$ at some point $x\in B_i$.}
\end{equation}

Now, we use Lemma~\ref{l.adapted} and again Lemma~\ref{l.stronghyperbolicity} to get an adapted neighborhood $\cU^{in}\subset \cU^{in}_{\lambda}$ of $\cL^s$ so that:
\begin{equation}
\label{e.expansion-4}
\mbox{$\|(\Gamma)_*(u)\|\geq (\lambda\cdot C^2m^{-1}).\|u\|$ for every vector $u$ tangent to $\cG^u_{in}$ at some point $x\in \partial^{in} V\setminus \cU^{in}$.}
\end{equation}
By definition of adapted neighborhood, the complement of $\cU^{in}$ consists in finitely many $in$-rectangles, and each connected component of $\partial^{in}V\setminus \cL^s$ contains at most one of these $in$-rectangles. We denote by $R_i$ the $in$-rectangle contained in the strip $B_i$. The set $\partial^{in} V\setminus\cU^{in}_\lambda$ is contained in the interior of the union of the $R_i$'s. Up to fattening the $in$-rectangles $R_i$ (that is, up to shrinking the adapted neighbourhood $\cU^{in}$) one may assume that the $\cG^u_{in}$-sides of the $in$-rectangle $R_i$ are contained in tubular neighborhoods of the compact leaves fo $\cL^s$ in the closure of $B_i$.

 Applying Lemma~\ref{l.ends} to both the $\cG^u_{in}$-sides of the rectangle $R_i$, one gets a diffeomorphism $\psi_i$ of the strip $B_i$ so that
 \begin{itemize}
  \item $\psi_i$  preserves every leaf of the restriction of $\cG^u_{in}$ to $B_i$;
  \item  the restriction of $\psi_i$  to the $in$-rectangle $R_i$ is $\psi_{h_i}$;
  \item $\psi_i$ coincides with the identity map out of a compact subset of $B_i$;
  \item $\psi_i$ expands vectors tangent to $\cG^u_{in}$ by a factor larger than $C^{-2}m$:
  \begin{equation}
\label{e.expansion-5}
 \|(\psi_i)_*(u)\|\geq C^{-2}m.\|u\|\mbox{ for every $u$ tangent to $\cG^u_{in}$ at some point $x\in B_i$.}
\end{equation}
  \item  the $C^1$-distance between the foliations $(\psi_i)_*(\cG^s_{in})$ and $\cG^s_{in}$ is smaller than $\varepsilon$;
 \end{itemize}

 We consider the diffeomorphism $\psi^{in}$ of $\partial^{in} V$ which coincides with $\psi_i$ on the strip $B_i$ and conincides with the identity map out of the union of the $B_i$'s. Let us check that $\psi^{in}$ satisfies all the announced properties: it is the identity map on a neighborhood of $\cL^s$, preserves every leaf of $\cG^u_{in}$, and $(\psi^{in})_*(\cG^s_{in})$ is $\varepsilon$-$C^1$-close to $\cG^s_{in}$.  It remains to control the action of derivative of $\Ga\circ \psi^{in}$ on vectors tangent to $\cG^u_{in}$.
 \begin{itemize}
 \item On $\partial^{in} V\setminus \bigcup_i B_i$, the diffeomorphism $\psi^{in}$ coincides with the identity. Therefore, \eqref{e.expansion-1} and the inclusion of $\partial^{in} V\setminus \bigcup_i B_i$ in $\cU_{\lambda}$ implies that $\Ga\circ \psi^{in}$ expands vectors tangent to $\cG^u_{in}$ by a factor larger than $\lambda$.
 \item On $B_i\setminus\cU^{in}=R_i$, the diffeomorphism $\psi^{in}$ coincides with $\psi_{h_i}$. Therefore, \eqref{e.expansion-2}  implies that $\Ga\circ \psi^{in}$ expands vectors tangent to $\cG^u_{in}$ by a factor larger than $\lambda$.
 \item On $B_i\cap\cU^{in}$, the diffeomorphism $\psi^{in}$ coincides with $\psi_i$. Therefore \eqref{e.expansion-4} and \eqref{e.expansion-5} imply that $\Ga\circ\psi^{in}$ expands vectors tangent to $\cG^u_{in}
 $ by a factor larger than~$\lambda$. 
 \end{itemize}
 This conpletes the proof of Proposition~\ref{p.returnmap} (and therefore also of Theorem~\ref{t.transitive}).
\end{proof}

\subsection{Transitivity}

The aim of this subsection is to prove Proposition~\ref{p.transitive}.

\begin{lemm} \label{l.transitivity}
Every orbit of the Anosov flow given by Theorem~\ref{t.transitive}  which is not contained in $V$ has
its stable  and unstable manifold cutting $\cL^u$ and $\cL^s$, respectively.
\end{lemm}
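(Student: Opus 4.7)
First, by time-reversal the two conclusions $W^s(O)\cap \cL^u\neq \emptyset$ and $W^u(O)\cap \cL^s\neq \emptyset$ are symmetric (replacing $Z_\psi$ by $-Z_\psi$ exchanges $\cL^s$ with $\cL^u$, $\partial^{in}V$ with $\partial^{out}V$, and $W^s$ with $W^u$), so I would focus on proving $W^u(O)\cap \cL^s\neq \emptyset$. Pick a point $p$ where $O$ meets the cross-section $\Sigma := \partial^{in} V \subset V/\psi$: such a point exists precisely because the hypothesis ``$O$ is not contained in $V$'' is equivalent to ``$O$ crosses $\partial V$'', since any orbit staying in $V\setminus \partial V$ would lie in the maximal invariant set $\Lambda$ of $X$. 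If $p\in \cL^s$ there is nothing to prove, because $p\in O\cap \cL^s\subset W^u(O)\cap \cL^s$; so assume $p\notin \cL^s$.

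Let $\gamma_0 = W^u_{\Theta_\psi,\text{loc}}(p)\subset \Sigma$ be the local unstable manifold through $p$ for the Poincar\'e return map $\Theta_\psi = \psi\circ \Gamma$; it is a short $C^1$ arc through $p$, contained in $W^u(O)$ and tangent to the unstable cone field $\cC^u_{in}$ produced in the proof of Lemma~\ref{l.hyperboilicity-vector-field-vs-return-map}. I would then consider the forward iterates $\Theta^n_\psi(\gamma_0)$ and split into two cases. If for some $q\in \gamma_0$ and some $n\geq 0$ one has $\Theta^n_\psi(q)\in \cL^s$, then $\Theta^n_\psi(q)$ lies on the $Z_\psi$-orbit of $q\in W^u(O)$ and thus $\Theta^n_\psi(q)\in W^u(O)\cap \cL^s$, as desired. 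Otherwise, $\Theta^n_\psi$ is defined on all of $\gamma_0$ for every $n\geq 0$, and $\Theta^n_\psi(\gamma_0)$ is a connected $C^1$ arc in $\Sigma\setminus \cL^s$ tangent to $\cC^u_{in}$, whose length grows at least like $\lambda^n \ell(\gamma_0)$ for some $\lambda>1$ by the uniform expansion of $d\Theta_\psi$ on $\cC^u_{in}$.

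The contradiction in this second case would follow from the key sublemma: there is a constant $L_0$ such that every $C^1$ curve in $\Sigma$ tangent to $\cC^u_{in}$ and disjoint from $\cL^s$ has ambient length at most $L_0$. Since $\cL^s$ is a filling MS-lamination, $\Sigma\setminus \cL^s$ is a finite union of strips, each diffeomorphic to $\RR\times (0,1)$ with $\cG^s_{in}$ appearing as the horizontal foliation and with both ends spiraling into compact leaves of $\cL^s$. Because $\cC^u_{in}$ does not contain the tangent direction of $\cG^s_{in}$, the vertical component of every unit tangent vector in $\cC^u_{in}$ is bounded below by some $c>0$; hence any curve tangent to $\cC^u_{in}$ inside such a strip is monotone in the vertical coordinate, which varies in the bounded interval $(0,1)$. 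This yields a bound on its length measured in the coordinates adapted to $(\cG^s_{in},\cG^u_{in})$. A Jacobian comparison near the spiraling ends, where one period of the horizontal coordinate corresponds to a single wrap around the asymptotic compact leaf and therefore to a bounded amount of ambient length, translates this into a uniform bound on the ambient length in $\Sigma$.

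The main obstacle is precisely this sublemma, and specifically controlling the ambient length near the spiraling ends of a strip, where the natural coordinates induced by the transverse foliations $(\cG^s_{in},\cG^u_{in})$ degenerate. Granted the sublemma, choosing $n$ with $\lambda^n \ell(\gamma_0)>L_0$ produces an arc $\Theta^n_\psi(\gamma_0)\subset \Sigma\setminus \cL^s$ tangent to $\cC^u_{in}$ of length greater than $L_0$, a contradiction. The conclusion $W^s(O)\cap \cL^u\neq \emptyset$ is obtained by applying the same argument to the reversed flow $-Z_\psi$.
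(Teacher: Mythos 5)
Your route is genuinely different from the paper's. The paper argues in one line: the orbit meets $\partial^{out}V$, its weak stable manifold contains a complete leaf of a one-dimensional foliation $\cG^s_{out}$ on $\partial^{out}V$ transverse to $\cL^u$, and every complete leaf of such a foliation must cross the filling lamination $\cL^u$ (a leaf confined to a complementary strip would have to spiral into a compact boundary leaf of $\cL^u$ or reach the accessible boundary, and transversality forces a crossing either way). This is topological and uses no metric estimate, because stable leaves are \emph{complete}. Your approach is metric: start from a small piece of $W^u(O)\cap\partial^{in}V$, iterate the return map, and use the uniform cone-field expansion to force exponential growth, concluding by a uniform upper bound on the length of cone-tangent arcs disjoint from $\cL^s$. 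This is closer in spirit to the machinery of the proof of Theorem~\ref{t.transitive} and has the virtue of not invoking the weak stable foliation of the glued flow; the price is the length sublemma.

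The gap you flag is genuine and sits at the heart of your argument, and the ``Jacobian comparison near the spiralling ends'' is not quite the right mechanism: the strip parametrization and the ambient metric are not uniformly comparable near the ends, so bounding the strip's transverse coordinate does not by itself control ambient length there. The cleaner way to close the gap is a uniform separation estimate. Since $\cC^u_{in}$ is a closed cone field uniformly transverse to $\cG^s_{in}\supset\cL^s$ on the compact surface $\partial^{in}V$, there is a $\delta>0$ such that any arc tangent to $\cC^u_{in}$ entering the $\delta$-neighbourhood of a point $z\in\cL^s$ must cross the $\cL^s$-leaf through $z$ (argue in a $\cG^s_{in}$-flow box at $z$: the arc is uniformly ``vertical'' there while the local $\cL^s$-plaque through $z$ is horizontal). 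Hence every cone-tangent arc disjoint from $\cL^s$ stays in the compact set $\{x : d(x,\cL^s)\geq\delta\}$, which avoids the coordinate degeneracy at the spiralling ends; on this set your monotonicity-in-the-transverse-direction argument does yield a uniform length bound, and the contradiction with exponential growth then completes the proof.
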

\begin{proof} This orbit cuts $\partial^{out} V$ so that its stable manifold contains a leaf of the foliation
$\cG^s_{out}$, whose all leaves cut $\cL^u$.
\end{proof}

\begin{proof}[Proof of Proposition~\ref{p.transitive}]
Lemma~\ref{l.transitivity} implies that every orbit $\gamma$ of the resulting Ansosov flow has its stable (resp. unstable)
manifold cutting transversely the unstable (resp. stable) manifold  of a basic piece of the maximal
invariant  set $\La$ in $V$.  The combinatorial transitivity means that, after gluing, all the basic pieces of $\La$
are related by a cycle, and hence belong to the same basic piece of the Anosov flow.  Now the stable and unstable manifolds of
$\gamma$ cut the unstable  and stable manifold of this basic piece of the Anosov flow, so that $\gamma$ belongs to this
basic piece.  One deduces that the whole manifold is a unique basic piece, which means that the flow is transitive.
\end{proof} 
\part{Applications of the gluing theorem~\ref{t.transitive}}
\section{MS-foliations and filling MS-laminations}
\label{s.simple-foliations}

The purpose of this section is to investigate the
 geometry of filling MS-laminations on closed orientable
surfaces. We will define the \emph{combinatorial types} of a 
filling MS-lamination: these are simple
   combinatorial objects encoding the orientations of
    the compact leaves of the lamination. Then we will focus on the particular case of MS-foliations, and
     prove that every MS-foliation is characterized up to topological equivalence by any of its
      combinatorial types. This result will play a
 crucial role in the proofs of
Theorems~\ref{t.richer},~\ref{t.attractors},
~\ref{t.embedding},~\ref{t.manyanosov}.

All along this section, we will consider filling MS-laminations on the torus $\TT^2$. Indeed, up 
to diffeomorphism, $\TT^2$ is the only 
closed connected orientable surface which carries 
filling MS-laminations. From now on, we assume that 
an orientation of $\TT^2$ is fixed.

\subsection{Combinatorial type of a filling MS-lamination}

\begin{lemm}
\label{l.parallel-leaves}
Let $\cL$ be a filling MS-lamination on $\TT^2$. 
The compact leaves of $\cL$, regarded as 
non-oriented closed curves on $\TT^2$, are 
non-contractible and pairwise freely homotopic.
\end{lemm}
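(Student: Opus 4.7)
The plan is to reduce to the classical theory of foliations of $\TT^2$ by embedding $\cL$ in a genuine foliation, and then to apply Euler-characteristic and cut-and-paste arguments. First, I would apply Lemma~\ref{l.embed-in-foliation} to extend $\cL$ to a (topological) foliation $\cF$ on $\TT^2$. Since every connected component of $\TT^2 \setminus \cL$ is a strip, the extension of $\cL$ inside each strip consists of non-compact leaves only, so the compact leaves of $\cF$ coincide with the compact leaves of $\cL$. It therefore suffices to prove both assertions for compact leaves of $\cF$.

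For non-contractibility, I argue by contradiction. Suppose that some compact leaf $\gamma$ of $\cF$ bounds an embedded disc $D \subset \TT^2$. Then $\cF$ restricts to a continuous foliation of $D$ whose boundary $\gamma$ is a leaf, i.e.\ a non-singular one-dimensional foliation tangent to $\partial D$. This contradicts Poincar\'e--Hopf: a non-singular line field on a compact surface tangent to the boundary forces the Euler characteristic to vanish, whereas $\chi(D) = 1$. Equivalently, doubling $D$ along $\gamma$ would produce a non-singular foliation of $S^2$, which is impossible since $\chi(S^2)=2$.

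For the free homotopy statement, let $\gamma_1$ and $\gamma_2$ be two compact leaves of $\cL$. They are disjoint by the definition of a lamination, and non-contractible by the previous step. Since $\gamma_1$ is a non-contractible simple closed curve on $\TT^2$, it represents a primitive homology class and is therefore non-separating; cutting $\TT^2$ along $\gamma_1$ produces an annulus $A$. The curve $\gamma_2$ lies in $A$ and remains non-contractible there, since a disc bounded by $\gamma_2$ in $A$ would also bound a disc in $\TT^2$. Hence $\gamma_2$ is isotopic to the core of $A$, which is itself isotopic to $\gamma_1$ inside $\TT^2$. As unoriented curves, $\gamma_1$ and $\gamma_2$ are therefore freely homotopic.

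The proof is entirely classical; the only slightly delicate point is the low-regularity setting in the Poincar\'e--Hopf step, but since we only need a continuous line field tangent to the boundary, the standard obstruction-theoretic statement applies without modification.
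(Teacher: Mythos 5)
Your proof is correct and takes the same route as the paper's terse sketch: embed the filling MS-lamination in a MS-foliation of $\TT^2$, rule out contractible compact leaves via Poincar\'e--Hopf, and then deduce the free-homotopy statement from the classification of disjoint essential simple closed curves on the torus. You have also, in effect, corrected a small slip in the paper's proof, which cites Lemma~\ref{l.pre-foliation} where the lemma actually needed is Lemma~\ref{l.embed-in-foliation}.
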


\begin{proof}
According to Lemma~\ref{l.pre-foliation}, $\cL$ can 
be completed to a MS-foliation 
$\cF$. Lemma~\ref{l.parallel-leaves} is a 
consequence of Poincar\'e-Hopf Theorem applied to 
the foliation $\cF$.
\end{proof}

\begin{defi}[Contracting orientation]
\label{d.contracting-orientation}
Let $\cL$ be a filling MS-lamination on $\TT^2$, and $\gamma$ be 
a compact leaf of $\cL$. The 
\emph{contracting orientation} of $\gamma$ is 
the orientation for which the holonomy of $\gamma$ 
is a contraction.
\end{defi}

Lemma~\ref{l.parallel-leaves} alllows to compare 
the orientations of two compact leaves of a 
filling MS-lamination:

\begin{defi}[Coherently orientated compact leaves]
\label{d.coherent-incoherent}
Let $\cL$ be a filling MS-lamination on $\TT^2$, and 
$\gamma,\gamma'$ be some compact leaves of 
$\cL$, endowed with their contracting orientations. 
We say that $\gamma$ and $\gamma'$ 
are \emph{coherently oriented} if they are 
freely homotopic, when regarded as oriented 
closed curves.
\end{defi}

\begin{figure}[ht]
\begin{center}
\includegraphics[totalheight=4.5cm]{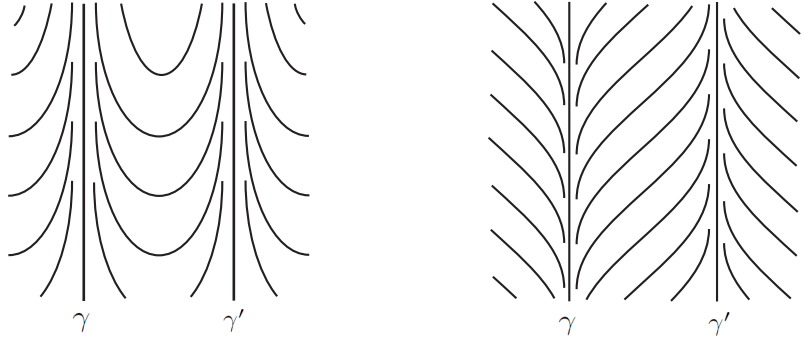}
\caption{\label{f.dynamical-orientations} 
Left: compact leaves with coherent 
contracting orientations. Right: compact leaves 
with incoherent contracting orientations.}
\end{center}
\end{figure}

Lemma~\ref{l.parallel-leaves} implies that the 
compact leaves of a filling MS-lamination are 
``cyclically ordered"; let us formalize this:

\begin{defi}[Geometrical enumeration]
\label{d.geom-enum}
 Let $\cL$ be a filling MS-lamination of $\TT^2$, with 
 $n$ compact leaves. An enumeration 
 $\gamma_0,\dots,\gamma_{n-1}$ of the compact leaves 
 of $\cL$ is called a 
 \emph{geometrical enumeration} if it satisfies 
 the 
 following properties:
\begin{itemize}
\item for $i=0\dots n-1$, the leaves $\gamma_i$ 
and $\gamma_{(i+1)\mathrm{ mod }n}$ bound a 
connected component $A_i$ of 
$\TT^2\setminus \bigcup_k \gamma_k$,
\item $A_1$ is on the right-hand side
\footnote{The orientation of $\TT^2$ provides 
an notion of \emph{local right-hand side of 
an oriented closed curve}. To include the 
particular case where $\cL$ has a single compact 
leaf, we allow $A_0$ to be on both sides 
of $\gamma_0$.} of $\gamma_0$, with respect 
to the contracting orientation of $\gamma_0$.
\end{itemize}
\end{defi}

\begin{defi}[Combinatorial type]
Let $\cL$ be a filling MS-lamination on $\TT^2$, 
and $\gamma_0,\dots,\gamma_{n-1}$ be a 
geometrical enumeration of the compact leaves 
of $\cL$. The \emph{combinatorial type} of 
the lamination $\cL$ (associated to the 
enumeration $\gamma_0,\dots,\gamma_{n-1}$) is the 
map
$$\sigma:\{0,\dots,n-1\}\to \{-,+\}$$
defined as follows: $\sigma(i)=+$ if and only if 
the contracting orientations of $\gamma_i$ 
and $\gamma_0$ are coherent
\footnote{In particular, $\sigma(0)$ is always 
equal to $+$ .}.
\end{defi}

\begin{rema}
Let $\cL$ be a filling MS-lamination on $\TT^2$, with 
$n$ compact leaves. There are $n$ possible 
geometrical enumeration of the compact leaves 
of $\cL$. To each geometrical enumeration 
is associated a combinatorial type of $\cL$. 
These combinatorial types can easily be deduced 
from one another.
\end{rema}

\subsection{MS-foliations are characterized 
by their combinatorial types}

\begin{defi}[Topological equivalence on 
oriented surfaces]
Let $\cL,\cL'$ be laminations on oriented 
surfaces $S,S'$. We will say that $\cL$ and $\cL'$ 
are \emph{topologically equivalent} if there exists 
an orientation-preserving homeomorphism 
$h:S\to S'$ such that $h_* (\cL)=\cL'$
\end{defi}

Keep in mind that we only consider 
topological equivalences induced by 
\emph{orientation preserving} homeomorphism. 
Now, let us focus our attention on MS-foliations.

\begin{prop}
\label{p.combinatorial-type}
A MS-foliation on $\TT^2$ is characterized 
up to topological equivalence by any of 
its combinatorial types.
\end{prop}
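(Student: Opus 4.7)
The plan is to build an orientation-preserving homeomorphism $h:\TT^2\to\TT^2$ with $h_*\cF=\cF'$, by first matching the compact leaves of $\cF$ and $\cF'$ and then matching their restrictions to each complementary annular piece. Fix geometric enumerations $\gamma_0,\dots,\gamma_{n-1}$ and $\gamma'_0,\dots,\gamma'_{n-1}$ of the compact leaves of $\cF$ and $\cF'$ realizing the common combinatorial type $\sigma$.

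\emph{Matching the compact leaves.} By Lemma~\ref{l.parallel-leaves}, the compact leaves of $\cF$ (resp.\ $\cF'$) form a cyclically ordered family of $n$ pairwise freely homotopic, non-contractible simple closed curves on $\TT^2$. Because $\cF$ and $\cF'$ share the combinatorial type $\sigma$, the pattern of contracting orientations along these cyclic families agrees up to a global sign. Invoking the classical fact that any two ordered, cyclically coherent, pairwise parallel collections of $n$ oriented simple closed curves on the oriented torus are related by an orientation-preserving diffeomorphism, I would produce a diffeomorphism $\phi:\TT^2\to\TT^2$ sending $\gamma_i$ to $\gamma'_i$ as oriented circles (for the contracting orientations). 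After replacing $\cF$ by $\phi_*\cF$, I may assume that $\cF$ and $\cF'$ share the same oriented compact leaves $\{\gamma_i\}_{0\leq i<n}$, hence the same decomposition $\TT^2\setminus\bigcup_i\gamma_i=\bigsqcup_i A_i$ into open annuli, and moreover that they induce the same contracting orientations on each $\gamma_i$.

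\emph{Annulus classification.} The key lemma is that on a closed oriented annulus $A$, any two MS-foliations admitting the two boundary components $\gamma,\gamma'$ as their only compact leaves and inducing the same contracting orientations on $\gamma,\gamma'$ are related by an orientation-preserving self-homeomorphism of $A$ that is the identity on $\partial A$. To prove this, I would first show that every non-compact leaf of such a foliation is of type $(\gamma,\gamma')$, i.e.\ has one end asymptotic to each boundary. Indeed, a non-compact leaf $\ell$ with both ends asymptotic to a single boundary $\gamma$ would, together with a subarc of $\gamma$, bound a topological disc $D\subset A$ on which the foliation would restrict to a non-singular $1$-foliation with all of its boundary tangent to leaves; this contradicts the Poincar\'e--Hopf index formula, since $\chi(D)=1$. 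Once every non-compact leaf runs from $\gamma$ to $\gamma'$, I would exhibit a closed transverse curve $\sigma\subset A$ separating $\gamma$ from $\gamma'$ and meeting each leaf in a single point, cut $A$ along $\sigma$ into two half-annuli each carrying a ``half-spiral'' foliation (which is unique up to homeomorphism fixing its boundary once the contracting orientation of the relevant compact leaf is fixed), and reglue them to obtain a canonical model foliation determined solely by the prescribed contracting orientations on $\gamma,\gamma'$.

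\emph{Gluing and main obstacle.} Applying the annulus lemma to each $\overline{A_i}$ yields a homeomorphism $h_i:\overline{A_i}\to\overline{A_i}$ which is the identity on $\partial A_i$ and satisfies $(h_i)_*(\cF|_{\overline{A_i}})=\cF'|_{\overline{A_i}}$; since the $h_i$'s agree (as the identity) on their common boundary circles $\gamma_i$, they assemble into the desired orientation-preserving homeomorphism $h:\TT^2\to\TT^2$. The main obstacle is the annular classification lemma: the crucial observation is that the absence of $(\gamma,\gamma)$- and $(\gamma',\gamma')$-leaves is forced by Poincar\'e--Hopf applied to the hypothetical bounded disc, and once this is established the construction of the transverse curve $\sigma$ and the identification with the standard model amount to a careful but standard analysis of spiraling foliations.
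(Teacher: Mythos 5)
Your overall strategy (cut along the compact leaves, classify each piece, reglue) is in the same spirit as the paper's, but your decomposition is genuinely different, and two of the steps have real gaps.

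First, the Poincar\'e--Hopf argument against $(\gamma,\gamma)$-leaves does not work as written. A non-compact leaf $\ell$ whose two ends both accumulate on a compact leaf $\gamma$ does so by spiraling: the ends wind around $\gamma$ infinitely often, so $\overline{\ell}\supset\gamma$ and $\ell\cup\gamma$ is not a Jordan curve. There is no ``topological disc $D$ bounded by $\ell$ and a subarc of $\gamma$'' to which you could apply Poincar\'e--Hopf. The conclusion you want (every interior leaf of $A_i$ runs from $\gamma_i$ to $\gamma_{i+1}$) is true, but the correct argument is a Poincar\'e--Bendixson type statement: orient the foliation near the two boundary circles, observe that a compact leaf with contracting holonomy attracts from both sides, hence the two boundary circles of $A_i$ cannot both attract (nor both repel) the interior dynamics, so one is an attractor and the other a repeller, forcing every interior half-leaf to limit on a distinct boundary circle.

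Second, the annular classification lemma is delicate precisely because you insist that the conjugating homeomorphism of $\overline{A_i}$ restrict to the \emph{identity} on both boundary circles. When you cut along a transverse circle $\sigma$ and produce a half-spiral conjugacy $H_-$ on the $\gamma_i$-side (identity on $\gamma_i$, some circle homeomorphism $g_-$ on $\sigma$) and a half-spiral conjugacy $H_+$ on the $\gamma_{i+1}$-side (identity on $\gamma_{i+1}$, some $g_+$ on $\sigma$), there is no reason for $g_-$ and $g_+$ to agree, and the group of self-homeomorphisms of a half-annulus preserving the foliation and fixing the compact boundary leaf pointwise is not large enough to absorb an arbitrary mismatch on $\sigma$ without further argument. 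You would need to either prove that the mismatch can always be absorbed, or drop the identity-on-boundary requirement and instead propagate a boundary adjustment around the cyclic chain of annuli.

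The paper sidesteps both difficulties by choosing a different decomposition: small tubular neighborhoods $U_i$ of the compact leaves (chosen so that $\partial U_i$ is \emph{transverse} to the foliation, possible because the holonomy is a contraction), and the complementary annuli $A_i$ between consecutive $U_i$'s, whose boundary circles are transverse, not tangent. On the $A_i$'s the foliation is a trivial product foliation by segments, so conjugating is immediate. On the $U_i$'s the foliation is the suspension of a contraction of a transversal, two such suspensions are conjugate because any two topological contractions of an interval are conjugate, and, crucially, the conjugacy may be prescribed arbitrarily on a fundamental domain of the contraction; this freedom is exactly what lets the paper force $\phi_i|_{\partial U_i}$ to match the previously-chosen annulus maps $\psi_{i-1},\psi_i$. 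This is the piece of freedom that your proposal, by putting the compact leaves on the boundary of the gluing pieces, does not have easy access to.
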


\begin{proof}
Consider two MS-foliations $\cF^1$ and $\cF^2$ 
on $\TT^2$. Fix some geometrical 
enumerations $\gamma_0^1,\dots,\gamma_{n-1}^1$ 
and $\gamma_0^2,\dots,\gamma_{n-1}^2$ of the 
compact leaves of $\cF^1$ and $\cF^2$, and denote 
by $\sigma^1$ and $\sigma^2$ the correspondng combinatorial types of $\cF^1$ and $\cF^2$. 
Assume that $\sigma^1=\sigma^2$. We will prove 
that $\cF^1$ and $\cF^2$ are topologically equivalent.

If we endow the compact leaf $\gamma_i^j$ with 
its contracting orientation, then the holonomy 
of $\cF^i$ along $\gamma_i^j$ is a 
contraction. Therefore, we can find an 
arbitrarily small tubular neighbourhood $U_i^j$ 
of $\gamma_i^j$, such that $\cF^j$ is transverse 
to $\partial U_i^j$. For $j=1,2$, we can assume 
that the neighbourhoods $U_1^j,\dots,U_n^j$ 
are pairwise disjoint. We denote by $A_i^j$ 
the connected component of 
$\TT^2\setminus \bigcup_i \mathrm{int}(U_i^j)$ 
which lies between $U_i^j$ and $U_{i+1}^j$. We 
denote by $\partial^{\ell} U_i^j$ (resp. 
$\partial^{r} U_i^j$) the boundary component 
of $U_i^j$ which is also a boundary component 
of $A_i^j$ (resp. $A_i^{(j-1)}$). The 
contracting orientation of the leaf 
$\gamma_1^j$ induces an orientation of the 
closed curves $\partial^{\ell} U_i^j$ 
and $\partial^{r} U_i^j$. Note that 
$\partial A_i^j=\partial^{r} U_i^j\cup 
\partial^{\ell} U_{i+1}^j$.

\begin{claim}
For each $i$, there is 
an orientation-preserving 
homeomorphism $\psi_i:A_i^1\to A_i^2$ such that 
which maps $\partial^{r} U_i^1$ and 
$\partial^{\ell} U_{i+1}^1$ on 
$\partial^{r} U_i^2$ and 
$\partial^{\ell} U_{i+1}^2$ respectively, and 
such that $(\psi_i)*(\cF^1)=\cF^2$.
\end{claim}

\begin{proof} Indeed, $A_i^j$ is a compact annulus, disjoint 
from the compact leaves of $\cF^j$, whose boundary 
is transverse to $\cF^j$. Since every half-leaf 
of $\cF^j$ accumulates on a compact leaf, this 
implies that the restriction of $\cF^j$ to $A_i^j$ 
is topologically conjugate to the vertical 
foliation on the annulus $(\RR/\ZZ)\times [-1,1]$. 
The claim follows.
\end{proof}

\begin{claim}
For each $i$, there is 
an orientation-preserving 
homeomorphism $\phi_i:U_i^1\to U_i^2$, which 
coincides with $\psi_{i}$ (resp. $\psi_{i-1}$) 
on $\partial^{r} U_i^1$ 
(resp. $\partial^{\ell} U_i^1$), and such 
that $\phi_*(\cF^1)=\cF^2$.
\end{claim}

\begin{proof}
We endow $\gamma_i^1$ (resp. $\gamma_i^2$) with 
the orientation that is coherent with the 
contracting orientation of $\gamma_1^1$ 
(resp. $\gamma_1^2$). Since 
$\sigma^1(i)=\sigma^2(i)$, there are 
two possibilities: either both the holonomies 
of $\gamma_i^1$ and $\gamma_i^2$ are contraction, 
or both the holonomies of $\gamma_i^1$ 
and $\gamma_i^2$ are dilations. Assume for 
example that they both are contractions.

Choose an oriented arc $\alpha_i^j$ in 
$U_i^j$,  tranverse to $\cF^j$, going 
from $\partial^{l} U_i^j$ to 
$\partial^{\ell} U_i^j$. The arc $\alpha_i^j$ is 
a cross section for the restriction of $\cF^j$ 
to $U_i^j$. Denote by $f_i^j$ the first return map 
of the leaves of $\cF^j$ on $\alpha^j$. The 
maps $f_i^1$ and $f_i^2$ are contractions. Hence 
they are topologically conjugate by an 
orientation-preserving homeomorphism 
$h_i:\alpha_i^1\to\alpha_i^2$. One deduces easily 
that there exists an orientation-
preserving homeomorphism $\phi_i:U_i^1\to U_i^2$ 
which maps $\partial^{\ell} U_i^1$ and 
$\partial^{r} U_i^j$ on $\partial^{\ell} U_i^2$ 
and  $\partial^{r} U_i^2$ respectively, such 
that $\phi_*(\cF^1)=\cF^2$. There is some freedom 
for the choice of the conjugating homeomorphism 
$h_i$: the restriction of $h_i$ to a 
fundamental domain of the contraction $f_i^j$ can 
be choosen arbitrarily. Hence, the restriction 
of $\phi_i$ to $\partial U_i^j$ can also be 
choosen arbitrarily (since every leaf of 
$\cF^j$ intersects $\partial U_i^j$ at most once).
The claim is proved.
\end{proof}

The homeomorphism 
$\phi_1,\psi_1,\dots,\phi_n,\psi_n$ provided by 
the two claims can be glued together to obtain 
a global orientation-preserving homeomorphism 
$\phi:\TT^2\to\TT^2$ such that $\phi_*(\cF^1)=\cF^2$.
This completes the proof of Proposition~\ref{p.combinatorial-type}.
\end{proof}

\begin{rema}
Proposition~\ref{p.combinatorial-type} is false 
for filling MS-laminations: by removing non-compact 
leaves to a given filling MS-lamination, one can 
easily find infinitely many filling MS-laminations 
with the same combinatorial type which are 
pairwise not topologically equivalent. 
Nevertheless, every filling MS-lamination $\cL$ can 
be embedded in a MS-foliation $\cF$ wtih the 
same compact leaves as $\cL$, the combinatorial 
types of $\cF$ are the same as those of $\cL$, 
and $\cF$ is characterized up to 
topological equivalence by these combinatorial types.
\end{rema}

\begin{defi}[Zipped Reeb lamination/foliation]
\label{d.zipped-Reeb}
We call \emph{zipped Reeb lamination} 
(resp. \emph{foliation}) a filling MS-lamination 
(resp. foliation) on $\TT^2$ with a single 
compact leaf. See figure~\ref{l.zipped-Reeb}.
\end{defi}

\begin{exam}
\label{e.zipped-Reeb}
Consider the vector field $X$ on $\RR^2$ defined 
by $X(x,y):=\sin(\pi.x)\frac{\partial}
{\partial x}+\cos(\pi x)\frac{\partial}
{\partial y}$. The orbits of this vector field 
defined a foliation on $\RR^2$. This foliation 
is invariant under the standard action of 
$\ZZ^2$. Therefore, it induces a foliation 
on $\TT^2=\RR^2/\ZZ^2$. One easily checks that this 
a zipped Reeb foliation.
\end{exam}

\begin{figure}[ht]
\begin{center}
\includegraphics[totalheight=4.2cm]{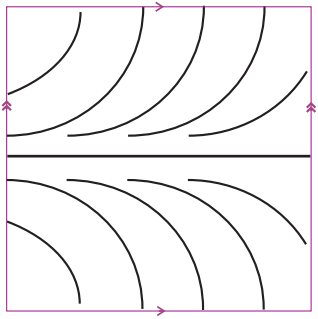}
\caption{\label{l.zipped-Reeb} A zipped 
Reeb lamination}
\end{center}
\end{figure}

\begin{defi}
\label{d.adding-leaves}
Let $\cF,\cF'$ be MS-foliations on 
$\TT^2$. Suppose that $\cF'$ has one more compact 
leaf than $\cF$, and suppose that there is 
a combinatorial type $\sigma:\{1,\dots,n\}\to
\{+,-\}$ of $\cF$ and a combinatorial  type 
$\sigma':\{1,\dots,n+1\}\to\{+,-\}$ of $\cF'$ 
such that $\sigma'_{|\{1,\dots,n\}}=\sigma$. We 
say that the foliation $\cF'$ is obtained 
by \emph{adding a compact leaf} to $\cF$.
\end{defi}

\begin{figure}[ht]
\begin{center}
\includegraphics[totalheight=3.4cm]{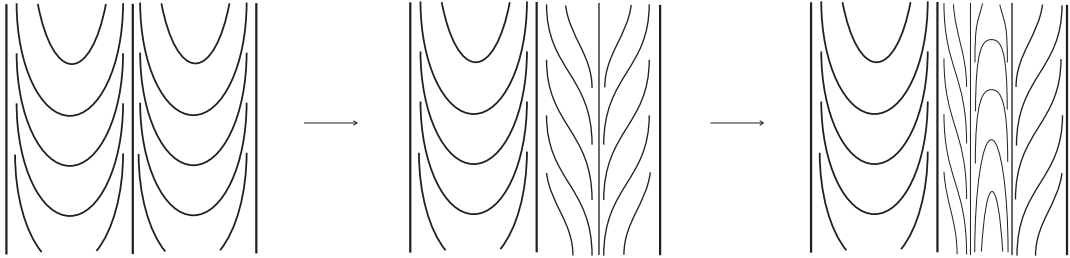}
\caption{\label{f.adding} Adding compact leaves 
to  MS-foliations.}
\end{center}
\end{figure}

The two following statements are 
immediate consequences 
of Proposition~\ref{p.combinatorial-type}:

\begin{coro}
\label{c.zipped-Reeb}
All zipped Reeb foliations are topological equivalent.
\end{coro}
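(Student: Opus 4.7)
The plan is to deduce Corollary~\ref{c.zipped-Reeb} directly from Proposition~\ref{p.combinatorial-type}. Since a zipped Reeb foliation is, by Definition~\ref{d.zipped-Reeb}, a MS-foliation on $\TT^2$ having exactly one compact leaf, the set indexing its geometrical enumeration is the singleton $\{0\}$, and any combinatorial type of such a foliation is a map $\sigma\colon\{0\}\to\{-,+\}$.

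By the convention following the definition of combinatorial type, one always has $\sigma(0)=+$ (this is the content of the footnote attached to the definition, reflecting that the leaf of reference is itself oriented by the contracting orientation). Consequently there is, up to the (trivial) choice of geometrical enumeration, a unique possible combinatorial type for a zipped Reeb foliation.

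Given two zipped Reeb foliations $\cF^1$ and $\cF^2$ on $\TT^2$, I pick a geometrical enumeration for each (each of which consists of a single compact leaf). The associated combinatorial types coincide by the previous paragraph. Proposition~\ref{p.combinatorial-type} then yields an orientation-preserving homeomorphism of $\TT^2$ sending $\cF^1$ to $\cF^2$, proving the corollary. There is no serious obstacle here: the entire content has already been packaged into Proposition~\ref{p.combinatorial-type}, and the only thing to verify is the bookkeeping that a one-leaf combinatorial type is unique, which is immediate from the normalization $\sigma(0)=+$.
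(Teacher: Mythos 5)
Your argument is correct and matches the paper's intent: the paper simply declares Corollary~\ref{c.zipped-Reeb} an immediate consequence of Proposition~\ref{p.combinatorial-type}, and your observation that a one-leaf filling MS-foliation has the unique combinatorial type $\sigma(0)=+$ is precisely the bookkeeping that makes this immediate.
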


\begin{coro}
\label{c.adding-leaves}
Up to topological equivalence, every simple 
foliation on $\TT^2$ can be obtained by 
adding inductively a finite number of compact 
leaves to a zipped Reeb foliation.
\end{coro}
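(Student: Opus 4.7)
The plan is to argue by induction on the number $n$ of compact leaves of the MS-foliation $\cF$ on $\TT^2$. The base case $n=1$ is immediate: $\cF$ is a zipped Reeb foliation by Definition~\ref{d.zipped-Reeb}, and is therefore topologically equivalent to the model from Example~\ref{e.zipped-Reeb} by Corollary~\ref{c.zipped-Reeb}.

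For the inductive step, combining Proposition~\ref{p.combinatorial-type} with Definition~\ref{d.adding-leaves} reduces the corollary to the following existence statement: \emph{for every combinatorial type $\sigma\colon \{1,\ldots,n\}\to \{+,-\}$ with $\sigma(1)=+$, there exists an MS-foliation on $\TT^2$ realizing $\sigma$.} Indeed, granted this, for any MS-foliation $\cF$ with combinatorial type $\sigma$ of length $n$ one chooses, for each $k=1,\ldots,n$, an MS-foliation $\cF_k$ realizing $\sigma_k:=\sigma|_{\{1,\ldots,k\}}$. Then $\cF_1$ is topologically equivalent to a zipped Reeb foliation by Corollary~\ref{c.zipped-Reeb}, each $\cF_{k+1}$ is by Definition~\ref{d.adding-leaves} obtained from $\cF_k$ by adding a compact leaf, and $\cF_n$ is topologically equivalent to $\cF$ by Proposition~\ref{p.combinatorial-type}.

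To realize an arbitrary $\sigma$, I would construct $\cF$ explicitly on $\TT^2=\RR^2/\ZZ^2$ with coordinates $(x,y)$: choose levels $0=y_1<y_2<\cdots<y_n<1$ and set $\gamma_i:=\{y=y_i\}$. In each annular region $A_i:=\{y_i\leq y\leq y_{i+1}\}$ (cyclic indices, $y_{n+1}:=1$), install one of two standard annular foliations according to the pair $(\sigma(i),\sigma(i+1))$: if $\sigma(i)\neq \sigma(i+1)$, use a \emph{product model}, whose non-compact leaves join $\gamma_i$ to $\gamma_{i+1}$ (for instance, the suspension of a contraction of an interval); if $\sigma(i)=\sigma(i+1)$, use a \emph{Reeb-like model}, in which the non-compact leaves are asymptotic to both $\gamma_i$ and $\gamma_{i+1}$ in a common $x$-direction (essentially one half of a Reeb component, analogous to Example~\ref{e.zipped-Reeb}). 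The orientations of the defining vector fields are chosen so that, at every $\gamma_i$, the two adjacent models induce the same contracting orientation on $\gamma_i$, namely the one prescribed by $\sigma(i)$. The resulting foliation has exactly the $n$ compact leaves $\gamma_1,\ldots,\gamma_n$ with the prescribed contracting orientations, and every half non-compact leaf is asymptotic to one of them, so it is an MS-foliation realizing $\sigma$.

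The main technical point will be the smooth (or at least $C^0$) gluing of the two annular models across each $\gamma_i$: the adjacent vector fields must agree on $\gamma_i$ up to a positive scalar factor. This is a purely local and elementary matter, handled by rescaling each vector field in a small tubular neighbourhood of $\gamma_i$, an operation that preserves the foliation and does not alter the combinatorial invariants.
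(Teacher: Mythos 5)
Your proof is correct, and it supplies precisely what the paper leaves implicit when it calls this an ``immediate consequence'' of Proposition~\ref{p.combinatorial-type}: the reduction to the realizability of every combinatorial type, followed by an explicit construction of a realizing MS-foliation on $\TT^2=\RR^2/\ZZ^2$ by stacking the two standard annular models (suspension type when the adjacent signs differ, Reeb type when they agree) and checking that the orientations can be chosen consistently at each compact leaf. This is the intended argument rather than a genuinely different route. One small remark on your last paragraph: the gluing across each $\gamma_i$ is automatic, since the leaves on either side spiral onto $\gamma_i$ and hence the tangent line field already extends continuously there; no vector-field compatibility (and so no rescaling) is needed to obtain a $C^{0,1}$ foliation, though the rescaling you describe is of course harmless.
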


\subsection{Contracting orientation versus 
dynamical orientation}

Consider a hyperbolic plug $(U,X)$. The compact 
leaves of the laminations $\cL^s_X$ and $\cL^u_X$ 
can be equipped with their contracting orientation. 
We will define another natural orientation for 
these compact leaves.

\begin{defi}[dynamical orientation]
\label{d.flow-orientation}
Let $\gamma$ be a compact leaf of the 
lamination $\cL^u_X$. According 
to (the proof of) Proposition~\ref{p.hyperbolicplug}, there exist 
a periodic orbit $O$ of $X$, such that $\gamma$ is 
a connected component of 
$W^u(O)\cap\partial^{out} U$. The orbit $O$ has 
a natural orientation defined by the vector field $X$.
\begin{itemize}
\item[--] If $O$ has positive multipliers, 
then $W^u(O)$ is a cylinder, and both $\gamma$ and 
$O$ are non-contractible closed curves on 
this cylinder.  The \emph{dynamical orientation} 
of $\gamma$ is the orientation for which $\gamma$ 
is freely homotopic to the orbit $O$ endowed with its natural orientation, in the cylinder $W^u(O)$. See figure~\ref{f.flow-orientation}.
\item[--] If  $O$ has positive multipliers, 
then $W^u(O)$ is a M\"obius band. The 
\emph{dynamical orientation} of $\gamma$ is 
the orientation for which $\gamma$ is freely 
homotopic to two times the orbit $O$ endowed with its natural orientation, in the cylinder $W^u(O)$.
\end{itemize}
We define similarly the dynamical orientation 
of a compact leaf of $\cL^s_X$.
\end{defi}

\begin{prop}
\label{p.flow-vs-contracting}
Let $(U,X)$ be a hyperbolic plug. If $\gamma$ is 
a compact leaf of $\cL^u_X$, the 
contracting orientation and the dynamical 
orientation of $\gamma$ coincide. If $\gamma$ is 
a compact leaf of $\cL^s_X$, the 
contracting orientation and the dynamical 
orientation of 
$\gamma$ are opposite.
\end{prop}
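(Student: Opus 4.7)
\emph{Proof plan.} The plan is to read off both statements from the proof of Proposition~\ref{p.hyperbolicplug}(3), handling the $\cL^s$ case directly and deducing the $\cL^u$ case by applying the same argument to the reversed plug $(U,-X)$.

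For $\gamma_0\subset\cL^s$, I would revisit the argument of Proposition~\ref{p.hyperbolicplug}(3). Let $O_0$ be the associated periodic orbit; assume first that $O_0$ has positive multipliers, so that $W^s(O_0)$ is a cylinder. The free homotopy $\gamma_0\simeq O_0$ inside $W^s(O_0)$ yields a conjugation between the holonomy of $\cL^s$ along $\gamma_0$ and the holonomy along $O_0$ of the $1$-lamination $W^s(\Lambda)\cap W^u(O_0)$, viewed inside the transverse surface $W^u(O_0)$. A transverse arc to $O_0$ in $W^u(O_0)$ lies in the unstable direction and is multiplied by $\lambda_u>1$ by the first-return map of $X$, so this last holonomy is a contraction exactly when $O_0$ is equipped with its $(-X)$-orientation. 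Under the conjugation, the contracting orientation of $\gamma_0$ must therefore be the orientation freely homotopic in $W^s(O_0)$ to the $(-X)$-orientation of $O_0$, and by Definition~\ref{d.flow-orientation} this is the opposite of the dynamical orientation of $\gamma_0$.

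For $\gamma\subset\cL^u_X$, I would apply the conclusion of the previous paragraph to the plug $(U,-X)$: its entrance lamination is $\cL^s_{-X}=\cL^u_X$, and the natural orientation of $-X$ on every periodic orbit is the reverse of the $X$-one. Since the contracting orientation of $\gamma$ depends only on the lamination $\cL^u_X$ and not on the choice of flow direction, the previous step gives that it is freely homotopic in $W^s_{-X}(O)=W^u_X(O)$ to the ``reverse of the natural $(-X)$-orientation'' of $O$, i.e. to the $(+X)$-orientation of $O$. By Definition~\ref{d.flow-orientation}, this is precisely the dynamical orientation of $\gamma$.

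The remaining case, where the multipliers of the associated periodic orbit are negative so that $W^s(O_0)$ or $W^u(O)$ is a M\"obius band, follows by the same argument with the two inessential modifications already present in the proof of Proposition~\ref{p.hyperbolicplug}(3): the compact leaf is freely homotopic to twice the periodic orbit rather than once, and the relevant holonomy is the square of the Poincar\'e holonomy along the orbit. Since the square of a contraction is a contraction with the same preferred direction, and doubling a closed curve does not change its orientation class, the identification between the contracting orientation of the compact leaf and the $(\pm X)$-orientation of the associated orbit is unaffected. The one step that will require care is verifying that the orientation correspondence under the holonomy conjugation is the one induced by the free homotopy between the compact leaf and the periodic orbit inside the invariant surface; this, however, is precisely how the conjugation is constructed.
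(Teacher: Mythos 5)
Your proof is correct and rests on the same key observation as the paper's (very terse) proof, namely that the holonomy of $W^s(\La)$ along a periodic orbit $O$ with its natural $X$-orientation is a dilation, transported to $\cL^s_X$ via the free homotopy $\gamma_0\simeq O_0$ in $W^s(O_0)$ already used in the proof of Proposition~\ref{p.hyperbolicplug}. The only stylistic difference is that the paper states the contraction/dilation fact for both $W^u(\La)$ and $W^s(\La)$ simultaneously and lets the two cases follow in parallel, whereas you establish the $\cL^s$ case and then deduce the $\cL^u$ case by the (perfectly valid) flow-reversal $X\mapsto -X$, which is a small streamlining.
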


\begin{proof}
Let $\Lambda$ be the maximal invariant set of 
$(U,X)$. Recall 
that $\cL^u_X=W^u(\Lambda)\cap\partial^{out} U$ 
and  $\cL^s_X=W^s(\Lambda)\cap\partial^{in} U$. 
The Proposition is a consequence of the 
definitions together with the following fact: if 
$O$ is a periodic orbit of $X$, the holonomy of 
the two-dimensional lamination $W^u(\Lambda)$ 
along $O$ (where the orbit $O$ is equipped with 
its natural orientation induced by $X$) is 
a contraction, and the holonomy of the 
two-dimensional lamination $W^s(\Lambda)$ along $O$ 
is a dilation.
\end{proof}

\begin{figure}[ht]
\begin{center}
\includegraphics[totalheight=4.5cm]
{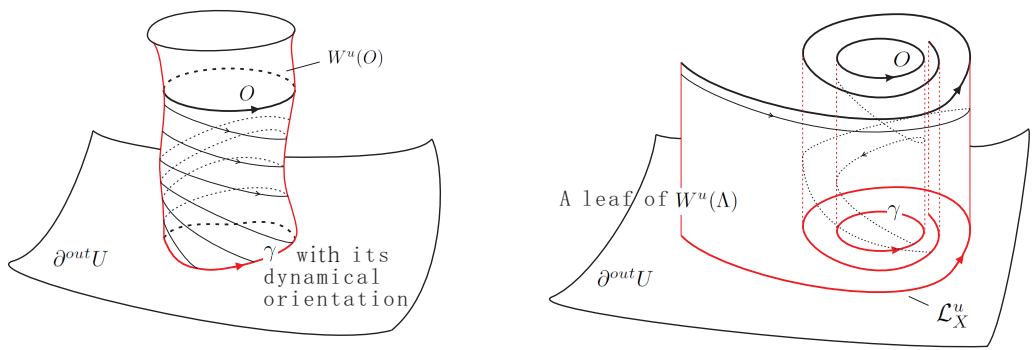}
\caption{\label{f.flow-orientation}
Dynamical orientation (left). Proof 
of Proposition~\ref{p.flow-vs-contracting} (right).}
\end{center}
\end{figure}

\subsection{Simplification of a MS-foliation}

The following elementary proposition provides a kind of ``normal form" for a filling MS-lamination of $\TT^2$.

\begin{prop}
\label{p.simplification}
Let $\cL$ be a filling MS-lamination of class $C^1$ on the torus $\TT^2$. Let $\gamma_0,\dots,\gamma_{n-1}$ be a geometrical enumeration
of its compact leaves and $\sigma\colon\{\gamma_0,\dots,\gamma_{n-1}\}\to \{+,-\}$ be a combinatorial type. Denote $\sigma_i:=\sigma(\gamma_i)\in\{+,-\}$.
We endow $\TT^2=\RR^2/ZZ^2$ with its standard eulidean coordinates, and we assume that $\gamma_0$ is isotopic to $\{0\}\times \SS^1$.  Then there is a 
diffeomorphism $\varphi\colon \TT^2\to \TT^2$ isotopic to the identity map,  so that the lamination $\varphi_*(\cL)$ has the following properties: 
\begin{itemize}
 \item $\varphi_*(\gamma_i)=\{\frac in\}\times \SS^1$;
 \item on the annulus $(\frac in,\frac{i+1}n)\times\SS^1$,  the leaves of the lamination $\varphi_*(\cL)$ are graphs of  $C^1$ functions from $(\frac in,\frac{i+1}n)$ to $\SS^1$; moreover, the derivative of these functions are:
 \begin{itemize}
 \item positive on the whole interval $(\frac in,\frac{i+1}n)$  if $\sigma_i<0$ and $\sigma_{i+1}>0$;
 \item negative on the whole interval $(\frac in,\frac{i+1}n)$  if $\sigma_i>0$ and $\sigma_{i+1}<0$;
 \item positive on $(\frac in,\frac in+\frac12)$ and negative on $(\frac in+\frac12,\frac{i+1}n)$  if $\sigma_i<0$ and $\sigma_{i+1}<0$;
  \item negative on $(\frac in,\frac in+\frac12)$ and positive on $(\frac in+\frac12,\frac{i+1}n)$ if $\sigma_i>0$ and $\sigma_{i+1}>0$.
\end{itemize}
\end{itemize}
\end{prop}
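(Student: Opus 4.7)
The plan is to straighten the compact leaves of $\cL$ first, and then realise the graph form on each of the $n$ complementary annuli by appealing to Proposition~\ref{p.combinatorial-type} after a careful choice of model.

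First I would apply Lemma~\ref{l.embed-in-foliation} to extend $\cL$ to a $C^1$ MS-foliation $\cF$ on $\TT^2$ with the same compact leaves as $\cL$. By Lemma~\ref{l.parallel-leaves}, the leaves $\gamma_0,\dots,\gamma_{n-1}$ are pairwise disjoint essential simple closed curves, pairwise freely homotopic to $\{0\}\times\SS^1$, and arranged in the cyclic order prescribed by the geometric enumeration. A standard ambient-isotopy argument (isotope $\gamma_0$ onto $\{0\}\times\SS^1$, then simultaneously straighten the remaining parallel curves in the open annulus $\TT^2\setminus\gamma_0$) produces a diffeomorphism $\varphi_1$ of $\TT^2$ isotopic to the identity with $\varphi_1(\gamma_i)=\{i/n\}\times\SS^1$ for each $i$; replacing $\cF$ by $\varphi_{1,*}\cF$, I may assume the compact leaves are already in the prescribed positions.

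I then build an explicit model MS-foliation $\cF_0$ on $\TT^2$ with compact leaves $\{i/n\}\times\SS^1$. On each annulus $A_i=(i/n,(i+1)/n)\times\SS^1$, I take $\cF_0$ to be the foliation by the graphs $y=F_i(x)+c$ ($c\in\RR$), where $F_i$ is a smooth function on $(i/n,(i+1)/n)$ whose derivative follows the sign pattern prescribed by $(\sigma_i,\sigma_{i+1})$ in the four cases of the statement, and satisfies $|F_i(x)|\to\infty$ at each end (with signs chosen so that leaves spiral towards $\gamma_i,\gamma_{i+1}$ in exactly the contracting directions dictated by $\sigma_i,\sigma_{i+1}$). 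A direct verification shows that $\cF_0$ is a $C^1$ MS-foliation with the same combinatorial type $\sigma$ as $\cL$. By Proposition~\ref{p.combinatorial-type}, $\cF$ and $\cF_0$ are topologically equivalent via an orientation-preserving homeomorphism $h\colon\TT^2\to\TT^2$, and inspection of the proof of that proposition shows that $h$ may be arranged so that $h(\gamma_i)=\{i/n\}\times\SS^1$ for every $i$ while preserving the contracting orientations.

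The remaining point is to ensure that $h$ can be chosen isotopic to the identity. Since $h_*$ fixes the oriented homology class $[\gamma_0]=(0,1)$, its matrix on $H_1(\TT^2;\ZZ)$ is of the form $\bigl(\begin{smallmatrix}1 & 0 \\ b & 1\end{smallmatrix}\bigr)$ for some $b\in\ZZ$, so $h$ is isotopic to the Dehn twist $T_{\gamma_0}^b$. I would absorb this twist by modifying the model: set $\cF_0':=T_{\gamma_0}^{-b}\cF_0$ (a MS-foliation with the same compact leaves and the same combinatorial type as $\cL$), and put $\varphi:=T_{\gamma_0}^{-b}\circ h$. Then $\varphi_*\cF=\cF_0'$ and $\varphi$ is isotopic to the identity of $\TT^2$.

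The main obstacle is therefore to verify that $\cF_0'$ still has the precise graph form demanded by the statement. The Dehn twist $T_{\gamma_0}^{-b}$ is supported in a collar of $\gamma_0$ meeting only the two adjacent annuli $A_{n-1}$ and $A_0$; on these the perturbed leaves become graphs of $F_i(x)-b\psi(x)+c$, where $\psi$ is the bump profile of the twist. By choosing $F_i$ in the model with $|F_i'|$ sufficiently large (permitted, since only the signs of $F_i'$ and the behavior of $F_i$ at the ends are imposed by the statement), the combination $F_i'-b\psi'$ has the same sign pattern as $F_i'$ in the monotonic cases 1 and 2; in cases 3 and 4, a final diffeomorphism of $A_i$ isotopic to the identity relocates the shifted turning point back to the midpoint while keeping the graph property intact. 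This yields the desired $\varphi$.
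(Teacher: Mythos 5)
There is a genuine gap in the proof, and it is the crux of why the paper's own argument takes a different route. Your plan reduces the statement to Proposition~\ref{p.combinatorial-type} by building a model MS-foliation $\cF_0$ with the prescribed graph form and obtaining a map $h$ with $h_*\cF=\cF_0$. But Proposition~\ref{p.combinatorial-type} only produces a \emph{topological equivalence}: $h$ is an orientation-preserving \emph{homeomorphism}, and nothing in that proposition or its proof lets you upgrade it to a diffeomorphism. Indeed its proof builds $h$ from topological conjugacies between the holonomy contractions $f_i^1$ and $f_i^2$ of matching compact leaves, and two contractions of an interval with different multipliers are \emph{never} $C^1$-conjugate. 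The holonomy multiplier of each compact leaf of $\cL$ is a $C^1$-invariant that is not encoded in the combinatorial type $\sigma$, so there is no hope of finding a diffeomorphism carrying $\cF$ onto a \emph{fixed} model $\cF_0$ chosen in advance. Your final $\varphi=T_{\gamma_0}^{-b}\circ h$ is therefore only a homeomorphism, whereas the statement (and its downstream uses in Lemma~\ref{l.gluing-diffeo}, Lemma~\ref{phik}, and Lemma~\ref{l.super-transverse-gluing-map}, all of which feed gluing \emph{diffeomorphisms} into Proposition~\ref{p.plug} and Theorem~\ref{t.transitive}) genuinely requires a diffeomorphism.

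Note that Proposition~\ref{p.simplification} does not claim that $\cL$ is diffeomorphic to some fixed model: it only asserts a weaker normal form (leaves are graphs of $C^1$ functions with prescribed sign of the derivative), and nothing is imposed on the resulting holonomies. The paper exploits exactly this flexibility. Its proof never fixes a target foliation; instead it (i) first moves the compact leaves onto the circles $\{i/n\}\times\SS^1$ by an ambient diffeomorphism (this part you do correctly), (ii) near each compact leaf $\gamma_i$ finds $C^1$ coordinates in which $\cF$ coincides with the suspension of \emph{its own} holonomy map -- a genuinely $C^1$ fact about foliations near a compact leaf -- in which the graph property is manifest, and (iii) interpolates these local charts across the remaining compact annuli, where the foliation is a $C^1$-trivial product foliation. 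This way the holonomies are never tampered with, and the diffeomorphism requirement is met. If you want to rescue your strategy, you would have to build $\cF_0$ \emph{after} knowing the holonomy germs of $\cL$ and match them exactly in your model; at that point you have effectively reproduced steps (ii) and (iii) of the paper's argument, and the detour through Proposition~\ref{p.combinatorial-type} becomes unnecessary.

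Your Dehn-twist correction at the end is a reasonable way to deal with the isotopy-to-identity requirement at the topological level, but it does not bear on the differentiability problem; that issue must be resolved first.
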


Proposition~\ref{p.simplification} can be thought as a kind of "differentiable version" of Proposition~\ref{p.combinatorial-type} (indeed, Proposition~\ref{p.combinatorial-type} shows that a MS-foliation is \emph{fully} characterized \emph{up to topological equivalence} by its combinatorial types, whereas Proposition~\ref{p.simplification} shows that a $C^1$ filling MS-lamination is \emph{partially} characterized \emph{up to differentiable equivalence} by its combinatorial types. 

\begin{proof}[Idea of the proof]
The proof of Proposition~\ref{p.simplification} roughly follows the same scheme as those of Proposition~\ref{p.combinatorial-type}. One first embeds $\cL$ in a MS-foliation $\cF$, using Lemma~\ref{l.embed-in-foliation}. Then one chooses a diffeomorphism mapping the compact leaf $\gamma_i$ on $\{\frac in\}\times \SS^1$ for every $i$. To get the normal form on a small tubular neighbourhood $U_i$ of the compact leaf $\gamma_i$, one uses the fact that a foliation of a surface is $C^1$-equivalent on a neighbourhood of a compact leaf to the suspension of the holonomy of this compact leaf. To conclude, it remains to get the announced normal form on a compact annulus $A_i$ lying between the tubular neighbourhoods $U_i$ and $U_{i+1}$; these is an easy task since the restriction of $\cF$ to the compact annulus $A_i$ is a trivial foliation by segments joining one boundary component of $A_i$ to the other one. We leave the details to the reader.
\end{proof}
\section{The ``blow-up, excise and glue surgery"}
\label{s.surgery}

 The purpose of this section is to describe the ``blow-up, excise and glue surgery" which was sketched
 in the introduction. As immediate applications, we will prove theorems~\ref{t.richer}
 and~\ref{t.both-transitive-non-transitive}.

\subsection{DA bifurcations}
\label{ss.DA}

In his seminal paper~\cite{Sm}, S. Smale constructed one of the first examples of surface diffeomorphism
displaying a one-dimensional hyperbolic attractor. This diffeomorphism was obtained by bifurcating a linear
Anosov diffeomorphism of $\TT^2$. Smale's construction is known as a \emph{DA bifurcation}
\footnote{``DA" stands for ``derived from Anosov".}. Since then DA bifurcations have been generalized
to various contexts, including axiom A vector fields in dimension~3 (a good reference for this purpose
is~\cite[subsection 2.2.2]{GHS}).

Given some hyperbolic plug $(U,X)$, one can build another hyperbolic plug $(U',X')$ by performing a DA
bifurcation on a periodic orbit of $X$ and excising a small tubular neighborhood of this orbit. We shall
describe this operation in details.

\subsubsection{Attracting DA bifurcation on a periodic orbit with positive multipliers}
\label{sss.DA}

We consider a hyperbolic plug\footnote{Note that the entrance boundary $\partial^{in} U$ or/and the exit
boundary $\partial^{out} U$ can be empty.} with filling MS-laminations $(U,X)$, and a periodic orbit $O$ of
the vector field $X$. We assume that $O$ has positive multipliers. To avoid dealing with some particular
cases, we assume moreover that $O$ has no free separatrix\footnote{Recall that a \emph{stable separatrix}
of $O$ is a connected component of $W^s_X(O)\setminus O$. Since $O$ has positive multipliers, $W^s_X(O)$
is a cylinder, and $O$ has two stable separatrices. A stable separatrix is said to be \emph{free} if it
is disjoint from the maximal invariant set of $(U,X)$. \emph{Free unstable separatrices} are defined in
the same way. Note that the assumption ``$O$ has no free separatrix" is not very restrictive since it is
satisfied by every but finitely many periodic orbits. See the proof of
Proposition~\ref{p.hyperbolicplug}.}. We denote by $\Lambda$ the maximal invariant set of
$(U,X)$.

The vector field $X$ is structurally stable. Therefore, up to perturbing $X$ within its topological
equivalence class, we can assume that $X$ is $C^1$-linearizable on a neighborhood of the periodic
orbit~$O$. This means that there exists a coordinate system
$(x,y,\theta) : V\to [-1,1] \times[-1,1]\times \RR/\ZZ$, defined on a neighborhood
$V\subset \mathrm{int}(U)$ of the orbit $O$, such that
$$X(x,y,\theta)=\lambda\, x\, \frac{\partial}{\partial x}+\mu\, y\,
\frac{\partial}{\partial y}+\frac{\partial}{\partial\theta}$$
for some constant $\lambda<0<\mu$. For $0<\eta<1$, we consider the vector field $X$ which vanishes on $U\setminus V$,
and which is defined on $V$ by:
$$Y_\eta(x,y,\theta)=-2\,\mu\,y\,\phi(x/\eta)\,\phi(y/\eta)\,\frac{\partial}{\partial y},$$
where $\phi:[-1,1]\to \RR^+$ is the bump function defined by $\phi(t)=(1-t^2)^2\mathbf{1}_{[-1,1]}(t)$.
Then we consider the vector field
$$X':=X+Y_\eta.$$
An straightforward computation shows that $O$ is an attracting hyperbolic periodic orbit for the vector
field $X'$.  We say that \emph{$X'$ is derived from $X$ by an attracting DA bifurcation on the orbit $O$}.

\begin{figure}
\begin{center}
\includegraphics[totalheight=4.6cm]{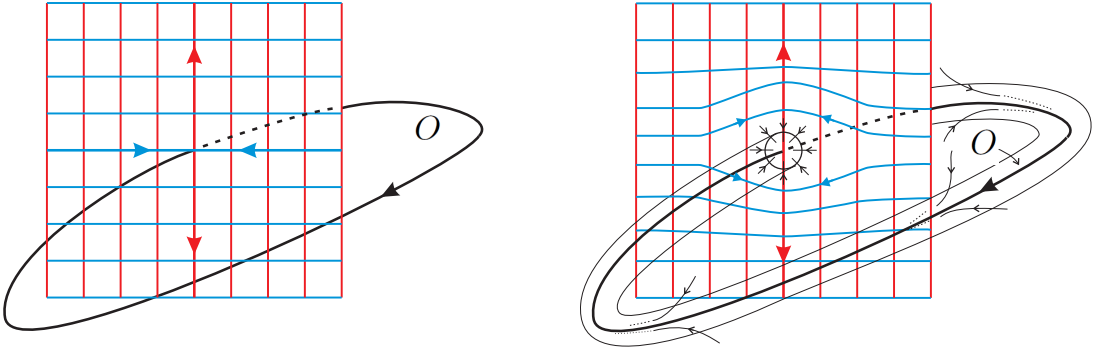}
\caption{\label{f.DA}An attracting DA bifurcation}
\end{center}
\end{figure}

Now we pick a (small) real number $\epsilon>0$, and we consider the solid torus $T\subset V$ defined by
$$T:=\{(x,y,\theta)\, ,\, x^2+y^2<\epsilon^2\}.$$
Obviously, $T$ is a tubular neighborhood of the periodic orbit $O$. We assume that $\epsilon$ is small
enough, so that the two following properties hold: $T$ is included in the basin of attraction (for $X'$)
of $O$, and $X'$ is transverse to $\partial T$. We consider the manifold with boundary
$$U':=U\setminus T$$
endowed with the vector field $X'$. Note that $X'$ is transverse to $\partial U'$ (since $X'$ is transverse
to $\partial T$ and $X'=X$ on $\partial U'\setminus\partial T=\partial U$). Hence $(U',X')$ is a plug.
The following proposition summarizes the relatitionships between the plugs $(U,X)$ and $(U',X')$:

\begin{prop}
\label{p.properties-DA}
The plug $(U',X')$ satisfies the following properties:
\begin{enumerate}
\item $U'=U\setminus T$ where $T$ is a tubular neighborhood of a periodic orbit of $X$;
\item $\partial^{in} U'=\partial^{in} U$ and $\partial^{out} U'=\partial^{out} U\cup \partial T$;
\end{enumerate}
Moreover, if $\eta$ is small enough,
\begin{enumerate}
  \setcounter{enumi}{2}
\item $(U',X')$ is a hyperbolic plug: the maximal invariant set $\Lambda'$ of $(U',X')$ is a saddle
hyperbolic set;
\item $(X')_{|\Lambda'}$ is a topological extension of $X_{|\Lambda}$: there exists a continuous onto
map $\pi:\Lambda'\to\Lambda$ inducing a semi-conjugacy between a reparametrization of the flow of $X'$
and the flow of $X$. Moreover, $\pi$ is ``almost one-to-one": the set $\pi^{-1}(x)$ is a single point
for every $x\in \Lambda\setminus W^s_X(O)$;
\item if $(U,X)$ is a transitive plug, then so is $(U',X')$;
\item $\cL^s_{X'}$ is a filling MS-lamination, with the same combinatorial types as $\cL^s_X$;
\item $\cL^u_{X'}\cap\partial^{out} U$ is a filling MS-lamination, topologically equivalent to $\cL^u_X$;
\item \label{new-lamination} $\cL^u_{X'}\cap \partial T$ is a filling MS-lamination with two coherently
oriented compact leaves (Figure~\ref{f.lamination-DA}).
\end{enumerate}
\end{prop}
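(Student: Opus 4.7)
Items (1) and (2) are immediate from the construction: since $Y_\eta$ is supported in $V\subset\mathrm{int}(U)$, one has $X'=X$ on $\partial U$; the choice of $\epsilon$ guarantees $X'$ points into $T$ along $\partial T$, so $\partial T\subset\partial^{out} U'$. For the hyperbolicity claim (3), the plan is the standard cone-field argument. Since $Y_\eta$ acts only in the $\partial/\partial y$ direction (the stable direction $E^s_X$ at $O$), cone fields adapted to the hyperbolic splitting of $X$ on $\Lambda$ remain invariant under the derivative flow of $X'$ throughout $U'$; the region where the stable contraction could be destroyed by $Y_\eta$ is contained in $T$, which has been excised. Since $O$ is the only attractor of $X'$ in $U$ and $O\subset T$, the resulting hyperbolic set $\Lambda'$ is of saddle type. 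Items (4) and (5) follow from the standard theory of DA bifurcations (see~\cite[Section 2.2.2]{GHS}): the semi-conjugacy $\pi$ is the identity away from a thickening of $W^s_X(O)$ and collapses, along stable leaf segments, the rectangle of new $X'$-orbits that replaced the stable manifold of $O$; transitivity is then immediate, since a dense $X$-orbit in $\Lambda$ may be chosen disjoint from the thin invariant set $W^s_X(O)$, and its $\pi$-preimage is a unique dense $X'$-orbit in $\Lambda'$.

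For items (6) and (7), the strategy uses the correspondence between compact leaves of $\cL^{s/u}$ and free stable/unstable separatrices of periodic orbits (Remark~\ref{r.free-2}). Since $O$ has no free separatrix in $(U,X)$ and $X'=X$ outside $V$, every periodic orbit of $X'$ in $\Lambda'$ whose free separatrix reaches $\partial^{in} U\cup\partial^{out} U$ is a periodic orbit of $X$ in $\Lambda\setminus\{O\}$, with the same local stable/unstable structure. Moreover, the contracting orientation of each corresponding compact leaf is determined by the holonomy of the corresponding 2-dimensional weak stable/unstable manifold along the periodic orbit, a local datum preserved by the DA. Hence the compact leaves of $\cL^s_{X'}\cap\partial^{in} U$ (resp. $\cL^u_{X'}\cap\partial^{out} U$) are in bijection with those of $\cL^s_X$ (resp. $\cL^u_X$), with matching contracting orientations. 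Proposition~\ref{p.combinatorial-type} applied to the MS-foliations extending these filling MS-laminations then gives the combinatorial-type identity of (6) and the topological equivalence of (7).

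The main obstacle is item (8). Since $O$ has no free stable separatrix, each of the two stable separatrices $S_1, S_2$ of $O$ in $(U,X)$ is accumulated by an unstable leaf $W^u_X(O^i)$ of some periodic orbit $O^i\in\Lambda\setminus\{O\}$. These orbits persist in $\Lambda'$, and combining the $\lambda$-lemma with the explicit local form of $X'$ in the linearizing coordinates $(x,y,\theta)$ shows that the unstable manifolds of $\Lambda'$ closest to $S_i$ (for $i=1,2$) become unstable boundary leaves: on one transverse side they accumulate along the basin of the new attractor $O$, which contains no leaf of $W^s(\Lambda')$. Each such boundary leaf enters the basin of $O$ and crosses $\partial T$ transversely in a single simple closed curve, one for each side $y>0$ and $y<0$ in the local coordinates, yielding the two announced compact leaves of $\cL^u_{X'}\cap\partial T$. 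Finally, both closed curves are freely homotopic in $T$ to the oriented periodic orbit $O$ with its flow orientation, so their dynamical orientations agree in $\pi_1(\partial T)$; Proposition~\ref{p.flow-vs-contracting} translates this into the coherence of the two contracting orientations, completing (8).
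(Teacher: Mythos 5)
Your treatment of item (3) contains a claim that is false and that, if true, would make the hypothesis ``$\eta$ small enough'' superfluous. You write that ``the region where the stable contraction could be destroyed by $Y_\eta$ is contained in $T$, which has been excised.'' This is not so. In the linearizing coordinates, the support of $Y_\eta$ is the solid torus $\{|x|\le\eta,\,|y|\le\eta\}\times\RR/\ZZ$, and a short computation with $\phi(t)=(1-t^2)^2$ shows that $\partial_y X'_y<0$ on a region of order $\{|y|\lesssim\eta\}$. Meanwhile $\epsilon$ must be taken small enough that $T=\{x^2+y^2<\epsilon^2\}$ lies in the basin of attraction of $O$ for $X'$, so in general $\epsilon\ll\eta$ and the ``bad'' region sticks far out of $T$. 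Orbits of $\Lambda'$ do pass through the region where the cone fields adapted to $X$ are not preserved, so your invariance claim fails as stated. The actual mechanism (as in the paper and in \cite[Section 2.2.2]{GHS}) is quantitative: as $\eta\to 0$ the support of $Y_\eta$ becomes a very thin solid torus, so orbits of the maximal invariant set spend a uniformly long time in the region of unperturbed hyperbolicity between consecutive crossings of the support, and the accumulated hyperbolicity there dominates the bounded loss incurred inside the support. That is exactly where the hypothesis ``$\eta$ small'' is used, and it is missing from your argument.

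There are also smaller gaps downstream. For item (5), the fact that $\pi^{-1}(Q)$ is a single dense orbit in $\Lambda'\setminus\widehat B$ (where $\widehat B$ is the closure of the basin of $O$ in $\Lambda'$) does not by itself give density in $\Lambda'$: one needs the extra step, proved in the paper from the no-free-separatrix assumption, that $W^s_{X'}(O_\pm)$ are not isolated in $W^s_{X'}(\Lambda')$, so $\Lambda'\setminus\widehat B$ is dense in $\Lambda'$. For item (8), the two compact leaves $\gamma_\pm$ of $\cL^u_{X'}\cap\partial T$ are the traces on $\partial T$ of the unstable manifolds of the \emph{new} saddle orbits $O_\pm$ created by the DA bifurcation (the boundary of the basin of $O$), not of the persisting orbits $O^i$ you invoke; you also need to check that these are the \emph{only} compact leaves (every free unstable separatrix meeting $\partial T$ must lie in $\widehat B$), and to show $\cL^u_{X'}\cap\partial T$ is \emph{filling} one needs the no-free-stable-separatrix hypothesis to see that $\gamma_\pm$ are accumulated on both sides by non-compact leaves. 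Finally, for (6) the bijection on compact leaves with matching orientations gives the combinatorial type, but one must also verify the filling property of $\cL^s_{X'}$ near $\pi^{-1}(W^s_X(O))$, where each non-compact leaf of $\cL^s_X\cap W^s_X(O)$ is blown up into a strip; and (7) asserts a full topological equivalence of the exit laminations on $\partial^{out}U$, which in the paper is deduced from the stronger fact that $\pi$ is a homeomorphism on the complement of $\widehat B\supset W^s_{X'}(O_\pm)$ and that $\partial^{out}U$ is disjoint from $\widehat B$.
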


\begin{proof}
Let us start by setting some notations. Recall that the periodic orbit $O$ corresponds to the circle
$(x=y=0)$ in the $(x,y,\theta)$ coordinate system. Recall that $O$ is a saddle hyperbolic orbit for the
vector field $X$, and an attracting hyperbolic orbit for the vector field $X'$. We denote by
$B:=W^s_{X',V}(O)$ the bassin of attraction of $O$ for the vector field $X'$. We denote by $O_\pm$ the
circle $(x=0,y=\pm\delta)$, where $\delta$ is the unique positive solution of the equation
$\phi(y)=\frac{1}{2}$. Straightforward computations show that $O_-$ and $O_+$ are sadlle hyperbolic
periodic orbits for the vector field $X'$. One can easily check that, in the $(x,y,\theta)$ coordinate
system, the local stable manifolds $W^s_{X',V}(O^-)$ and $W^s_{X',V}(O^+)$ are ``horizontal" graphs,
and the local basin $W^s_{X',V}(O)$ is the open band between the graphs $W^s_{X',V}(O^-)$ and
$W^s_{X',V}(O^+)$. It follows that the accessible boundary of $B$ is precisely
$W^s_{X'}(O_-)\cup W^s_{X'}(O_+)$. We set $\widehat B:
=B\cup W^s_{X'}(O_-)\cup W^s_{X'}(O_+)$

\medskip

Item~1 and~2 follow immediately from the construction of $U'$ and $X'$.

\medskip

Item~3 and~4 are consequences of well-known properties of DA bifurctions. Let us give more details.
Using classical techniques of hyperbolic theory, one can prove  that, for $\eta$ small enough
the maximal invariant set $\Lambda'$ of $(U',X')$ is a saddle hyperbolic set. The very rough idea is the following. Denote by $W$
the support of the vector field $Y_\eta=X'-X$, and observe that $W$ is contained in a solid torus which gets thiner 
and thiner when $\eta$ goes to $0$. Therefore, when $\eta$ is very small, every orbit spends a long time outside 
$W$ between two visits of $W$. Therefore, the possible loss of hyperbolicity in $W$ is counterbalanced by the
hyperbolicity outside $W$. See~\cite[section 2.2.2]{GHS} for a detailed proof. Moreover, one can prove that the vector field
$X'$ is a topological extension of the vector field $X$: there exists a continuous onto map $\pi:U\to U$, inducing a semi-conjugacy 
between a reparametrization of the flow of $X'$ and the flow of $X$. Moreover, the map $\pi$ admits a concrete description:  
it ``squashes $\widehat B$ onto $W^s_X(O)$".
More precisely,
\begin{itemize}
\item $\pi$ maps $\widehat B$ on $W^s_X(O)$, and maps $U\setminus \widehat B$ on
$U\setminus W^s_X(O)$;
\item $\pi: U\setminus\widehat B \to U\setminus W^s_X(O)$ is a homeomorphism;
\item for $x\in W^s_X(O)$, the set $\pi^{-1}(x)$ is an arc crossing $\widehat B$ from
$W^s_{X'}(O_-)$ to $W^s_{X'}(O_-)$.
\end{itemize}
In particular, the restriction $\pi:\Lambda'\to\Lambda$ is onto, and
$\pi:\Lambda'\setminus\widehat B\to\Lambda\setminus W^s_X(O)$ is a homeomorphism. See again~\cite[section 2.2.2]{GHS} for a 
detailed proof. 
Item 3 and 4 follow.

\medskip

Let us prove item~5. Assume that $(U,X)$ is a transitive plug. By definition, this means that
$\Lambda$ is a transitive hyperbolic set for $X$. Note that $\Lambda$ is not a single orbit
since we have assumed that the orbit $O$ has no free separatrix. Hence, we can find an orbit
$Q$ of $X$, such that $Q\subset \Lambda\setminus W^s_X(O)$ and such that $Q$ is dense in
$\Lambda$. We have seen above that $\pi:\Lambda'\setminus \widehat B\to \Lambda\setminus W^s_X(O)$
is an homeomorphism. It follows that the set $\Lambda'\setminus \widehat B$ is topologically
transitive for the vector field $X'$. On the other hand, since $O$ has no free unstable separatrix,
$W^s_X(O)$ is accumulated on both sides by leaves of $W^s_X(\Lambda)\setminus W^s_X(O)$.
Using the properties of the map $\pi$, it follows that neither $W^s_{X'}(O_-)$ nor $W^s_{X'}(O_+)$
is isolated in $W^s_{X'}(\Lambda')$. In other words,
$\Lambda'\setminus \widehat B=\Lambda'\setminus \left(W^s_{X'}(O_-)\cup W^s_{X'}(O_+)\right)$ is
dense in $\Lambda'$.
Hence, $\Lambda' $ is transitive (for the vector field $X'$). By definition, this means that
$(U',X')$ is a transitive plug.

\medskip

Let us turn to item~6. Recall that the orbit $O$ has no free separatrix. According to the proof of
Proposition~\ref{p.hyperbolicplug}, this implies that $W^s_X(O)$ does not contain any compact leaf of
the lamination $\cL^s_X$. The map $\pi$ induces a homeomorphism from $\cL^s_{X'}\setminus \widehat B$ to
$\cL^s_{X}\setminus W^s_{X}(O)$. Moreover, if $\gamma$ is a (non-compact) leaf of
$\cL^s_{X}\cap W^s_{X}(O)$, then $\pi^{-1}(\cL)$ is a strip, bounded by two (non-compact) leaves
of $\cL^s_{X'}$, whose interior is contained in the bassin $B$ (hence disjoint from $\cL^s_{X'}$).
Item~6 follows.

\medskip

Now we prove item~7. The surface $\partial^{out} U$ is disjoint from the basin $B$ since every orbit of
$X'$ in $B$ must accumulate on $O$ in the future, and therefore must exit from $U'$ by crossing
$\partial T$. The surface $\partial^{out} U$ is also disjoint from the stable manifolds $W^s_{X'}(O_-)$
and $W^s_{X'}(O_+)$ (since every orbit in $W^s_{X'}(O_-)$ and $W^s_{X'}(O_+)$ accumulates on $O_+$ and
$O_-$ in the future, and therefore remains in $U'$ forever). Hence, $\partial^{out} U$ is disjoint from
$\widehat B=B\sqcup W^s_{X'}(O_-)\sqcup W^s_{X'}(O_-)$. But we know that $\pi$ is a homoemophism on the
complement of $\widehat B$. Hence $\pi$ induces a topological equivalence between the laminations
$\cL^u_{X'}\cap\partial^{out} U$ and $\cL^u_{X}\cap\partial^{out} U=\cL^u_X$.

\medskip

We are left to prove item 8. Let $\gamma_\pm$ be the circle $(x=0,y=\pm\epsilon)$ in the $(x,y,\theta)$
coordinate system. Let $W_+$ be the cylinder $(x=0,y>0)$ and $W_-$ be the cylinder $(x=0,y>0)$. It is easy
to check that the cylinder $W_\pm$ is contained in $W^u_{X'}(O_\pm)$. It follows that the circles
$\gamma_+$ and $\gamma_-$ are compact leaves of the lamination $\cL^s_{X'}\cap \partial T$. On the other
hand, let $\gamma$ be a compact leaf of $\cL^s_{X'}\cap \partial T$. According to the proof of
Proposition~\ref{p.hyperbolicplug}, $\gamma=W\cap \partial T$ where $W$ is a free unstable separatrix of
a periodic orbit $P\subset\Lambda'$. Since $\partial T$ is contained in the bassin $B$, the separatrix $W$
must be contained in $B$, and the orbit $P$ must be contained in the accessible boundary of $B$. But $O_+$
and $O_-$ are only the only periodic orbits in the accessible boundary of $B$. Hence the separatrix $W$
must be equal to either $W_+$ or $W_-$. As a further consequence, the compact leaf $\gamma$
must be equal to either $\gamma_+$ or $\gamma_-$. So we have proved that the circles $\gamma^-$ and
$\gamma^-$ are the only compact leaves of the lamination $\cL^u_{X'}\cap\partial T$. For further use,
note that these compact leaves are not homotopic to $0$ is the torus $\partial T$.

By assumption, the periodic orbit $O$ has no free stable separatrix. Hence  $W^u_X(O)$ is accumulated on
both sides by leaves of $W^u_X(\Lambda)$. Using the properties of the map $\pi$, it follows that
$W^u_{X'}(O_\pm)$ is accumulated on both sides by leaves of $W^u_{X'}(\Lambda')$. As a further consequence,
the compact leaf $\gamma_\pm$ is accumulated on both sides by non-compact leaves of
$\cL^s_{X'}\cap \partial T$.

The surface $\partial T$ is a torus, no compact leaf of $\cL^s_{X'}\cap \partial T$ is homotopic
to $0$, and every compact leaf of $\cL^s_{X'}\cap \partial T$ is accumulated on both sides by
non-compact leaves of $\cL^s_{X'}\cap \partial T$. It follows that every connected component of
$\partial T\setminus \cL^s_{X'}$ is a strip bounded by two leaves of $\cL^s_{X'}$ which are asymptotic
to each other at both ends. Hence, $\cL^s_{X'}\cap \partial T$ is a filling MS-lamination.

Using explicit formula for the vector field $X'$, one easily checks that the dynamical orientation of
the compact leaf $\gamma_\pm$ coincides with the orientation induced by the vector field
$\frac{\partial}{\partial\theta}$. According to Proposition~\ref{p.flow-vs-contracting},
the attracting orientation of the leaf $\gamma_\pm$ coincides with its  dynamical orientation.
It follows that  the attracting orientation of $\gamma_-$ and $\gamma_+$ are coherent. The proof is complete.
\end{proof}

\begin{rema}
\label{r.homotopie-class-compact-leaves-1}
For later use, we note that, in the $(x,y,\theta)$ coordinate system, the two compact leaves of the
lamination $\cL^u_{X'}\cap \partial T$ are the circles $(x=0,y=\pm\epsilon)$. Moreover, the attracting
orientation of these leaves is the orientation induced by the vector field
$\frac{\partial}{\partial\theta}$ (see the proof of Proposition~\ref{p.properties-DA}).
\end{rema}

\begin{figure}[ht]
\begin{center}
\includegraphics[totalheight=4.5cm]{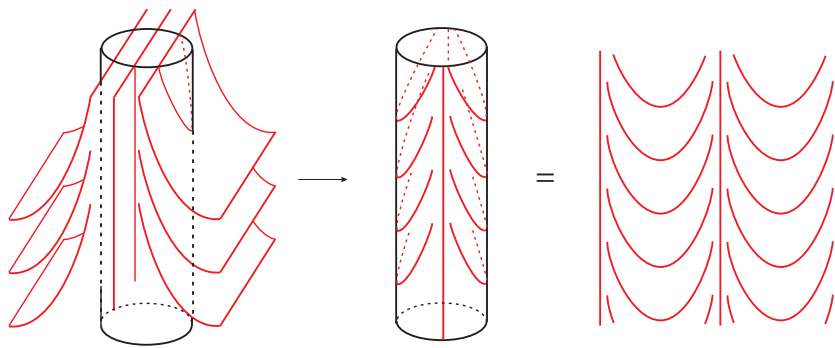}
\caption{\label{f.lamination-DA}The exit lamination $\cL^u_{X'}\cap\partial T$.}
\end{center}
\end{figure}

\subsubsection{Attracting DA bifurcation on orbits with negative multipliers}

In the preceeding paragraph, the orbit $O$ was assumed to have positive multipliers. Actually, we can
also make an attracting DA bifurcation on a periodic orbit $O$ with \emph{negative multipliers}. In
this case, the stable manifold $W^s_X(O)$ is a M\"obius band, and Proposition~\ref{p.properties-DA}
must be replaced by the following statement:

\begin{prop}
\label{p.properties-DA-bis}
Same as proposition~\ref{p.properties-DA}, except for item~\ref{new-lamination}, which is replaced by
\begin{itemize}
\item[\ref{new-lamination}'.] $\cL^u_{X'}\cap \partial T$ is a filling MS-lamination with a single compact
leaf, \emph{i.e.} a zipped Reeb lamination.
\end{itemize}
\end{prop}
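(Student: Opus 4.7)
The plan is to reduce to Proposition~\ref{p.properties-DA} via a double cover of a tubular neighborhood of $O$. First I would use the fact that $O$ has negative multipliers to $C^1$-linearize $X$ (up to perturbing within the topological equivalence class) on a tubular neighborhood $V\subset\mathrm{int}(U)$ of $O$ of the form $V=([-1,1]^2\times[0,T])/\sim$, where $(x,y,0)\sim(-x,-y,T)$, so that $X=\lambda x\,\partial_x+\mu y\,\partial_y+\partial_\theta$ with $\lambda<0<\mu$. I would then pass to the double cover $\tilde V=[-1,1]^2\times(\RR/2T\ZZ)$, equipped with the orientation-preserving, free covering involution $\tau(x,y,\tilde\theta):=(-x,-y,\tilde\theta+T)$. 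The lift $\tilde X$ of $X$ is given by the same linear formula, and its central periodic orbit $\tilde O=\{(0,0,\tilde\theta)\}$ now has positive multipliers $\lambda^2$ and $\mu^2$ and no free separatrix.

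Next I would run the construction of Section~\ref{sss.DA} in the cover with $\tilde O$ playing the role of $O$: set $\tilde Y_\eta:=-2\mu y\,\phi(x/\eta)\phi(y/\eta)\,\partial_y$ and $\tilde X':=\tilde X+\tilde Y_\eta$. Because $\phi$ is even, a short computation yields $\tilde Y_\eta(\tau(q))=d\tau(\tilde Y_\eta(q))$ at every $q$, so $\tilde Y_\eta$ descends to a vector field $Y_\eta$ on $V$, and I would define $X':=X+Y_\eta$ on $U$ (extended by zero off $V$). Applying Proposition~\ref{p.properties-DA} in the cover produces $\tilde O$ as the attractor together with two new saddle orbits $\tilde O_\pm=\{(0,\pm\delta,\tilde\theta)\}$. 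Since $\tau$ swaps $\tilde O_+$ and $\tilde O_-$, they descend to a single saddle orbit $O'$ of $X'$ in $V$ which winds twice around $O$, and the return map of $O'$ at a point is conjugate to the return map of $\tilde O_+$ in the cover, so $O'$ has positive multipliers and $W^s_{X'}(O')$, $W^u_{X'}(O')$ are cylinders.

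For the plug itself, I would take the same solid torus $T:=\{x^2+y^2<\epsilon^2\}\subset V$ (well-defined because the identification is linear) and set $U':=U\setminus T$. Items~1--7 of Proposition~\ref{p.properties-DA} transfer almost verbatim: hyperbolicity of $\Lambda'$ and the existence of a semi-conjugacy $\pi:\Lambda'\to\Lambda$ are local-to-the-bifurcation statements whose proofs apply directly, the only change being that $\widehat B$ is now the basin of $O$ together with the single cylinder $W^s_{X'}(O')$, so each fiber $\pi^{-1}(x)$ for $x\in W^s_X(O)$ is an arc crossing $W^s_{X'}(O')$ transversally. Transitivity and the descriptions of $\cL^s_{X'}$ and of $\cL^u_{X'}\cap\partial^{out}U$ follow as before since these are unaffected by the bifurcation. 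The genuine change is item~8: in $\partial\tilde T$, Proposition~\ref{p.properties-DA}.(8) together with Remark~\ref{r.homotopie-class-compact-leaves-1} give a filling MS-lamination $\cL^u_{\tilde X'}\cap\partial\tilde T$ with two coherently oriented compact leaves $\tilde\gamma_\pm=\{x=0,\,y=\pm\epsilon\}$. The involution $\tau$ acts freely on $\partial\tilde T$, preserves this lamination, and exchanges $\tilde\gamma_+$ with $\tilde\gamma_-$, so the quotient $\cL^u_{X'}\cap\partial T$ is a filling MS-lamination with a unique compact leaf, i.e.\ a zipped Reeb lamination.

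The main delicate point will be the verification of $\tau$-equivariance at every step---invariance of $\tilde Y_\eta$, equivariance of the hyperbolic splitting on $\tilde\Lambda'$, and $\tau$-compatibility of the semi-conjugacy $\tilde\pi$ produced in the cover---so that everything descends cleanly to~$V$. This is essentially bookkeeping, but requires keeping careful track of the identification $(x,y,0)\sim(-x,-y,T)$ throughout; alternatively, one could avoid the cover entirely and simply redo the proof of Proposition~\ref{p.properties-DA} directly in $V$, using that $Y_\eta$ is already compatible with the twisted gluing $(x,y,0)\sim(-x,-y,T)$.
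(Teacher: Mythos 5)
The paper does not actually supply a proof of Proposition~\ref{p.properties-DA-bis}; it simply declares the conclusion to be the same as Proposition~\ref{p.properties-DA} with item~8 modified, leaving the reader to re-run the DA construction in the twisted tube $V=([-1,1]^2\times[0,T])/(x,y,0)\sim(-x,-y,T)$ directly. Your double-cover argument is a valid and arguably cleaner alternative: it reduces the negative-multiplier case to the positive-multiplier case already proved, at the cost of keeping track of $\tau$-equivariance. The key steps are sound: $\phi$ even gives $\tau_*\tilde Y_\eta=\tilde Y_\eta$ so the perturbation descends; $\tau$ swaps $\tilde O_\pm$, so their quotient is a single saddle orbit $O'$ of twice the period with positive multipliers (return map over time $2T$ agrees with that of $\tilde O_+$), and $W^{s/u}_{X'}(O')$ are cylinders; and $\tau$ acts freely and orientation-preservingly on $\partial\tilde T$ exchanging the two compact leaves $\tilde\gamma_\pm$, so the quotient of the two-compact-leaf filling MS-lamination is a filling MS-lamination with exactly one compact leaf. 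One point worth making explicit for the last step: one must know that $\tau$ cannot preserve any complementary strip of $\cL^u_{\tilde X'}\cap\partial\tilde T$ (so that each strip maps homeomorphically to a strip downstairs rather than producing a non-simply-connected quotient); this follows from Brouwer's theorem, since a free orientation-preserving finite-order homeomorphism of a plane has no fixed point, which is impossible. Apart from that small addition, and the equivariance bookkeeping you already flagged, the argument is complete. Both routes—your covering argument and the direct re-run in the twisted tube—establish the proposition; the cover makes transparent \emph{why} the two compact leaves of the positive case become one in the negative case, which is conceptually the more instructive explanation.
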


\subsubsection{Repelling DA bifurcations}

Instead of an attracting DA bifurcation, it is also possible to make \emph{repelling DA bifurcation}
on a periodic orbit $O$. This bifurcation creates a repelling periodic orbit, instead of an attracting one.
As in \S~\ref{sss.DA}, one can excise a tubular neighborhood of this repelling periodic orbit, and get
a hyperbolic plug $(U',X')$. The properties of this hyperbolic plug are analogous to those listed in
Proposition~\ref{p.properties-DA} and~\ref{p.properties-DA-bis}, after having exchanged the roles of
the stable and the unstable directions, and the roles of the entrance and exit boundaries.
Remark~\ref{r.homotopie-class-compact-leaves-1} must replaced by the following statement:

\begin{rema}
\label{r.homotopie-class-compact-leaves-2}
In the $(x,y,\theta)$ coordinate system, the compact leaves of the lamination
$\cL^s_{X'}\cap \partial T$ are the circles $(x=\pm\epsilon,y=0)$. Moreover, the attracting
orientation of these leaves coincides with the orientation induced by the vector field
$-\frac{\partial}{\partial\theta}$.
\end{rema}

\subsection{The ``blow-up, excise, and glue surgery"}
\label{ss.blow-up-excise-glue}

We will now explain what we call the ``blow-up, excise, and glue surgery", and prove
Theorem~\ref{t.richer}. We shall need the following lemma~:

\begin{lemm}
\label{l.gluing-diffeo}
Let $\cL_1,\cL_2$ be filling MS-laminations on $\TT^2$, with the same number of compact leaves.
Assume that the all the compact leaves of $\cL_1$ (resp. $\cL_2$) are coherently oriented
(see Definition~\ref{d.coherent-incoherent}). Then there exists an orientation preserving
diffeomorphism $\phi:\TT^2\to\TT^2$ such that $\phi_*(\cL_1)$ is strongly transverse to
$\cL_2$. If $\cL_1=\cL_2$, then $\phi$ can be chosen isotopic to the identity.
\end{lemm}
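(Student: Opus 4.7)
The plan is to bring both laminations into a common normal form via Proposition~\ref{p.simplification}, then apply a $90^\circ$ rotation of $\TT^2$ to $\cL_1$ so that its compact leaves become transverse to those of $\cL_2$, and finally verify strong transversality by a grid analysis. Since all compact leaves of $\cL_i$ are coherently oriented, a combinatorial type of $\cL_i$ is the constant map $\sigma\equiv +$. By Proposition~\ref{p.simplification}, there exist orientation-preserving diffeomorphisms $\psi_1,\psi_2$ of $\TT^2$, each isotopic to the identity, such that $(\psi_i)_*(\cL_i)$ ($i=1,2$) has its compact leaves at the vertical circles $\{\tfrac{k}{n}\}\times\SS^1$ for $k=0,\dots,n-1$, and its non-compact leaves are graphs $y=f(x)$ of $C^1$ functions in each strip, with derivative negative on the first half of the strip and positive on the second half.

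Next I apply the orientation-preserving diffeomorphism $A\in SL(2,\ZZ)$ induced by the matrix $\bigl(\begin{smallmatrix}0&-1\\1&0\end{smallmatrix}\bigr)$, namely $A(x,y)=(-y,x)$, to $(\psi_1)_*\cL_1$. The resulting lamination $\cL_1':=A_*(\psi_1)_*\cL_1$ has horizontal compact leaves $\SS^1\times\{\tfrac{k}{n}\}$ and non-compact leaves of the form $x=g(y)$ in each horizontal strip. The compact leaves of $\cL_1'$ (horizontal) and of $\cL_2':=(\psi_2)_*\cL_2$ (vertical) are pairwise transverse and together decompose $\TT^2$ into $n^2$ closed squares $Q_{i,j}=[\tfrac{i}{n},\tfrac{i+1}{n}]\times[\tfrac{j}{n},\tfrac{j+1}{n}]$.

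It remains to verify that $\cL_1'$ and $\cL_2'$ are strongly transverse, for then $\phi:=\psi_2^{-1}\circ A\circ\psi_1$ is the required diffeomorphism. Inside each square $Q_{i,j}$, the non-compact leaves of $\cL_1'$ are \emph{vertical} graph arcs entering through the horizontal edges and spiralling toward the vertical edges (which lie on compact leaves of $\cL_2'$); symmetrically, the non-compact leaves of $\cL_2'$ enter through the vertical edges and spiral toward the horizontal edges. Since the two families are graphs over complementary coordinates, a further small $C^1$-perturbation (absorbed into $\psi_1$) breaks any isolated tangencies, and the graph structure then forces every connected component of $Q_{i,j}\setminus(\cL_1'\cup\cL_2')$ to be a topological disk with two sides on $\cL_1'$ and two on $\cL_2'$. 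I expect the main obstacle to be controlling this decomposition near the corners of the squares $Q_{i,j}$, where both laminations spiral: here the fact from Proposition~\ref{p.simplification} that the compact-leaf holonomies are conjugate to homotheties is essential for ruling out exceptional annular or strip components. Finally, when $\cL_1=\cL_2$ the rotation $A$ cannot be used since it is not isotopic to the identity; instead one takes a small isotopy making each compact leaf wavy and transverse to the original, and carries out an analogous grid analysis, using the freedom afforded by the non-compact leaves to subdivide would-be annular complementary regions into four-sided disks.
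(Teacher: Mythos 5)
Your approach is genuinely different from the paper's, and it contains a real gap. The paper does not rotate: after normalizing both laminations by Proposition~\ref{p.simplification} so that all compact leaves are vertical circles $\{\tfrac{k}{n}\}\times\SS^1$ and the non‑compact leaves are graphs $y=f(x)$ whose derivative changes sign exactly at the midpoint of each strip, the paper applies the \emph{shift} $\psi_0(x,y)=(x+\tfrac1{2n},y)$. This single move does two things at once. First, the shifted compact leaves of $\cL_1$ sit precisely at the midpoints of the $\cL_2$‑strips (and vice versa), where the derivatives of the other lamination's non‑compact leaves vanish, so transversality at compact leaves is automatic; and on the complementary open intervals the sign of $f_1'$ for the shifted $\cL_1$ is everywhere opposite to that of $f_2'$ for $\cL_2$, so non‑compact leaves are transverse \emph{without any perturbation}, and the strongly transverse (quadrilateral) decomposition follows by direct inspection. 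Second, $\psi_0$ is isotopic to the identity, so when $\cL_1=\cL_2$ one simply takes the two normalizing diffeomorphisms equal and obtains $\phi$ isotopic to the identity with no extra work.

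Your rotation $A(x,y)=(-y,x)$ loses both of these advantages. Because the rotated leaves $x=g(y)$ and the original leaves $y=f(x)$ are graphs over complementary coordinates, tangencies occur whenever $g'(y_0)=1/f'(x_0)$, and these are not controlled by sign considerations; a generic $C^1$‑perturbation removes the tangencies but can create bigons between two nearby transverse intersection points, and a bigon is precisely an obstruction to strong transversality (which demands that each complementary region be a four‑sided disc with two sides on each lamination). You claim ``the graph structure then forces every connected component \dots to be a topological disk with two sides on $\cL_1'$ and two on $\cL_2'$'', but this does not follow from transversality alone and is exactly the delicate point; the invocation of the homothety normal form near corners is not turned into an argument. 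As a secondary remark, the spiralling description is reversed: the non‑compact leaves of $\cL_1'$ (graphs $x=g(y)$ with $|g'|\to\infty$ at the ends of the strip) accumulate on the \emph{horizontal} compact leaves of $\cL_1'$ and cross the vertical edges of $Q_{i,j}$, not the other way around. Finally, $A$ represents a nontrivial mapping class, so $\phi=\psi_2^{-1}\circ A\circ\psi_1$ is never isotopic to the identity; the separate ``wavy'' isotopy you propose for $\cL_1=\cL_2$ is a third, unanalyzed construction that would need its own verification of strong transversality. The upshot: replacing the rotation by the $\tfrac1{2n}$‑shift resolves all three issues simultaneously and is what the paper does.
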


\begin{proof}
Let $n$ be the number of compact leaves of $\cL_1,\cL_2$. According to Proposition~\ref{p.simplification}, there exists two diffeomorphisms
$\phi_1,\phi_2:\TT^2\to\TT^2$ such that, for $i=1,2$, 
\begin{itemize}
\item the compact leaves of the lamination $(\phi_i)_*(\cL_i)$ are the vertical circles $\{\frac in\}\times \SS^1$ for $i=0,\dots,n-1$;
 \item in the open annulus $(\frac in,\frac{i+1}n)\times \SS^1$, the leaves of the lamination $(\phi_i)_*(\cL_i)$ are graphs of  $C^1$ functions from $(\frac in,\frac{i+1}n)$ to $\SS^1$; moreover the derivative of these functions is strictly positive  on $(\frac in,\frac in+\frac12)$, stricly negative on $(\frac in+\frac12,\frac{i+1}n)$ and vanishes precisely on $\{\frac in+\frac12\}$.
 \end{itemize}
 Let $\phi$ be the diffeomorphism of $\TT^2$ given by $\psi_0(x,y)=(x+\frac{1}{2n},y)$. Let $\phi:=(\phi_2)^{-1}\circ\psi\circ \phi_1$. One easily checks that $\phi_*(\cL_1)$ is strongly transverse to $\cL_2$. If $\cL_1=\cL_2$, then one may take $\phi_1=\phi_2$ which implies that $\phi$ is isotopic to the identity.
 \end{proof}
 
We begin with an Anosov vector field $X$ on a closed three-manifold $M$. We consider two distinct periodic
orbits $O,O'$ of $X$,  both of which have positive multipliers\footnote{The surgery can also be made if $O$
and $O'$ both have negative multipliers, but we will not need it}. We proceed as follows:
\begin{itemize}
\item \textbf{Step 1. Blow-up.} We consider a vector field $X'=X'_\eta$ on $M$, derived from $X$ by an attracting
DA bifurcation on the orbit $O$ (see subsection~\ref{ss.DA}). Note that $O$ is an attracting hyperbolic
periodic orbit for this new vector field $X'$.
\item \textbf{Step 2. Excise.} As in subsection~\ref{ss.DA}, we consider a tubular neighborhood $T$ of
the attracting orbit of $O'$, so that $T$ is included in the bassin of attraction of $O'$, and so that
$X'$ is transverse to $\partial T$. We set $U':=M\setminus T$. Clearly, $(U',X')$ is a repelling hyperbolic
plug with $\partial^{out} U'=\partial T$.  According to item~8 of Proposition~\ref{p.properties-DA},
$\cL^u_{X'}$ is a filling MS-lamination with two coherently oriented compact leaves. We denote by $\Lambda'$
the maximal invariant set of $X'$.  According item~4 of Proposition~\ref{p.properties-DA}, there is a
continous onto map $\pi:\Lambda'\to M$ inducing a semi-conjugacy between a reparametrization of the flow
of $X'$ and the flow of $X$. Since $O'\nsubseteq W^s_X(O)$, $\pi^{-1}(O')$ is a periodic orbit of $X'$.
In other words, we can (and we will) regard $O'$ as an orbit of $X'$.
\item \textbf{Step 1'. Blow-up.} Now, we consider a vector field $X''$ on $U'$, derived from $X'$ by an
repelling DA bifurcation on the orbit $O'$. Note that $O'$ is a repelling hyperbolic periodic orbit for
$X''$.
\item \textbf{Step 2'. Excise.}  We consider a tubular neighborhood $T'$ of the repelling orbit of $O'$,
so that $T'$ is included in the bassin of repulsion of $O'$, and so that $X''$ is transverse to
$\partial T'$.  We set $U'':=U'\setminus T'$. Then $(U'',X'')$ is a hyperbolic plug with
$\partial^{out} U''=\partial^{out} U'=\partial T$ and $\partial^{in} U''=\partial T'$.
We denote by $\Lambda''$ the maximal invariant set of $X'$. According to item~6 of
Proposition~\ref{p.properties-DA}, the lamination $\cL^u_{X''}$ is a filling MS-lamination
with the same combinatorial type as $\cL^u_{X'}$, \emph{i.e.} $\cL^u_{X''}$ has two coherently
oriented compact leaves. According to item~8 of Proposition~\ref{p.properties-DA}, the lamination
$\cL^s_{X''}$ is also a filling MS-lamination with two coherently oriented compact leaves.  According
item~4 of Proposition~\ref{p.properties-DA}, there is a continous onto map $\pi':\Lambda''\to \Lambda'$
inducing a semi-conjugacy between a reparametrization of the flow of $X''$ and
the flow of $X'$.
\item \textbf{Step 3. Glue.} The laminations $\cL^s_{X''}$ and $\cL^u_{X''}$ satisfy the hypothesis
of Lemma~\ref{l.gluing-diffeo}. Hence we can find an orientation-preserving diffeomorphism of
$\phi:\partial^{out} U''\to \partial^{in} U''$  such that $\phi_*(\cL^u_{X''})$ is strongly
transverse to $\cL^s_{X''}$. We consider the closed manifold $N:=U''/\phi$ and the vector
field $Z$ induced by $X''$ on $N$.  According to Theorem~\ref{t.transitive}, up to modifying
$X''$ by a topological equivalence and $\phi$ by a strongly transverse isotopy, $Z$ is Anosov.
\end{itemize}
We say that the Anosov vector field $(N,Z)$ are derived from the Anosov vector field $(M,X)$ by a
\emph{``blow-up, excise and glue surgery"}.


\begin{lemm}
\label{l.transitive}
If $X$ is transitive, then so is $Z$.
\end{lemm}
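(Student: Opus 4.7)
The plan is to apply Proposition~\ref{p.transitive} by showing that $(U'',X'',\phi)$ is combinatorially transitive. In fact, I will show that the maximal invariant set $\Lambda''$ of $(U'',X'')$ consists of a \emph{single} transitive basic piece, so the graph $\cP$ has only one vertex and is trivially strongly connected.

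First, view the Anosov flow $(M,X)$ as a (degenerate) hyperbolic plug with empty boundary; since $X$ is assumed transitive, this plug is transitive in the sense that its maximal invariant set (namely $M$ itself) is a single transitive hyperbolic set. By construction, the orbits $O,O'$ are chosen with no free separatrix (they lie in the Anosov basic piece $M$, which has no free separatrix at all), so the hypotheses of Proposition~\ref{p.properties-DA} are satisfied. Applying item~5 of Proposition~\ref{p.properties-DA} to the attracting DA bifurcation on $O$ and the excision of a tubular neighbourhood $T$, the plug $(U',X')$ is again transitive: its maximal invariant set $\Lambda'$ is a single transitive hyperbolic basic piece. Since the semi-conjugacy $\pi\colon \Lambda'\to M$ of item~4 is a homeomorphism off the bassin of $O$, the orbit $O'$ lifts to a periodic orbit of $X'$ in $\Lambda'$ with no free separatrix, and we can iterate: applying the repelling version of item~5 (Proposition~\ref{p.properties-DA-bis} and the symmetric statement for repelling DA) to the repelling DA bifurcation on $O'$ followed by excising $T'$, the resulting plug $(U'',X'')$ is still transitive. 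So $\Lambda''$ is one single transitive basic piece.

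Consequently, the oriented graph $\cP$ attached to $(U'',X'',\phi)$ has exactly one vertex, hence is strongly connected, and $(U'',X'',\phi)$ is combinatorially transitive. A small subtlety is that Theorem~\ref{t.transitive} may have been invoked after first replacing $X''$ by a topologically equivalent vector field and $\phi$ by a strongly isotopic gluing diffeomorphism; but topological equivalence preserves the set of basic pieces of a hyperbolic plug (together with their heteroclinic intersections), and a strong isotopy of gluing maps preserves the set of intersections $\varphi_*(W^u\cap\partial^{out}U'')\cap (W^s\cap\partial^{in}U'')$. Thus combinatorial transitivity is preserved by these modifications, and we may directly invoke Proposition~\ref{p.transitive} to conclude that the resulting Anosov vector field $Z$ is transitive.

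The argument involves essentially no new work: the main obstacle has already been cleared by establishing the preservation of transitivity under DA bifurcation and excision (item~5 of Proposition~\ref{p.properties-DA}) and by the combinatorial transitivity criterion (Proposition~\ref{p.transitive}). The only point requiring a moment's care is checking that performing the repelling DA bifurcation on $O'$, viewed as a periodic orbit of $X'$ in $\Lambda'$, is legitimate — i.e.\ that $O'$ has no free separatrix for $X'$ — which follows from the fact that, under the semi-conjugacy $\pi$, the orbit $O'$ corresponds to an orbit of $X$ in the Anosov basic piece, whose weak stable and unstable manifolds are nontrivially accumulated by leaves of $W^s(M)$ and $W^u(M)$.
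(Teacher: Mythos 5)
Your proof is correct and takes essentially the same route as the paper: the paper's proof of this lemma is a two-line invocation of item~5 of Proposition~\ref{p.properties-DA} to get transitivity of $X''|_{\Lambda''}$, followed by Proposition~\ref{p.transitive}. You expand this with the justification for why item~5 applies twice (in particular, that $O'$ has no free separatrix for $X'$, which the paper leaves implicit) and make explicit the observation that a single transitive basic piece yields trivial combinatorial transitivity, as well as the point that the modifications made by Theorem~\ref{t.transitive} preserve combinatorial transitivity. These are reasonable details, not a different approach.
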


\begin{proof}
Assume that $X$ is transitive. According to item~5 of Proposition~\ref{p.properties-DA},
it follows that $X''_{|\Lambda''}$ is also transitive. Now, using Proposition~\ref{p.transitive},
we deduce that $Z$ is transitive.
\end{proof}

\begin{lemm}
\label{l.richer}
The dynamics of the new vector field $Z$ is ``richer" than the dynamics of the initial vector field $X$.
More precisely, there exists a compact subset $\Lambda''$ of $N$, which is invariant under the flow of $Z$,
and a continuous onto map $\pi'\circ\pi:\Lambda''\to M$  inducing a semi-conjugacy between some
reparametrization of the flow of $Z_{|\Lambda''}$ on the flow of $X$.
\end{lemm}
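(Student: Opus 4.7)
The plan is to combine the two semi-conjugacies coming from the two DA bifurcations (item~4 of Propositions~\ref{p.properties-DA} and~\ref{p.properties-DA-bis}), after first observing that the maximal invariant set $\Lambda''$ of the plug $(U'',X'')$ embeds naturally as a $Z$-invariant compact subset of $N=U''/\phi$.

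First I would verify the set-theoretic embedding. By definition, $\Lambda''=\bigcap_{t\in\RR}(X'')^t(U'')$ is disjoint from $\partial U''=\partial^{in}U''\cup\partial^{out}U''$, hence the quotient map $U''\to U''/\phi=N$ is injective on $\Lambda''$, so $\Lambda''$ embeds as a compact subset of $N$. Moreover, the orbit of any point of $\Lambda''$ under $X''$ never touches $\partial U''$, so the gluing $\phi$ has no effect on this orbit: the $Z$-orbit in $N$ of a point $x\in\Lambda''$ coincides (as a parametrized curve) with its $X''$-orbit in $U''$. In particular, $\Lambda''$ is $Z$-invariant and $Z|_{\Lambda''}=X''|_{\Lambda''}$ as flows.

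Next, I would assemble the two semi-conjugacies. From the attracting DA step applied to the orbit $O$ of the original Anosov field $X$, item~4 of Proposition~\ref{p.properties-DA} provides a continuous onto map $\pi:\Lambda'\to M$ conjugating a time-reparametrization of $X'|_{\Lambda'}$ to $X$. From the repelling DA step applied to the orbit $O'$ of $X'|_{\Lambda'}$, the analogous statement for repelling DA bifurcations (Proposition~\ref{p.properties-DA-bis}) provides a continuous onto map $\pi':\Lambda''\to\Lambda'$ conjugating a time-reparametrization of $X''|_{\Lambda''}$ to $X'|_{\Lambda'}$. Composing, one obtains a continuous onto map $\pi\circ\pi':\Lambda''\to M$ (the statement of the lemma contains an inoffensive notational swap). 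A short elementary calculation shows that the composition of two orbit-equivalences that intertwine reparametrized flows again intertwines some (further) reparametrization, so $\pi\circ\pi'$ conjugates an appropriate reparametrization of $X''|_{\Lambda''}$ to $X$ on $M$. Since $Z|_{\Lambda''}=X''|_{\Lambda''}$ by the previous paragraph, this yields the desired semi-conjugacy between a reparametrization of $Z|_{\Lambda''}$ and $X$.

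There is no serious obstacle here; the only small technical care needed is in (i)~keeping track of the reparametrizations when composing the two semi-conjugacies, and (ii)~verifying that $\Lambda''$ indeed survives as a $Z$-invariant set in $N$, which follows from the fact that points of $\Lambda''$ have both forward and backward orbits disjoint from $\partial U''$ and are therefore unaffected by the gluing. Everything else is a direct application of the structural results on DA bifurcations recalled in Section~\ref{ss.DA}.
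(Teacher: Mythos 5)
Your proposal is correct and follows essentially the same route as the paper's proof, which is itself terse: the paper simply observes that $\Lambda''\subset\mathrm{int}(U'')\subset N$, that $Z$ coincides with $X''$ on $\Lambda''$, and then appeals to the properties of $\pi$ and $\pi'$. You fill in the details the paper leaves implicit (the quotient map is injective on $\Lambda''$ because the latter is disjoint from $\partial U''$; the $Z$- and $X''$-orbits of points of $\Lambda''$ agree; and the composition of two reparametrized semi-conjugacies is again a reparametrized semi-conjugacy), and you correctly spot that the statement's $\pi'\circ\pi$ should read $\pi\circ\pi'$ given the domains and targets of the two maps.

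One small inaccuracy in your citations: for the repelling DA step you invoke Proposition~\ref{p.properties-DA-bis}, but that proposition treats an \emph{attracting} DA on an orbit with \emph{negative} multipliers. Since both $O$ and $O'$ are chosen with positive multipliers in the surgery, the relevant statement for the repelling DA on $O'$ is the analogue of item~4 of Proposition~\ref{p.properties-DA} obtained by reversing time (as described in the paragraph of \S\ref{ss.DA} on repelling DA bifurcations), and indeed that is what the paper cites in Step~2'. This does not affect the validity of the argument, only the bookkeeping.
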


\begin{proof}
The set $\Lambda''$, the maps $\pi'$ and $\pi$ were defined above. Observe that $\Lambda''$ can
indeed be seen as a subset of $N$, since $\Lambda''\subset \mathrm{int}( U'')\subset N$. Moreover,
the vector field $Z$ coincides with $X''$ on $\Lambda''$. The lemma follows from the properties of the
maps $\pi$ and $\pi'$.
\end{proof}

\begin{proof}[Proof of Theorem~\ref{t.richer}]
The theorem immediately follows from the construction above, lemma~\ref{l.transitive} and
lemma~\ref{l.richer}.
\end{proof}

\subsection{A transitive and a non-transitive Anosov vector field on the same manifold}

The ``blow-up, excise and glue" surgery described in the previous paragraph admits many variants.
We shall use one of these variants to prove Theorem~\ref{t.both-transitive-non-transitive}, \emph{i.e.}
to construct a closed three-manifold $N$ supporting both a non-transitive Anosov vector field $Y$ and a
transitive vector field $Z$.

\begin{proof}[Proof of Theorem~\ref{t.both-transitive-non-transitive}]
We start with a transitive Anosov vector field $X$ on a closed manifold $M$. We pick two periodic orbits
$O,O'$ of $X$ with positive multipliers. Then we consider four vector fields $X_1,\dots,X_4$ on $M$
which are derived from $X$ by DA bifurcations on $O$ and $O'$. More precisely:
\begin{itemize}
\item $X_1$ is obtained by an attracting DA bifurcation on $O$ and an attracting DA bifurcation on $O'$;
\item $X_2$ is obtained by  a repelling DA bifurcation on $O$ and a repelling DA bifurcation on $O'$;
\item $X_3$ is obtained by an attracting DA bifurcation on $O$ and a repelling DA bifurcation on $O'$;
\item $X_4$ is obtained by a repelling DA bifurcation on $O$ and a attracting DA bifurcation on $O'$.
\end{itemize}
Observe that $O$ is an attracting orbit for $X_1$ and $X_3$ and a repelling periodic orbit for $X_2$ and
$X_4$, whereas $O'$ is an attracting orbit for $X_1$ and $X_4$ and a repelling orbit for $X_2$ and $X_3$.
We can find some tubular neighborhoods $T$ and $T'$ of $O$ and $O'$ respectively, so that $T$ and $T'$ are
contained in the bassins\footnote{with respect to each of the four vector fields $X_1,\dots,X_4$} of $O$
and $O'$ respectively, and so that the four vector fields $X_1,\dots,X_4$ are transverse to $\partial T$
and $\partial T'$. We consider the manifold with boundary
$U:=M\setminus (\mathrm{int}(T)\cup\mathrm{int}(T'))$. Note that $(U,X_1)$, $(U,X_2)$, $(U,X_3)$ and
$(U,X_4)$ are hyperbolic plugs (item~3 of Proposition~\ref{p.properties-DA}).

\medskip

We construct a non-transitive Anosov vector field $Y$ by gluing the hyperbolic plugs $(U,X_1)$ and
$(U,X_2)$. The periodic orbits $O$ and $O'$ are attracting for $X_1$. Hence $(U,X_1)$ is a repelling
hyperbolic plug: $\partial^{out}_{X_1} U=\partial U=\partial T\cup \partial T'$.  On the contrary, the
periodic orbits $O$ and $O'$ are repelling for $X_2$.  Hence $(U,X_2)$ is an attracting hyperbolic plug:
$\partial^{in}_{X_2} U=\partial U=\partial T\cup \partial T'$. According to item~8 of
Proposition~\ref{p.properties-DA},  $\cL^s_{X_1}\cap \partial T$
(resp. $\cL^s_{X_1}\cap \partial T'$, $\cL^u_{X_2}\cap T$ and $\cL^u_{X_2}\cap T'$) is a MS-foliation
with two coherently oriented compact leaves. Lemma~\ref{l.gluing-diffeo} provides an orientation-preserving
diffeomorphism
$$\phi:\partial^{out}_{X_2} U=\partial U\longrightarrow \partial^{in}_{X_1} U=\partial U$$
such that $\phi_*(\cL^u_{X_1})$ is transverse to $\cL^s_{X_2}$. We consider the closed manifold
$N_\phi:=(U\sqcup U)/\phi$. The vector fields $X_1$ and $X_2$ induce a vector field $Y$ on $N_\phi$.
According to Proposition~\ref{p.plug}, $(N_\phi,Y)$ is a hyperbolic plug. Since
$\partial N_\phi=\emptyset$, this means that $Y$ is an Anosov vector field. Note that $Y$ is not
transitive, since $(N_\phi,Y)$ was constructed by gluing an attracting plug and a repelling plug.

\medskip

Now, we construct a transitive Anosov vector field $Z$ by gluing the hyperbolic plugs $(U,X_3)$ and
$(U,X_4)$. Recall that $O$ is a attracting orbit for $X_3$ and a repelling orbit for $X_4$, whereas $O'$
is a repelling orbit for $X_3$ and an attracting orbit for $X_4$. Therefore,
$\partial^{in}_{X_3} U=\partial^{out}_{X_4}U=\partial T'$ and
$\partial^{out}_{X_3} U=\partial^{in}_{X_4} U=\partial T$. According to item~8 of
Proposition~\ref{p.properties-DA},  $\cL^s_{X_3}$ (resp. $\cL^u_{X_3}$, $\cL^s_{X_4}$
and $\cL^u_{X_4}$ is a filling MS-lamination with two has two coherently oriented compact leaves.
Lemma~\ref{l.gluing-diffeo} provides an orientation-preserving diffeomorphism
$$\psi: (\partial^{out}_{X_3} U\sqcup\partial^{out}_{X_4} U)=
\partial U\longrightarrow (\partial^{in}_{X_3} U\sqcup\partial^{in}_{X_4} U)=\partial U$$
such that $\psi_*(\cL^u_{X_3})$ is strongly transverse to $\cL^s_{X_4}$ on $\partial T$,
and $\psi_*(\cL^u_{X_4})$ is strongly transverse to $\cL^s_{X_3}$ on $\partial T'$. We
consider the closed manifold $N_\psi:=(U\sqcup U)/\psi$. The vector fields $X_3$ and $X_4$
induce a vector field $Z$ on $N_\psi$. According to Theorem~\ref{t.transitive}, up to
perturbing $X_3$ and $X_4$ by a small topological equivalence and $\psi$ by a strongly
transverse isotopy, we can assume that $Z$ is an Anosov vector field.

Let us prove that the vector field $Z$ is transitive. Since the initial Anosov vector
field $X$ is transitive, item~5 of Proposition~\ref{p.properties-DA} ensures that the
maximal invariant set $\Lambda_3$ of $(U,X_3)$ and the maximal invariant set $\Lambda_4$
both are transitive. Moreover,
$\psi_*(\cL^u_{X_4})=\psi_*(W^u_{X_4}(\Lambda_4))\cap\partial^{out}_{X_4} U$ intersects
$\cL^s_{X_3}=W^s_{X_3}(\Lambda_3)\cap\partial^{in}_{X_3} U$, and
$\psi_*(\cL^u_{X_3})=\psi_*(W^u_{X_3}(\Lambda_3))\cap\partial^{out}_{X_3} U$ intersects
$\cL^s_{X_4}=W^s_{X_4}(\Lambda_4)\cap\partial^{in}_{X_4} U$. Hence, Proposition~\ref{p.transitive}
guarantees that $Z$ is transitive.

\medskip

The proof of Lemma~\ref{l.gluing-diffeo} together with Remarks~\ref{r.homotopie-class-compact-leaves-1}
and~\ref{r.homotopie-class-compact-leaves-2} imply that the maps $\phi$ and $\psi$ can be
choosen isotopic to the $-\mathrm{Id}$ on each of the two connected components of
$\partial U\simeq\TT^2\sqcup\TT^2$. As a consequence, the gluing maps $\phi$ and $\psi$ can be choosen in
such a way that the manifolds $N_\phi$ and $N_\psi$ are diffeomorphic. As a further consequence, $Y$ and
$Z$ can be regarded as vector fields on the same manifold $N$. The proof is complete.
\end{proof}

\begin{figure}[ht]
\begin{center}
\includegraphics[totalheight=5.7cm]{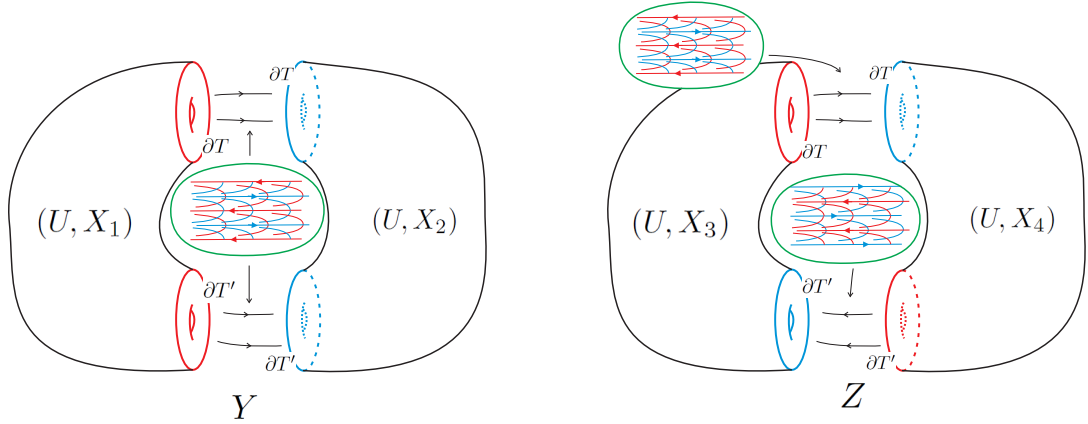}
\caption{\label{f.transitive-non-transitive}Construction of the Anosov vector fields $Y$ and $Z$}
\end{center}
\end{figure}

\section{Attractors with prescribed entrance foliation}
\label{s.attractors}

Let $(V,X)$ be an oriented  plug (\emph{i.e.} a hyperbolic plug such that  $V$ is oriented). Then the entrance boundary $\partial^{in} V$ inherits of a canonical orientation, characterized by the following property: if $(e_1,e_2)$ is a basis of the tangent space $T_p\partial^{in} V$ of $\partial^{in} V$ at some point $p$, then $(e_1,e_2)$ is a direct basis of $T_p\partial^{in} V$ if and only if $(e_1,e_2,X(p))$ is a direct basis of $T_p V$. The canonical orientation of the exit boundary $\partial^{out} V$ is defined similarly. 

\begin{defi}
Let $\cF$ be a MS-foliation on a closed oriented surface $S$, and $(U,X)$ be an oriented
attracting hyperbolic plug. If the entrance foliation $\cL^s_X$ is topological equivalent to $\cF$, then we say that $\cF$ is \emph{realized} by the plug 
$(U,X)$.
\end{defi}

The purpose of the present section is to prove Theorem~\ref{t.attractors}, which states that every 
MS-foliation (on a closed oriented surface) can be realized by a transitive attracting hyperbolic plug. 
For pedagogical results, we will first prove a weaker result (Proposition~\ref{p.not-transitive} below).

\subsection{Non-transitive attracting plugs with prescribed entrance foliation}

As a first step towards Theorem~\ref{t.attractors}, we will prove the following proposition:

\begin{prop}
\label{p.not-transitive}
Assume that some orientation of $\TT^2$ has been fixed. Any MS-foliation on $\TT^2$ can be realized 
by a (not necessarily transitive) attracting hyperbolic plug.
\end{prop}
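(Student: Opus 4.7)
The plan is to prove Proposition~\ref{p.not-transitive} by induction on the number $n$ of compact leaves of $\cF$, using Corollary~\ref{c.adding-leaves}: every MS-foliation on $\TT^2$ is obtained from a zipped Reeb foliation by inductively adding compact leaves.

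For the base case $n=1$, I realize the zipped Reeb foliation as follows. Let $X_0$ be a transitive Anosov vector field on a closed orientable three-manifold $M$ admitting a periodic orbit $O$ with negative multipliers (for instance, the suspension of a hyperbolic toral automorphism with two negative eigenvalues). I perform a repelling DA bifurcation on $O$, making $O$ a repelling periodic orbit of the new vector field $X_0'$, and excise a small tubular neighborhood $T\subset M$ of $O$ contained in its basin of repulsion. Then $(M\setminus T, X_0')$ has empty exit boundary, hence is an attracting plug, and by the time-reversed version of Proposition~\ref{p.properties-DA-bis} its entrance foliation on $\partial T\simeq\TT^2$ is a zipped Reeb foliation. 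Corollary~\ref{c.zipped-Reeb} then provides the desired orientation-preserving topological equivalence with any prescribed zipped Reeb foliation.

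For the inductive step, suppose every MS-foliation on $\TT^2$ with $n$ compact leaves is realized by an attracting hyperbolic plug, and let $\cF'$ be obtained from such an $\cF$ by inserting an extra compact leaf $\gamma$ of prescribed sign between two consecutive compact leaves of $\cF$. By hypothesis there is an attracting plug $(U,X)$ realizing $\cF$. I stack on top of $(U,X)$ an auxiliary hyperbolic plug $(V,Y)$, constructed from a second Anosov vector field on a closed three-manifold by performing a repelling DA bifurcation on a periodic orbit $O_2$ with multiplier sign chosen to match the sign of $\gamma$, and an attracting DA bifurcation on another periodic orbit $O_1$, then excising tubular neighborhoods $T_2, T_1$ of the resulting repelling and attracting orbits. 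The plug $(V,Y) := (M'\setminus(T_1\cup T_2),Y)$ has $\partial^{in} V=\partial T_2$ and $\partial^{out} V=\partial T_1$, both tori, and Propositions~\ref{p.properties-DA} and~\ref{p.properties-DA-bis} control the combinatorial types of $\cL^s_Y$ and $\cL^u_Y$ on these boundary components.

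I then glue $(V,Y)$ on top of $(U,X)$ via an orientation-preserving diffeomorphism $\varphi:\partial^{out} V\to\partial^{in} U$, chosen using Proposition~\ref{p.simplification} together with Lemma~\ref{l.gluing-diffeo}, so that $\varphi_*(\cL^u_Y)$ is strongly transverse to $\cL^s_X=\cF$. Proposition~\ref{p.plug} produces a hyperbolic plug $W:=U\cup_\varphi V$ with induced vector field $Z$, and $\partial^{out} W=\emptyset$ makes it attracting. The compact leaves of the entrance foliation $\cL^s_Z$ on $\partial^{in} W=\partial^{in} V$ correspond to free stable separatrices of periodic orbits in $\Lambda_Z$ and split into two contributions: those from $\Lambda_Y$ yield the single compact leaf already present in $\cL^s_Y$ (coming from the repelling DA bifurcation on $O_2$), and those from $\Lambda_X$ yield the $\varphi^{-1}\circ\Gamma_Y^{-1}$-preimages of the compact leaves of $\cL^s_X$ that represent $\cF$. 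Altogether $\cL^s_Z$ has $n+1$ compact leaves whose signs and cyclic arrangement match the combinatorial type of $\cF'$, and Proposition~\ref{p.combinatorial-type} provides an orientation-preserving topological equivalence $\cL^s_Z\simeq\cF'$. The main technical obstacle is to arrange the orbits $O_1,O_2$ and the gluing $\varphi$ so that simultaneously (i) $\varphi_*(\cL^u_Y)$ lies in the correct isotopy class to be strongly transverse to $\cF$, and (ii) the new compact leaf of $\cL^s_Y$ lands in the complementary strip of the pulled-back $\cF$ prescribed by $\cF'$; both are achieved via the density of periodic orbits in the auxiliary basic set together with the combinatorial flexibility provided by Propositions~\ref{p.simplification} and~\ref{p.combinatorial-type}.
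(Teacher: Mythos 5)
Your base case is exactly the paper's Lemma~\ref{l.realizing-zipped}, and the global induction scheme (zipped Reeb plus Corollary~\ref{c.adding-leaves}) is the paper's strategy. But your inductive step has a fatal structural flaw in the choice of auxiliary plug $(V,Y)$.

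You construct $(V,Y)$ from an Anosov flow by performing a repelling DA on $O_2$ and an attracting DA on $O_1$, then excising; its maximal invariant set is a nontrivial saddle basic set. By items~8 and~8' of Propositions~\ref{p.properties-DA} and~\ref{p.properties-DA-bis}, the exit lamination $\cL^u_Y$ on $\partial T_1$ is then a \emph{filling} MS-lamination: the complement $\partial T_1\setminus\cL^u_Y$ is a disjoint union of strips, each homeomorphic to $\RR^2$. Now look at the formula used in the paper's Lemma~\ref{l.adding-leaf-1}, $\cL^s_Z=\cL^s_Y\sqcup(\Gamma_V^{-1})_*\bigl(\phi^{-1}_*(\cL^s_X)\setminus\cL^u_Y\bigr)$, which is exactly what you invoke. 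Each compact leaf $\gamma_i$ of $\cL^s_X=\cF$ is an essential simple closed curve in the torus $\partial^{in}U$ (Lemma~\ref{l.parallel-leaves}); so $\phi^{-1}(\gamma_i)$ is essential in $\partial T_1$, and an essential circle cannot be contained in $\partial T_1\setminus\cL^u_Y$ since each connected component of that set is contractible. Therefore $\phi^{-1}(\gamma_i)$ must meet $\cL^u_Y$, $\phi^{-1}(\gamma_i)\setminus\cL^u_Y$ is a union of open arcs, and its $\Gamma_V^{-1}$-image is a collection of noncompact curves. None of the $\gamma_i$ survive as compact leaves of $\cL^s_Z$, so your claim that $\cL^s_Z$ has $n+1$ compact leaves fails. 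This is precisely why the paper's Lemma~\ref{a.hyperbolic-plug-twisted-orbit} is stated as it is: the auxiliary plug there has a \emph{single} saddle periodic orbit as maximal invariant set, so that $\cL^u_Y$ consists of one isolated circle $\gamma^u$ and is \emph{not} filling. That leaves plenty of room to place $\phi_*(\gamma^u)$ inside a chosen complementary annulus $A$ of the compact leaves of $\cL^s_X$, disjoint from them; only then do all $n$ leaves of $\cF$ pull back to compact leaves of $\cL^s_Z$.

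Two further issues, secondary to the above but still genuine gaps even if you were to repair the auxiliary plug. First, the sign $\sigma'(n+1)$ of the new compact leaf $\gamma^s$ is not controlled by the multiplier sign of $O_2$. The multiplier sign of $O_2$ governs only the \emph{number} of new compact leaves in $\cL^s_Y\cap\partial T_2$ (one if negative, two coherently oriented if positive — see Propositions~\ref{p.properties-DA} and~\ref{p.properties-DA-bis}); the dynamical orientation of $\gamma^s$, and hence via Proposition~\ref{p.flow-vs-contracting} its contracting orientation, is fixed by the flow direction on the new saddle orbit and cannot be toggled by choosing the sign of the multipliers. The paper adjusts $\sigma'(n+1)$ by replacing the gluing map $\phi$ with $\phi\circ\tau$, where $\tau$ is an orientation-preserving self-homeomorphism of $\partial^{out}V$ reversing the orientation of $\gamma^u$: this flips the contracting orientations of all the pulled-back leaves $\widehat\gamma_1,\dots,\widehat\gamma_n$ relative to $\gamma^s$, achieving either value of $\sigma'(n+1)$. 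Second, your invocation of Lemma~\ref{l.gluing-diffeo} to produce the gluing map is not applicable: that lemma requires both laminations to have the \emph{same number} of compact leaves, all \emph{coherently oriented}, whereas $\cL^s_X=\cF$ is an arbitrary MS-foliation with $n$ possibly incoherently oriented compact leaves while $\cL^u_Y$ has one or two. In the paper's proof, neither Lemma~\ref{l.gluing-diffeo} nor strong transversality is needed: $\phi_*(\gamma^u)$ need only be \emph{transverse} to $\cL^s_X$ and contained in the chosen annulus $A$, which is arranged directly since $\cL^s_X$ restricted to $A$ is topologically a product foliation; Proposition~\ref{p.plug} then yields a hyperbolic plug, which is attracting (so $\cL^s_Z$ is automatically a MS-foliation).
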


The proof of Proposition~\ref{p.not-transitive} relies on the results of section~\ref{s.simple-foliations}. 
In particular, we will use the fact that every MS-foliation on $\TT^2$ can be obtained by adding 
compact leaves to a zipped Reeb foliation (corollary~\ref{c.adding-leaves}).

\begin{lemm}
\label{l.realizing-zipped}
There exists an oriented (transitive)  attracting hyperbolic plug $(U,X)$, whose entrance boundary $\partial^{in} U$ 
is connected, and whose entrance foliation $\cL^s_X$ is a zipped Reeb foliation 
(see Definition~\ref{d.zipped-Reeb}).
\end{lemm}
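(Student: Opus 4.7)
The plan is to exhibit such a plug by performing the first two steps (\emph{blow-up} and \emph{excise}) of the surgery of \S\ref{s.surgery}, applied to a suitably chosen transitive Anosov flow and a suitably chosen periodic orbit.

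First I would pick a transitive Anosov vector field $X$ on a closed orientable three-manifold $M$ that admits a periodic orbit $O$ with \emph{negative} multipliers. For instance, one can take the suspension of the linear Anosov diffeomorphism of $\TT^2$ induced by $\bigl(\begin{smallmatrix} -2 & -1\\ -1 & -1\end{smallmatrix}\bigr)$, whose eigenvalues $(-3\pm\sqrt{5})/2$ are both negative; any periodic orbit of odd period of this suspension then has negative multipliers. Since only finitely many periodic orbits of $X$ possess a free stable or unstable separatrix, I can further assume that $O$ has no free separatrix.

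Next I would apply to $O$ the \emph{repelling} (i.e.\ time-reversed) version of the DA bifurcation of Subsection~\ref{ss.DA}: after a $C^0$-small perturbation inside its topological equivalence class, I can assume $X$ is $C^1$-linearizable near $O$, and a localized bump-function perturbation supported in a linearizing neighborhood of $O$ produces a new vector field $X'$ on $M$ for which $O$ is a repelling hyperbolic periodic orbit. I then excise a small tubular neighborhood $T$ of $O$ contained in the basin of repulsion of $O$ and transverse to $X'$, and set $U:=M\setminus T$. Because $O$ is repelling and $T$ lies in its basin, $X'$ points strictly into $U$ along $\partial T$, so $\partial^{in}U=\partial T$ is a single two-torus (hence connected) and $\partial^{out}U=\emptyset$. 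Thus $(U,X')$ is an attracting hyperbolic plug, and $U$ inherits an orientation from $M$.

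It remains to identify the entrance foliation and to establish transitivity. By the repelling analog of item~4 of Proposition~\ref{p.properties-DA}, the flow of $X'$ on its saddle maximal invariant set $\La$ is a topological extension of the restriction of $X$ to $\La_X\setminus W^u_X(O)$; and by the analog of item~5, transitivity of $X$ implies transitivity of $(U,X')$. Finally, applying the repelling analog of Proposition~\ref{p.properties-DA-bis} (the case of an orbit with negative multipliers), the entrance lamination $\cL^s_{X'}=\cL^s_{X'}\cap\partial T$ is a filling MS-lamination on $\TT^2$ with a single compact leaf; since the plug is attracting, this lamination is actually a foliation, i.e.\ a zipped Reeb foliation in the sense of Definition~\ref{d.zipped-Reeb}. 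The only step requiring any real care is the first one, namely exhibiting a transitive Anosov flow with at least one periodic orbit of negative multipliers, and this is handled by the explicit suspension above; everything else is a direct time-reversed application of the DA bifurcation results already proved in Subsection~\ref{ss.DA}.
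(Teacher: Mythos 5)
Your proposal is correct and follows essentially the same route as the paper: start from the suspension of the linear Anosov automorphism given by $\bigl(\begin{smallmatrix}-2&-1\\-1&-1\end{smallmatrix}\bigr)$, perform a repelling DA bifurcation on a periodic orbit with negative multipliers, excise a tubular neighborhood of the resulting repelling orbit, and read off the conclusion from Proposition~\ref{p.properties-DA-bis}. The extra verifications you include (identifying orbits of odd period as having negative multipliers, noting that no orbit of a transitive Anosov flow has a free separatrix) are harmless clarifications of points the paper leaves implicit.
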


\begin{proof}
Let $X_0$ be a transitive Anosov vector field on a closed oriented three-manifold $M$, such that $X_0$ 
has some periodic orbits with negative multipliers. For example, $X_0$ can be the the suspension of the 
linear Anosov automorphism $A:\TT^2\to \TT^2$ defined by $A(x,y)=(-2x-y,-x-y)$.  Choose a periodic orbit 
$O$ of $X_0$, such that $O$ has negative multiplier. Make a repelling DA bifurcation on $O$ 
(see subsection~\ref{ss.DA}). This gives rise to a vector field $X$ on $M$, for which $O$ is a 
repelling hyperbolic periodic orbit. As in subsection~\ref{ss.DA}, consider a tubular neighbourhood 
$T$ of $O$, such that $T$ is contained in the bassin of $O$, and such that $X$ is transverse to 
$\partial T$. Set $U:=M\setminus T$. Proposition~\ref{p.properties-DA-bis} shows that $(U,X)$ satisfies 
the required properties.
\end{proof}

\begin{rema}
\label{r.several-zipped}
More generally, for every $p\geq 1$, there exists an oriented  transitive attracting hyperbolic plug $(U,X)$, whose 
entrance boundary $\partial^{in} U$ has $p$ connected components, and such that the restriction of the 
entrance foliation $\cL^s_X$ to each connected component of $\partial^{in} U$ is a zipped Reeb foliation. 
The construction of the plug $(U,X)$ is similar to those of the proof of Lemma~\ref{l.realizing-zipped}. 
The only difference is that we need to make a DA bifurcation on $p$ periodic orbits instead of a single one.
\end{rema}

The core of the proof of Proposition~\ref{p.not-transitive} is the following lemma:

\begin{lemm}
\label{l.adding-leaf-1}
Let $\cF,\cF'$ be MS-foliations on $\TT^2$. Suppose that:
\begin{itemize}
\item $\cF'$ can be obtained by adding a compact leaf to $\cF$ (in the sense of 
definition~\ref{d.adding-leaves}),
\item $\cF$ is realized by an attracting hyperbolic plug.
\end{itemize}
Then $\cF'$ can also be realized by an attracting hyperbolic plug.
\end{lemm}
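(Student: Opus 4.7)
The plan is to build the desired plug $(U',X')$ by gluing an auxiliary \emph{leaf-adding plug} $(V,Y)$ on top of the given attracting plug $(U,X)$ realizing $\cF$, and then to verify that the resulting attracting plug realizes $\cF'$.

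First, I would construct a saddle hyperbolic plug $(V,Y)$ with $\partial^{in} V\cong\partial^{out} V\cong\TT^2$, whose maximal invariant set is a single saddle periodic orbit $O$ with negative multipliers. Paralleling the proof of Lemma~\ref{l.realizing-zipped}, such a plug can be extracted from a transitive Anosov flow on a closed 3-manifold admitting a periodic orbit of negative multipliers, by taking a trapping region around $O$ whose boundary is transverse to the flow and whose maximal invariant set inside is exactly $O$. Because $W^s(O)$ and $W^u(O)$ are M\"obius bands, their unique free stable and unstable separatrices meet $\partial^{in} V$ and $\partial^{out} V$ respectively in single closed curves $\gamma_V^s$ and $\gamma_V^u$. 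Thus $\cL^s_Y$ and $\cL^u_Y$ each consist of a single compact leaf (together with the non-compact leaves accumulating on it).

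Next, I would pick a diffeomorphism $\varphi:\partial^{out} V\to\partial^{in} U$ so that $\varphi_*(\gamma_V^u)$ is transverse to the MS-foliation $\cF=\cL^s_X$, which is easy to arrange since $\gamma_V^u$ is a single non-contractible simple closed curve on the torus. Proposition~\ref{p.plug} then yields a hyperbolic plug $(W,Z):=(V\sqcup U)/\varphi$. Since $U$ is attracting ($\partial^{out} U=\emptyset$), so is $W$, with $\partial^{in} W=\partial^{in} V\cong\TT^2$, so $\cL^s_W$ is an MS-foliation on the torus. Its compact leaves correspond to free stable separatrices of periodic orbits in $\Lambda_W=\Lambda_X\cup\{O\}$: one new leaf $\gamma_V^s$ arising from $O$, plus, for each of the $n$ compact leaves $\gamma_i^U$ of $\cF$, a transported leaf $(\Gamma_Y^{-1}\circ\varphi^{-1})(\gamma_i^U)$ obtained by flowing the corresponding stable separatrix backward through $V$. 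This gives exactly $n+1$ compact leaves, which must be pairwise freely homotopic by Lemma~\ref{l.parallel-leaves}.

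It remains to identify $\cL^s_W$ with $\cF'$ up to topological equivalence. By precomposing $\varphi$ with an isotopy of $\partial^{out} V$, I can slide the transported leaves to any prescribed cyclic position around $\gamma_V^s$; by precomposing with an orientation-reversing involution of the compact curve, I can flip the sign $\sigma'(n)$ of the contracting orientation of the new leaf relative to the others. Hence any combinatorial type $\sigma'$ extending $\sigma$ can be realized, and Proposition~\ref{p.combinatorial-type} then concludes that $\cL^s_W$ and $\cF'$ are topologically equivalent. The hard part will be this final step: controlling the combinatorial type of $\cL^s_W$ with sufficient precision. This requires enough flexibility in the choice of $\varphi$ (modulo isotopy and orientation changes) to realize any $\sigma'$, and verifying that the transported circles $(\Gamma_Y^{-1}\circ\varphi^{-1})(\gamma_i^U)$ indeed lie in the same homotopy class on $\partial^{in} V$ as $\gamma_V^s$---which is forced abstractly by the MS-foliation structure on the torus, but should be checked explicitly via the geometry of the crossing map $\Gamma_Y$.
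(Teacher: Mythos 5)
Your overall strategy is the same as the paper's: glue an auxiliary plug $(V,Y)$ whose maximal invariant set is a single negative-multiplier saddle orbit onto $(U,X)$ along $\partial^{in} U$, so that the new compact leaf of $\cL^s$ comes from the saddle orbit, and then adjust the gluing map (including by an orientation-flipping twist, the paper's $\tau$) to match the prescribed combinatorial type. However, there is a genuine gap in your construction of $(V,Y)$. You ask for $\partial^{in} V\cong\partial^{out} V\cong\TT^2$ but propose to obtain $(V,Y)$ as ``a trapping region around $O$ whose boundary is transverse to the flow and whose maximal invariant set is exactly $O$'' inside a transitive Anosov flow. Such a region is an isolating block of $O$, hence a solid torus neighborhood with a \emph{single} boundary torus $T$; the decomposition $T=\partial^{in} V\sqcup\partial^{out} V$ then splits $T$ into two non-empty annuli (for a negative-multiplier orbit, one entrance annulus and one exit annulus, each winding twice around the core), so neither $\partial^{in} V$ nor $\partial^{out} V$ is itself a torus. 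This is why the paper instead constructs $(V,Y)$ via Lemma~\ref{a.hyperbolic-plug-twisted-orbit}: one suspends a gradient-like diffeomorphism of $\SS^2$ with a source, a saddle with negative eigenvalues, and an attracting period-two orbit, and excises tubular neighborhoods of the attracting and repelling periodic orbits of the suspension. That yields a manifold with two boundary tori, one entirely entrance and one entirely exit, which is what you need. With the corrected $(V,Y)$, the remainder of your argument---placing $\varphi_*(\gamma^u)$ in a chosen complementary annulus of $\cL^s_X$ to fix the cyclic position, and post-composing $\varphi$ with a map reversing the orientation of $\gamma^u$ to flip the relative contracting orientation $\sigma'(n+1)$ (you wrote $\sigma'(n)$, presumably a typo)---is the same as the paper's, modulo the paper's precision in observing that this $\tau$-twist changes the contracting orientation of the transported leaves $\widehat\gamma_1,\dots,\widehat\gamma_n$ but not of $\gamma^s$, since the latter is determined intrinsically by the dynamics of $Y$ via Proposition~\ref{p.flow-vs-contracting}.
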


During the proof of the Lemma~\ref{l.adding-leaf-1}, we will need the existence of a hyperbolic plug as 
provided by the following lemma:

\begin{lemma}
\label{a.hyperbolic-plug-twisted-orbit}
There exists an oriented hyperbolic plug $(V,Y)$ with the following properties:
\begin{itemize}
\item $V$ is a Seifert bundle over a 2-sphere minus two discs;
\item the maximal invariant set $\bigcap_{t\in\RR} Y^t(V)$ is a saddle hyperbolic periodic orbit $O$ with 
negative multipliers;
\item the entrance boundary $\partial^{in} V$ is a torus, and the entrance lamination 
$\cL^s_{Y}=W^s_Y(O)\cap\partial^{in} V$ is made of a single (closed) leaf $\gamma^s$, which is essential in 
$\partial^{in} V$;
\item the exit boundary $\partial^{out} V$ is a a torus, and the exit lamination 
$\cL^u_{Y}=W^u_Y(O)\cap\partial^{out} V$ is made of a single (closed) curve $\gamma^u$, which is  
essential in $\partial^{in} V$.
\end{itemize}
\end{lemma}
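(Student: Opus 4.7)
The construction of $(V, Y)$ proceeds from a local model for a saddle periodic orbit with negative multipliers, then builds the plug around it. On $\tilde V = D^2_{(u, v)} \times \RR_\tau$ with vector field $\tilde Y = \partial_\tau + \alpha u\,\partial_u - \beta v\,\partial_v$ (for some $\alpha, \beta > 0$), the $\ZZ$-action $(u, v, \tau) \sim (-u, -v, \tau + 1)$ preserves $\tilde Y$; let $N := \tilde V/\ZZ$ (a solid torus) carry the induced vector field $Y_0$. The image $O \subset N$ of the $\tau$-axis is a periodic orbit whose first-return map on $\{\tau = 0\}$ is $\mathrm{diag}(-e^\alpha, -e^{-\beta})$, a saddle with negative multipliers. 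With respect to the natural Seifert fibration of $N$ induced by $\partial_\tau$, $O$ is an exceptional fiber of type $(2, 1)$ and regular fibers have period $2$. The quotients $W^s(O) = \{u = 0\}/{\sim}$ and $W^u(O) = \{v = 0\}/{\sim}$ are Möbius bands with core $O$; their boundaries on $\partial N$ are simple closed curves of primitive homology class (a direct computation in the universal cover of $\partial N$ in coordinates $(\theta, \tau)$ gives, for both, the class $A - 2B$ in a natural $\ZZ^2$-basis).

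The plug $V$ is obtained by ``opening up'' the two tangency circles of $Y_0$ on $\partial N$, which divide $\partial N$ into an entrance annulus $\cE \ni \partial W^s(O)$ and an exit annulus $\cX \ni \partial W^u(O)$. Near each tangency circle the flow is, up to a smooth conjugation, a planar saddle crossed with a circle, and the corner between $\cE$ and $\cX$ along such a circle can be replaced by a pair of flow-parallel smooth surfaces separating the entrance region from the exit region, each transverse to $Y_0$. Performing this separation at both tangency circles replaces the single boundary torus $\partial N$ by two disjoint transverse tori $T^{\mathrm{in}}$ (containing an isotopic copy of $\partial W^s(O)$) and $T^{\mathrm{out}}$ (containing an isotopic copy of $\partial W^u(O)$). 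The resulting compact 3-manifold $V$ is Seifert fibered over the annulus arising as the base of the induced fibration, with a single $(2, 1)$ exceptional fiber corresponding to $O$; let $Y$ coincide with $Y_0$ on the interior of $N \cap V$ and be extended smoothly over the neighborhoods of $\partial V$ added by the separation so as to point strictly inward on $T^{\mathrm{in}}$ and strictly outward on $T^{\mathrm{out}}$.

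With this construction, $O$ is a saddle periodic orbit with negative multipliers (the dynamics at $O$ is untouched), and it is the entire maximal invariant set since every other orbit is monotonically advected to $T^{\mathrm{in}}$ in backward time or $T^{\mathrm{out}}$ in forward time by the explicit saddle form of $Y_0$ away from $O$. The entrance lamination $\cL^s_Y = W^s(O) \cap T^{\mathrm{in}}$ is the image of $\partial W^s(O) \cap \cE$ under the corner-separation isotopy, a single essential simple closed curve on $T^{\mathrm{in}}$ (essentiality being preserved under the small isotopy since the homology class is primitive), and analogously for $\cL^u_Y = W^u(O) \cap T^{\mathrm{out}}$ on $T^{\mathrm{out}}$.

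The main obstacle is the separation procedure near the tangency circles: one must construct $T^{\mathrm{in}}$ and $T^{\mathrm{out}}$ so that they are disjoint, transverse to $Y$, compatible with the Seifert fibration, and still contain the essential curves coming from $\partial W^s(O)$ and $\partial W^u(O)$. This is handled by working in the local saddle-times-circle model around each tangency circle, where the corner-opening can be written down explicitly and equivariantly with respect to the $S^1$-Seifert action, and by arranging the separating surfaces to avoid the (compact) curves $\partial W^s(O)$ and $\partial W^u(O)$ already lying in the interiors of $\cE$ and $\cX$.
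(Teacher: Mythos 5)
Your local model is fine as far as it goes: $(N,Y_0)$ is the fibered solid torus of type $(2,1)$, the saddle orbit $O$ of negative multipliers is its core, and $W^s(O)$, $W^u(O)$ are M\"obius bands meeting $\partial N$ in essential circles. The gap is in the ``opening up'' step, and it is a genuine topological obstruction rather than a technical difficulty to be ``handled'' by an explicit equivariant corner-smoothing. The solid torus $N$ has $b_1(N)=1$, and any modification supported in a collar of $\partial N$ --- removing wedges near the tangency circles, inserting pairs of transverse annular faces, rounding corners --- leaves $b_1$ unchanged and cannot disconnect the single boundary torus into two. But any compact orientable $3$-manifold $V$ whose boundary is a disjoint union of two tori has $b_1(V)\geq 2$, since the image of $H_1(\partial V;\QQ)\to H_1(V;\QQ)$ has half the rank of $H_1(\partial V;\QQ)$; indeed the Seifert bundle over the annulus with one $(2,1)$-exceptional fiber that you correctly say $V$ should be has $H_1\cong\ZZ^2$. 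So no local surgery on $(N,Y_0)$ can produce the required plug: the dynamical germ at $O$ does not determine the global topology of $V$.

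The missing ingredient is an ambient closed manifold that places a repeller ``before'' $O$ and an attractor ``after'' $O$; these furnish the extra generator of $H_1$ and let the entrance and exit regions sit on genuinely disjoint boundary tori. This is precisely what the paper does: choose a gradient-like diffeomorphism $f$ of $S^2$ whose nonwandering set consists of a repelling fixed point $r$, a saddle fixed point $o$ with negative eigenvalues, and an attracting periodic orbit $\{a_1,a_2\}$ of period two; take the suspension, which is $S^2\times S^1$; and set $V$ to be the complement of small solid torus neighborhoods $T_R$ and $T_A$ of the suspended repelling and attracting orbits, chosen with boundaries transverse to the flow. Your model $N$ embeds in this $V$ as a neighborhood of $O$, and $\partial T_R$, $\partial T_A$ are the tori $T^{\mathrm{in}}$, $T^{\mathrm{out}}$ that your construction was seeking. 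Once the plug is built this way, your homology computations for $\gamma^s$ and $\gamma^u$ and your identification of the maximal invariant set go through essentially as you wrote them.
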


\begin{proof}[Proof of Lemma~\ref{a.hyperbolic-plug-twisted-orbit}]
Consider a gradient-like diffeomorphism $f:\SS^2\to\SS^2$ such that the non-wandering set of $f$ consists 
in a repelling hyperbolic fixed point $r$, a saddle hyperbolic fixed point (with negative eigenvalues) $o$, 
and an attracting hyperbolic periodic orbit of period two $\{a_1,a_2\}$. Denote by $(N,Y)$ the suspension 
of $(\SS^2,f)$. The non-wandering set of $Y$ is made of a repelling hyperbolic periodic orbit $R$, a saddle 
hyperbolic periodic orbit $O$, and an attracting hyperbolic periodic orbit $A$. Let $V$ be the manifold 
with boundary obtained by excising from $N$ some small tubular neighborhoods of the periodic orbits $A$ and 
$R$ whose boundary are transverse to $Y$. Then $(V,Y)$ is a hyperbolic plug, and one can easily check that 
it satisfies the desired properties.
\end{proof}

\begin{proof}[Proof of lemma~\ref{l.adding-leaf-1}]
By assumption, we can find a connected attracting hyperbolic plug  $(U,X)$ realizing the foliation $\cF$. 
Let $(V,X)$ be the hyperbolic plug provided by Lemma~\ref{a.hyperbolic-plug-twisted-orbit}. We have to 
construct a plug $(U',X')$ realizing the foliation $\cF'$. Thus plug will be obtained by gluing together 
the plugs $(U,X)$ and $(V,Y)$. We proceed to the construction.

\bigskip

By assumption, $\cF'$ can be obtained by adding a compact leaf to $\cF$. Since $\cL^s_X$ is topologically 
equivalent to $\cF$, the foliation $\cF'$ can also be obtained by adding a compact leaf to $\cL^s_X$. By 
definition, this means that, one can find a geometrical enumeration $\gamma_1,\dots,\gamma_n$ of the 
compact leaves of $\cL^s_X$ and a geometrical enumeration  $\gamma_1',\dots,\gamma_{n+1}'$ of the compact 
leaves of $\cF'$, so that the corresponding combinatorial types $\sigma:\{1,\dots,n\}\to\{+,-\}$ and 
$\sigma':\{1,\dots,n+1\}\to\{+,-\}$ of $\cF'$ satisfy $\sigma'_{|\{1,\dots,n\}}=\sigma$.

Since $\gamma_1,\dots,\gamma_n$ is a geometrical enumeration of the compact leaves of $\cL^s_X$, the 
compact leaves $\gamma_n$ and $\gamma_1$ bound an open annulus $A\subset \partial^{in} U$ which is disjoint 
from the compact leaves of  $\cL^s_X$ (in the particular case $n=2$, the leaves $\gamma_n$ and $\gamma_1$  
bound two annuli; we denote by $A$ the annulus which is on the left-hand side of $\gamma_1$ with respect to 
the contracting orientation of $\gamma_1$).

According to item~3 of Lemma~\ref{a.hyperbolic-plug-twisted-orbit}, the surface $\partial^{out} V$ is a 
torus and the lamination $\cL^u_{Y}$ is made of a single (closed) leaf $\gamma^u$. We consider a 
diffeomorphism $\phi:\partial^{out} V\to\partial^{in} U$ with the following properties:
\begin{enumerate}
\item[(i)] the curve $\phi_*(\gamma^u)$ is contained in the annulus $A$;
\item[(ii)] the curve $\phi_*(\gamma^u)$ is transverse to the foliation $\cL^s_X$.
\end{enumerate}
Such a diffeomorphism $\phi$ does exist: indeed, the restriction of the foliation $\cL^s_X$ to the annulus 
$A$ is topologically equivalent to the foliation by vertical lines of the annulus $\SS^1\times\RR$. Now, we
glue the plugs $(U,X)$ and $(V,Y)$, using $\phi$ as a gluing map. In other words, we consider the manifold 
with boundary $U':=(U\sqcup V)/\phi$, and the vector field $X'$ on $U'$ induced by $X$ and $Y$. 
Proposition~\ref{p.plug} ensures that $(U',X')$ is a connected attracting hyperbolic plug.

We want to prove that the foliation $\cF'$ is realized by $(U',X')$, \emph{i.e.} that the foliations 
$\cL^s_{X'}$ and $\cF'$ are topologically equivalent. For this purpose, we will use the crossing map
$$\Gamma_V:\partial^{in} V\setminus\gamma^s\longrightarrow \partial^{out} V\setminus \gamma^u.$$
Recall that $\Gamma_V$ maps a point $x\in \partial^{in} V\setminus\gamma^s$ to the unique point of 
intersection of the orbit of $x$ (for the flow of the vector field $Y$) with the surface 
$\partial^{out} V$. As stated in Proposition~\ref{p.gluedlaminations}:
\begin{equation}
\label{e.lamination}
\cL^s_{X'} =  \gamma^s\sqcup (\Gamma_V^{-1})_*(\phi^{-1})_*(\cL^s_X)\setminus\gamma^u).
\end{equation}
The foliation $\cL^s_X$ has $n$ compact leaves $\gamma_1,\dots,\gamma_n$. By definition of the gluing map 
$\phi$, the closed curve $\phi_*(\gamma^u)$ is disjoint from $\gamma_1,\dots,\gamma_n$. Hence, 
the foliation $\cL^s_{X'}$ has $n+1$ compact leaves $\widehat\gamma_1,\dots,\widehat\gamma_{n+1}$ 
where we denote
$$\widehat\gamma_i:=(\Gamma_V^{-1}\circ \phi^{-1})_*(\gamma_1)\mbox{ for }i=1\dots n\mbox{ and }
\widehat\gamma_{n+1}:=\gamma^s.$$
Let $\widehat A:=(\Gamma_V^{-1}\circ\phi^{-1})(A)$. Since $\phi_*(\gamma^u)$ is contained in $A$, 
the map  $\Gamma_V^{-1}\circ \phi^{-1}$ is defined on $\partial^{in} U\setminus A$ and
\eqref{e.lamination} shows that
$$\cL^s_{X'}\cap (\partial^{in} U'\setminus \widehat A)
\mbox{ is topologically equivalent to }\cL^s_X\cap (\partial^{in} U\setminus A).$$
Since the annulus $\partial^{in} U'\setminus \widehat A$ contains the compact leaves 
$\widehat\gamma_{1},\dots,\widehat\gamma_{n+1}$, this proves that:
\begin{itemize}
\item $\widehat\gamma_1,\dots,\widehat\gamma_{n+1}$ is a geometrical enumeration of the 
compact leaves of $\cL^s_{X'}$,
\item the combinatorial type $\widehat \sigma:\{1,\dots,n+1\}\to \{+,-\}$ of  $\cL^s_{X'}$ 
associated with this geometrical enumeration satisfies 
$\widehat \sigma_{|\{1,\dots,n\}}=\sigma=\sigma'_{|\{1,\dots,n\}}$.
\end{itemize}
We are left to prove that $\widehat\sigma(n+1)=\sigma'(n+1)$. Actually, we will modify the gluing map 
$\phi$ in order to adjust the value of $\widehat\sigma(n+1)$.  Let 
$\tau:\partial^{out} V\to\partial^{out} V$ be an orientation-preserving homeomorphism which maps the 
closed curve $\gamma^u$ on the same curve with the opposite orientation. Observe that 
$\phi\circ\tau$ still satisfies properties~(i) and~(ii); so we may replace $\phi$ by 
$\phi\circ\tau$ in our construction. Replacing $\phi$ by $\phi\circ\tau$ has the following effect:
\begin{itemize}
\item It changes the contracting orientation of $\widehat\gamma_1$. Indeed, the contracting orientation of  
$\widehat\gamma_1$ is the image under $(\Gamma_V^{-1}\circ \phi^{-1})_*$ of the contracting orientation of 
$\gamma_1$, and $\tau_*$ reverses the orientation of $(\phi^{-1})_*(\gamma_1)$ since $\phi^{-1}(\gamma_1)$ 
is in the same free homotopy class as $\gamma^u$).
\item It does not change the contracting orientation of $\widehat\gamma^{n+1}=\gamma^s$. Indeed, 
Proposition~\ref{p.flow-vs-contracting} ensures that the contracting orientation of $\gamma^s$ as a 
leaf of $\cL^s_{X'}$ is opposite to the dynamical orientation of $\gamma^s$. And the dynamical orientation 
of  $\gamma^s$ does not depend on the gluing map.
\end{itemize}
Therefore, up to replacing $\phi$ by $\tau\circ\phi$, we can decide whether the contracting orientation of 
the compact leaves $\widehat\gamma_1$ and $\widehat\gamma_{n+1}=\gamma^s$ are coherent or not. In other 
words, up to replacing $\phi$ by $\tau\circ\phi$, we may assume that $\widehat\sigma(n+1)=\sigma'(n+1)$. 
We have proved that the foliations $\cL^s_{X'}$ and $\cF'$ have the same combintorial type.  According to 
Proposition~\ref{p.combinatorial-type}, this implies that these foliations are topologically equivalent. 
The proof is complete.
\end{proof}

\begin{figure}[ht]
\begin{center}
\includegraphics[totalheight=5cm]{l.adding-leaf.eps}
\caption{Proof of Lemma~\ref{l.adding-leaf-1}}
\end{center}
\end{figure}

\begin{rema}
The attracting plug $(U',X')$ constructed in the proof above is never transitive.
\end{rema}

\begin{proof}[Proof of Proposition~\ref{p.not-transitive}]
The proposition is obtained by putting together corollary~\ref{c.adding-leaves}, 
lemma~\ref{l.realizing-zipped} and lemma~\ref{l.adding-leaf-1}.
\end{proof}

\subsection{Transitive attracting plug with prescribed entrance foliations}

We are now ready to prove Theorem~\ref{t.attractors}. For this purpose, we need an improved version of 
Lemma~\ref{l.adding-leaf-1}:

\begin{lemm}
\label{l.adding-leaf-2}
Let $\cF,\cF'$ be MS-foliations on $\TT^2$. Suppose that:
\begin{itemize}
\item $\cF'$ can be obtained by adding a compact leaf to $\cF$,
\item $\cF$ is realized by a transitive attracting hyperbolic plug $(U,X)$ which has infinitely 
many periodic orbits with negative multipliers.
\end{itemize}
Then $\cF'$ can be realized by a transitive attracting hyperbolic plug $(U',X')$ which has infinitely 
many periodic orbits with negative multipliers.
\end{lemm}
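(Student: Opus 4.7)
The plan is to modify the construction of Lemma~\ref{l.adding-leaf-1} by adding a blow-up/excise/glue surgery, in the spirit of Theorem~\ref{t.richer}, in order to create a heteroclinic cycle that recovers transitivity of the combined plug while preserving the combinatorial type of its entrance foliation.

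First I would apply Lemma~\ref{l.adding-leaf-1} to $(U,X)$ to obtain an attracting hyperbolic plug $(U_0, X_0) = (U,X) \sqcup_\varphi (V,Y)$ realizing $\cF'$, where $(V,Y)$ is the twisted-orbit plug of Lemma~\ref{a.hyperbolic-plug-twisted-orbit}. The maximal invariant set of $(U_0,X_0)$ has two basic pieces: the transitive set $\Lambda_X$ and the single saddle orbit $O$ (which has negative multipliers). Combinatorially one has an edge $O \to \Lambda_X$ (from the gluing $\varphi$), but no edge $\Lambda_X \to O$, which is why $(U_0,X_0)$ fails to be transitive.

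To create the missing heteroclinic connection, I would pick two distinct periodic orbits $P_1,P_2 \subset \Lambda_X$ with negative multipliers (available by hypothesis), perform an attracting DA bifurcation on $P_1$ and a repelling DA bifurcation on $P_2$, and excise small tubular neighborhoods $T_{P_1}, T_{P_2}$. By Proposition~\ref{p.properties-DA-bis} and its repelling analog, this produces a saddle plug with a new exit torus $\partial T_{P_1}$ and a new entrance torus $\partial T_{P_2}$, each carrying a zipped Reeb foliation, while the entrance foliation on $\partial^{in} V$ is topologically unchanged (same combinatorial type as $\cF'$). I would then glue $\partial T_{P_1}$ to $\partial T_{P_2}$ by a strongly transverse diffeomorphism $\psi$, invoking Remark~\ref{r.partial-gluing} (the partial gluing analog of Theorem~\ref{t.transitive}) to arrange, after a small perturbation of the vector field within its topological equivalence class, that the resulting attracting plug $(U', X')$ is hyperbolic. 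The orbits $P_1, P_2$ and the gluing $\psi$ should be chosen so that some orbit in $W^u(\tilde\Lambda_X)$, after escaping through $\partial T_{P_1}$ and re-entering via $\psi$ and $\partial T_{P_2}$, eventually reaches the annulus of $\partial^{in} U$ containing $\varphi_*(\gamma^u)$, and hence lands on $W^s(O)$ through $(V,Y)$; this gives the desired edge $\Lambda_X \to O$ so that the combinatorial graph becomes strongly connected, and transitivity follows by (an analog of) Proposition~\ref{p.transitive}.

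The hardest step is verifying that the compact leaf count on $\partial^{in} V$ is preserved by the self-gluing $\psi$. The crucial ingredient is that $\cL^u_V = \{\gamma^u\}$ consists of a single circle with no non-compact leaves, so the pullback argument of Lemma~\ref{l.adding-leaf-1} via $\Gamma_V^{-1} \circ \varphi^{-1}$ turns the $n$ compact leaves of $\cL^s_X$ on $\partial^{in} U$ into $n$ uninterrupted compact leaves on $\partial^{in} V$, regardless of what happens in $U_0 \setminus V$. Together with $\gamma^s \subset \cL^s_V$ this gives $n+1$ compact leaves, and by choosing $\varphi$ appropriately (using Proposition~\ref{p.simplification} and Proposition~\ref{p.combinatorial-type}) the combinatorial type matches that of $\cF'$. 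The extra surgery in $U$ only modifies free separatrices which land on the new internal boundary tori $\partial T_{P_1}, \partial T_{P_2}$ (not on $\partial^{in} V$); after the gluing $\psi$ these separatrices cease to be free, and they contribute no new compact leaves to $\partial^{in} V$. Finally, infinitely many of the negative-multiplier periodic orbits of $(U,X)$ survive all the surgeries (which destroy only $P_1$ and $P_2$), so $(U', X')$ still has infinitely many such orbits, completing the induction.
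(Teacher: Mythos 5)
There is a genuine gap at the step where you claim the re-glued orbit ``lands on $W^s(O)$ through $(V,Y)$'', and it cannot be repaired by choosing $P_1,P_2,\psi$ more cleverly. In the plug $(U_0,X_0)=(U\sqcup V)/\varphi$, the gluing surface $\Sigma=\partial^{out}V=\partial^{in}U$ is crossed by the flow only in the direction from $V$ to $U$. Consequently every forward orbit that ever enters $U$ stays in $U$ for all positive times, so $W^s(O)$ (whose forward orbits accumulate on $O\subset V$) is entirely contained in $V\cup\partial^{in}V$ and meets $U$ nowhere. Your surgery --- two DA bifurcations on $P_1,P_2\subset\Lambda_X$ followed by gluing $\partial T_{P_1}$ to $\partial T_{P_2}$ --- modifies only the $U$ side of the picture: both tori lie in $\mathrm{int}(U)$, so an orbit escaping through $\partial T_{P_1}$ re-enters through $\partial T_{P_2}$ still on the $U$ side, and can never reach $W^s(O)$. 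Reaching the annulus of $\partial^{in}U$ containing $\varphi_*(\gamma^u)$ does not help: $\gamma^u$ lies on $W^u(O)$, not on $W^s(O)$, and that annulus is on the $U$ side of $\Sigma$ in any case. Thus the edge $\tilde\Lambda_X\to O$ needed for combinatorial transitivity is never created; your gluing only produces a self-loop at $\tilde\Lambda_X$, and the resulting attractor still has (at least) two basic pieces, $O$ remaining an isolated source in the graph.

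The paper circumvents this by replacing the plug $(V,Y)$ of Lemma~\ref{a.hyperbolic-plug-twisted-orbit} with the pair-of-pants plug $(W,Z)$ of Lemma~\ref{l.suspension-pair-pants}, which has a \emph{second} entrance torus $\partial^{in}_2 W$ whose entrance lamination is a zipped Reeb foliation with unique compact leaf $\gamma^s_2\subset W^s(O)$. After a single attracting DA on a negative-multiplier orbit $\Omega\subset\Lambda$, the new exit torus $\partial T_\Omega\subset U$ is glued onto $\partial^{in}_2 W$. This routes the escaping orbits back into a neighbourhood of $W^s(O)$, creating the edge $\Lambda'\to O$ and hence combinatorial transitivity, while the remaining entrance torus $\partial^{in}_1 W$ becomes $\partial^{in}U'''$ and carries the required combinatorial type $\cF'$. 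Your first paragraph (applying Lemma~\ref{l.adding-leaf-1}) and your closing remarks about the compact-leaf count and the persistence of negative-multiplier orbits are fine; the essential missing idea is that the return gluing must land on the stable side of the extra saddle orbit, which forces the use of a three-boundary plug rather than a two-boundary one.
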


The proof of Lemma~\ref{l.adding-leaf-2} follows the same strategy as those of Lemma~\ref{l.adding-leaf-1}, 
but is slightly more complicated. Instead of using the plug $(V,Y)$ provided by 
Lemma~\ref{a.hyperbolic-plug-twisted-orbit}, we will use a plug $(W,Z)$ with the following 
characteristics:

\begin{lemma}
\label{l.suspension-pair-pants}
There exists a connected oriented hyperbolic plug $(W,Z)$ such that:
\begin{itemize}
\item[$\bullet$] $W$ is diffeomorphic to the product of a pair of pants by a circle;
\item[$\bullet$] the maximal invariant set $\bigcap_{t\in\RR} Z^t(W)$ is an isolated saddle hyperbolic 
periodic orbit (with positive multipliers);
\item[$\bullet$] the entrance boundary $\partial^{in} W$ is made of two tori 
$\partial_1^{in} W,\partial_2^{in} W$; the entrance lamination $\cL^s_{Z}$ is made of two isolated compact leaves 
$\gamma_1^s,\gamma_2^s$; more precisely, $\gamma_1^s$ is an essential simple closed curve in 
$\partial_1^{in} W$ and $\gamma_2^s$ is an essential simple closed curve in $\partial_2^{in} W$;
\item[$\bullet$] the exit boundary $\partial^{out} W$ is a torus, and the exit lamination 
$\cL^u_{Z}$ is made of two closed leaves $\gamma_1^{u},\gamma_2^{u}$, which are essential in the 
torus $\partial^{out} W$; moreover the dynamical orientations of $\gamma_1^{u},\gamma_2^{u}$ coincide.
\end{itemize}
\end{lemma}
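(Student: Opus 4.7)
The plan is to realize $(W,Z)$ as an open subset of a suitable flow on $\SS^2\times S^1$. First I would choose a Morse-Smale vector field $X$ on $\SS^2$ whose singularities are two repelling hyperbolic fixed points $r_1,r_2$, one saddle hyperbolic fixed point $o$ with real eigenvalues, and one attracting hyperbolic fixed point $a$; such a vector field exists (for instance, take minus the gradient of a generic Morse function $h\colon\SS^2\to\RR$ with two local maxima, one saddle and one local minimum, for which $2+1-1=2=\chi(\SS^2)$). Moreover, I can arrange the heteroclinic picture so that the two branches of $W^s_X(o)$ respectively connect $o$ to $r_1$ and to $r_2$, while both branches of $W^u_X(o)$ connect $o$ to $a$. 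I would then consider the product vector field $Y := X + \partial/\partial\theta$ on $N := \SS^2\times S^1$. Its non-wandering set consists of the four hyperbolic periodic orbits $R_i=\{r_i\}\times S^1$, $O=\{o\}\times S^1$, $A=\{a\}\times S^1$, and all four orbits have positive real multipliers, since these multipliers are obtained by exponentiating the real eigenvalues of $X$ at the corresponding singular points.

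I would then pick small disjoint tubular neighborhoods $T_1,T_2,T_A$ of $R_1,R_2,A$ whose boundaries are transverse to $Y$, contained respectively in the basins of $R_1,R_2,A$, and set
\[ W := N \setminus \bigl(\mathrm{int}(T_1)\cup\mathrm{int}(T_2)\cup\mathrm{int}(T_A)\bigr), \qquad Z := Y|_W. \]
Topologically $W$ is $\SS^2\times S^1$ minus three solid tori fibered over disjoint disks in $\SS^2$, so $W\cong P\times S^1$ where $P$ is a pair of pants. By construction the maximal invariant set of $(W,Z)$ is exactly the saddle periodic orbit $O$, and the entrance (resp. exit) boundary of $W$ is $\partial T_1\sqcup\partial T_2$ (resp. $\partial T_A$); hence $(W,Z)$ is a hyperbolic plug of the desired topological type. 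Since $O$ has positive real multipliers, both $W^s(O)$ and $W^u(O)$ are cylinders, and the product structure gives $W^s(O)=W^s_X(o)\times S^1$ and $W^u(O)=W^u_X(o)\times S^1$. The hypothesis on the heteroclinic connections of $X$ then immediately shows that $\cL^s_Z$ consists of exactly one essential simple closed curve in each entrance torus (coming from the two distinct branches of $W^s_X(o)$), and that $\cL^u_Z$ consists of exactly two essential simple closed curves $\gamma^u_1,\gamma^u_2$ in $\partial T_A$ (coming from the two branches of $W^u_X(o)$, both of which limit at~$a$).

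The only point I expect to require care is the coherence of the dynamical orientations of $\gamma^u_1$ and $\gamma^u_2$ in $\partial T_A$. By definition, the dynamical orientation of $\gamma^u_i$ is obtained by pushing the natural orientation of $O$ along the component of $W^u(O)\setminus O$ containing $\gamma^u_i$. Under the product decomposition $W^u(O)=W^u_X(o)\times S^1$, the natural orientation of $O=\{o\}\times S^1$ is the $\partial/\partial\theta$-direction, and the push along either branch is of the form $(\text{arc in }\SS^2)\times\{\theta\}$, which preserves the $\partial/\partial\theta$-orientation. Both curves $\gamma^u_1,\gamma^u_2$ therefore inherit the orientation parallel to the $\partial/\partial\theta$-direction in $\partial T_A$, which is also the natural orientation of $A=\{a\}\times S^1$ induced by $Y$. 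Consequently $\gamma^u_1$ and $\gamma^u_2$ are freely homotopic as oriented closed curves in $\partial T_A$, so their dynamical orientations coincide and the proof is complete.
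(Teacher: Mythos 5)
Your proof is correct and follows the same strategy as the paper: suspend a gradient-like dynamics on $\SS^2$ with two sources, one saddle, and one sink, and excise tubular neighborhoods of the three non-saddle periodic orbits. The only cosmetic difference is that you use the product flow $X+\partial/\partial\theta$ rather than the suspension of a gradient-like diffeomorphism; this has the mild advantage of making the coherence of the dynamical orientations of $\gamma^u_1,\gamma^u_2$ transparent (a verification the paper leaves to the reader), and you correctly make explicit the requirement of real eigenvalues at the saddle, which guarantees the orbit $O$ has positive multipliers.
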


\begin{proof}[Proof of Lemma~\ref{l.suspension-pair-pants}]
Consider a gradient-like diffeomorphism $f:\SS^2\to\SS^2$ such that the non-wandering set of $f$ consists 
in two repelling fixed points $r,r'$, one saddle hyperbolic fixed point $o$, and one attracting fixed point 
$a$. Denote by $(N,Z)$ the suspension of $(\SS^2,f)$. The non-wandering set of $Z$ is made of two repelling 
periodic orbits $R,R'$, one saddle hyperbolic periodic orbit $O$, and one attracting periodic orbit $A$. 
Let $W$ be the manifold with boundary obtained by excising from $N$ some small tubular neighborhoods of the 
periodic orbits $R,R',A$. Then $(W,Z)$ is a hyperbolic plug, and one can easily check that it satisfies the 
desired properties.
\end{proof}

\begin{proof}[Proof of Lemma~\ref{l.adding-leaf-2}]
By assumption,  the foliation $\cF$ is realized by a transitive attracting hyperbolic plug $(U,X)$, which 
has infinitely many periodic orbits with negative multipliers. The foliation $\cF'$ can be obtained by 
adding a compact leaf to $\cL^s_X$. This means that we can find a  geometrical enumeration 
$\gamma_1,\dots,\gamma_n$ of the compact leaves of $\cL^s_X$ and a geometrical enumeration 
$\gamma_1',\dots,\gamma_{n+1}'$ of the compact leaves of $\cF'$, so that the corresponding combinatorial 
types $\sigma,\sigma'$ satisfy $\sigma'_{|\{1,\dots,n\}}=\sigma$. We denote by $A$ be the connected 
component of $\partial^{in} U\setminus \cup_i \gamma_i$ which is bounded by the compact leaves 
$\gamma_n$~and~$\gamma_1$, and which is on the left-hand side of $\gamma_1$.

Let $(W,Z)$ be the plug provided by Lemma~\ref{a.hyperbolic-plug-twisted-orbit}, and 
$\Gamma_W:\partial^{in} W\setminus\cL^s_{Z}\rightarrow \partial^{out} W\setminus \cL^u_{Z}$ be the 
crossing map of this plug. Observe that $\partial^{in} W\setminus\cL^s_{Z}$ is the disjoint union of 
the open annuli $A_1^s:=\partial^{in}_1 W\setminus\gamma_1^s$ and 
$A_2^s:=\partial^{in}_2 W\setminus\gamma_2^s$. Therefore, $\partial^{out} W\setminus\cL^u_{Z}$ is the 
disjoint union of the open annuli $A_1^u:=\Gamma_W(A_1^s)$ and $A_2^u:=\Gamma_W(A_2^s)$. Both $A_1^u$ and 
$A_2^u$ are bounded by the compact leaves $\gamma_1^u$ and $\gamma_2^u$.

We consider a gluing diffeomorphism $\phi:\partial^{out} W\to\partial^{in} U$ satisfying the two following 
properties:
\begin{enumerate}
\item $\phi\left(\overline{A_2^u}\right)$ is contained in $A$ (equivalently, $\partial^{in} U\setminus A$ 
is contained in $\phi(A_1^u)$);
\item the compact leaves $\phi_*(\gamma^u_1)$ and $\phi_*(\gamma^u_2)$ are transverse to the foliation 
$\cL^s_X$.
\end{enumerate}
We consider the attracting plug $(U',X')$ obtained by gluing $(W,Z)$ and $(U,X)$ thanks to the gluing map 
$\phi$. Proposition~\ref{p.plug} implies that $(U',X')$ is an hyperbolic plug. Note that 
$\partial^{in} U'=\partial^{in} W=\partial^{in}_1 W\sqcup\partial^{in}_2 W$. Property~1 above implies that 
the pre-image under $\Gamma_W\circ \phi$ of the $n$ compact leaves of $\cL^s_X$ are in $\partial^{in}_1 W$. 
This remark and the same arguments as in the proof of Lemma~\ref{l.adding-leaf-1} show that the foliation 
$\cL^s_{X'}\cap\partial^{in}_1 W$ has the same combinatorial type as $\cF'$. It also shows that 
$\gamma^s_2$ is the only compact leaf of the foliation $\cL^s_{X'}\cap\partial^{in}_2 W$; in other words, 
$\cL^s_{X'}\cap\partial^{in}_2 W$ is a zipped Reeb foliation.

Let us summarize. We have constructed an attracting hyperbolic plug $(U',X')$ with the following 
properties~:
\begin{itemize}
\item the entrance boundary $\partial^{in} U'$ has two connected components $\partial^{in}_1 W$ and 
$\partial^{in}_2 W$,
\item on the first component $\partial^{in}_1 W$, the entrance foliation $\cL^s_{X'}$ is topologically 
equivalent to the foliation $\cF'$,
\item on the other connected component $\partial^{in}_2 W$, the entrance foliation $\cL^s_{X'}$ is a 
zipped Reeb foliation.
\end{itemize}
The plug $(U',X')$ is not transitive. We will use the ``blow-up, excise and glue surgery" of 
section~\ref{s.surgery} to turn $(U',X')$ into a transitive plug.

\bigskip

One hand, the maximal invariant set of $(U,X)$ is a transitive hyperbolic attractor; let us denote it by 
$\Lambda$. On the other hand, the maximal invariant set of $(W,Z)$ is a single hyperbolic periodic orbit 
$O$. Hence $(U',X')$ has two basic pieces: $\Lambda$ and $O$. Observe that $W^u(O)$ intersect 
$W^s(\Lambda)$ in $U'$ (since $\phi(W^u(O)\cap\partial^{out} W)=\phi(\gamma_1^s\cup\gamma_2^s)$ 
intersects $W^s(\Lambda)\cap \partial^{in} U$). By assumption, $(U,X)$ contains some periodic orbits 
with negative multipliers. Choose such a periodic orbit $\Omega$, make a attracting DA bifurcation on 
$\Omega$. This gives rise to a new vector field $X''$ on $U'$ which is a topological extension of $X'$, 
and has three basic pieces: a saddle hyperbolic periodic orbit $O$, a non-trivial saddle basic piece 
$\Lambda'$, and an attracting periodic orbit $\Omega$ (see section~\ref{ss.DA} for more details). 
Let $U''$ be obtained by excising from $U'$ a small tubular neighborhood of the attracting orbit $\Omega$. 
According 
to Proposition~\ref{p.properties-DA}, $(U'',\overline{X'})$ is a hyperbolic plug with the following 
properties:
\begin{itemize}
\item the exit boundary $\partial^{out} U''$ is a torus,
\item the exit lamination $\cL^u_{X''}$ is a zipped Reeb lamination,
\item the entrance boundary $\partial^{in} U''$ coincides with $\partial^{in} U'$,
\item the entrance lamination $\cL^s_{X''}$ has the same combinatorial type as $\cL^s_{X'}$ 
(hence, $\cL^s_{X''}\cap \partial^{in}_1 W$ has the same combinatorial type as $\cF'$ and 
$\cL^s_{X''}\cap \partial^{in}_2 W$ is a zipped Reeb lamination).
\end{itemize}

The laminations $\cL^u_{X''}$ and $\cL^s_{X''}\cap \partial^{in}_2 W$ both are zipped Reeb laminations. 
So, by Lemma~\ref{l.gluing-diffeo}, we can find a strongly transverse gluing map 
$\psi: \partial^{out} U''\to \partial^{in}_2 U''$. We consider the manifold with boundary 
$U''':=U''/\psi$ and the vector field $X'''$ induced by $X''$ on $U'''$. Clearly, $(U''',X''')$ 
is an attracting plug and $\partial^{in} U'''=\partial^{in}_1 W$. According to Theorem~\ref{t.transitive} and Remark~\ref{r.partial-gluing}, 
up to modifying $X''$ by a topological equivalence and $\psi$ by a strongly transverse isotopy, we may 
assume that $(U''',X''')$ is a hyperbolic plug.

The interior of $U''$ is embedded in $U'''$, and $X'''$ coincides with $X''$ in restriction to 
$\mathrm{int}(U'')$. It follows that $\cL^s_{X''}\cap\partial^{in}_1 W$ must be a sublamination of 
the foliation $\cL^s_{X'''}$. Since $\cL^s_{X''}\cap \partial^{in}_1 U''$ is a filling MS-lamination, 
this implies that $\cL^s_{X'''}$ has the same combinatorial type as $\cL^s_{X''}\cap \partial^{in}_1 W$. 
As a further consequence,  $\cL^s_{X'''}$ has the same combinatorial type as $\cF'$. According to 
Proposition~\ref{p.combinatorial-type}, this implies that $\cF'$ is realized by the attracting hyperbolic 
plug $(U''',X''')$.

Clearly, $(U''',X''')$ has infinitely many periodic orbits with negative multipliers. It remains to check 
that the maximal invariant set of $(U''',X''')$ is transitive. The hyperbolic plug $(U'',X'')$ has two
basic pieces: the transitive attractor $\Lambda'$ and the periodic orbit $O$. On the one hand, the unstable
manifold of $O$ intersects the stable manifold of $\Lambda'$. On the other hand, 
$\psi_*(W^u_{X''}(\Lambda')\cap\partial^{out} W)$ intersects $W^s_{X''}(O)\cap\partial^{in}_2 W$, 
since the unique compact leaf of $\cL^s_{X''}\cap\partial^{in}_2 W$ is $\gamma^s_2\subset W^s_{X''}(O)$. 
Therefore, the system $(U'',X'',\psi)$ is combinatorially transitive. 
According to Proposition~\ref{p.transitive}, this implies that $(U''',X''')$ is a transitive plug. 
The proof is complete.
\end{proof}

\begin{figure}[ht]
\begin{center}
\includegraphics[totalheight=4.3cm]{l.adding-leaf-2.eps}
\caption{Proof of Lemma~\ref{l.adding-leaf-2}}
\end{center}
\end{figure}

\begin{rema}
\label{r.negative-multipliers}
Observe that the plug $(U,X)$ provided by the proof of Lemma~\ref{l.realizing-zipped} has infinitely 
many periodic orbits with negative multipliers.
\end{rema}

\begin{proof}[Proof of Theorem~\ref{t.attractors}]
Let $\cF$ be a MS-foliation on a closed oriented surface $S$. We have to prove that $\cF$ is realized
by a transitive attracting plug. If $S$ is connected (\emph{i.e.} is a torus), this immediately follows 
from  corollary~\ref{c.adding-leaves}, lemma~\ref{l.realizing-zipped}, remark~\ref{r.negative-multipliers} 
and lemma~\ref{l.adding-leaf-2}. If $S$ is has several connected component, the proof is roughly the same, 
except for the fact that we have to use Remark~\ref{r.several-zipped} instead of 
Lemma~\ref{l.realizing-zipped}.
\end{proof}

\begin{rema}
\label{r.repellers}
In $(U,X)$ is an oriented  transitive attracting hyperbolic plug, then $(U,-X)$ is an oriented transitive repelling hyperbolic 
plug, and $\cL^u(U,-X)=\cL^s_X$. Using this observation, we deduce from Theorem~\ref{t.attractors} that 
every MS-foliation can be realized as the exit foliation of a transitive repelling hyperbolic plug.
\end{rema}

\section{Embedding hyperbolic plugs in Anosov flows}
\label{s.embedding}

The purpose of the present section is to prove Theorem~\ref{t.embedding} which states that every hyperbolic plug with filling MS-laminations can be embedded in a transitive Anosov flow. We shall need the folliowing lemma:

\begin{lemma}
\label{l.simple-transverse}
For every MS-foliation $\cF$ on a surface $S$, there exists a MS-foliation $\cG$ on $S$ which is transverse to $\cF$.
\end{lemma}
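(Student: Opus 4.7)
We work on each connected component of $S$ separately. By Lemma~\ref{l.embed-in-foliation}, each component is a torus or a Klein bottle; we focus on the torus $\TT^2$, the Klein bottle case being analogous via passage to the orientable double cover. The plan is to induct on the number $n$ of compact leaves of $\cF$, using Corollary~\ref{c.adding-leaves}, which asserts that every MS-foliation on $\TT^2$ is obtained by successively adding compact leaves to a zipped Reeb foliation.

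For the base case $n=1$, $\cF$ is a zipped Reeb foliation, so by Corollary~\ref{c.zipped-Reeb} I may take the explicit model of Example~\ref{e.zipped-Reeb}, tangent to $X=\sin(\pi x)\partial_x+\cos(\pi x)\partial_y$ on $\TT^2=\RR^2/\ZZ^2$. The direction field spanned by $Y=\cos(\pi x)\partial_x-\sin(\pi x)\partial_y$ descends to $\TT^2$ (its kernel is $\ZZ^2$-invariant), and $X\wedge Y=-\partial_x\wedge\partial_y$ never vanishes, giving transversality everywhere. By a computation analogous to the one for $\cF$, the foliation $\cG$ integrating $Y$ is itself a zipped Reeb foliation, with compact leaf $\{1/2\}\times\SS^1$, hence MS.

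For the inductive step, assume the result for MS-foliations with fewer than $n$ compact leaves. By Corollary~\ref{c.adding-leaves}, $\cF$ is topologically equivalent to an MS-foliation $\cF'$ obtained from some $\cF_0$ (with $n-1$ compact leaves) by adding one compact leaf, and by induction some geometric realization of $\cF_0$ admits a transverse MS-foliation $\cG_0$. I would realize $\cF'$ so that $\cF'=\cF_0$ outside a small annular neighborhood $N$ of the new compact leaf $\gamma_{\mathrm{new}}$, while inside $N$ the foliation $\cF'$ has a standard local zipped-Reeb model. Setting $\cG=\cG_0$ outside $N$, I would then modify $\cG_0$ inside $N$ to obtain a foliation transverse to $\cF'|_N$, matching $\cG_0|_{\partial N}$, and having no compact leaves in $N$: since $N$ is an annulus, it may be foliated by smooth arcs joining its two boundary components and transverse to $\cF'|_N$ (the space of direction fields on $N$ transverse to a given foliation is contractible, so any boundary data can be extended across $N$).

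The hard part will be arranging that the modification inside $N$ introduces no new compact leaves of $\cG$. Since $\cF'|_N$ itself carries the compact leaf $\gamma_{\mathrm{new}}$, a naive transverse foliation on $N$ might develop its own compact leaves. The key is to choose the local model so that every leaf of $\cG$ inside $N$ crosses $\gamma_{\mathrm{new}}$ transversely exactly once (and hence cannot be compact), which is feasible because $\gamma_{\mathrm{new}}$ is essential in $N$ and one has full freedom to pick the transverse direction field, for instance by importing directly the explicit transverse construction from the base case into a collar of $\gamma_{\mathrm{new}}$ and interpolating to $\cG_0|_{\partial N}$ in the rest of $N$.
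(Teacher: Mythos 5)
Your plan is genuinely different from the paper's argument, which is a two-line observation: take the orthogonal foliation $\cF^\perp$ with respect to any Riemannian metric on $S$, and then observe that a generic $C^1$-small perturbation of $\cF^\perp$ is an MS-foliation still transverse to $\cF$. That uses only the openness of transversality and the density of Morse--Smale foliations on $\TT^2$ (or $\KK$), and needs no structure theory at all. Your induction on the number of compact leaves via Corollary~\ref{c.adding-leaves} is a legitimate alternative outline, and your base case is correct: the model $Y=\cos(\pi x)\partial_x-\sin(\pi x)\partial_y$ is indeed a $\ZZ^2$-invariant direction field everywhere transverse to $X$ and gives a zipped Reeb foliation. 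But the inductive step contains a gap that is not cosmetic.

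The gap is that you only discuss \emph{creating no new compact leaves} of $\cG$ inside $N$, and never address the possibility that your modification \emph{destroys all the compact leaves of} $\cG_0$. By Lemma~\ref{l.parallel-leaves}, the compact leaves of $\cG_0$ lie in a single nontrivial free homotopy class, and since $\cG_0\pitchfork\cF_0$ this class differs from that of $\gamma_1,\dots,\gamma_{n-1}$ and of $\gamma_{\mathrm{new}}$. Hence every compact leaf of $\cG_0$ has nonzero geometric intersection number with the core of $A_{n-1}$, and therefore must enter every essential sub-annulus $N\subset A_{n-1}$, no matter how thin you take $N$. Once you replace $\cG_0$ by an essentially arbitrary extension of the boundary data across $N$, the arcs of $\cG|_N$ need not reconnect a compact leaf of $\cG_0$ to itself, and generically they will not; the resulting $\cG$ can then have \emph{no} compact leaf at all, in which case condition~(2) of Definition~\ref{d.MS} fails and $\cG$ is not an MS-foliation. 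Since you do not control the holonomy data of $\cG_0$ along $\partial N$, the further claim that one can make ``every leaf of $\cG$ inside $N$ cross $\gamma_{\mathrm{new}}$ exactly once'' is also a wish rather than a construction: leaves entering and leaving $N$ through the same boundary component are possible a priori, and whether they occur depends on $\cG_0$, which the induction hypothesis hands you with no normalization. To repair the argument along your lines you would need, at the very least, to (i) show that the turbulization of $\cF_0$ to $\cF'$ can be supported in an $N$ chosen after $\cG_0$ is normalized so that $\cG_0|_N$ is a product foliation by arcs crossing $N$, and (ii) prove that the interpolation inside $N$ can be done leaf-by-leaf so that each arc of $\cG_0$ crossing $N$ is carried to an arc of $\cG$ with the same endpoints, thereby preserving the compact leaves of $\cG_0$ verbatim. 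As written, neither step is established, whereas the paper's orthogonal-plus-generic-perturbation argument sidesteps all of this.
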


\begin{proof}
Choose a riemannian metric on $S$, and consider the orthogonal $\cF^\perp$ of $\cF$ for this metric. Obviously, $\cF^\perp$ is a foliation on $S$ which is transverse to $\cF$. In general, $\cF^\perp$ is not a MS-foliation. Nevertheless, a generic $C^1$-small perturbation of $\cF^\perp$ is a  MS-foliation which is still transverse to $\cF$.
\end{proof}

\begin{proof}[Proof of Theorem~\ref{t.embedding}]
Let us prove the first item. We consider a hyperbolic plug with filling MS-laminations $(U_0,X_0)$. We have to build a closed three-manifold $M$ and a (not necessarily transitive) Anosov vector field $X$ on $M$, such that there is a compact submanifold $U$ of $M$, such that $X$ is transverse to $\partial U$ and $(U,X_{|U})$ is topologically equivalent to $(U_0,X_0)$.

According to Lemma~\ref{l.pre-foliation}, the lamination $\cL^s_{X_0}$ can completed to a MS-foliation $\cF^s$. According to Lemma~\ref{l.simple-transverse}, we can find a MS-foliation $\cG$ on $\partial^{in} U_0$ which is transverse to $\cF^s$. Theorem~\ref{t.attractors} provides a transitive repelling hyperbolic plug $(U_R,X_R)$ and a homeomorphism $\phi:\partial^{out} U_1\to\partial^{in} U_0$ such that $\phi_*(\cL^u_{X_R})=\cG$. In particular, the foliation $\phi_*(\cL^u_{X_R})$ is strongly transverse to the  lamination $\cL^s_{X_0}$. We consider the manifold $U_1:=(U_R\sqcup U_0)/\phi$, endowed with the vector field $X_1$ induced by $X_R$ and $X_0$. According to Proposition~\ref{p.plug}, $(U_1,X_1)$ is a repelling hyperbolic plug.

Now we consider the exit foliation $\cL^u_{X_1}$. According to Lemma~\ref{l.simple-transverse}, we can find a MS-foliation $\cH$ on $\partial^{out} U_1=\partial^{out} U_0$ which is transverse to $\cL^u_{X_1}$. Theorem~\ref{t.attractors} provides a transitive attracting hyperbolic plug $(U_A,X_A)$ and a homeomorphism $\psi:\partial^{in} U_A\to\partial^{out} U_1$ such that $\psi_*(\cL^s_{X_A})=\cH$. In particular, the foliation $\psi_*(\cL^s_{X_A})$ is transverse to the foliation $\cL^u_{X_1}$. We consider the closed manifold $M:=(U_1\sqcup U_A)/\psi$, endowed with and the vector field $X$ induced by $X_1$ and $X_A$.  According to Proposition~\ref{p.plug}, $X$ is a (non-transitive) Anosov vector field.

The plug $(M,X)$ was constructed by gluing together the plugs $(U_R,X_R)$, $(U_0,X_0)$ and $(U_A,X_A)$. Therefore, $U_0$ can be regarded as a submanifold with boundary of $M$, and $X_0$ can be regarded as the restriction of $X$ to $U_0$. This completes the proof of the first item of Theorem~\ref{t.embedding}.

\bigskip

Now prove the second item. We assume that the maximal invariant set of $(U_0,X_0)$ contains neither attractors, nor repellers. We will modify use the  blow-up, excise and glue surgery to ``turn $X$ into a transitive vector field".

Recall that $(M,X)$ was obtained by gluing together the hyperbolic plugs $(U_R,X_R)$, $(U_0,X_0)$ and $(U_A,X_A)$. Therefore, $U_R,U_0,U_A$ can be regarded as compact submanifolds with boundary of $M$. We pick two periodic orbits $O_R,O_A$ of $X$, both with positive multipliers, contained respectively in $U_R$ and $U_A$. We make a repelling DA bifurcation on $O_R$, and an attracting DA bifurcation on $O_A$ (see subsection~\ref{ss.DA}). This gives rise to a new vector field $\overline X$ on $M$, which has a repelling periodic orbit $O_R\subset U_R$ and an attracting periodic orbit $O_A\subset U_A$. As in subsection~\ref{ss.DA}, we consider some open tubular neighborhoods $T_R$ and $T_A$ of $O_A$ and $O_R$ respectively, such that $\overline{X}$ is transverse to $\partial T_R$ and $\partial T_A$. We assume that $T_A$ and $T_R$ are contained respectively in  $U_A$ and $U_R$ (therefore $T_A$ and $T_R$ are disjoint from $U_0$). We excise these tubular neighborhoods from $M$, \emph{i.e.} we consider the compact manifold with boundary $U:=M\setminus (T_A\sqcup T_R)$. As explained in section~\ref{ss.blow-up-excise-glue}, $(U,\overline{X})$ is a hyperbolic plug, and we can find a strongly transverse gluing diffeomorphism $\chi:\partial^{out} U\to\partial^{int} U$. Notice moreover that the maximal invariant set of $(U,\overline{X})$ does not contain neither attractors, nor repellors, since we have made a DA bifurcation on a periodic orbit of the unique attractor of $X$ (turning this attractor into a saddle basic piece) and a DA bifurcation on a periodic orbit of the unique repellor of $X$ (turning this repellor into a saddle basic piece). We consider the closed manifold $M':=U/\chi$, and the vector field $X'$ induced by $\overline{X}$ on $M'$. According to Theorem~\ref{t.transitive} (up to perturbing $\overline{X}$ by topological equivalence, and $\chi$ by a  strongly transverse isotopy), the vector field $X'$ is Anosov. Observe that $U_0$ can be regarded as a compact submanifold with boundary of $M'$ (since the solid tori $T_A$ and $T_R$ were assumed to be disjoint from $U_0$), and $X_0$ can be regarded as the restriction of $X'$ to $U_0$ (indeed, $X'$ coincides with $\overline{X}$ on $\mathrm{int}(\overline{M})\supset U_0$, and $\overline{X}$ coincides with $X$ outside a small neighborhood of the orbits $O$ and $O'$). 

We are left to prove that $X'$ is transitive. For this purpose, we will use the criterion provided by Proposition~\ref{p.transitive}. Let $\Lambda_A$ (resp. $\Lambda_R$) be the maximal invariant set of the plug $(U_A,X_A)$ (resp. $(U_R,X_R)$). Let $\Lambda_1,\dots,\Lambda_n$ be the collection of the basic pieces of the hyperbolic plug $(U,X)$. Recall that $(M,X)$ was obtained by gluing the hyperbolic plugs $(U_R,X_R)$, $(U_0,X_0)$, $(U_A,X_A)$, without creating any new recurrent orbit. Therefore, the basic pieces of $(M,X)$ are $\Lambda_R,\Lambda_1,\dots,\Lambda_n,\Lambda_A$. For each $i=1,\dots,n$, $W^s_X(\Lambda_i)$ must intersect $W^u_X(\Lambda_R)$, because $\Lambda_R$ is the only repelling basic piece for $X$. Similarly, $W^u_X(\Lambda_i)$ must intersect $W^s_X(\Lambda_A)$.  Item~4 of Proposition~\ref{p.properties-DA} implies that the basic pieces of $\overline{X}_{|U}$ are in one-to-one correspondence with the basic pieces in $X$. We denote by $\overline{\Lambda}_R,\overline{\Lambda}_1,\dots,\overline{\Lambda}_n,\overline{\Lambda}_A$ the basic pieces of $\overline{X}_{|U}$ (using obvious notations). For $i=1,\dots,n$, $W^s_{\overline{X}}(\overline{\Lambda}_i)$  intersects $W^u_{\overline{X}}(\overline{\Lambda}_R)$, and $W^u_{\overline{X}}(\overline{\Lambda}_i)$  intersects $W^s_{\overline{X}}(\overline{\Lambda}_A)$. Moreover,  item~4 of Proposition~\ref{p.properties-DA} implies that $W^u_{\overline{X}}(\overline{\Lambda}_A)\cap\partial^{out} U$ is dense in $\cL^u_{\overline{X}}$, and $W^s_{\overline{X}}(\overline{\Lambda}_R)\cap\partial^{in} U$ is dense in $\cL^s_{\overline{X}}$. Hence, the image under $\chi_*$ of $W^u_{\overline{X}}(\overline{\Lambda}_A)\cap\partial^{out} U$ intersects $W^s_{\overline{X}}(\overline{\Lambda}_R)\cap\partial^{in} U$. As a consequence, the system $(U,\overline{X},\chi)$ is combinatorially transitive. According to Proposition~\ref{p.transitive}, this implies that the Anosov vector field $X'$ is transitive. This completes the proof of the second item of Theorem~\ref{t.embedding}.
\end{proof}

\section{A manifold supporting with $n$ transitive Anosov flows}

The purpose of this section is to prove Theorem~\ref{t.manyanosov}. We fix an integer $n\geq 1$. In subsection~\ref{ss.construction-several}, we construct of a manifold $M$ supporting $n$ transitive Anosov vector $Z_1,\dots,Z_n$. In subsection~\ref{ss.proof-several}, we prove that these vector fields are pairwise non topologically equivalent.

\subsection{Construction of a manifold $M$ supporting $n$ transitive Anosov flows}
\label{ss.construction-several}

\begin{lemma}\label{M0X0}
There exists a transitive hyperbolic plug with filling MS-laminations $(U,X)$ such that~:
\begin{enumerate}[topsep=1pt, itemsep=1pt, parsep=1pt]
\item $\mathrm{int}(U)$ is a hyperbolic manifold;
\item $\partial^{in} U$ is connected (\emph{i.e.} is a torus), the lamination $\cL^s_X$ has $2n+2$ compact leaves, all of them being coherently oriented;
\item for each connected component $T$ of $\partial^{out} U$, all the compact leaves of the lamination $\cL^u_X\cap T$ are coherently oriented.
\end{enumerate}
\end{lemma}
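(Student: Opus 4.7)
My plan is to construct $(U,X)$ by performing $n+1$ simultaneous repelling DA-bifurcations on a transitive Anosov flow on a closed hyperbolic 3-manifold, excising tubes to obtain $n+1$ entrance boundary tori, and then merging these into a single entrance torus via a carefully built hyperbolic ``merging plug''. Concretely, I would start with a transitive Anosov vector field $X_0$ on a closed orientable hyperbolic 3-manifold $M_0$ (such pairs exist, e.g., by Goodman's hyperbolic Dehn surgery construction on Anosov flows). I would then pick $n+1$ periodic orbits $O_1,\ldots,O_{n+1}$ of $X_0$, all with positive multipliers and none of them a free separatrix, chosen generically so that $\bigcup_i O_i$ is a hyperbolic link in $M_0$ (i.e., $M_0\setminus \bigcup_i O_i$ admits a complete finite-volume hyperbolic metric; this is a generic condition by Thurston's hyperbolic Dehn surgery theorem). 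After performing simultaneous repelling DA-bifurcations on these orbits to obtain a new vector field $\overline{X}_0$ on $M_0$, and excising small open tubular neighborhoods $T_1,\ldots,T_{n+1}$ of the (now repelling) orbits, the repelling analog of Proposition~\ref{p.properties-DA} yields a transitive hyperbolic plug $(U_0, \overline{X}_0)$ with filling MS-laminations, hyperbolic interior, entrance boundary $\bigsqcup_i \partial T_i$ (namely $n+1$ tori each carrying exactly $2$ coherently oriented compact leaves of $\cL^s_{\overline{X}_0}$), and empty exit boundary.

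Next, I would build a hyperbolic plug $(V,Y)$, the \emph{merging plug}, having hyperbolic interior, a single entrance boundary torus carrying $2n+2$ coherently oriented compact leaves of $\cL^s_Y$, and $n+1$ exit boundary tori each carrying $2$ coherently oriented compact leaves of $\cL^u_Y$ matching the entrance lamination of $(U_0, \overline{X}_0)$ up to diffeomorphism. Such a plug $(V,Y)$ can be built by an analogous construction: start from a transitive Anosov flow on another closed hyperbolic 3-manifold, perform $n+1$ attracting DA-bifurcations on orbits (yielding the $n+1$ exit boundary tori with $2$ coherently oriented compact leaves each) together with a bifurcation procedure producing a single entrance torus whose intersection with the stable lamination gives $2n+2$ coherently oriented compact curves; this last step can be arranged by performing $n+1$ iterated repelling DA-bifurcations on orbits chosen inside the stable manifold of a common periodic orbit, and then excising a single solid torus enclosing the $n+1$ resulting repelling orbits.

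I would then apply Proposition~\ref{p.plug} to glue $(V,Y)$ to $(U_0,\overline{X}_0)$ along the $n+1$ matching boundary tori via strongly transverse diffeomorphisms, whose existence is guaranteed by Lemma~\ref{l.gluing-diffeo} since each pair of tori carries filling MS-laminations with the same number of coherently oriented compact leaves. The resulting plug $(U,X)=(V\sqcup U_0)/\varphi$ has filling MS-laminations by Proposition~\ref{p.plug2}, and is transitive by Proposition~\ref{p.transitive} applied to the combinatorially transitive glued system (each basic piece in the hyperbolic pieces is connected to every other via the gluing). Its entrance boundary is the single entrance torus of $(V,Y)$, carrying $2n+2$ coherently oriented compact leaves of $\cL^s_X$, while the exit boundary is empty, so condition~(3) of the lemma holds vacuously.

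The main obstacle is twofold. First, the construction of the merging plug $(V,Y)$ with hyperbolic interior and exactly the right single-torus entrance lamination -- with $2n+2$ coherently oriented compact leaves -- requires either a carefully orchestrated sequence of DA-bifurcations on orbits that share a common enclosing solid torus, or a more direct hyperbolic construction of the underlying cusped manifold supporting an Anosov-like flow. Second, ensuring that $\mathrm{int}(U)$ remains hyperbolic after gluing two hyperbolic pieces along $n+1$ tori requires the gluing maps to be sufficiently generic to avoid creating essential tori or Seifert-fibered substructures in the union: Thurston's hyperbolization theorem for Haken manifolds guarantees that a generic gluing of this type preserves hyperbolicity, and the flexibility in the strongly transverse gluing condition leaves enough freedom to achieve such genericity while remaining compatible with the coherent-orientation and filling-MS-lamination requirements.
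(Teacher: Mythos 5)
Your proposal takes a genuinely different route from the paper, but it contains a fatal flaw at the last step, and a serious vagueness in the step before. In the final gluing, you identify the $n+1$ exit boundary tori of the "merging plug" $(V,Y)$ with the $n+1$ entrance boundary tori of $(U_0,\overline{X}_0)$. Each of these gluing tori is $\pi_1$-injective on both sides, since boundary tori (cusp cross-sections) of finite-volume hyperbolic $3$-manifolds are always incompressible. Hence each gluing torus becomes an essential torus embedded in $\mathrm{int}(U)$, so $\mathrm{int}(U)$ is toroidal and cannot be hyperbolic. Your appeal to Thurston's hyperbolization theorem is misplaced: hyperbolization requires atoroidality, and the gluing tori destroy it regardless of how generic the gluing diffeomorphisms are chosen to be. This invalidates item (1), which is not decorative --- it is precisely what is needed later (Proposition~\ref{p.JSJ}) to identify $U_{\pm}$ as hyperbolic JSJ pieces of the closed manifold.

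The construction of the merging plug $(V,Y)$ is also broken as written: you propose to perform "repelling DA-bifurcations on orbits chosen inside the stable manifold of a common periodic orbit." But in an Anosov flow, if $O'$ is a periodic orbit contained in $W^s(O)$ for a periodic orbit $O$, then $O'=O$ (the forward orbit of a point of $O'$ must converge to $O$, and by periodicity the point is already at distance $0$ from $O$). So there is no such family of orbits. Even setting this aside, it is unclear why excising a single solid torus enclosing $n+1$ repelling orbits would yield a boundary torus whose intersection with the stable lamination is a filling MS-lamination with exactly $2n+2$ compact leaves.

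The paper sidesteps both problems by producing the $2n+2$ coherently oriented compact leaves on a single torus \emph{at the source}: one starts with a pseudo-Anosov diffeomorphism $f$ of a closed surface having a singularity $p_1$ with exactly $2n+2$ prongs (such $f$ exist by Masur--Smillie), performs a repelling DpA bifurcation at $p_1$ and attracting DpA bifurcations at the other singularities, suspends, and excises solid tori around the resulting attracting and repelling orbits. The number of compact leaves on each boundary torus is then the number of prongs of the corresponding singularity, and $\mathrm{int}(U)$ is the mapping torus of a pseudo-Anosov diffeomorphism of a punctured surface, which is hyperbolic by Thurston's fibered hyperbolization theorem --- no gluing of hyperbolic pieces is needed, so no essential torus is created.
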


\begin{proof}
Consider a pseudo-Anosov diffeomorphism $f$ of a closed surface $\Sigma$, such that $f$ has at least two singularities, and such that one of the singularities of $f$ has exactly  $(2n+2)$ prongs. The existence of such a pseudo-Anosov diffeomorphism follows for example from Theorem~2 of~\cite{MaSm}. After possibly replacing $f$ by a power, we can assume that all the prongs of all the singularities of $f$ are fixed by $f$. We denote these singularities by $p_1,\dots,p_m$, where $p_1$ has $(2n+2)$ prongs.  Then we make a repelling DpA (derived from pseudo-Anosov) bifurcation at $p_1$, and some attracting DpA  bifurcations at $p_2,\dots,p_m$. This yields an axiom~A diffeomorphism $g$ of $\Sigma$, whose non-wandering set is composed of a non-trivial saddle basic piece, a repelling fixed point $p_1$, and some attracting fixed points $p_2,\dots,p_m$. Then we consider the suspension $(N,X)$ of this diffeomorphism: $N$ is a closed three-manifold, and $X$ is a non-singular axiom~A vector field on $N$ whose non-wandering set is made of a non-trivial saddle basic piece $\Lambda$, a repelling periodic orbit $\gamma_1$, and some periodic attracting orbits $\gamma_2,\dots,\gamma_m$. We set $U:=M\setminus (T_1\cup\dots\cup T_n)$, where $T_1,\dots,T_m$ are ``small" open tubular neighborhoods of the periodic orbits $\gamma_1,\gamma_2,\dots,\gamma_m$. More precisely, we choose $T_1,\dots,T_m$ to be included in the basins of the orbits $\gamma_1,\gamma_2,\dots,\gamma_m$ respectively, and such that their boundary is transverse to $X$ (just as in subsection~\ref{ss.DA}). By construction, $(U,X)$ is a plug,  $\partial^{in} U=\partial T_1$ (in particular $\partial^{in} U$ is connected as announced), and $\partial^{out} U=\partial T_2\cup\dots\cup \partial T_m$. Moreover, the same arguments as in the proof of Proposition~\ref{p.properties-DA} show that:
\begin{itemize}
\item the maximal invariant set of $(U,X)$ is a transitive and hyperbolic;
\item $\cL^s_X$ is a filling MS-lamination with $2n+2$ compact leaves, all of them being coherently oriented;
\item $\cL^u_X\cap \partial T_k$ is a filling MS-lamination with $s_k$ compact leaves,  where $s_k$ is the number of prongs of the singularity~$p_k$, all of them being coherently oriented.
\end{itemize}
Finally, a well-known theorem of Thurston (see~\cite{Th}) shows that the interior of $U$ is hyperbolic.
\end{proof}

We will use the hyperbolic plug $(V,Y)$ provided by Lemma~\ref{a.hyperbolic-plug-twisted-orbit}.

\begin{lemma}
\label{M1.prefoliation}
There exist a hyperbolic plug with filling MS-laminations $(W,Z)$ such that:
\begin{enumerate}[topsep=1pt, itemsep=1pt, parsep=1pt]
\item $(W,Z)$ is obtained by gluing the hyperbolic plugs $(U,X)$and $(V,Y)$, provided respectively by  Lemma~\ref{M0X0} and Lemma \ref{a.hyperbolic-plug-twisted-orbit}, along $\partial^{in} U$ and $\partial^{out} V$; in particular, $\partial^{in} W=\partial^{in} V$ is connected;
\item the lamination $\cL^s_Z$ has $2n+3$ compact leaves; exactly $2n+2$ of these $2n+3$ compact leaves are coherently oriented;
\item\label{i.exit-lamination} on each connected components of $\partial^{out} W$, all the compact leaves of the lamination $\cL^u_Z\cap T$ are coherently oriented.
\end{enumerate}
\end{lemma}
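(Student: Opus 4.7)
The plan is to realize $(W,Z)$ as the gluing of $(V,Y)$ (from Lemma~\ref{a.hyperbolic-plug-twisted-orbit}) and $(U,X)$ (from Lemma~\ref{M0X0}) through a diffeomorphism $\varphi\colon \partial^{out}V\to\partial^{in}U$ chosen so that the unique compact leaf $\varphi(\gamma^u)$ of $\varphi_*(\cL^u_Y)$ is an essential simple closed curve in $\partial^{in}U$ which is \emph{parallel to, but disjoint from}, the $2n+2$ compact leaves $\gamma_1,\dots,\gamma_{2n+2}$ of $\cL^s_X$. Since Lemma~\ref{l.parallel-leaves} asserts that the $\gamma_i$'s are pairwise freely homotopic essential curves on the torus $\partial^{in}U$, such a $\varphi$ exists: we place $\varphi(\gamma^u)$ as a core circle inside one of the annular components of $\partial^{in}U\setminus\bigcup_i\gamma_i$, where the non-compact leaves of $\cL^s_X$ cross it transversely. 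Proposition~\ref{p.plug} then implies that $W:=(V\sqcup U)/\varphi$ with induced vector field $Z$ is a hyperbolic plug.

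To count the compact leaves of $\cL^s_Z$ on $\partial^{in}W=\partial^{in}V$, I would apply the time-reversed form of Proposition~\ref{p.gluedlaminations}:
\[
\cL^s_Z \;=\; \gamma^s \;\sqcup\; \Gamma_Y^{-1}\!\bigl(\varphi^{-1}(\cL^s_X)\setminus\gamma^u\bigr),
\]
where $\Gamma_Y$ is the crossing diffeomorphism of $(V,Y)$. The disjointness $\varphi(\gamma^u)\cap\gamma_i=\emptyset$ ensures each $\varphi^{-1}(\gamma_i)$ is a closed curve lying entirely inside $\partial^{out}V\setminus\gamma^u$; its diffeomorphic image $\Gamma_Y^{-1}(\varphi^{-1}(\gamma_i))$ is therefore a closed curve in $\partial^{in}V\setminus\gamma^s$ and hence a compact leaf of $\cL^s_Z$. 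Together with $\gamma^s$ this gives exactly $2n+3$ compact leaves, all pairwise parallel on the torus $\partial^{in}V$ because $\Gamma_Y^{-1}$ is an annulus-to-annulus diffeomorphism sending cores to cores. Equivalently, under this gluing no heteroclinic orbit from $O$ can land on any $\gamma_i$, so the $2n+2$ free stable separatrices inherited from $(U,X)$ remain free in $(W,Z)$; adjoining the unique free (M\"obius) stable separatrix of $O$, whose trace on $\partial^{in}V$ is $\gamma^s$, yields the claimed count.

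For item~(2), the diffeomorphism $\Gamma_Y^{-1}\circ\varphi^{-1}$ transports the common contracting orientation of the $\gamma_i$'s guaranteed by Lemma~\ref{M0X0}(2) to a coherent contracting orientation of the $2n+2$ pulled-back leaves. The contracting orientation of $\gamma^s$ is intrinsically determined by Proposition~\ref{p.flow-vs-contracting} together with the fact that $O$ has negative multipliers (so $W^s_Y(O)$ is a M\"obius band). By selecting $\varphi$ appropriately within its isotopy class---for instance by pre-composing with a self-diffeomorphism of $\partial^{out}V$ acting as $-\mathrm{Id}$ on $H_1$---we arrange that $\gamma^s$ is not coherently oriented with the other $2n+2$ leaves. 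Item~(3) is then immediate from Proposition~\ref{p.gluedlaminations}: on each component $T$ of $\partial^{out}W=\partial^{out}U$, the gluing only adds to $\cL^u_X\cap T$ the open arcs $(\Gamma_X)_*(\varphi(\gamma^u)\setminus\cL^s_X)\cap T$, contributing no new compact leaves, so coherence is inherited from Lemma~\ref{M0X0}(3).

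The main technical obstacle is verifying that $\cL^s_Z$---and hence by Lemma~\ref{l.simple} also $\cL^u_Z$---is a \emph{filling} MS-lamination. The non-compact leaves of $\cL^s_X$ in the annulus of $\partial^{in}U$ containing $\varphi(\gamma^u)$ are cut by $\varphi(\gamma^u)$ into finitely many arcs; by Lemma~\ref{l.stablefoliation} and the $\lambda$-lemma, their $\Gamma_Y^{-1}\circ\varphi^{-1}$-images asymptote to $\gamma^s$ at each end lying on $\varphi(\gamma^u)$ and spiral toward the adjacent pulled-back compact leaf at the other end. Combined with the untouched non-compact leaves in the remaining annuli of $\cL^s_X$ (whose pull-backs spiral between consecutive pulled-back compact leaves of $\cL^s_Z$), these arcs subdivide every annular complementary component of the $2n+3$ compact leaves in $\partial^{in}V$ into strips bounded by two asymptotic non-compact leaves. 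Ruling out exceptional (annular) complementary components, especially those adjacent to $\gamma^s$, is the delicate final step, and it relies crucially on the choice of $\varphi(\gamma^u)$ parallel to (rather than transverse to) the $\gamma_i$'s.
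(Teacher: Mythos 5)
Your argument tracks the paper's own proof very closely: same placement of $\psi_*(\gamma^u)$ as a core curve of a complementary annulus of the compact leaves of $\cL^s_X$ (parallel to and disjoint from the $\gamma_i$, transverse to the non-compact leaves), same use of Proposition~\ref{p.gluedlaminations} in both time directions to see that $\cL^u_Z$ gains no new compact leaves (item~3) and that $\cL^s_Z$ has exactly the $2n+2$ pulled-back leaves plus $\gamma^s$, and the same pre-composition trick to flip the contracting orientations of the pulled-back leaves while leaving that of $\gamma^s$ fixed via Proposition~\ref{p.flow-vs-contracting}. The one place you supplement the paper is in flagging that one must verify $\cL^s_Z$ is a \emph{filling} MS-lamination: the paper's proof only invokes Proposition~\ref{p.plug}, and Proposition~\ref{p.plug2} does not apply directly here since $\cL^u_Y=\{\gamma^u\}$ is not itself a filling MS-lamination. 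You correctly identify this as the delicate step but leave it as an outline; a complete argument would check that $\partial^{in}U\setminus(\cL^s_X\cup\psi_*(\gamma^u))$ has no exceptional components and then push this through $\Gamma_V^{-1}$ using Lemma~\ref{l.exceptional-inout} and a Lemma~\ref{l.strip}-type estimate. Finally, the heuristic remark that ``no heteroclinic orbit from $O$ can land on any $\gamma_i$'' is not needed and is somewhat loose as stated; the Proposition~\ref{p.gluedlaminations} computation already gives the exact count of compact leaves.
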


\begin{proof}
According to Lemma~\ref{M0X0}, $\partial^{in} U$ is a torus, and $\cL^s_X$ is a filling MS-lamination with $2n+2$ compact leaves. Let $\gamma_1,\dots,\gamma_{2n+2}$ be a geometrical enumeration of the compact leaves of $\cL^s_X$ (see definition~\ref{d.geom-enum}). Let $A_1$ be the connected component of $\partial^{in} U\setminus \bigcup_i \gamma_i$ bounded by the compact leaves $\gamma_1$ and $\gamma_2$. Recall that $\partial^{out} V$ is a torus, and that the lamination $\cL^u_Y$ consists in a single (compact) leaf $\gamma^u$. Then we choose a diffeomorphism $\psi:\partial^{out} V\to \partial^{in} U$ such that the closed leaf $\psi_*(\gamma^u)$ is contained in the interior of the annulus $A_1$, and transverse to the lamination $\cL^s_X$ (in particular, the compact leaf $\psi_*(\gamma^u)$ does intersect the lamination $\cL^s(U,X)$). We glue the plugs $(U,X)$ and $(V,Y)$ thanks to the diffeomorphism $\psi$. More precisely, we consider the manifold with boundary $W:=(U\sqcup V)/\psi$ and the vector field $Z$ induced by $X$ and $Y$ on $W$. Proposition~\ref{p.plug} asserts that $(W,Z)$ is a hyperbolic plug.

Let us describe the lamination $\cL^u_Z$. According to Proposition~\ref{p.gluedlaminations}, we have
$$\cL^u(W,Z)=\cL^u_X\sqcup (\Gamma_U)_*(\psi_*(\gamma^u)\setminus \cL^s_X),$$
where $\Gamma_U:\partial^{in} U\setminus \cL^s_X\to \partial^{out} U\setminus \cL^u_X$ is the crossing map associated to the plug $(U,X)$. Now observe that $\psi_*(\gamma^u)\setminus \cL^s_X$ does not contain any compact leaf (recall that the compact leaf $\psi_*(\gamma^u)$ does intersect the lamination $\cL^s_X$). Therefore the compact leaves of $\cL^u_Z$ are exactly the same as the compact leaves of $\cL^u_X$. In particular, for each connected component $T$ of $\partial^{out} W=\partial^{out} U$,
all the compact leaves of $\cL^u_Z\cap T$ are coherently oriented.

Now we describe the entrance lamination $\cL^s_Z$. The arguments are very similar to those of the proof of Proposition~\ref{l.adding-leaf-1}. We have
$$\cL^s_Z=\cL^s_Y\sqcup (\Gamma_V^{-1})_*(\psi^{-1}_*(\cL^s_X)\setminus \gamma^u),$$
where $\Gamma_V: \partial^{in} V\setminus \cL^s_Y\to\partial^{out} V\setminus \cL^u_Y$ is the crossing map associated to the plug $(V,Y)$.
The lamination $(\psi^{-1})_*(\cL^s_X)$ has $2n+2$ compact leaves, and we have chosen $\psi$ so that these leaves are disjoint from $\gamma^u$. Therefore $(\Gamma_V^{-1})_*((\psi^{-1})_*(\cL^s_X)\setminus \cL^u_Y)$ has $2n+2$ compact leaves. The lamination $\cL^s_Y$ consists in a single isolated compact leaf $\gamma^s$. This proves that the  lamination $\cL^s(W,Z)$ has exactly $2n+3$ compact leaves. Let us examine the contracting orientations of the leaves. The compact leaves of $\cL^s_X$ are coherently oriented. Moreover, these compact leaves are contained in the annulus $\partial^{in} U\setminus A_1$, and the map $\Gamma_V^{-1}\circ \psi^{-1}$ is well-defined on $\partial^{in} U\setminus A_1$ (since  $\psi_*(\gamma^u)$ is contained in $A_1$). It follows that the $2n+2$ compact leaves of $\cL^s_Z$ contained in  $(\Gamma_V^{-1})_*((\psi^{-1})_*(\cL^s_X))$ are coherently oriented. It remains to check that the orientation of the last compact leaf of $\cL^s_Z$ is not coherent
with the orientations of the $2n+2$  other compact leaves. Actually, we can use exactly the same trick as in the proof of Proposition~\ref{l.adding-leaf-1}: we consider an orientation-preserving homeomorphism $\tau:\partial^{out} V\to\partial^{out} V$ which reverses the orientation of the compact leaf $\gamma^u$. Exactly the same arguments as in the proof of Proposition~\ref{l.adding-leaf-1} show that either $\psi$ or $\psi\circ\tau$ lead to a plug $(W,Z)$ satisfying the desiered property.
\end{proof}

Now we consider two copies $W_-,W_+$ of the manifold with boundary $W$ provided by Lemma~\ref{M1.prefoliation}. We endow $W_+$ with the vector field $Z_+:=Z$, and we endow $W_-$ with the vector field $Z_-:=-Z$. There are some natural identifications:
\begin{itemize}
\item $\partial^{in} W_+\simeq \partial^{out} W_-\simeq \partial^{in} W$ and $\partial^{out} W_+\simeq \partial^{in} W_-\simeq \partial^{out} W$,
\item $\cL^s_{Z_+}\simeq\cL^u_{Z_-}\simeq\cL^s_{Z}$ and $\cL^u_{Z_+}\simeq\cL^s_{Z_-}\simeq\cL^u_{Z}$.
\end{itemize}
According to Lemma~\ref{M1.prefoliation}, there is one (and only one) compact leaf $c$ of the lamination $\cL^s_{Z}$, such that the contracting orientation of $c$ is incoherent with the contracting orientations of the other compact leaves of $\cL^s_{Z}$. We denote by $c_+$ (resp. $c_-$) the corresponding compact leaf of $\cL^s_{Z_+}$ (resp. $\cL^u_{Z_-}$).

\begin{lemma}\label{diff1}
There exists a diffeomorphism $\phi:\partial^{out} W_+\rightarrow \partial^{in} W_-$ such that:
\begin{enumerate}
  \item the filling MS-laminations $\phi_{\ast} (\cL^u_{Z_+})$ and $\cL^s_{Z_-}$ are strongly transverse;
  \item if we see $\phi$ as a self-homeomorphism of $\partial^{out} W$ (using the natural identifications of $\partial^{out} W_+$ and $\partial^{in} W_-$ with $\partial^{out} W$), then $\phi$ is isotopic to the identity.
\end{enumerate}
\end{lemma}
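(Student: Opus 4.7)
The plan is to apply Lemma~\ref{l.gluing-diffeo} componentwise to the surface $\partial^{out} W$. Under the natural identifications $\partial^{out} W_+ \simeq \partial^{in} W_- \simeq \partial^{out} W$, the laminations involved become the same lamination on a common surface: reversing the flow swaps stable and unstable, so $\cL^s_{Z_-}$ viewed on $\partial^{in} W_-$ is the same as $\cL^u_Z$ viewed on $\partial^{out} W$, and of course $\cL^u_{Z_+}$ viewed on $\partial^{out} W_+$ is also $\cL^u_Z$. Thus the problem reduces to producing a self-diffeomorphism $\phi$ of $\partial^{out} W$, isotopic to the identity, such that $\phi_*(\cL^u_Z)$ is strongly transverse to $\cL^u_Z$.

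First I would decompose $\partial^{out} W = T_1 \sqcup \cdots \sqcup T_k$ into its connected components. By item~1 of Lemma~\ref{M1.prefoliation}, the exit boundary of $W$ coincides with $\partial^{out} U$ (the gluing is performed along $\partial^{in} U = \partial^{out} V$), so each $T_i$ is one of the tori $\partial T_j$ from the construction of $(U,X)$ in Lemma~\ref{M0X0}. By item~3 of Lemma~\ref{M1.prefoliation}, the restriction $\cL^u_Z \cap T_i$ is a filling MS-lamination on the torus $T_i$ whose compact leaves are all coherently oriented.

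Next I would apply Lemma~\ref{l.gluing-diffeo} to each component separately, taking $\cL_1 = \cL_2 = \cL^u_Z \cap T_i$. The hypotheses are met (both laminations are filling MS with the same number of compact leaves, all coherently oriented), and the final sentence of that lemma gives, for each~$i$, an orientation-preserving diffeomorphism $\phi_i : T_i \to T_i$, isotopic to the identity, such that $(\phi_i)_*(\cL^u_Z \cap T_i)$ is strongly transverse to $\cL^u_Z \cap T_i$. Assembling these componentwise, define $\phi : \partial^{out} W \to \partial^{out} W$ by $\phi|_{T_i} = \phi_i$.

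Finally, reading $\phi$ through the natural identifications as a diffeomorphism $\partial^{out} W_+ \to \partial^{in} W_-$, the pushed-forward lamination $\phi_*(\cL^u_{Z_+})$ is strongly transverse to $\cL^s_{Z_-}$ componentwise, hence globally, establishing item~1. Item~2 is immediate since $\phi$ is the identity-isotopy componentwise, hence isotopic to the identity on $\partial^{out} W$. No step here presents a real obstacle: all the work has already been done in Lemma~\ref{l.gluing-diffeo} and in item~3 of Lemma~\ref{M1.prefoliation} that guarantees coherent orientation on each exit component.
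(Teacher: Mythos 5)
Your proposal is correct and takes essentially the same route as the paper: the paper's proof is simply the observation that the claim follows from Lemma~\ref{l.gluing-diffeo} together with item~3 of Lemma~\ref{M1.prefoliation}, and you have spelled out exactly that reduction (identify $\cL^s_{Z_-}$ with $\cL^u_Z$ on $\partial^{out} W$, then apply Lemma~\ref{l.gluing-diffeo} componentwise with $\cL_1=\cL_2$, using the coherent orientation on each exit component).
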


\begin{proof}
This follows immediately from Lemma~\ref{l.gluing-diffeo} and item~\ref{i.exit-lamination} of Lemma~\ref{M1.prefoliation}.
\end{proof}

\begin{lemma}
\label{phik}
 For every $k\in \{1,\dots,n\}$, there exists a diffeomorphism $\phi_k:\partial^{out} W_-\rightarrow \partial^{in} W_+$ with the following properties:
\begin{enumerate}
  \item $(\phi_k)_* (\cL^u_{Z_-})$ and $\cL^s_{Z_+}$ are strongly transverse;
  \item\label{i.position-special-leaf}  the compact leaves $(\phi_k)_\ast(c^-)$ and $c^+$ bound two open annuli\footnote{In order avoid unnecessary complications, we will not try to distinguish these two annuli (although this can be done using the orientations of the leaves $(\phi_k)_\ast(c^-)$ and $c^+$).} in the torus $\partial^{in} W_+$, which contain respectively $k$ and $2n+2-k$ compacts leaves of the lamination $\cL^s_{Z_+}$;
  \item  if we see $\phi$ as a self-homeomorphism of $\partial^{out} W$, then $\phi_k$ is isotopic to the identity.
  \end{enumerate}
\end{lemma}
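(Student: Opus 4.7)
Under the natural identifications of the tori $\partial^{in} W$, $\partial^{out} W_-$ and $\partial^{in} W_+$, both laminations $\cL^u_{Z_-}$ and $\cL^s_{Z_+}$ get identified with $\cL^s_Z$. Let $T$ denote this common torus, and let $\gamma_0,\gamma_1, \ldots, \gamma_{2n+2}$ be a geometrical enumeration of the compact leaves of $\cL^s_Z$, chosen so that $\gamma_0 = c$ is the unique incoherently oriented leaf provided by Lemma~\ref{M1.prefoliation}. The plan is to put $\cL^s_Z$ in normal form via a diffeomorphism isotopic to the identity using Proposition~\ref{p.simplification}, and then conjugate back a suitable horizontal translation, the translation amount being chosen to realize the prescribed cyclic position of $(\phi_k)_*(c_-)$ relative to~$c_+$.

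More precisely, Proposition~\ref{p.simplification} provides a diffeomorphism $\varphi\colon T \to T$, isotopic to the identity, such that $\varphi_*(\cL^s_Z)$ has its compact leaves at the vertical circles $\{i/(2n+3)\}\times \SS^1$ and its non-compact leaves given by the explicit monotone graphs described there. Let $T_\alpha(x,y) := (x+\alpha, y)$ denote the horizontal translation by $\alpha$ on $T = \RR^2/\ZZ^2$, set $\alpha_k := (2k+1)/(2(2n+3))$, and define
$$\phi_k := \varphi^{-1} \circ T_{\alpha_k} \circ \varphi.$$
Since both $\varphi$ and $T_{\alpha_k}$ are isotopic to the identity, so is $\phi_k$, which yields condition~3. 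For condition~2, in the normal form the image $T_{\alpha_k}(\varphi_*(\gamma_0))$ is the vertical circle $\{\alpha_k\}\times \SS^1$, which lies strictly between $\varphi_*(\gamma_k)$ and $\varphi_*(\gamma_{k+1})$; pulling back by $\varphi^{-1}$ yields that $(\phi_k)_*(c_-)$ and $c_+=\gamma_0$ bound two annuli in $T$ containing respectively the compact leaves $\gamma_1,\dots,\gamma_k$ (that is, $k$ leaves) and $\gamma_{k+1}, \dots, \gamma_{2n+2}$ (that is, $2n+2-k$ leaves) of $\cL^s_{Z_+}$.

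It remains to verify condition~1, i.e.\ strong transversality. After applying $\varphi_*$ to both laminations, this amounts to showing that $(T_{\alpha_k})_*\varphi_*(\cL^s_Z)$ is strongly transverse to $\varphi_*(\cL^s_Z)$. The compact leaves of these two laminations sit at the positions $\{i/(2n+3)\}$ and $\{i/(2n+3)+\alpha_k\}$ in the $x$-coordinate, so they interleave on~$T$. Inside each of the resulting $2(2n+3)$ vertical strips the explicit monotone graph structure of the non-compact leaves provided by Proposition~\ref{p.simplification} is preserved under $T_{\alpha_k}$; hence, after possibly a final $C^1$-small perturbation of $\varphi$ supported away from the compact leaves (still isotopic to the identity), all intersection points are transverse and every connected component of $T\setminus(\cL^s_Z \cup (\phi_k)_*(\cL^u_{Z_-}))$ is a topological square having two sides on each lamination. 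This last transversality check is the only nontrivial step and is the main obstacle, but it is essentially the same verification as the one concluding the proof of Lemma~\ref{l.gluing-diffeo}, with the shift $1/(2n)$ replaced by $\alpha_k$, so no new ingredient is needed.
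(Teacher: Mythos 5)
Your argument for items~2 and~3 is fine, and the overall scheme (normal form via Proposition~\ref{p.simplification}, then a translation, then conjugate back) is the right skeleton. But the verification of item~1 (strong transversality) has a genuine gap, and the step you dismiss as ``essentially the same verification as the one concluding the proof of Lemma~\ref{l.gluing-diffeo}'' is precisely where the argument fails.

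In Lemma~\ref{l.gluing-diffeo} all $n$ compact leaves are coherently oriented, so the derivative--sign pattern of the normal form is $1/n$-periodic (each interval $(\tfrac in,\tfrac{i+1}n)$ carries the same positive-then-negative pattern), and a shift by half a period makes the two sign patterns opposite \emph{everywhere}. Here this periodicity is broken: with your enumeration $\gamma_0=c$, the combinatorial type is $(+,-,-,\dots,-)$, and Proposition~\ref{p.simplification} gives a derivative that is of constant sign on the whole of the first interval $(0,\tfrac1{2n+3})$ and on the whole of the last interval $(\tfrac{2n+2}{2n+3},1)$, while on the remaining $2n+1$ intervals it changes sign at the midpoint. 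Translating by $\alpha_k=(k+\tfrac12)/(2n+3)$ therefore does \emph{not} produce opposite signs everywhere: there are macroscopic regions where the translated and untranslated patterns agree (for instance, for $n=1$, $k=1$, $\alpha_1=3/10$, one checks that the signs agree on the intervals $(0,\tfrac1{10})$, $(\tfrac2{10},\tfrac4{10})$ and $(\tfrac9{10},1)$). On such regions both laminations consist of graphs with the same monotonicity, the normal form gives no control on the magnitudes of the derivatives, and neither transversality nor the ``complementary components are squares'' condition can be obtained by a $C^1$-small perturbation supported away from the compact leaves: the obstruction is combinatorial, not just quantitative.

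The paper's proof resolves exactly this difficulty with an extra ``shrinking'' step that your proposal omits. After the normal form $\psi_-$ (placing $c$ at $\tfrac1{2n+3}$), one applies a diffeomorphism $\Theta_\epsilon(x,y)=(\theta_\epsilon(x),y)$ which crushes the whole annulus $(0,\tfrac2{2n+3})\times\SS^1$ --- the region responsible for the broken periodicity --- onto a strip of width~$\epsilon$. After this, the $2n+1$ remaining intervals become approximately equally spaced of width $\tfrac{1-\epsilon}{2n+1}$, the sign pattern becomes approximately $\tfrac1{2n+1}$-periodic, and the translation $\xi_k(x,y)=(x+\tfrac{k+1/2}{2n+1},y)$ (an odd multiple of the half-period) does make the two sign patterns opposite everywhere away from the thin shrunk strip; for $\epsilon$ small that strip causes no trouble. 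The desired map is then $\phi_k=(\Theta_\epsilon\circ\psi_+)^{-1}\circ\xi_k\circ(\Theta_\epsilon\circ\psi_-)$. In short: the ``shrink first'' operation $\Theta_\epsilon$ is not an optional cosmetic step --- it is the device that recreates the periodicity that Lemma~\ref{l.gluing-diffeo} enjoys for free, and you would need something equivalent to it for item~1 to go through.
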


\begin{rema}
\label{r.non-homeomorphic}
Item~\ref{i.position-special-leaf} of Lemma~\ref{phik} implies in particular that  $\cL^s_{Z_+}\cup (\phi_i)_*(\cL^u_{Z_-})$ is homeomorphic to $\cL^s_{Z_+}\cup (\phi_j)_\ast (\cL^u_{Z_-})$ only if $i=j$.
\end{rema}

\begin{proof}[Proof of Lemma~\ref{phik}]
See figure~\ref{f.phi}. Proposition~\ref{p.simplification} provides a diffeomorphism $\psi_-^0:\partial^{out} W_-\to\TT^2$ such that:
\begin{itemize}
\item the $2n+3$ compact leaves of the lamination $(\psi_-)_*(\cL^u_{Z_-})$ are the vertical circles $\{\frac{i}{2n+3}\}\times \SS^1$ for $i=0,\dots,2n+2$, the leaf $(\psi_-)_*(c^-)$ being the circle $\{\frac{1}{2n+3}\}\times\SS^1$
 \item in the open annulus $(\frac{i}{2n+3},\frac{i+1}{2n+3})\times \SS^1$, the leaves of the lamination $(\psi_-^0)_*(\cL^u_{Z_-})$ are graphs of  $C^1$ functions from $(\frac i{2n+3}n,\frac{i+1}{2n+3})$ to $\SS^1$; 
 \item the derivatives of these functions are positive on $(0,\frac{1}{2n+3})$, negative on $(\frac{1}{2n+3},\frac{2}{2n+3})$, 
  \item  for $i=2,\dots,2n+2$, the derivatives of these functions are positive on $(\frac{i}{2n+3},\frac{i}{2n+3}+\frac{1}{2})$ and negative on $(\frac{i}{2n+3}+\frac{1}{2},\frac{i+1}{2n+3})$.
 \end{itemize}
Similarly, one gets a a diffeomorphism $\psi_+:\partial^{in} W_+\to\TT^2$ so that the lamination $(\psi_+)_*(\cL^s_{Z_+})$ satisfies similar properties (with the leaf $(\psi_+)_*(c^+)$ insteaf of the leaf  $(\psi_-)_*(c^-)$). Now pick $\epsilon<<1$ and consider the diffeomorphism $\Theta_\epsilon:\TT^2\to\TT^2$ defined by 
$$\Theta_\epsilon(x,y) =  (\theta_\epsilon(x),y)$$
where $\theta_\epsilon:[0,1]\to [0,1]$ is the function which maps affinely $[0,\frac{2}{2n+3}]$ on $[0,\epsilon]$ and maps affinely $[\frac{2}{2n+3},1]$ on $[\epsilon,1]$ (in other words, $\Theta_k$ shrinks the annulus $(0,\frac{2}{2n+3})\times \SS^1$ to a very thin annulus).
For $k\in \{1,\dots,n\}$, also consider the diffeomorphism $\xi_k:\TT^2\to\TT^2$ defined by 
$$\xi_k(x,y)=\left(x+\frac{k+\frac{1}{2}}{2n+1},y\right).$$
One easily checks that the diffeomorphism $(\Theta_\epsilon\circ\psi_+)^{-1}\circ\xi_k\circ(\Theta_\epsilon\circ\psi_-)$ satisfies the desired properties provided that $\epsilon$ is small enough.
\end{proof}

\begin{figure}[ht]
\begin{center}
\includegraphics[totalheight=5.5cm]{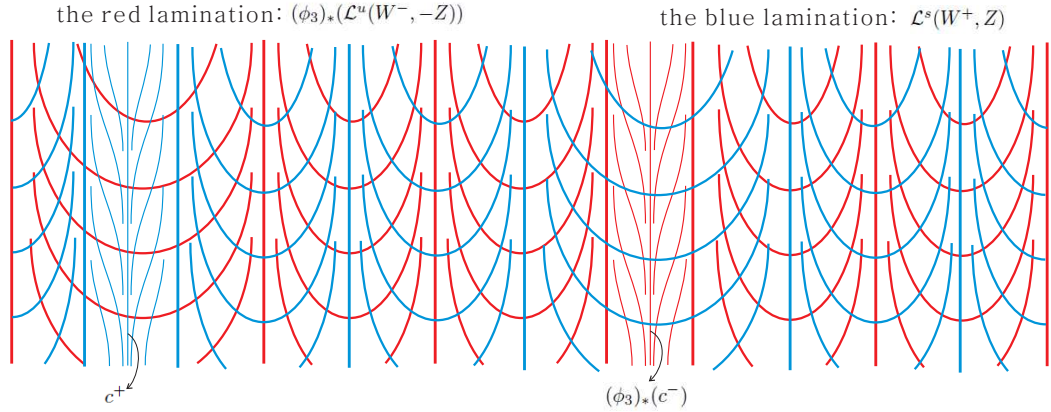}
\caption{\label{f.phi}The diffeomorphism $\phi_k$ in the case $n=2$ and $k=3$.}
\end{center}
\end{figure}

\begin{defi}[The vector fields $Z_1,\dots,Z_n$]
\label{d.several-anosov}
Consider the hyperbolic plug $(W_+,Z_+)\sqcup (W_-,Z_-)$. For $k=1\dots n$,  consider the diffeomorphism
$$\Phi_k: \partial^{out} W_+\sqcup \partial^{out} W_- \longrightarrow \partial^{in} W_+\sqcup \partial^{in} W_-$$
defined by $\Phi_k:=\phi\mbox{ on }\partial^{out} W_+\mbox{ and }\Phi_k:=\phi_k\mbox{ on }\partial^{out} W_-.$
According to Lemma~\ref{diff1} and~\ref{phik}, $\Phi_k$ is a strongly transverse gluing diffeomorphism. Now consider the closed manifold $M_k:=(W_+\sqcup W_-)/\Phi_k$
and the vector field $Z_k$ on $M_k$ induced by the vector fields $Z$ and $-Z$ (more precisely, $Z_k:=Z \mbox{ on }W_+ \mbox{ and }Z_k:=-Z\mbox{ on }W_-$).
According to Theorem~\ref{t.transitive}, up to modifying $Z$ by a topological equivalence and $\Phi_k$ by a strongly transverse isotopy, $Z_k$ is an Anosov vector field.
\end{defi}


Since the gluing map $\Phi_k$ is isotopic to the identity for every $k$, the manifolds $M_1,\dots,M_n$ are pairwise diffeomorphic. From now on, we identify the manifolds $M_1,\dots,M_n$ with a single manifold $M$, and see $Z_1,\dots,Z_n$ as vector fields on this manifold $M$.

The Anosov vector field $(M,Z_k)$ was obtained by gluing cyclically four hyperbolic plugs $(U_-,X_-)$, $(V_-,Y_-)$, $(V_+,Y_+)$ and $(U_+,X_+)$, where $U_+,U_-$ are two copies of $U$, $V_+,V_-$ are two copies of $V$, $X_+=X$, $X_-=-X$, $Y_+=Y$ and $Y_-=-Y$. See figure~\ref{f.gluing-four-plugs}.

\begin{figure}[ht]
\begin{center}
\includegraphics[totalheight=6cm]{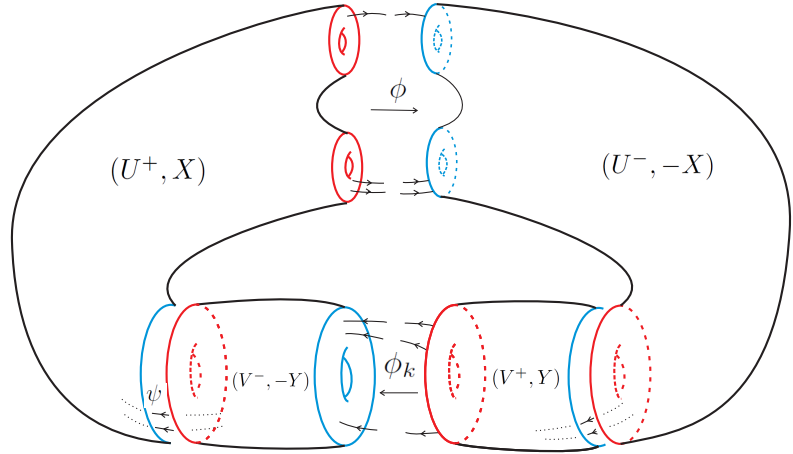}
\caption{\label{f.gluing-four-plugs}The hyperbolic plugs used to build the manifold $M$ and the Anosov vector field $Z_k$}
\end{center}
\end{figure}

\begin{prop}
\label{p.JSJ}
The JSJ decomposition of manifold $M$ has three pieces: two hyperbolic pieces $U_-$ and $U_+$, and one Seifert piece $S:=V_-\cup V_+$.
\end{prop}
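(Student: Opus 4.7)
The plan is to verify directly that the proposed decomposition coincides with the JSJ decomposition of $M$. The pieces $U_+$ and $U_-$ are each diffeomorphic to $U$, whose interior is hyperbolic by Lemma~\ref{M0X0}(1); a compact $3$-manifold with toroidal boundary whose interior admits a finite-volume complete hyperbolic metric is atoroidal with incompressible boundary, so each $U_\pm$ is a hyperbolic JSJ piece.

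The core of the proof is to show that $S := V_-\cup_{\phi_k}V_+$ is a Seifert manifold. By Lemma~\ref{a.hyperbolic-plug-twisted-orbit}(1), each $V_\pm$ is Seifert over a $2$-sphere minus two discs; the saddle orbit $O$ appears as a singular fiber of multiplicity~$2$, the multiplicity being produced by the negative multipliers of $O$ (which give an order-two local holonomy). The first step is to identify the Seifert-fiber isotopy class on the torus $\partial^{in} V$ with the free homotopy class of the unique compact leaf $\gamma^s = W^s(O)\cap \partial^{in} V$ of $\cL^s_Y$: since the M\"obius band $W^s(O)$ accumulates on the repelling periodic orbit $R$ in backward time, the curve $\gamma^s$ is freely homotopic in $\partial^{in} V$ to the longitude of the tubular neighborhood of $R$, which is itself the Seifert-fiber direction (as $R$ is a regular fiber adjacent to the boundary). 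By Lemma~\ref{l.parallel-leaves}, all $2n+3$ compact leaves of $\cL^s_Z$ on $\partial^{in} V$ are pairwise freely homotopic, so all lie in the Seifert-fiber isotopy class. Tracing through the construction of $\phi_k$ in Lemma~\ref{phik}, the normalizing maps $\psi_\pm$ place these compact leaves on vertical circles in $\TT^2$, and the auxiliary diffeomorphisms $\Theta_\varepsilon$ and $\xi_k$ both preserve the vertical foliation; thus $\phi_k$ preserves the Seifert-fiber direction. The Seifert fibrations on $V_+$ and $V_-$ therefore match across $\partial^{in} V$, making $S$ Seifert over an annulus base with two cone points of order~$2$ (one from each copy of~$V$), whose orbifold Euler characteristic equals $-1<0$.

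The remaining tori $T_\pm := \partial^{in} V_\pm = \partial^{out} U_\pm$ and $T_0 := \partial^{out} U_+ = \partial^{out} U_-$ are pairwise non-isotopic and incompressible in $M$: incompressibility from the hyperbolic sides $U_\pm$ is automatic, and incompressibility of $T_\pm$ from the Seifert side $S$ holds because its base orbifold is hyperbolic. Minimality of the decomposition follows from the atoroidality of $U_\pm$ and from the fact that $S$ admits no further essential torus that is not fiber-parallel. Hence the JSJ decomposition of $M$ consists of exactly the three pieces $U_-,S,U_+$ as claimed.

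The main obstacle is the precise identification of the Seifert-fiber isotopy class on $\partial^{in} V$ with that of the compact leaf $\gamma^s$, together with a careful tracing through the explicit construction of $\phi_k$ in Lemma~\ref{phik} to verify that this class is preserved by the gluing. Once this compatibility of the two Seifert structures across $\partial^{in} V$ is established, the remaining verifications are routine applications of JSJ theory and Thurston's hyperbolization theorem.
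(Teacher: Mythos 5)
Your proof is correct in outline but takes a substantially longer route than the paper's, and one step in that route is misjustified. The paper's argument is short: Lemma~\ref{phik}(3) already states that $\phi_k$, viewed under the natural identifications of $\partial^{out}W_-$ and $\partial^{in}W_+$ with $\partial^{in}V$, is isotopic to the identity. The Seifert-fiber slope of $V_-$ on $\partial^{out}W_-$ and that of $V_+$ on $\partial^{in}W_+$ are tautologically the same slope under these identifications (both are the fiber slope of $V$ on $\partial^{in}V$), so a gluing map isotopic to the identity automatically matches them. There is no need to identify the fiber slope with any particular leaf of $\cL^s_Z$, to invoke Lemma~\ref{l.parallel-leaves}, or to re-examine the normal forms $\psi_\pm$, $\Theta_\varepsilon$, $\xi_k$; that tracing only re-derives what item~(3) of Lemma~\ref{phik} already gives you.

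The step that is wrong as written is the identification ``$\gamma^s$ is freely homotopic in $\partial^{in}V$ to the longitude of $T_R$, which is the fiber direction because $R$ is a regular fiber.'' Both premises fail, each by a factor of two. Since $O$ has negative multipliers, $W^s(O)$ is a M\"obius band with core $O$; its boundary circle $\gamma^s$ is therefore freely homotopic in $V$ to $2O$, and on $\partial T_R$ it wraps \emph{twice} around the longitude direction, not once. Correspondingly, the Seifert fibration of $V$ (which has $O$ as the exceptional fiber of multiplicity $2$; it cannot have $R$ as a regular boundary-adjacent fiber, since $\partial T_A$ would then fail to be a union of fibers) would, upon filling back $T_R$, make $R$ a singular fiber of multiplicity $2$. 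The correct reason $[\gamma^s]$ is the fiber slope is: a regular fiber is freely homotopic to $2O$ because $O$ has multiplicity $2$, and $\gamma^s\simeq 2O$ via the M\"obius band $W^s(O)\cap V$. Your final conclusion is therefore right, but obtained from two compensating errors. Aside from this, the remainder of your argument (hyperbolicity of $U_\pm$ via Thurston, orbifold Euler characteristic $-1$ for the base of $S$, incompressibility of the decomposing tori) is sound and in fact supplies details that the paper leaves implicit; note only that your labeling $T_\pm=\partial^{in}V_\pm=\partial^{out}U_\pm$ does not match the actual gluing pattern (the torus $\partial^{in}V_+$ is glued to $\partial^{out}V_-$ via $\phi_k$, not to $U_+$), and that $\partial^{out}U$ may have several components, so $U_+$ and $U_-$ are glued along several tori rather than one.
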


\begin{proof}
As explained above, $M$ was obtained by gluing two copies $U_-,U_+$ of $U$ and two copies $V_-,V_+$ of $V$.  The interior of $U$ is hyperbolic; therefore, $U_-$ and $U_+$ must be hyperbolic pieces of JSJ decomposition of $M$. The manifold $V$ is a Seifert fiber bundle. During the construction of the manifold $M$, the two copies $V_-,V_+$ of $V$ were glued together using the map $\psi_k$. The map $\psi_k$ is isotopic to identity, and therefore maps the regular fibers of $V_-$ on the regular fibers of $V_+$ (up to free homotopy). Therefore $S:=V_-\cup V_+$ is a Seifert bundle, and corresponds to a single piece in the JSJ decomposition of $M$.
\end{proof}

\begin{rema}
The vector fields $Z_1,\dots,Z_n$ are pairwise homotopic through non-zero vector fields on $M$: this follows easily from the construction.
\end{rema}

\begin{prop}
For every $k\in\{1,\dots,n\}$, the Anosov vector field $Z_k$ is transitive.
\end{prop}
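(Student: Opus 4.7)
The goal is to apply Proposition~\ref{p.transitive}, which reduces the transitivity of $Z_k$ to the combinatorial transitivity of the triple $(W_+\sqcup W_-,\,Z_+\sqcup Z_-,\,\Phi_k)$. I would begin by enumerating the basic pieces of the hyperbolic plug $(W_+\sqcup W_-,\,Z_+\sqcup Z_-)$. Inside $W_+$, the plug $(U,X)$ contributes the single transitive saddle piece $A_+$ (Lemma~\ref{M0X0}) and the plug $(V,Y)$ contributes the saddle periodic orbit $B_+:=O_+^Y$ (Lemma~\ref{a.hyperbolic-plug-twisted-orbit}); the gluing $\psi$ creates no new recurrent orbits, so these are the only basic pieces of $W_+$. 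Analogously $W_-$ contains the pieces $A_-$ and $B_-$. Thus the graph $\cP$ has four vertices $\{A_+,B_+,A_-,B_-\}$, and it suffices to prove that $\cP$ is strongly connected.

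Next I would exhibit the internal edges of $\cP$. The construction in Lemma~\ref{M1.prefoliation} chose $\psi$ so that $\psi_*(\gamma^u)\subset\partial^{in}U$ is transverse to $\cL^s_X$, which immediately produces a heteroclinic orbit from $B$ to $A$ inside $(W,Z)$. In $W_+$ this yields the edge $B_+\to A_+$; in $W_-$, reversing the flow turns the same orbit into the edge $A_-\to B_-$.

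I would then analyze the cross-edges coming from the gluings $\phi$ and $\phi_k$. Since $\partial^{out}W_\pm$ is a copy of $\partial^{out}U$ (possibly with reversed orientation), one has $W^u_{Z_+}(A_+)\cap \partial^{out}W_+\supset \cL^u_X$ and $W^s_{Z_-}(A_-)\cap \partial^{in}W_-\supset \cL^u_X$; the strong transversality of $\phi_*(\cL^u_{Z_+})$ and $\cL^s_{Z_-}$ provided by Lemma~\ref{diff1} thus forces $\phi_*(\cL^u_X)\cap \cL^u_X\neq\emptyset$ and gives the edge $A_+\to A_-$. For $\phi_k$, the key dichotomy is that $W^u_{Z_-}(B_-)\cap \partial^{out}W_-$ is precisely the single compact leaf $c_-$ (trace of the stable separatrix of $O$ under $Y$, which becomes the unique unstable separatrix under $-Y$), while $W^s_{Z_+}(B_+)\cap \partial^{in}W_+$ is precisely $c_+$. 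Item~\ref{i.position-special-leaf} of Lemma~\ref{phik} states that $(\phi_k)_*(c_-)$ and $c_+$ bound two annuli in the torus $\partial^{in}W_+$; in particular they are disjoint, so there is no direct edge $B_-\to B_+$. However, by the strong transversality of $(\phi_k)_*(\cL^u_{Z_-})$ and $\cL^s_{Z_+}$, the essential closed curve $(\phi_k)_*(c_-)$ cannot lie in a single component of the complement of $\cL^s_{Z_+}$, so it must intersect $\cL^s_{Z_+}\setminus c_+ = W^s_{Z_+}(A_+)\cap \partial^{in}W_+$, producing the edge $B_-\to A_+$. The symmetric argument applied to $c_+$ and $(\phi_k)_*(\cL^u_{Z_-})\setminus(\phi_k)_*(c_-)=(\phi_k)_*\bigl(W^u_{Z_-}(A_-)\cap\partial^{out}W_-\bigr)$ produces the edge $A_-\to B_+$.

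Putting these together, the cycle $B_+\to A_+\to A_-\to B_-\to A_+$ together with the edges $A_-\to B_+$ and $B_-\to A_+$ shows that every pair of vertices of $\cP$ is joined by an oriented path, so $(W_+\sqcup W_-,\,Z_+\sqcup Z_-,\,\Phi_k)$ is combinatorially transitive; Proposition~\ref{p.transitive} then concludes that $Z_k$ is transitive. The main obstacle I anticipate is the precise bookkeeping of which portions of $\cL^u_{Z_\pm}$ and $\cL^s_{Z_\pm}$ correspond to the ``special'' compact leaves $c_\pm$ (coming from the Seifert piece $V$) versus the portion coming from the transitive saddle piece of $U$, and exploiting the exact geometric statement of Lemma~\ref{phik} to convert the disjointness of $(\phi_k)_*(c_-)$ from $c_+$ into the crucial non-trivial intersections that produce the edges $B_-\to A_+$ and $A_-\to B_+$.
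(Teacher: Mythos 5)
Your proof is correct, follows the same overall strategy as the paper (verify combinatorial transitivity and apply Proposition~\ref{p.transitive}), and is in fact \emph{more careful} than the paper's own argument. The paper asserts that the four pieces form an oriented cycle, with ``the unstable manifold of the maximal invariant set of any of these four plugs intersect[ing] the stable manifold of the maximal invariant set of the next plug in the cycle''; in your notation this is the cycle $B_+\to A_+\to A_-\to B_-\to B_+$. But you are right that the last edge $B_-\to B_+$ does \emph{not} exist: $W^u_{Z_-}(B_-)\cap\partial^{out}W_-$ is exactly the compact leaf $c_-$, $W^s_{Z_+}(B_+)\cap\partial^{in}W_+$ is exactly $c_+$, and item~2 of Lemma~\ref{phik} says precisely that $(\phi_k)_*(c_-)$ and $c_+$ bound annuli, hence are disjoint. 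So the paper's cycle, as stated, is broken at this step. You correctly replace this missing edge by the pair of edges $B_-\to A_+$ and $A_-\to B_+$, and your argument for them is sound: an essential simple closed curve on a torus cannot be contained in a simply connected strip of the complement of a filling MS-lamination, so $(\phi_k)_*(c_-)$, being transverse to and disjoint from $c_+$, must meet $\cL^s_{Z_+}\setminus c_+$, and symmetrically for $c_+$ and $(\phi_k)_*(\cL^u_{Z_-})\setminus(\phi_k)_*(c_-)$. The resulting graph on the four basic pieces $\{A_+,B_+,A_-,B_-\}$, with edges $B_+\to A_+$, $A_-\to B_-$, $A_+\to A_-$, $B_-\to A_+$, $A_-\to B_+$, is indeed strongly connected, and Proposition~\ref{p.transitive} gives transitivity. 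The only tiny imprecision is the phrase ``the strong transversality\ldots thus forces $\phi_*(\cL^u_X)\cap\cL^u_X\neq\emptyset$'' for the edge $A_+\to A_-$: strong transversality of the ambient laminations does not by itself imply that these particular sublaminations meet (their compact leaves are in fact disjoint after the half-period shift coming from Lemma~\ref{l.gluing-diffeo}); one should invoke the same essential-curve-versus-simply-connected-strip argument here as well, which you do spell out for the $\phi_k$ edges.
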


\begin{proof}
As explained above, $(M_k,Z_k)$ was obtained by gluing  the four hyperbolic plugs $(U_-,X_-)$, $(V_-,Y_-)$, $(V_+,Y_+)$ and $(U_+,X_+)$. These four plugs form a cycle as shown on figure~\ref{f.gluing-four-plugs}. The choice of the gluing maps ensures that the unstable manifold of the maximal invariant set of any of these four plugs intersects the stable manifold of the maximal invariant set of the next plug in the cycle. Moreover, each of the four hyperbolic plugs is transitive. Therefore, the graph associated to the gluing procedure has four vertices, and these four vertices belong to an oriented cycle; in particular, the gluing procedure is combinatorially transitive. By Proposition~\ref{p.transitive}, it follows that $Z_k$ is topologically transitive.
\end{proof}

\subsection{The vector fields $Z_1,\dots,Z_n$ are not topologically equivalent}
\label{ss.proof-several}

The strategy to prove the vector fields  $Z_1,\dots,Z_n$ are pairwise non topologically equivalent is the following. First, we prove that a topological equivalence between $Z_i$ and $Z_j$ must leave invariant the submanifolds $W_-$ and $W_+$. Then we use remarks~\ref{r.non-homeomorphic} to conclude that such a topological equivalence cannot exist, unless $i=j$.

We will use the following result, which was proved by Barbot (\cite[Th\'eor\`eme~A]{Ba3}), elaborating on some arguments of Brunella (\cite{Br}):

\begin{lemma}[Barbot]
\label{l.along-the-flow}
Let $Z$ be an Anosov vector field on a closed three-manifold $M$, and $T,T'$ be some tori embedded in $M$ and transverse to $Z$. If $T$ is homotopic to $T'$, then $T$ is isotopic to $T'$ along the orbits of $Z$.
\end{lemma}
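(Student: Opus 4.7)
The plan is to work in the universal cover $\widetilde M$ of $M$, where the lifted flow $\widetilde Z$ has a Hausdorff orbit space $\cO \cong \RR^2$ (Fenley--Barbot), and a transverse torus lifts to a plane that projects injectively into $\cO$.

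First I would verify the standard preliminary facts. Any torus transverse to an Anosov flow is incompressible: a compressing disc could be perturbed to be transverse to the flow, contradicting the absence of singularities. Hence any connected lift $\widetilde T \subset \widetilde M$ of $T$ is a properly embedded plane whose stabilizer $H_T \leq \pi_1(M)$ is a copy of $\ZZ^2$ acting cocompactly on $\widetilde T$. Using the homotopy $T \simeq T'$, I would choose a connected lift $\widetilde{T}'$ of $T'$ so that its stabilizer is the same subgroup $H := H_T = H_{T'}$.

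Next I would show that $\widetilde T$ projects injectively to $\cO$ as an $H$-invariant open set $U$: if two distinct points of $\widetilde T$ lay on a common $\widetilde Z$-orbit, closing up the orbit segment by an arc in $\widetilde T$ gives a loop in $M$ which meets $T$ transversely with algebraic intersection $\pm 1$ (from the transverse orbit segment), yet is freely homotopic into $T$ and so has algebraic intersection $0$ with $T$, a contradiction. Similarly $\widetilde{T}'$ projects injectively to an $H$-invariant open set $U' \subset \cO$. Both $U$ and $U'$ are simply connected subsets of the plane $\cO$ on which $H \cong \ZZ^2$ acts freely and cocompactly.

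The core step is to prove $U = U'$. I would lift a homotopy $h \colon T \times [0,1] \to M$ between $T$ and $T'$ to $\widetilde h \colon \widetilde T \times [0,1] \to \widetilde M$ starting at the inclusion; cocompactness modulo $H$ together with the choice of common stabilizer force $\widetilde h(\cdot,1)$ to land in $\widetilde{T}'$. For each $\widetilde x \in \widetilde T$, the path $t \mapsto \widetilde h(\widetilde x, t)$ projects to a continuous path in $\cO$ from a point of $U$ to a point of $U'$. Using that $\cO$ is a plane, that $U$ and $U'$ are simply connected and $H$-cocompact, and that $U \triangle U'$ is an $H$-invariant open set whose complement in $U \cup U'$ is connected by these homotopy paths, one concludes that $U \triangle U' = \emptyset$. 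Once $U = U'$ is established, the map $\varphi \colon \widetilde T \to \widetilde{T}'$ sending each $\widetilde x$ to the unique point of $\widetilde{T}'$ on its $\widetilde Z$-orbit is an $H$-equivariant diffeomorphism, and the associated hitting-time function $\tau \colon \widetilde T \to \RR$ is continuous and $H$-invariant, so descends to $\tau \colon T \to \RR$. Then $H_s(x) := Z^{s\tau(x)}(x)$ is an isotopy of $T$ onto $T'$ along the orbits of $Z$.

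The main obstacle is the equality $U = U'$: the open regions of the orbit space traced out by $\widetilde T$ and $\widetilde{T}'$ must coincide. This step genuinely uses both the three-manifold Anosov structure (the product structure of $\widetilde{W^s(\Lambda)}$ and $\widetilde{W^u(\Lambda)}$, and the planarity of $\cO$) and the homotopy hypothesis, and is the technical heart of Brunella's original arguments for torus bundles and of Barbot's generalization. The remaining steps, incompressibility of transverse tori, injectivity of the projection $\widetilde T \to \cO$, and the construction of the isotopy from the common region $U = U'$, are then essentially formal.
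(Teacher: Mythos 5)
The paper does not actually prove this statement: it is quoted as Barbot's Th\'eor\`eme~A from \cite{Ba3}, which in turn elaborates on arguments of Brunella \cite{Br}. So there is no ``paper's own proof'' to compare against; your proposal has to stand on its own.

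The strategy you sketch --- pass to the orbit space $\cO\cong\RR^2$ of the lifted flow, project the lifted tori $\widetilde T$ and $\widetilde T'$ to $H$-invariant open sets $U,U'\subset\cO$, prove $U=U'$, and then read off a hitting-time function --- is a sensible reconstruction of the modern point of view, and the peripheral steps (incompressibility of a transverse torus, $\cO\cong\RR^2$ by Barbot and Fenley, injectivity of $\widetilde T\to\cO$ via the intersection-number argument) are correctly identified. The genuine gap is precisely the step $U=U'$, and the one sentence you offer in its place is not a proof. It already fails at the level of point-set topology: $U\setminus U'=U\cap(\cO\setminus U')$ is not open unless $U\cap\partial U'=\emptyset$, so ``$U\triangle U'$ is an $H$-invariant open set'' is unjustified. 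More importantly, the existence of homotopy paths from $U$ to $U'$ plus simple-connectivity and cocompactness of the $H$-action do not by themselves force $U=U'$: one must use how a transverse torus actually sits relative to the two transverse foliations $\widetilde{\cF^s}$, $\widetilde{\cF^u}$ of $\cO$ (this is where Brunella's lemma on separating sets and Barbot's ``mise en position optimale'' come in). You flag this as ``the technical heart,'' but as written the core argument is absent, and the concluding isotopy $H_s(x)=Z^{s\tau(x)}(x)$ also requires a (short but not automatic) verification that each $H_s$ is injective when $\tau$ changes sign, since distinct points of $T$ in $M$ can lie on a common $Z$-orbit even though distinct points of $\widetilde T$ cannot lie on a common $\widetilde Z$-orbit.
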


The phrase ``$T$ is isotopic to $T'$ along the orbits of $Z$" means that there exists a continuous function $u:T\to \RR$ such that $x\mapsto Z^{u(x)}(x)$ maps $T$ on $T'$. This implies that there is a homeomorphism $g:M\to M$ preserving preserving each orbit of $Z$ and mapping $T$ on $T'$.

\begin{lemma}
\label{l.preserves-the-plugs}
Assume that the vector fields $Z_i$ and $Z_j$ are topologically equivalent. Then, one can choose the topological equivalence so that it preserves $W_-$ and $W_+$.
\end{lemma}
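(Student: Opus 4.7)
The proof will combine two main inputs: the canonicity of the JSJ decomposition of $M$ (whose structure is given by Proposition~\ref{p.JSJ}) and Barbot's isotopy result (Lemma~\ref{l.along-the-flow}). The strategy is first to use the topology of $M$ to arrange that a given topological equivalence exactly preserves the collection of JSJ tori, and then to use the dynamics of $Z_j$ to force it to preserve the partition $\{W_+,W_-\}$.

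First I would identify the JSJ tori of $M$ as the system $\cT$ of tori along which the four plug-pieces $U_-,V_-,V_+,U_+$ are glued to form $M$; more precisely, $\cT=T_1\cup T_3^+\cup T_3^-$, where $T_1=\partial^{out} U_+=\partial^{in} U_-$ corresponds to the identification of the two copies of $\partial^{out} U$ by $\phi$, and $T_3^\pm$ is the internal torus of $W_\pm$ separating $U_\pm$ from $V_\pm$. The remaining separating torus $T_2=\partial^{in} V$, along which the two copies of $V$ are glued by $\phi_k$, lies inside the Seifert piece $S=V_-\cup V_+$ and is thus not a JSJ torus. By construction all three tori of $\cT$ are transverse to every $Z_k$. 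Given a topological equivalence $h\colon (M,Z_i)\to(M,Z_j)$, the canonicity of the JSJ decomposition implies that $h(\cT)$ is isotopic to $\cT$ in $M$; since both systems are transverse to $Z_j$, Barbot's Lemma~\ref{l.along-the-flow} provides an isotopy along $Z_j$-orbits taking $h(\cT)$ onto $\cT$. Composing $h$ with this isotopy produces a new topological equivalence $h'\colon (M,Z_i)\to(M,Z_j)$ with $h'(\cT)=\cT$.

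The map $h'$ now permutes the three JSJ pieces $U_+,U_-,S$ while preserving their topological type, so it permutes $\{U_+,U_-\}$ and fixes $S$. Within $S$, the two halves $V_+$ and $V_-$ are distinguished by the direction in which $Z_j$ crosses the internal torus $T_2$, and preservation of the forward flow cycle $V_+\to U_+\to U_-\to V_-\to V_+$ forces $h'$ to belong to one of only two possibilities: the identity, which directly yields $h'(W_\pm)=W_\pm$, or the simultaneous swap $U_+\leftrightarrow U_-$ and $V_+\leftrightarrow V_-$, which globally exchanges $W_+$ and $W_-$ and necessarily reverses the flow direction. In the latter case, one composes $h'$ with the natural involution $\sigma$ of $M$ coming from the identification of the two copies of $W$; this involution conjugates $Z_k$ with $-Z_k$, and the composition $\sigma\circ h'$ is a topological equivalence (up to time reversal, which is admissible in the orbit-equivalence setting) preserving $W_+$ and $W_-$ individually. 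The main delicate point, and the step where care is required, is checking that $\sigma$ descends to a well-defined self-homeomorphism of $M$; this relies on the symmetric form of the gluing data and on the fact that $\phi$ and $\phi_k$ were chosen isotopic to the identity in Lemmas~\ref{diff1} and~\ref{phik}.
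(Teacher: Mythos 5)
Your opening is essentially correct and mirrors the paper's own strategy: appeal to the canonicity of the JSJ decomposition of $M$ (Proposition~\ref{p.JSJ}) to conclude that a topological equivalence $h$ permutes the pieces $U_+,U_-,S$ up to isotopy, and then use Barbot's result (Lemma~\ref{l.along-the-flow}) to replace ``up to isotopy'' by ``set-theoretically,'' since the JSJ tori are transverse to the flows. Up to that point the argument is sound. However, the remainder of your proof has two genuine gaps.

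\textbf{The swap case.} You correctly observe that a simultaneous swap $U_+\leftrightarrow U_-$, $V_+\leftrightarrow V_-$ would ``necessarily reverse the flow direction,'' but you then try to repair that case by composing with an involution $\sigma$ conjugating $Z_k$ to $-Z_k$, claiming time reversal ``is admissible in the orbit-equivalence setting.'' It is not: in this paper, topological equivalence sends \emph{oriented} orbits of $Z_i$ to oriented orbits of $Z_j$. The correct conclusion from your own observation is that the swap case simply cannot occur for a topological equivalence, which is exactly what the paper argues (``the orbits of $Z_i$ and $Z_j$ go from $U_+$ to $U_-$''). Allowing time reversal here would also break the intended application: the next proposition needs $h$ to send $\cL^s_{Z_-}$ to $\cL^s_{Z_-}$ (not to $\cL^u_{Z_-}$), which fails for a time-reversing equivalence, so Remark~\ref{r.non-homeomorphic} could no longer be invoked.

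\textbf{The internal torus of $S$.} After you have arranged $h'(\cT)=\cT$ for $\cT=T_1\cup T_3^+\cup T_3^-$, you assert that preservation of the flow cycle within $S$ pins down $h'$ on $V_\pm$. But $\partial W_\pm$ involves the torus $T_2=\partial^{out} W_-=\partial^{in} W_+=\partial^{out}V_-=\partial^{in}V_+$, which (as you yourself note) is \emph{not} a JSJ torus, and nothing you have said forces $h'(T_2)=T_2$. Without this, ``$h'$ fixes $V_\pm$'' does not follow even in the identity case. The paper closes this gap by observing that $S$ contains only three incompressible tori up to isotopy ($T_3^+$, $T_3^-$, and $T_2$), so $h'(T_2)$ is isotopic to $T_2$, and then applying Barbot's lemma a second time to make this an equality of sets. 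You should incorporate that step (or an equivalent one) before concluding.
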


\begin{proof}
By assumption $Z_i$ and $Z_j$ are topological equivalent: there exists a homeomorphism $h:M\to M$ mapping the oriented orbits of $Z_i$ on the oriented orbits of $Z_j$.

Recall that the JSJ decomposition of $M$ comprises two hyperbolic pieces $U_-,U_+$ and one Seifert piece $S=V_-\cup V_+$ (see Proposition~\ref{p.JSJ}). The homeomorphism $h$ permutes the three JSJ pieces up to isotopy, mapping a hyperbolic piece on a hyperbolic piece, and the Seifert piece on a Seifert piece. Morover, $h$ cannot map (even up to isotopy) $U_-$ on $U_+$, because $h$ preserves the orientation of the orbits, and the orbits of $Z_i$ and $Z_j$ go from $U_+$ to $U_-$ (see figure~\ref{f.gluing-four-plugs}). Therefore, $h$ must leave invariant the three JSJ pieces $U^-,U^+,S$ up to isotopy.

Now, recall that the boundaries of $U_-, U_+,S$ are transverse to the vector fields $Z_i$ and $Z_j$ (see the proof of Proposition~\ref{p.JSJ}). Hence, using to Lemma~\ref{l.along-the-flow}, we can modify $h$ so that it leaves invariant $U_-,U_+,S$ in the set theoretic sense (not ``up to isotopy"). And since $\partial^{out} W_+=\partial^{in} W_-=\partial ^{out}U_+=\partial^{in} U_-$ (see figure~\ref{f.gluing-four-plugs}), it follows that $h$ preserves the surface $\partial^{out} W_+=\partial^{in} W_-$.

It remains to prove that $h$ also preserves the surface $\partial^{out} W_-=\partial^{in} W_+$. Recall that $\partial^{out} W_-=\partial^{in} W_+=\partial^{out} V_-=\partial^{in} V_+$ is an incompressible torus in the interior the Seifert piece $S=V_-\cup V_+$  (see again figure~\ref{f.gluing-four-plugs}). But the topology of $S$ is quite simple. Indeed, $V_-$ and $V_+$ are Seifert bundles over the projective plane minus two discs. It follows that, up to homotopy, there are only three incompressible tori in the Seifert piece $S$: the two connected components $\partial^{in} V_-,\partial^{out} V_+$ of the boundary of $S$, and the torus $\partial^{out} W_-=\partial^{in} W_+=\partial^{out} V_-=\partial^{in} V_+$. As a consequence, $h$ must preserve the torus $\partial^{out} W_-=\partial^{in} W_+$ up to isotopy. And using once again lemma~\ref{l.along-the-flow}, we can modify $h$ so that it leaves invariant $\partial^{out} W_-=\partial^{in} W_+$ in the set theoretic sense.

Now $h$ leaves invariant $\partial^{out} W_+=\partial^{in} W_-$ and $\partial^{out} W_-=\partial^{in} W_+$. Henceforth, it must leave invariant $W_-$~and~$W_+$.
\end{proof}

\begin{prop}
The vector fields $Z_i$ and $Z_j$ are not topologically equivalent, unless $i=j$.
\end{prop}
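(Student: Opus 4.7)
The plan is to use Lemma~\ref{l.preserves-the-plugs} to reduce the question to a statement about the torus $T := \partial^{out} W_- = \partial^{in} W_+$, and then to extract from the configuration of compact leaves of $\cL^s_{Z_+}\cup(\phi_k)_*(\cL^u_{Z_-})$ a topological invariant that distinguishes the values of $k$.

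Assume $h\colon M\to M$ is a topological equivalence from $Z_i$ to $Z_j$. By Lemma~\ref{l.preserves-the-plugs} we may choose $h$ so that $h(W_\pm)=W_\pm$, and since $h$ sends oriented orbits of $Z_i$ to oriented orbits of $Z_j$, it must send entrance boundaries to entrance boundaries. In particular $h(T)=T$, and the restriction $h|_{W_+}$ (resp. $h|_{W_-}$) is a topological equivalence from $Z_i|_{W_+}=Z_+$ to $Z_j|_{W_+}=Z_+$ (resp. from $Z_-$ to $Z_-$). A topological equivalence sends periodic orbits to periodic orbits and weak stable (resp. unstable) manifolds to weak stable (resp. unstable) manifolds. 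Hence $h|_T$ sends the set of compact leaves of $\cL^s_{Z_+}\subset T$ to itself (they are the intersection of $T$ with stable manifolds of periodic orbits of $Z_+$ having a free stable separatrix pointing into $T$), and sends the compact leaves of $(\phi_i)_*(\cL^u_{Z_-})$ to the compact leaves of $(\phi_j)_*(\cL^u_{Z_-})$.

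Next I will identify the ``special'' compact leaves. The contracting orientation of a compact leaf (Definition~\ref{d.contracting-orientation}) is dynamically defined, so it is preserved by topological equivalences. By Lemma~\ref{M1.prefoliation}, the lamination $\cL^s_{Z_+}$ has a unique compact leaf $c^+$ whose contracting orientation is incoherent with that of the other $2n+2$ compact leaves; similarly, $\cL^u_{Z_-}$ has a unique incoherent compact leaf $c^-$, so $(\phi_k)_*(c^-)$ is the unique special leaf of $(\phi_k)_*(\cL^u_{Z_-})$. It follows that $h|_T(c^+)=c^+$ and $h|_T((\phi_i)_*(c^-))=(\phi_j)_*(c^-)$.

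Finally I will extract the invariant. By Lemma~\ref{l.parallel-leaves} the two compact curves $c^+$ and $(\phi_k)_*(c^-)$ are freely homotopic essential simple closed curves on $T$, and they are disjoint by item~1 of Lemma~\ref{phik}; hence they split $T$ into two open annuli. By item~2 of Lemma~\ref{phik}, these two annuli contain respectively $k$ and $2n+2-k$ compact leaves of $\cL^s_{Z_+}\setminus\{c^+\}$. Since $h|_T$ sends the pair $\{c^+,(\phi_i)_*(c^-)\}$ to $\{c^+,(\phi_j)_*(c^-)\}$ and permutes the compact leaves of $\cL^s_{Z_+}$, the unordered pair $\{i,2n+2-i\}$ must equal $\{j,2n+2-j\}$. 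Either $i=j$, or $i+j=2n+2$; the latter forces $\max(i,j)\ge n+1$, contradicting $i,j\in\{1,\dots,n\}$. Therefore $i=j$. The main obstacle is the second paragraph: one must argue carefully that $h|_T$ preserves the two sub-laminations $\cL^s_{Z_+}$ and $(\phi_k)_*(\cL^u_{Z_-})$ separately as sub-laminations of the full weak stable/unstable foliations restricted to $T$, which is handled by identifying their compact leaves intrinsically from the dynamics of the plug $(W_+,Z_+)$ (resp.\ $(W_-,Z_-)$) to which $h$ restricts as a self-equivalence.
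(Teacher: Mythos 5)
Your proof is correct and follows the same route as the paper's: reduce via Lemma~\ref{l.preserves-the-plugs} to a self-homeomorphism of the torus $T=\partial^{out} W_-=\partial^{in} W_+$, and then use the combinatorics of compact leaves to force $i=j$. The paper compresses your last two paragraphs into a citation of Remark~\ref{r.non-homeomorphic}; you unfold that remark correctly, in particular making explicit why the unordered pair $\{i,\,2n+2-i\}$ is a topological invariant of the configuration $\cL^s_{Z_+}\cup(\phi_i)_*(\cL^u_{Z_-})$ and why the constraint $i,j\le n$ rules out $i+j=2n+2$. One small imprecision: the disjointness of $c^+$ and $(\phi_k)_*(c^-)$ is encoded in item~2 of Lemma~\ref{phik} (the two curves bound a pair of annuli), not in item~1 -- strong transversality alone does not preclude transverse intersection of two compact leaves -- but this does not affect the argument.
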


\begin{proof}
Consider two integers $i,j\in\{1,\dots,n\}$ and assume that the vector fields $Z_i$ and $Z_j$ are topologically equivalent. According to lemma~\ref{l.preserves-the-plugs}, there exists a homeomorphism $h:M\to M$ mapping the orbits of $Z_i$ of the oriented orbits of $Z_j$, and leaving invariant the two submanifolds $W^-$ and $W^+$. This homeomorphism $h$ maps the laminations $\cL^s_{Z^-}$ and $(\phi_i)_*(\cL^u_{Z_+})$ on the laminations  $\cL^s_{Z^-}$ and $(\phi_j)_*(\cL^u_{Z_+})$ respectively. According to remark~\ref{r.non-homeomorphic}, this is possible only if $i=j$.
\end{proof}

\section{An Anosov flow with infinitely many transverse tori}

The purpose of this last section is to prove Theorem~\ref{t.infinitely-many-tori}, \emph{i.e.} to build a transitive Anosov vector field $Z$ on a closed three-manifold $M$ such that there exists infinitely many pairwise non-isotopic tori embedded in $M$ which are transverse to $Z$.

We first consider the vector field $X_0$ on the torus $\TT^2$ defined as follows
$$X_0(x,y)=\sin(2\pi y)\frac{\partial}{\partial x}+\sin(2\pi y)\frac{\partial}{\partial y}.$$
One can easily check that the non-wandering set of $X_0$ consists in four hyperbolic singularities: a source~$\alpha:=(0,0)$, two saddles $\sigma_1:=(\frac{1}{2},0)$ and $\sigma_2:=(0,\frac{1}{2})$, and a sink $\omega:=(\frac{1}{2},\frac{1}{2})$. Moreover,  the invariant manifolds of $\sigma_1$ are disjoint from the invariant manifold of $\sigma_2$. See figure~\ref{f.gradient-like}.

\begin{figure}[ht]
\begin{center}
  \includegraphics[totalheight=5cm]{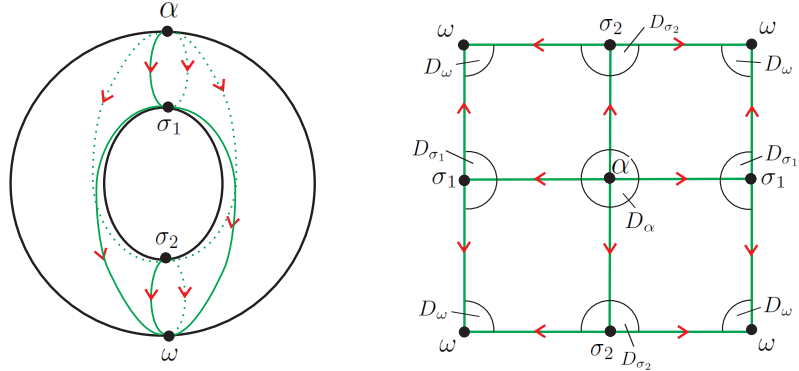}
  \caption{\label{f.gradient-like}The gradient-like vector field $X_0$}
  \end{center}
\end{figure}

Now, we consider some pairwise disjoint (small) open discs $D_\alpha,D_{\sigma_1},D_{\sigma_2},D_{\omega}$ centered at $\alpha,\sigma_1,\sigma_2,\omega$ respectively, such that the vector field $X_0$ is transverse to the boundaries of $D_\alpha$ and $D_{\omega}$. We consider a smooth function $\varphi:\TT^2\to\RR$ such that $\varphi>0$ on $D_{\sigma_1}$, $\varphi<0$ on $D_{\sigma_2}$, and $\varphi=0$ on $\TT^2\setminus (D_{\sigma_1}\cup D_{\sigma_2})$ (in particular, $\varphi=0$ on $D_\alpha\cup D_\omega$). Then we consider the vector field $X$ on $\TT^2\times \SS^1$ defined by
$$X(x,y,t)= X_0(x,y)+ \varphi(x,y)\frac\partial{\partial t}.$$
We consider the compact three-manifold with boundary $U:=(\TT^2\setminus (D_\alpha\cup D_\omega))\times \SS^1$.

\begin{lemma}
The pair $(U,X)$ is a hyperbolic plug with the following characteristics:
\begin{enumerate}[topsep=1pt, itemsep=1pt, parsep=1pt]
\item the maximal invariant set of $(U,X)$ consists in two saddle hyperbolic periodic orbits;
\item both the entrance boundary $\partial^{in} U$ (resp. the exit boundary $\partial^{out} U$) of $(U,X)$ is a torus;
\item the lamination $\cL^s_X$ (resp. $\cL^u_X$) consists in four closed leaves. These leaves are parallel essential curves in $\partial^{in} U$ (resp. $\partial^{out} U$). Moreover,  the dynamical orientations (see definition~\ref{d.flow-orientation}) of these leaves are ``alternating":  the dynamical orientations of two adjacent leaves of $\cL^s_X$ (resp. $\cL^u_X$) are always incoherent.
\end{enumerate}
\end{lemma}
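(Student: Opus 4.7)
\begin{proo}[Proof proposal]
The plan is to verify the four claims in order, with all the work concentrated in the last one. First I would check item~(2) and set up the boundary: on the set $(\partial D_\alpha\cup\partial D_\omega)\times\SS^1$ one has $\varphi\equiv 0$ (because $D_{\sigma_1}$ and $D_{\sigma_2}$ are disjoint from $D_\alpha$ and $D_\omega$), so $X$ coincides with $X_0$ there. Since $X_0$ is transverse to $\partial D_\alpha$ and $\partial D_\omega$ and points outward from $\alpha$ and inward toward $\omega$, the vector field $X$ is transverse to $\partial U$ with $\partial^{in}U=\partial D_\alpha\times \SS^1$ and $\partial^{out}U=\partial D_\omega\times\SS^1$, both of which are tori.

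Next, for item~(1), an orbit $(x(s),y(s),t(s))$ of $X$ projects to an orbit $(x(s),y(s))$ of $X_0$, so it remains in $U$ for all time iff its projection remains in $\TT^2\setminus(D_\alpha\cup D_\omega)$ for all time. Because $X_0$ is Morse--Smale with non-wandering set $\{\alpha,\sigma_1,\sigma_2,\omega\}$, and because the invariant manifolds of $\sigma_1,\sigma_2$ are disjoint, the only $X_0$-orbits staying forever away from $\{\alpha,\omega\}$ are the fixed points $\sigma_1$ and $\sigma_2$ themselves. Hence the maximal invariant set is $O_1\sqcup O_2$ with $O_i:=\{\sigma_i\}\times\SS^1$; these are periodic orbits since $X(\sigma_i,t)=\varphi(\sigma_i)\partial_t$ with $\varphi(\sigma_i)\neq 0$. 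Hyperbolicity is inherited from the hyperbolic saddles $\sigma_i$ of $X_0$: the linearization of the time-$\tau_i$ flow of $X$ along $O_i$ splits as the hyperbolic derivative of the first-return of $X_0$ near $\sigma_i$ in the $(x,y)$-directions, plus the flow direction $\partial_t$.

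For item~(3), the invariance of the $(x,y)$-projection gives $W^s(O_i)=W^s_{X_0}(\sigma_i)\times\SS^1$, hence
\[
\cL^s_X=W^s(\Lambda)\cap\partial^{in}U=\bigsqcup_{i=1,2}\bigl(W^s_{X_0}(\sigma_i)\cap\partial D_\alpha\bigr)\times\SS^1.
\]
Since each stable separatrix of $\sigma_i$ has the source $\alpha$ as backward limit, it crosses the transverse circle $\partial D_\alpha$ exactly once (for $D_\alpha$ small), so $W^s_{X_0}(\sigma_i)\cap\partial D_\alpha$ consists of two points; thus $\cL^s_X$ has exactly four compact leaves, each of the form $\{p\}\times\SS^1$, i.e.~a fiber circle, and hence they are parallel essential simple closed curves in $\partial^{in}U$. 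The argument for $\cL^u_X$ is identical, using $\omega$ in place of $\alpha$.

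The main work is the alternation of dynamical orientations. By Definition~\ref{d.flow-orientation} (applied to $\cL^s$), a leaf $\{p\}\times\SS^1$ of $\cL^s_X$ inherits its dynamical orientation from the orientation of the associated periodic orbit $O_i$, which at $\sigma_i$ is given by $\varphi(\sigma_i)\partial_t$: positive $\partial_t$ for leaves coming from $\sigma_1$ (since $\varphi>0$ on $D_{\sigma_1}$) and negative $\partial_t$ for leaves coming from $\sigma_2$. So it suffices to show that the four points of $(W^s_{X_0}(\sigma_1)\cup W^s_{X_0}(\sigma_2))\cap\partial D_\alpha$ alternate in the cyclic order on $\partial D_\alpha$ according to which saddle they belong to. I would prove this by linearizing $X_0$ at $\alpha$: near the hyperbolic source $\alpha$ the flow is $C^1$-conjugate to $2\pi(x\partial_x+y\partial_y)$, and in these coordinates $W^s_{X_0}(\sigma_1)$ enters $\alpha$ along the two rays of one eigendirection while $W^s_{X_0}(\sigma_2)$ enters along the two rays of the other (transverse) eigendirection. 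Parameterizing $\partial D_\alpha$ by the angular coordinate of the linearized portrait, the four intersection points sit at approximately $0,\pi/2,\pi,3\pi/2$ and therefore alternate between $\sigma_1$- and $\sigma_2$-type; combined with the orientation computation, this shows that consecutive leaves of $\cL^s_X$ have opposite dynamical orientations. The argument for $\cL^u_X$ on $\partial D_\omega\times\SS^1$ is entirely analogous, using the linearization at the sink $\omega$ and the separatrices $W^u_{X_0}(\sigma_i)$; the only mild subtlety is to observe that the cyclic alternation depends only on the local linearization at $\alpha$ (resp.\ $\omega$), not on the exact shape of the transverse disc $D_\alpha$ (resp.\ $D_\omega$), which is the bookkeeping step I expect to be the most delicate part.
\end{proo}
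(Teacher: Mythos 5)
Your argument follows the same route as the paper's short proof and is correct in substance for items (1), (2), and the first part of (3): transversality and the in/out split of $\partial U$ via $\varphi\equiv 0$ on $D_\alpha\cup D_\omega$; the maximal invariant set $\{\sigma_1,\sigma_2\}\times\SS^1$ (a periodic orbit because $\varphi(\sigma_i)\neq 0$, hyperbolic because $\sigma_i$ is a saddle of $X_0$); and the count of four vertical-fiber compact leaves of $\cL^s_X$, each $\{p\}\times\SS^1$. Your observation that $\varphi>0$ on $D_{\sigma_1}$ and $\varphi<0$ on $D_{\sigma_2}$ gives opposite $t$-orientations on $O_1,O_2$ is exactly the paper's mechanism for incoherence.

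The one step that does not go through as written is your justification of the alternation of the four points $(W^s_{X_0}(\sigma_1)\cup W^s_{X_0}(\sigma_2))\cap\partial D_\alpha$. You appeal to the $C^1$-linearization of $X_0$ at the source $\alpha$ and speak of "the two rays of one eigendirection" versus "the other (transverse) eigendirection." But $DX_0(\alpha)=2\pi\,\mathrm{Id}$ is a scalar matrix: \emph{every} line is an eigendirection, so the linearization singles out no preferred pair of transverse axes, and it does not by itself force the two stable separatrices of a given $\sigma_i$ to limit on antipodal rays. What actually establishes the alternation is not a local normal form at $\alpha$ but the explicit product structure of $X_0$: the coordinate circles $\{y=0\}$ and $\{x=0\}$ are $X_0$-invariant, with $W^s_{X_0}(\sigma_1)\subset\{y=0\}$ and $W^s_{X_0}(\sigma_2)\subset\{x=0\}$, and these two embedded circles cross transversally at $\alpha$. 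Two transversal arcs through the center of a small disc $D_\alpha$ meet $\partial D_\alpha$ in four cyclically alternating points; this is exactly the fact the paper compresses into "observe that the points ... are alternating." Replace the linearization appeal by this direct observation (and the analogous one at $\omega$ with $W^u_{X_0}(\sigma_i)$) and the proof is complete.
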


\begin{proof}
The vector field $X_0$ is transverse to $\partial D_\alpha\cup \partial D_\omega$, and the function $\varphi$ vanishes on $D_\alpha\cup D_\omega$. Hence, the vector field $X$ is transverse to $\partial U=(\partial D_\alpha\times\SS^1)\sqcup (\partial D_\omega\times\SS^1)$. In other words, $(U,X)$ is a plug. Moreover, since $\alpha$ is a source and $\omega$ is a sink, the vector field $X_0$ is pointing outwards $D_\alpha$ and inwards $D_\omega$. It follows that $\partial^{in} U=\partial D_\alpha\times\SS^1$ and $\partial^{out} U=\partial D_\omega\times\SS^1$. In particular, both $\partial^{in} U$ and $\partial^{out} U$ are tori.

The definitions of $X_0$ and $S$ imply that maximal invariant set of $(S,X_0)$ is made of the two saddles $\sigma_1,\sigma_2$. It follows that the maximal invariant set of $(U,X)$ consists in the saddle hyperbolic periodic orbits $\sigma_1\times\SS^1$ and $\sigma_2\times\SS^1$. In particular, $(U,X)$ is a hyperbolic plug.

The intersection of $W^s(\sigma_1)\cup W^s(\sigma_2)$ with $\partial D_\alpha$ consists in four points. By definition of the vector field $X$, the lamination $\cL^s_X$ is just the product by $\SS^1$ of $(W^s(\sigma_1)\cup W^s(\sigma_2))\cap \partial D_\alpha$. This shows that $\cL^s_X$ consists in four closed leaves, which are parallel essential curves in $\partial^{in} U=\partial D_\alpha\times\SS^1$.

Let $\gamma_1,\gamma_2$ be two adjacent leaves in $\cL^s(U,X)$. Observe that the points of $W^s(\sigma_1)\cap \partial D_\alpha$ and $W^s(\sigma_2)\cap \partial D_\alpha$ are alternating with respect to the cyclic order of $\partial D_\alpha$. Therefore, up to exchanging the names, $\gamma_1$ and $\gamma_2$ belong respectively to $W^s(\sigma_1\times\SS^1)$ and $W^s(\sigma_2\times\SS^1)$. And since the function $\varphi$ is positive on $D_{\sigma_1}$ and negative on $D_{\sigma_2}$, it follows that
the dynamical orientations of $\gamma_1,\gamma_2$ are incoherent.
\end{proof}

Now we consider two copies $(U_1,X_1)$ and $(U_2,X_2)$ of the plug $(U,X)$. We choose a diffeomorphism $\psi:\partial^{out} U_1\to \partial^{in} U_2$ so that each of the four compact leaves of $\psi_*(\cL^u_{X_1})$ intersects transversally each of the four leaves of $\cL^s_{X_2}$ at exactly one point (see figure~\ref{f.gluings}, left). We consider manifold with boundary $V:=(U_1\sqcup U_2)/\psi$. We denote by $Y$ the vector field on $V$ induced by $X_1$ and $X_2$. According to proposition~\ref{p.plug}, $(V,Y)$ is a hyperbolic plug. By construction, $\partial^{in} V=\partial^{in} U_1$ and $\partial^{out} V=\partial^{out} U_2$. In particular, both $\partial^{in} V$ and $\partial^{out} V$ are tori.

\begin{figure}[ht]
\begin{center}
 \includegraphics[totalheight=5cm]{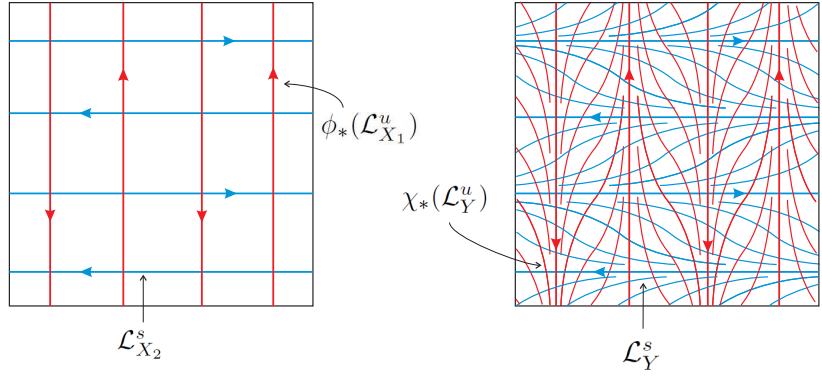}
\caption{\label{f.gluings} The gluing maps $\psi$ (on the left) and $\chi$ (on the right)}
\end{center}
\end{figure}

\begin{rema}
\label{r.arc-crossing-annulus}
Each connected component $A^s$ of $\partial^{in} U_2\setminus \cL^s_{X_2}$ is an annulus bounded by two (compact) leaves of $\cL^s_{X_2}$. The assumptions on the gluing map $\psi$ imply that $\psi_*(\cL^u_{X_2})\cap A^s$ consists in four open arcs, each of which is ``crossing" the annulus $A^s$ (\emph{i.e} going from one end of $A^s$ to the other).
\end{rema}

\begin{lemma}
\label{l.properties-lamination}
$\cL^u_Y$ (resp. $\cL^s_Y$) is a filling MS-lamination.  It has four compact leaves. These leaves have ``alternating contracting orientations": the contracting orientations of two adjacent leaves are always incoherent.
\end{lemma}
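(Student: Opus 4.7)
The plan is to treat $\cL^u_Y$ only; the statement for $\cL^s_Y$ will then follow by the symmetric argument (reversing the flow and swapping $U_1$ with $U_2$). First I will invoke Proposition~\ref{p.gluedlaminations} to write
$$\cL^u_Y \;=\; \cL^u_{X_2}\;\sqcup\;(\Ga_{X_2})_*\bigl(\psi_*(\cL^u_{X_1})\setminus\cL^s_{X_2}\bigr),$$
where $\Ga_{X_2}\colon\partial^{in}U_2\setminus\cL^s_{X_2}\to\partial^{out}U_2\setminus\cL^u_{X_2}$ is the crossing map of $(U_2,X_2)$. Each of the four compact leaves of $\psi_*(\cL^u_{X_1})$ meets $\cL^s_{X_2}$ transversally in four points, hence is cut into four non-compact arcs; their images under the diffeomorphism $\Ga_{X_2}$ are still non-compact (their endpoints accumulate on $\cL^u_{X_2}$), so the four compact leaves of $\cL^u_Y$ are exactly those of $\cL^u_{X_2}$.

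Next I will check the MS property. Every non-compact half-leaf of $\cL^u_Y$ either lies in $\cL^u_{X_2}$ and is asymptotic to a compact leaf by Proposition~\ref{p.hyperbolicplug} applied to $(U_2,X_2)$, or is the image under $\Ga_{X_2}$ of a half-arc of $\psi_*(\cL^u_{X_1})\setminus\cL^s_{X_2}$ with endpoint on some compact leaf $\ell\subset\cL^s_{X_2}$. In the latter case, denoting by $O$ the periodic orbit of $X_2$ cut out by $\ell$, the $\lambda$-lemma applied along $O$ ensures that this half-leaf is asymptotic in $\partial^{out}U_2$ to the compact leaf of $\cL^u_{X_2}$ produced by $W^u_{X_2}(O)$.

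For the filling property I will analyze each annular connected component $A^u$ of $\partial^{out}V$ minus the compact leaves of $\cL^u_Y$. The unlabelled lemma stated right after Lemma~\ref{l.exceptional-inout} tells me that $A^s:=\Ga_{X_2}^{-1}(A^u)$ is an annulus bounded by two compact leaves of $\cL^s_{X_2}$. Remark~\ref{r.arc-crossing-annulus} then gives four pairwise disjoint, pairwise freely homotopic arcs in $\psi_*(\cL^u_{X_1})\cap A^s$, each crossing $A^s$ from one boundary component to the other; their images in $A^u$ under the diffeomorphism $\Ga_{X_2}\colon A^s\to A^u$ form four pairwise disjoint, freely homotopic spirals whose endpoints accumulate on the two compact leaves bounding $A^u$. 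These four spirals cut $A^u$ into four open regions, and to show each region is a strip I will foliate $A^s$ by leaf segments of the auxiliary smooth foliation $\cF^s$ provided by Lemma~\ref{l.invariantfoliations} and push this foliation forward by $\Ga_{X_2}$; Lemma~\ref{l.stablefoliation} will then force the two adjacent spirals bounding each region to be asymptotic to a common compact leaf of $\cL^u_Y$ at each end, exactly as in the proof of Proposition~\ref{p.plug2} (see Lemma~\ref{l.strip}).

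Finally, for the contracting orientations I will note that the four compact leaves of $\cL^u_Y$ are compact leaves of $\cL^u_{X_2}$ contained in $\partial^{out}U_2\subset U_2$, where $Y=X_2$; hence their dynamical orientations in the sense of Definition~\ref{d.flow-orientation} computed for $Y$ and for $X_2$ coincide. By Proposition~\ref{p.flow-vs-contracting}, the dynamical orientation of a compact leaf of an unstable lamination agrees with its contracting orientation, so the contracting orientations of the compact leaves of $\cL^u_Y$ are those coming from $\cL^u_{X_2}$, which by item~3 of the previous lemma alternate along $\partial^{out}V$. The only genuine obstacle I anticipate is the rigorous asymptoticity argument inside each $A^u$; this should go through by transcribing verbatim the construction of the foliation $\cG_1$ in the proof of Lemma~\ref{l.strip}.
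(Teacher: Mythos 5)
Your proof is correct and takes essentially the same route as the paper: both begin with the splitting of $\cL^u_Y$ from Proposition~\ref{p.gluedlaminations}, both identify the compact leaves of $\cL^u_Y$ with those of $\cL^u_{X_2}$ (since each leaf of $\psi_*(\cL^u_{X_1})$ meets $\cL^s_{X_2}$), both handle the alternating orientations via Proposition~\ref{p.flow-vs-contracting}, and both establish the filling property by analyzing the four crossing arcs from Remark~\ref{r.arc-crossing-annulus} inside an annulus $A^u$. The only cosmetic difference is that you fill in more detail at two points the paper treats tersely: you spell out the $\lambda$-lemma argument for the MS property where the paper simply invokes Proposition~\ref{p.hyperbolicplug}, and you explicitly reduce the strip verification to the construction of Lemma~\ref{l.strip}, whereas the paper asserts the conclusion directly after observing that the four spirals accumulate on both boundary leaves of $A^u$.
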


\begin{proof}
As usual, we use Proposition~\ref{p.gluedlaminations} to write $\cL^s_Y$ as a disjoint union:
$$\cL^u_Y=\cL^u_{X_2}\sqcup (\Gamma_2)_*(\phi_*(\cL^u_{X_1})\setminus \cL^s_{X_2}$$
where $\Gamma_2:\partial^{in} U_2\setminus\cL^s_{X_2}\to \partial^{out} U_2\setminus\cL^u_{X_2}$ is the crossing map of the plug $(U_1,X_1)$.

Each of the four leaves of $\phi_*(\cL^u_{X_1})$ intersects $\cL^s_{X_2}$. Therefore, $\Gamma_*(\phi_*(\cL^u_{X_1})\setminus \cL^s_{X_2})$ does not contain any compact leaf. As a further consequence, the compact leaves of the lamination $\cL^u_Y$ are exactly those of the lamination $\cL^u_{X_2}$. Hence, the lamination $\cL^u_Y$  has four compact leaves (which are parallel essential curves in the torus $\partial^{out} V=\partial^{out} U_2$), and the dynamical orientation of two adjacent compact leaves are incoherent. By Proposition~\ref{p.flow-vs-contracting}, this is equivalent to the analoguous statement with the contracting orientation instead of the dynamical orientations.

We are left to prove that $\cL^u_Y$ is a filling MS-lamination. We already know that this is a MS-lamination thanks to Propositon~\ref{p.hyperbolicplug}. So we are left to prove that every connected component of $\partial^{out} V\setminus \cL^u_Y$ is a strip in the sense of definition~\ref{d.strip-exceptionnal}. For this purpose, we consider a connected component $A^u$ of $\partial^{out} U_2\setminus\cL^u_{X_2})$~; this is  an open annulus bounded by two compact leaves $\gamma_1^u,\gamma_2^u$ of $\cL^u_{X_2}$. The set $A^s:=\Gamma^{-1}(A^u)$ is a connected component of $\partial^{in} U_2\setminus\cL^s_{X_2}$. The leaves of $\cL^u_Y$ contained in the annulus $A^u$ are exactly the images under $\Gamma_*$ of the connected components of $\psi_*(\cL^u_{X_1}))\cap A^s$. Together with remark~\ref{r.arc-crossing-annulus}, this implies that there are exactly four leaves of $\cL^u_Y$ in the annulus $A^u$, and that each of this four leaves is ``crossing" the annulus $A^u$, \emph{i.e.} is accumulating on both $\gamma^u_1$ and $\gamma^u_2$. As a further consequence, every connected component of $A^u\setminus \cL^u_Y$ is a strip bounded by two compact leaves of $\cL^u_Y$ which are asymptotic at both ends. In other words, $\cL^u_Y$ is a filling MS-lamination.
\end{proof}

\begin{lemma}
\label{l.super-transverse-gluing-map}
There exists a diffeomorphism $\chi:\partial^{out} V\to\partial^{in} V$ such that:
\begin{itemize}
\item the laminations $\chi_*(\cL^u_Y)$ and $\cL^s_Y$ are strongly transverse,
\item every leaf of $\chi_*(\cL^u_Y)$ intersects every leaf of $\cL^s_Y$ (see figure~\ref{f.gluings}, right).
\end{itemize}
\end{lemma}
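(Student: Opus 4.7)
The plan is to reduce the statement to an explicit computation on $\TT^2$ by first putting both laminations in normal form, and then realizing $\chi$ as the composition of a translation and a Dehn twist of large order.

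First, I would invoke Proposition~\ref{p.simplification} to normalize both laminations. After identifying $\partial^{in} V \simeq \partial^{out} V\simeq \TT^2=\RR^2/\ZZ^2$ and composing $\chi$ with suitable self-diffeomorphisms of the two tori, I may assume that the four compact leaves of $\cL^s_Y$ and of $\cL^u_Y$ are the vertical circles $\{i/4\}\times\SS^1$ for $i=0,1,2,3$; since the contracting orientations alternate (Lemma~\ref{l.properties-lamination}), on each annulus $(i/4,(i+1)/4)\times\SS^1$ the non-compact leaves are graphs over the vertical factor whose horizontal derivative changes sign exactly once in the middle of the annulus and vanishes only there.

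Second, I would try the diffeomorphism $\chi_n : (x,y)\mapsto(x+\tfrac{1}{8},\,y+nx)$ for a large integer $n$; this is a translation which realizes the shift used in the proof of Lemma~\ref{l.gluing-diffeo}, post-composed with the $n$-th power of a standard Dehn twist. The compact leaves of $(\chi_n)_*(\cL^u_Y)$ become four parallel $(1,n)$-curves on $\TT^2$, placed midway between the compact vertical leaves of $\cL^s_Y$, and the non-compact leaves of $(\chi_n)_*(\cL^u_Y)$ become small $C^1$-perturbations of these $(1,n)$-curves.

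Third, I would verify the two required properties. The ``every leaf meets every leaf'' part is the easier one: each non-compact leaf of either lamination is asymptotic on both ends to compact leaves, so it suffices to check that each compact leaf of $(\chi_n)_*(\cL^u_Y)$ meets each leaf (compact or not) of $\cL^s_Y$, which is immediate because an essential $(1,n)$-curve crosses every vertical essential curve, and hence by closeness also every non-compact leaf of $\cL^s_Y$, which spirals around vertical curves. The main obstacle is the strong transversality: one must check that every connected component of the complement of $(\chi_n)_*(\cL^u_Y)\cup \cL^s_Y$ in $\TT^2$ is a disc bounded by four arcs, two in each lamination. I would handle this by choosing $n$ so large that the leaves of $(\chi_n)_*(\cL^u_Y)$ are uniformly transverse to $\cL^s_Y$ (this is possible because in the normal form the tangent lines to the leaves of $\cL^u_Y$ are uniformly bounded in slope, and a Dehn twist of large order makes every leaf of $(\chi_n)_*(\cL^u_Y)$ very steep), and then performing a direct combinatorial analysis of the intersection pattern inside each fundamental strip, which is now the image of a thin neighborhood of a $(1,n)$-curve and meets each vertical strip of $\cL^s_Y$ essentially as a parallelogram. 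A small $C^1$-perturbation supported away from the compact leaves, provided by an argument analogous to Lemma~\ref{l.transversefoliation}, can be used if needed to eliminate any residual tangencies or non-quadrilateral complementary regions.
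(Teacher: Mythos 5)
Your reduction to a normal form on $\TT^2$ via Proposition~\ref{p.simplification} is exactly the paper's first step, but the explicit gluing map you propose does not do what you claim, and this breaks the proof. The shear $(x,y)\mapsto(x,y+nx)$ preserves vertical circles: it sends $\{c\}\times\SS^1$ onto itself by the rotation $y\mapsto y+nc$. Hence $\chi_n:(x,y)\mapsto(x+\tfrac18,y+nx)$ maps the vertical compact leaves of the normalized $\cL^u_Y$ onto the vertical circles $\{i/4+1/8\}\times\SS^1$, \emph{not} onto $(1,n)$-curves. As a result, the compact leaves of $(\chi_n)_*(\cL^u_Y)$ are disjoint from the vertical compact leaves of $\cL^s_Y$ at $x=j/4$, the ``every leaf meets every leaf'' conclusion fails outright, and the complement of the two laminations contains annular components between parallel compact leaves, so strong transversality also fails. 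Increasing $n$ makes the non-compact leaves of $(\chi_n)_*(\cL^u_Y)$ steeper (as you observe), but that only aggravates the problem: you end up with two laminations that are both nearly vertical, which is the opposite of what is needed. To realize your intended picture the Dehn twist must be taken along a \emph{vertical} curve, \emph{e.g.} $(x,y)\mapsto(x+ny,y)$, so that vertical circles become $(n,1)$-curves of small slope; then the intersection argument has a chance of going through.

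The paper takes a more economical route. It first upgrades Proposition~\ref{p.simplification}: since here the contracting orientations alternate, the derivatives of the non-compact leaves have constant sign on each annulus, and one can further normalize so that these derivatives are larger than any prescribed $A$ in absolute value (the leaves are uniformly steep, \emph{i.e.} nearly vertical). The gluing map is then simply the $90^{\circ}$ rotation $\chi_0(x,y)=(-y,x)$, conjugated by the two normalizing diffeomorphisms. After rotation, $\chi_*(\cL^u_Y)$ has horizontal compact leaves and non-compact leaves of slope bounded by $1/A$, while $\cL^s_Y$ remains nearly vertical; both properties of the lemma are then immediate, with no need to analyze a shear of large order or to post-hoc perturb away tangencies. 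Your plan could be repaired along these lines — replace the shear by the rotation (or by the correct Dehn twist) and invoke the $A$-enhanced normal form — but as written the key formula is simply in the wrong direction.
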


\begin{proof}
 See figure~\ref{f.gluings}. Proposition~\ref{p.simplification} provides a diffeomorphism $\psi^{out}:\partial^{out} V\to\TT^2$ such that:
\begin{itemize}
\item the compact leaves of the lamination $(\psi^{out})_*(\cL^u_Y)$ are the vertical circles $\{\frac{i}{4}\}\times \SS^1$ for $i=0,\dots,3$;
 \item in the open annulus $(\frac{i}{4},\frac{i+1}{4})\times \SS^1$, the leaves of the lamination $(\psi^{out})_*(\cL^u_Y)$ are graphs of $C^1$ functions from $(\frac{i}{4},\frac{i+1}{4})$ to $\SS^1$; 
 \item the derivatives of these functions are positive for $i=0$ and $2$, and negative for $i=1$ and $3$. Given any constant $A$, an elementary  modification of the proof fo Proposition~\ref{p.simplification} allows to assume that
 the derivatives of these functions are larger than $A$ for $i=0$ and $2$, and smaller than $-A$ for $i=1$ and $3$. 
 \end{itemize}
There is a diffeomorphism $\psi^{in}:\partial^{in} V\to\TT^2$ such that  the lamination $(\psi^{in})_*(\cL^s_Y)$ satisfies analoguous properties.  Now, let $\chi_0:\TT^2\to\TT^2$ be the orientation-preserving diffeomorphism defined by $\chi_0(x,y)=(-y,x)$. A straightforward computation shows that the diffeomorphism $\chi:\partial^{out} V\to\partial^{in} V$ defined by $\chi:=(\psi^{out})^{-1}\circ\chi_0\circ\psi^{in}$ satisfies the desired properties.
\end{proof}

We consider the closed manifold $M:=V/\chi$. We denote by $Z$ the vector field induced by $Y$ on $M$. The first item of Lemma~\ref{l.super-transverse-gluing-map} and Theorem~\ref{t.transitive} imply that the vector field $Z$ is Anosov (up to perturbing $Y$ within its topological equivalence class and modifying $\chi$ by a strongly transverse isotopy). The second item of Lemma~\ref{l.super-transverse-gluing-map} and Proposition~\ref{p.transitive} imply that the vector field $Z$ is topologically transitive.

\begin{figure}[htp]
\begin{center}
 \includegraphics[totalheight=5cm]{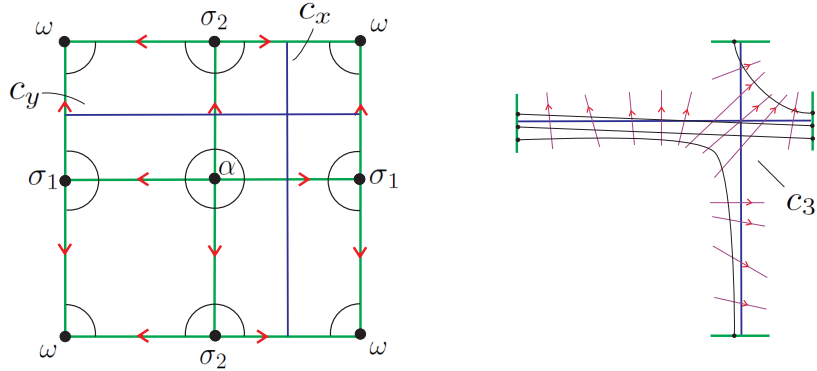}
  \caption{\label{f.transverse-curves}The curves $c_x$, $c_y$ and $c_{3}$}
  \end{center}
\end{figure}

\begin{prop}
\label{p.infinitely-many-tori}
There exists infinitely many pairwise non-isotopic tori embedded in $M$ which are transverse to the vector field $Z$.
\end{prop}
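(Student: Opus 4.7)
The plan is to exploit the explicit product structure $U_i\cong S\times\SS^1$ of the two building blocks of $M$, where $S=\TT^2\setminus(D_\alpha\cup D_\omega)$. The starting observation is this: if $c$ is any simple closed curve in $S$ transverse to the planar vector field $X_0$, then the embedded torus $c\times\SS^1\subset U_i$ is transverse to $X=X_0+\varphi\,\partial/\partial t$. Indeed, the fiber direction $\partial/\partial t$ is everywhere tangent to $c\times\SS^1$, so the vertical part $\varphi\,\partial/\partial t$ of $X$ is tangent, and transversality of $X$ to $c\times\SS^1$ reduces to transversality of $X_0$ to $c$ in $S$.

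The first step is then to produce an infinite family $\{c_n\}_{n\ge 1}$ of pairwise non-isotopic simple closed curves in $S$, all transverse to $X_0$. Since $X_0$ has only four isolated zeros (at $\alpha,\sigma_1,\sigma_2,\omega$) and the discs $D_\alpha,D_\omega$ can be taken arbitrarily small, for every coprime pair $(p,q)\in\ZZ^2$ one can pick a straight linear curve on $\TT^2$ representing the class $p[c_x]+q[c_y]\in H_1(\TT^2;\ZZ)$, then push it $C^1$-slightly inside $S$ and make it transverse to $X_0$ (transversality to a prescribed vector field is open and generic for smooth embeddings). This produces curves $c_n$ in pairwise distinct free homotopy classes of $S$, and corresponding transverse tori $T_n:=c_n\times\SS^1\subset U_1\subset M$.

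The main obstacle is then to verify that the tori $T_n$ are pairwise non-isotopic in $M$. My plan is to separate them via a homological invariant. Inside $U_1$, the class $[T_n]\in H_2(U_1;\ZZ)$ equals $[c_n]\otimes[\SS^1]$ under the K\"unneth isomorphism, and the classes $[c_n]\in H_1(S;\ZZ)$ are pairwise distinct by construction. The inclusion $U_1\hookrightarrow M$ pushes these classes forward to $H_2(M;\ZZ)$. Since the gluing maps $\psi$ and $\chi$ used to build $M$ act by explicit matrices on $H_1$ of the boundary tori (read off from the proofs of Lemmas~\ref{M0X0}--\ref{l.super-transverse-gluing-map}), the Mayer--Vietoris sequence for the decomposition $M=U_1\cup U_2$ allows one to compute the map $H_2(U_1)\to H_2(M)$ explicitly and to check that the images of the $[T_n]$ remain pairwise distinct -- at worst after extracting an infinite subsequence of slopes.

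A possible way to avoid any worry that $\psi_*$ and $\chi_*$ could collapse too many of the classes is to replace the raw homology class by the algebraic intersection number $T_n\cdot\alpha$ with a fixed oriented loop $\alpha$ in $M$ crossing the gluing tori a prescribed finite number of times; this intersection number depends affinely on the slope $(p,q)$ used to define $c_n$ and therefore takes infinitely many distinct values along the family. Either way, one concludes that the countable family $\{T_n\}$ contains an infinite subfamily of pairwise non-isotopic embedded tori transverse to $Z$ in $M$, which proves Theorem~\ref{t.infinitely-many-tori}.
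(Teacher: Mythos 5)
Your key observation — that $c\times\SS^1$ is transverse to $X$ whenever $c\subset S$ is transverse to the planar field $X_0$, since the vertical part $\varphi\,\partial/\partial t$ is always tangent to the product torus — is exactly the paper's starting point, and the idea of separating the resulting tori by intersection numbers is also the right one. The gap is in the first step, and it is a genuine one.

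You claim that ``transversality to a prescribed vector field is open and generic for smooth embeddings,'' and on that basis you assert that every primitive slope $(p,q)$ can be realized by a closed curve in $S$ transverse to $X_0$. This is false. Transversality of a hypersurface to a nonvanishing vector field is open but emphatically \emph{not} generic: the isotopy class of the curve imposes global obstructions. For instance, no simple closed curve on $\TT^2$ in the class $(0,1)$ can be made transverse to the constant field $\partial_y$: if $\gamma(\theta)=(x(\theta),y(\theta))$ runs once around the $y$-direction and zero times around the $x$-direction, then $\int_0^1 x'(\theta)\,d\theta=0$ forces $x'$ to vanish, i.e.\ a tangency. The same kind of obstruction persists near the singularities of $X_0$. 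The paper handles this by a \emph{desingularization} (resolution of crossings): one takes the curve $c_x$ together with $q$ parallel copies of $c_y$ — all genuinely transverse to $X_0$ and mutually transverse with crossings of a single sign — and smooths each crossing respecting the orientations. Because $X_0$ points to the same side of $c_x$ and of $c_y$ at each crossing, the smoothing stays transverse, producing a simple closed curve $c_q$ in class $(q,1)$ transverse to $X_0$ and avoiding the four discs. Your ``generic perturbation'' shortcut skips this construction entirely and does not produce the required curves; without it there is nothing to glue.

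The second half of your argument is salvageable but circuitous. The Mayer--Vietoris computation of $H_2(U_1)\to H_2(M)$ via ``explicit matrices'' is never carried out and is unnecessary; the hedge ``at worst after extracting an infinite subsequence'' signals this. Your alternative — pair the $T_n$ against a fixed loop — is essentially what the paper does and can be made clean: take $\hat c_q=c_q\times\{\theta_0\}\subset T_q$ (the paper) or equivalently the fixed loop $\hat c_x=c_x\times\{\theta_0\}$, and observe that the geometric intersections all occur inside $U_1$, so $\hat c_x\cdot T_q=c_x\cdot c_q=\pm q$, a bilinear homological pairing that depends only on $[T_q]\in H_2(M;\ZZ)$. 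Since this takes infinitely many values, the $T_q$ are pairwise non-isotopic; no subsequence extraction or gluing-matrix bookkeeping is needed. In short: replace the ``generic transversality'' step by the desingularization argument, and replace the Mayer--Vietoris detour by the direct intersection-number computation, and you recover the paper's proof.
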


\begin{proof}
Let  $c_x$ and $c_y$ be the closed curves on $\TT^2$ defined respectively by the equations $x=\frac{1}{4}$ and $y=\frac{1}{4}$. We endow $c_x$ and $c_y$ with the orientations defined by the vector fields $\frac{\partial}{\partial y}$ and $\frac{\partial}{\partial x}$ respectively. One can easily check that the vector field $X_0$ is transverse to $c_x$ and $c_y$.

Let $q\in\NN-\{0\}$. Using classical and elementary desingularization process, one can easily find an oriented simple closed curve $c_{q}$ on $\TT^2$, freely homotopic to $c_x+q.c_y$, transverse to the vector field $X_0$, and disjoint from the discs $D_\alpha,D_{\sigma_1},D_{\sigma_2},D_{\omega}$ (see figure~\ref{f.transverse-curves}). The torus $T_{q}:=c_{q}\times\SS^1$ is embedded in $U=(\TT^2\setminus (D_\alpha\cup D_\beta))\times\SS^1$ and transverse to the vector field $X$ (because $c_q$ is transverse to $X_0$ and since $X(x,y,t)=X_0(x,y)$ for $(x,y)\in \TT^2\setminus (D_{\sigma_1},D_{\sigma_2})$). Now recall that $(M,Z)$ has been obtained by gluing together two copies of the plug $(U,X)$. Therefore the torus~$T_q$ can be seen as a torus embedded in $M$ transverse to the vector field $Z$.


Now consider two different positive integers $q,q'$. Fix any $\theta_0\in\SS^1$ and consider the simple closed curve $\hat c_q: = c_q \times \{\theta_0\} \subset T_q$. Obviously the algebraic intersection number of the curve $\hat c_q$ and the torus $T_{q'}$ is equal to $\pm |q-q'|$, which is nonzero. An easy cohomological argument shows that $T_q$ is not isotopic to $T_{q'}$ in $M$.

So we have found infinitely many pairwise non-isotopic tori embedded in $M$ and transverse to $Z$. The proof of Proposition~\ref{p.infinitely-many-tori} and Theorem~\ref{t.infinitely-many-tori} is complete.
\end{proof}

\begin{rema}
The construction above does not seem to be optimal. Indeed, it should be possible to find a gluing map $\theta:\partial^{out} U\to\partial^{in} U$ such that the vector field $Z_\theta$ induced by $X$ on the closed manifold $U/\theta$ is Anosov. Nevertheless, the existence of such a gluing map does not follow from Theorem~\ref{t.transitive}, since the entrance/exit laminations of the plug $(U,X)$ are not filling MS-laminations.
\end{rema}

\begin{rema}
The manifold $M$ constructed above is a graph manifold (it was obtained by gluing together two copies of $S\times \SS^1$ where $S$ is the torus minus two discs). Nevertheless,
the construction can easily be modified in order to get a manifold $M$ which has hyperbolic pieces in its JSJ decomposition.
\end{rema}

\vskip 1cm
\noindent Fran\c cois B\'eguin

\noindent {\small LAGA\\UMR 7539 du CNRS}

\noindent{\small Universit\'e Paris 13, 93430 Villetaneuse, FRANCE}

\noindent{\footnotesize{E-mail: beguin@math.univ-paris13.fr}}
\vskip 2mm

\noindent Christian Bonatti,

\noindent {\small Institut de Math\'ematiques de Bourgogne\\
UMR 5584 du CNRS}

\noindent {\small Universit\'e de Bourgogne, 21004 Dijon, FRANCE}

\noindent {\footnotesize{E-mail : bonatti@u-bourgogne.fr}}

\vskip 2mm

\noindent Bin Yu

\noindent {\small Department of Mathematics}

\noindent{\small Tongji University, Shanghai 2000
92, CHINA}

\noindent{\footnotesize{E-mail: binyu1980@gmail.com }}

\end{document}